\documentclass[11pt]{article}
\usepackage{amssymb, latexsym, mathtools, array, multirow, graphicx, indentfirst, enumerate}
\usepackage[all]{xy}

\usepackage[thmmarks, amsmath]{ntheorem}
{
\theoremstyle{nonumberplain}
\theoremheaderfont{\itshape}
\theorembodyfont{\upshape}
\theoremsymbol{\mbox{$\Box$}}
\newtheorem{proof}{Proof.}
}

\usepackage[hang,flushmargin]{footmisc}

\textheight=23cm \textwidth=16cm \topmargin -2.5cm

\oddsidemargin 0.1cm \evensidemargin 0.1cm \headsep 20mm \headheight
10mm \voffset -10mm \openup 0.8mm
\parskip0.1cm

\def\Z{\mathbb Z}
\def\Q{\mathbb Q}
\def\R{\mathbb R}
\def\C{\mathbb C}
\def\la{\lambda}

\theoremstyle{plain}

\newtheorem{lemma}{Lemma}[section]
\newtheorem{proposition}{Proposition}[section]
\newtheorem{theorem}{Theorem}[section]

\theorembodyfont{\upshape}
\newtheorem*{construction}{Canonical Construction}
\newtheorem*{observation}{Key Observation}
\newtheorem{definition}{Definition}[section]
\newtheorem{notation}{Notation}[section]
\newtheorem{example}{Example}[section]
\newtheorem{remark}{Remark}[section]

\theoremstyle{empty}

\newtheorem{thm}{Theorem}

\newcommand\blfootnote[1]{
\begingroup 
\renewcommand\thefootnote{}\footnote{#1}
\addtocounter{footnote}{-1}
\endgroup 
}

\parskip 0.0cm
\title{On string quasitoric manifolds and their orbit polytopes}
\author{Qifan Shen}
\date{}
\begin{document}

\renewcommand{\theequation}{\thesection.\arabic{equation}}
\setcounter{equation}{0} \maketitle

\vspace{-1.0cm}

\bigskip

\noindent{\bf Abstract.} This article mainly aims to give combinatorial characterizations and topological descriptions of quasitoric manifolds with string property. We provide a necessary and sufficient condition for a simple polytope in dimension 2 and 3 to be realizable as the orbit polytope of a string quasitoric manifold. In particular, a complete description of string quasitoric manifolds over prisms is obtained. On the other hand, we characterize string quasitoric manifolds over $n$-dimensional simple polytopes with no more than $2n+2$ facets. Further results are available when the orbit polytope is the connected sum of a cube and another simple polytope. In addtion, a real analogue concerning small cover is briefly discussed. 

\blfootnote{Mathematics Subject Classification (2020): 57R19, 57R20, 52B05 \\
Key words and phrases: String property, Quasitoric manifold, Orbit polytope, Characteristic class \\
Partially supported by the grant from NSFC (No. 11971112).}

\medskip

\renewcommand{\theequation}{\thesection.\arabic{equation}}
\section{Introduction} \label{introduction}
\setcounter{equation}{0}

String structure is a higher version of spin structure. The concept originates from quantum field theory in physics, in which vanishing of a certain \emph{quantum anomaly} (the failure of a symmetry to be preserved at a quantum level) is guaranteed by existence of its corresponding geometric structure (see \cite{Ki87} for more background in physics). Mathematically, string property of a vector bundle $V$ is equivalent to vanishing of characteristic classes $w_{1}(V),w_{2}(V)$ and $p_{1}(V)/2$, where $w_{1}(V),w_{2}(V)$ are Stiefel-Whitney classes and $p_{1}(V)/2$ is half of the first Pontryagin class (see Section 2).

Due to stronger restrictions, search for examples of string manifolds becomes harder and the progress in string category is far less than that in spin category. For instance, explicit group structure and complete invariants of spin bordism were given in \cite{ABP67}, while the same problems remain widely open in string case. Partial results on string bordism group structure can be found in \cite{HR95, MG95} and Witten genus introduced in \cite{Wi88} is conjectured to be part of complete invariants for string bordism.

On the other hand, first introduced by Davis and Januszkiewicz in their pioneering work \cite{DJ91} as a topological generalization of nonsingular projective toric varieties, \emph{quasitoric manifolds} admit equivariant unitary structure and can be applied in various bordism theories. For example, every element in unitary bordism group $\Omega_{*}^{U}$ has a quasitoric representative \cite{BPR07}. And polynomial generators of $\Omega_{*}^{SU} \otimes \Z[\frac{1}{2}]$ (special unitary bordism ring with 2 inverted) can be chosen as Calabi-Yau hypersurfaces in certain quasitoric manifolds \cite{LLP18}. In fact, quasitoric manifolds are sufficient to represent these generators when dimension is greater than 8 \cite{LP16}. 

A quasitoric manifold can be constructed from its orbit polytope and corresponding characteristic matrix, yielding the expression of its algebraic topology data such as cohomology ring and characteristic classes in a combinatorial way. In particular, for any given quasitoric manifold $M$, admitting string structure is equivalent to $w_{2}(M)=p_{1}(M)=0$ since $M$ is orientable and $H^{*}(M;\Z)$ is torsion-free (see Section 2). These properties intrigue an attempt to search and classify string quasitoric manifolds. 

In this article, we apply the most straightforward approach: represent $w_{2}(M)$ (resp. $p_{1}(M)$) as the linear combination of certain basis of $H^{2}(M;\Z)$ (resp. $H^{4}(M;\Z)$). Hence, string property is equivalent to vanishing of corresponding coefficients. It turns out that linear restriction $w_{2}(M)=0$ is only related with parity of elements in the characteristic matrix, and can be formulated in a unified form. However, when it comes to non-linear restriction $p_{1}(M)=0$, one can not get rid of the orbit polytope. This makes it impossible to reach a uniform description of $p_{1}(M)=0$ in combinatorial language via the straightforward approach. As a matter of fact, the existence of quasitoric manifolds over a given simple polytope is related to Buchstaber invariant problem, which remains unsolved in general case (see \cite{Er14} for some partial results). 

Nevertheless, as long as the combinatorial type of the orbit polytope is clear, a complete characterization of $p_{1}(M)=0$ is available, which allows further study at the level of polytope as well as manifold. Furthermore, in some specific cases, partial characterizations can impose effective restrictions on a string quasitoric manifold and its orbit polytope as well. 

\vspace{0.5cm}

Main results of this article include two parts. The first part centers around low dimensional string quasitoric manifolds: 

\begin{thm} \bf{Theorem 1.1}
\it{For $n \leq 3$, an $n$-dimensional simple polytope $P$ can be realized as the orbit polytope of a string quasitoric manifold if and only if $P$ is $n$-colorable.} 
\end{thm}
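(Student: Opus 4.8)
The plan is to prove the two implications separately; the implication ``$P$ is $n$-colorable $\Rightarrow$ $P$ is the orbit polytope of a string quasitoric manifold'' will be uniform in $n$, and only the converse will use $n\le 3$.

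For the easy direction, given a proper coloring $c$ of the facets $F_1,\dots,F_m$ of $P$ by colors $\{1,\dots,n\}$, I take $\Lambda$ to be the $n\times m$ integer matrix whose $i$-th column is the standard basis vector $e_{c(i)}$. Any $n$ facets meeting at a vertex carry $n$ distinct colors, so the corresponding minor of $\Lambda$ is a permutation matrix and $\Lambda$ is a characteristic matrix; let $M$ be the resulting quasitoric manifold with degree-two generators $v_1,\dots,v_m$ of $H^*(M;\Z)$. The row relations of $\Lambda$ read $\sum_{c(i)=k} v_i=0$ for each color $k$, whence $w_2(M)=\sum_i v_i\bmod 2=0$. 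Since two facets of the same color never meet, $v_iv_j=0$ whenever $i\ne j$ and $c(i)=c(j)$; squaring each relation $\sum_{c(i)=k}v_i=0$ then gives $\sum_{c(i)=k}v_i^2=0$, so $p_1(M)=\sum_i v_i^2=0$ and $M$ is string.

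For the converse, the case $n=1$ is vacuous. For $n=2$, $P$ is an $m$-gon and a string quasitoric manifold $M$ over $P$ is a closed oriented $4$-manifold with $p_1(M)=0$, so by Hirzebruch $\sigma(M)=\tfrac13\langle p_1(M),[M]\rangle=0$; since $\chi(M)$ equals the number of vertices of $P$, i.e.\ $m$, and $\chi\equiv\sigma\pmod 2$ for any closed oriented $4$-manifold, $m$ is even and $P$ is $2$-colorable. For $n=3$ I will show every facet has an even number of edges, which is the classical criterion for $3$-colorability of a simple $3$-polytope (equivalently, the dual simplicial $2$-sphere is Eulerian). Fix a facet $F_j$, a $k_j$-gon, and let $M_j=\pi^{-1}(F_j)$ be the associated characteristic submanifold: a simply connected quasitoric $4$-manifold over $F_j$ with $\chi(M_j)=k_j$, whose normal line bundle $\nu_j$ satisfies $c_1(\nu_j)=v_j|_{M_j}$. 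Because $H^*(M_j;\Z)$ is torsion-free, $TM|_{M_j}=TM_j\oplus\nu_j$ gives $w_2(M_j)=w_2(M)|_{M_j}+c_1(\nu_j)\bmod 2=v_j|_{M_j}\bmod 2$ and $p_1(M_j)=p_1(M)|_{M_j}-c_1(\nu_j)^2=-(v_j|_{M_j})^2$, using $w_2(M)=p_1(M)=0$. Putting $q:=\langle(v_j|_{M_j})^2,[M_j]\rangle=\langle v_j^3,[M]\rangle$, Hirzebruch gives $3\sigma(M_j)=-q$, while the Poincar\'e dual of $v_j|_{M_j}$ is a characteristic element of the unimodular intersection form of $M_j$, so van der Blij's lemma gives $\sigma(M_j)\equiv q\pmod 8$. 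Hence $4\sigma(M_j)\equiv 0\pmod 8$, so $\sigma(M_j)$ is even, and $k_j=\chi(M_j)\equiv\sigma(M_j)\equiv 0\pmod 2$.

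The main obstacle is the $n=3$ converse, and within it the mod-$8$ refinement: Hirzebruch alone determines $\sigma(M_j)$ only modulo $3$, and it is the interplay of $3\sigma(M_j)=-q$ with the van der Blij congruence $\sigma(M_j)\equiv q\pmod 8$ that forces the parity. One must also check carefully that restriction to the characteristic submanifold twists $w_2$ and $p_1$ exactly by the normal bundle, and that ``all $2$-faces even $\Rightarrow$ $3$-colorable'' is a genuine classical input (the same fact, read backwards, is what makes the canonical construction of the first paragraph available for every $3$-colorable $P$). The restriction $n\le 3$ is essential to this argument: for $n\ge 4$ the codimension-two strata $\pi^{-1}(F_j)$ are no longer surfaces and the signature/parity constraint disappears.
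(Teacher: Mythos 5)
Your proof is correct, and the forward direction (an $n$-coloring $c$ gives the characteristic matrix with columns $e_{c(i)}$, for which $\sum_i v_i$ and $\sum_i v_i^2$ vanish) is essentially the paper's appeal to the pull-back of the linear model, just verified cohomologically. The converse, however, follows a genuinely different route. The paper works entirely with the characteristic matrix: for $n=2$ it derives $p_1(M)=(\sum_i l_i)\,v_1v_2$ and proves $\sum_i l_i\equiv m \pmod 2$ by a block-deletion analysis of the mod-$2$ reduction of $\Lambda$ (Lemmas \ref{dimP=2 coefficient} and \ref{dimP=2 main}); for $n=3$ it invokes Joswig's theorem to produce a facet with an odd number of edges and then shows, through the lengthy mod-$8$ computations of Lemmas \ref{algebraic lemma 1} and \ref{algebraic lemma 2}, that the coefficient in (\ref{cyclic coefficient 2}) is $\equiv 4\pmod 8$, hence nonzero. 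You instead exploit four-dimensional topology: for $n=2$, $p_1(M)=0$ forces $\sigma(M)=0$ and hence $m=\chi(M)\equiv\sigma(M)\equiv 0\pmod 2$; for $n=3$, you restrict to each characteristic submanifold $M_j$ (a quasitoric $4$-manifold over the $k_j$-gon $F_j$), use $w_2(M)=p_1(M)=0$ to identify $v_j|_{M_j}$ as an integral characteristic element with $p_1(M_j)=-(v_j|_{M_j})^2$, and play the signature theorem against van der Blij's congruence to get $4\sigma(M_j)\equiv 0\pmod 8$, whence $k_j=\chi(M_j)$ is even; all the steps (unimodularity and Wu's formula for the simply connected $M_j$, exactness of the Whitney formula for $p_1$ over the torsion-free $H^*(M_j;\Z)$) check out. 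Both routes rest on the same combinatorial input, namely that a simple $3$-polytope is $3$-colorable iff every $2$-face has an even number of edges. Your argument is shorter and conceptually explains the mod-$8$ phenomenon that the paper obtains by brute force; the paper's computation, in exchange, yields the explicit coefficient formulas that it reuses later for prisms and few-facet polytopes. (One cosmetic slip in your closing remark: for $n\ge 4$ the strata $\pi^{-1}(F_j)$ cease to be $4$-manifolds, not surfaces; the relevant point is that the signature argument lives in real dimension four.)
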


This theorem is a combination of Proposition \ref{dimP=2 polytope} and \ref{dimP=3 polytope}. In general, $n$-colorability is always a sufficient condition for an $n$-dimensional simple polytope to be realizable as the orbit polytope of a string quasitoric manifold \cite{DJ91}. But as demonstrated in Example \ref{5-colorable}, it is not necessary when $n \geq 4$. In the case of product of two polygons, a more complicated criterion is given in Proposition \ref{dimP=4 polytope}. 

At the level of manifold, 4-dimensional spin quasitoric manifolds are all string. In fact, there is only one string (spin) quasitoric manifold over $2m$-gon up to homeomorphism, namely the \emph{equivariant connected sum} of $m-1$ copies of $\C P^{1} \times \C P^{1}$ (see Remark \ref{dimP=2 counting}). As for 6-dimensional case, general results are not available, but string quasitoric manifolds over prisms can be completely characterized. It is worthwhile to mention that they may not be homeomorphic to a bundle type manifold, i.e., the total space of a fiber bundle, although the orbit polytope $L_{2k}=C_{2}(2k) \times I$ is a Cartesian product (see Example \ref{not bundle type example}). But with the help of \emph{equivariant edge connected sum operation} (see Definition \ref{equivariant edge connected sum definition}), all of them can be constructed from bundle type string quasitoric manifolds. 

\begin{thm} \bf{Theorem \ref{equivariant edge connected sum theorem}}
\it{If a quasitoric manifold $M(L_{2k},\Lambda)$ is string, then there exist quasitoric manifolds $\{M(L_{2k_{i}},\Lambda_{i})\}_{i=1}^{s}$ such that \\
(1) $M(L_{2k_{i}},\Lambda_{i})$ is of bundle type and string for $1 \leq i \leq s$; \\
(2) $M(L_{2k},\Lambda)=M(L_{2k_{1}},\Lambda_{1}) \widetilde{\#^{e}} \cdots \widetilde{\#^{e}} M(L_{2k_{s}},\Lambda_{s})$ up to weakly equivariant homeomorphism.} 
\end{thm}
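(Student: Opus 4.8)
The plan is to proceed by induction on $k$, the heart of the argument being the following dichotomy: a string quasitoric manifold over the prism $L_{2k}$ is either of bundle type, or it is a nontrivial equivariant edge connected sum $M_{1}\,\widetilde{\#^{e}}\,M_{2}$ of two string quasitoric manifolds over prisms $L_{2k_{1}},L_{2k_{2}}$ with $k_{1},k_{2}<k$. Granting this, the theorem follows at once: if $M(L_{2k},\Lambda)$ is not already of bundle type, split it and apply the inductive hypothesis to each factor, then concatenate the resulting decompositions using the associativity of $\widetilde{\#^{e}}$; the recursion terminates because $k$ strictly decreases and the minimal prism $L_{4}=I^{3}$ is a cube, over which every quasitoric manifold is a $\C P^{1}$-bundle over a quasitoric surface, hence of bundle type. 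Property (1) is then immediate, and property (2) is what the induction produces.

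To set up the dichotomy I would first put $\Lambda$ in a normal form: after an automorphism of $\Z^{3}$ one may assume $\lambda(F_{\mathrm{bot}})=e_{3}$ and $\lambda(F_{1})=e_{1}$, $\lambda(F_{2k})=e_{2}$ for two side facets meeting at a bottom vertex. Passing to the quotient $\Z^{3}\to\Z^{2}=\Z^{3}/\langle e_{3}\rangle$, the side facets descend to a characteristic function on the $2k$-gon $C_{2}(2k)$, so the data of $\Lambda$ decouples into (i) a $2$-dimensional characteristic function on $C_{2}(2k)$, (ii) the third coordinates of the side facets, and (iii) the vector $\lambda(F_{\mathrm{top}})$. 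Using the cohomology ring of $M(L_{2k},\Lambda)$ and the formulas for $w_{2}$ and $p_{1}$ recalled in Section~2 --- together with the combinatorial characterization of string characteristic matrices over prisms established beforehand --- string-ness becomes an explicit system of parity conditions (from $w_{2}=0$) and quadratic integral conditions (from $p_{1}=0$) on the data (i)--(iii). The essential point to extract is that these conditions are \emph{local} along the cyclic sequence of side facets.

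Next I would identify, in this normal form, which $\Lambda$ give manifolds of bundle type --- for instance those for which $M(L_{2k},\Lambda)$ is a $\C P^{1}$-bundle over the unique string toric surface $\#_{k-1}(\C P^{1}\times\C P^{1})$ over $C_{2}(2k)$, or a toric-surface bundle over $\C P^{1}$ --- and then establish the contrapositive of the dichotomy. If a string $M(L_{2k},\Lambda)$ is not of bundle type, then, inspecting a spot where the normal form of $\Lambda$ departs from the bundle-type shape, one isolates a consecutive arc of side facets which, together with a suitable vertical edge $e=v\times I$, closes up into a legitimate characteristic matrix $\Lambda_{2}$ on a smaller prism $L_{2k_{2}}$, the complementary arc together with $e$ giving $\Lambda_{1}$ on $L_{2k_{1}}$, both with fewer side facets. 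One then checks that the inherited parity and quadratic relations make each $\Lambda_{i}$ string and, reading off from Definition~\ref{equivariant edge connected sum definition} how $\widetilde{\#^{e}}$ combines characteristic matrices, that $M(L_{2k},\Lambda)=M(L_{2k_{1}},\Lambda_{1})\,\widetilde{\#^{e}}\,M(L_{2k_{2}},\Lambda_{2})$ up to weakly equivariant homeomorphism.

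The main obstacle is this splitting construction. One must show that the failure of bundle type can always be localized to a sub-arc of side facets that genuinely closes up into a smaller prism characteristic function, that the complementary arc does as well, that both pieces inherit the string conditions, and --- most delicately --- that performing the equivariant edge connected sum along the correct vertical edge, with the correct gluing and normalization data, recovers $M(L_{2k},\Lambda)$ itself rather than merely some quasitoric manifold with the same cohomology ring. This is exactly where the explicit form of the string conditions is used in full strength, and where one must also cope with the non-uniqueness of the chosen splitting.
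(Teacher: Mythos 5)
Your high-level strategy (induct on $k$ via a dichotomy: bundle type, or a nontrivial splitting $M_{1}\,\widetilde{\#^{e}}\,M_{2}$ into string pieces over smaller prisms) is the same as the paper's, but the proposal stops exactly where the proof has to begin. The entire substance of the argument is the verification of the dichotomy, and you explicitly defer it ("the main obstacle is this splitting construction"). Concretely, what is missing is: (a) the observation that vanishing of the two coefficients $c_{4,2k+2}$ and $c_{2k+1,2k+2}$ from Lemma \ref{coefficient prism}, combined with the nonsingularity condition and the estimate $|\la_{2,2k+2}\rho_{4}+\la_{1,4}\rho_{2k+2}|>|\rho_{4,2k+2}|$, forces $\la_{1,4}\la_{2,2k+2}=\la_{1,2k+1}\la_{3,2k+2}=0$; (b) the resulting three-case analysis, in which the splitting edge and the two characteristic matrices $\Lambda_{1},\Lambda_{2}$ are written down explicitly, with the determinant checks $\mathrm{det}(\boldsymbol{\la_{1}},\boldsymbol{\la_{4}},\boldsymbol{\la_{2k+1}})=\mathrm{det}(\boldsymbol{\la_{2k+2}},\boldsymbol{\la_{4}},\boldsymbol{\la_{2k+1}})=1$ that legitimize the edge connected sum and the comparison of the $p_{1}$-expressions that shows string-ness passes to the factors; and (c) the hardest case, where $\la_{2,2k+2}$ and $\la_{3,2k+2}$ are both nonzero, which requires using the vanishing of the further coefficients $c_{t-1,2k+2}$ and $c_{t,2k+2}$ to derive contradictions of the form $\la_{1,t}^{2}+d^{2}+3=0$ and conclude that $M$ is already of bundle type. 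In particular, the paper's splitting always peels off an $L_{4}$-factor at a specific location determined by the normal form, rather than cutting at an arbitrary "spot where the matrix departs from bundle-type shape"; your heuristic that the string conditions are local along the cycle of side facets is not borne out (the coefficient $c_{k+2,k+3}$ is a cyclic sum over all side facets).

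There is also an error in your base case: it is not true that every quasitoric manifold over $L_{4}=I^{3}$ is a $\C P^{1}$-bundle over a quasitoric surface. By Lemma \ref{key lemma} there are characteristic matrices over $I^{3}$ whose reduced part has a proper principal minor pattern with determinant $-1$, and these are not of bundle type. What is true, and what the induction needs, is that every \emph{string} quasitoric manifold over $I^{3}$ is of bundle type; this requires the string hypothesis in an essential way (as in Theorem \ref{2n manifold}, or as handled inside Cases 2 and 3 of the paper's proof for $k=2$), and your proposal does not supply that argument.
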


The second part follows from an observation which imposes strong restrictions on the orbit polytope of a string quasitoric manifold. Particularly, such polytopes must be triangle-free. Based on classification of $n$-dimensional triangle-free simple polytopes with the number of facets no more than $2n+2$ \cite{BB92}, complete characterizations on string quasitoric manifolds over these polytopes are obtained. Illustrative results are listed in Proposition \ref{2n coefficient}-\ref{2n+2B coefficient}. On the other hand, within the range of product of simplices, only cube can be the orbit polytope of a string quasitoric manifold. Further analysis on characteristic matrices leads to additional characterization on the manifold: 

\begin{thm} \bf{Theorem \ref{2n manifold}} 
\it{Every string quasitoric manifold over cube is weakly equivariantly homeomorphic to a Bott manifold. }
\end{thm}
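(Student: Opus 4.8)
The plan is to start from the combinatorial data of a quasitoric manifold $M$ over the cube $I^n$ and show that, after a suitable change of basis of the torus (equivalently, elementary operations on the characteristic matrix) and an automorphism of the polytope, the characteristic matrix can be put into a \emph{lower (or upper) triangular} form with $\pm 1$ on the diagonal; this is exactly the combinatorial description of a Bott manifold (an iterated $\C P^1$-bundle), and such a normalization is realized by a weakly equivariant homeomorphism. So the key is a linear-algebraic normal form for the characteristic matrix of a \emph{string} quasitoric manifold over $I^n$.

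\medskip

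First I would fix notation: the cube $I^n$ has $2n$ facets, which we group into $n$ pairs $\{F_i, F_i'\}$ of opposite facets, $1 \le i \le n$. A characteristic matrix $\Lambda$ assigns to each facet a primitive vector in $\Z^n$, subject to the nonsingularity (unimodularity) condition at each vertex; at a vertex the $n$ facets meeting there are one from each pair, so the corresponding $n$ characteristic vectors must form a basis of $\Z^n$. After acting by $GL_n(\Z)$ we may assume the vectors on $F_1, \dots, F_n$ are the standard basis $e_1, \dots, e_n$; write the vector on $F_i'$ as $\lambda_i = \sum_j a_{ij} e_j$. The unimodularity conditions then become statements about the matrices obtained by swapping in some subset of the $\lambda_i$'s.

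\medskip

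Next comes the heart of the argument, where I would invoke the string hypothesis via the earlier combinatorial characterization of $w_2(M) = p_1(M) = 0$ (Proposition \ref{2n coefficient} in the paper, for the $2n$-facet case). The condition $w_2 = 0$ forces congruences mod $2$ on the entries $a_{ij}$; combined with primitivity and the vertex unimodularity conditions this should already pin down each diagonal entry $a_{ii} = \pm 1$ and each $\lambda_i$ to be of a restricted shape. Then $p_1 = 0$, which for the cube is a collection of quadratic equations in the $a_{ij}$, should force the remaining off-diagonal structure to vanish except in a way that can be cleared by further unimodular column/row operations compatible with the pairing of facets — i.e. it forces $\Lambda$ to be conjugate to a triangular matrix. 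Concretely, one expects to show by an induction on $n$ (peeling off one pair of opposite facets, which corresponds to viewing $I^n = I^{n-1} \times I$ and the would-be $\C P^1$-bundle projection) that the relevant submatrix is triangular. Finally I would cite the standard fact (Davis–Januszkiewicz / Masuda–Panov) that a quasitoric manifold over $I^n$ with triangular characteristic matrix is a Bott manifold, and that the normalization steps used — $GL_n(\Z)$ on the torus, automorphisms of $I^n$ permuting and flipping the pairs, and sign changes of characteristic vectors — are all realized by weakly equivariant homeomorphisms.

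\medskip

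The main obstacle I anticipate is the bookkeeping in the inductive step: showing that the string equations are strong enough not merely to constrain individual entries but to be \emph{simultaneously} triangularizable by the restricted set of moves available (those respecting the opposite-facet pairing), and that no obstruction survives forcing a genuinely non-Bott structure. In particular one must rule out the ``twisted'' configurations that satisfy $w_2 = 0$ but would need $p_1 = 0$ to be excluded, and verify that the quadratic $p_1$ relations do exactly that. Handling the ambiguity in the choice of which facet in each pair gets the ``standard basis vector'' (and the resulting sign/orientation choices) cleanly is the fiddly part; the rest is linear algebra over $\Z$ plus the dictionary between triangular characteristic matrices and iterated projective line bundles.
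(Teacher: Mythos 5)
Your overall strategy is right --- reduce to a normal form for the characteristic matrix and quote the dictionary between triangular characteristic matrices over $I^{n}$ and Bott manifolds --- but the proposal leaves precisely the hard step unproved, and the specific route you sketch for it would run into trouble. The paper does not induct on $n$ by peeling off a pair of opposite facets; that scheme is problematic because the quasitoric manifold induced on an $I^{n-1}$ factor of a string quasitoric manifold over $I^{n}$ need not itself be string (cf.\ Remark \ref{Cartesian product}), so a naive induction hypothesis would not apply. Instead the paper invokes an algebraic lemma of Dobrinskaya (Lemma \ref{key lemma}): an integer matrix all of whose proper principal minors equal $1$ is conjugate either to a unipotent upper triangular matrix (if $\det=1$) or to an explicit ``cyclic'' matrix with $\prod_{i} b_{i}=(-1)^{n}\cdot 2$ (if $\det=-1$). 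This reduces the theorem to showing that every principal minor of $\Lambda_{*}$ equals $1$, which is then done by two concrete computations exploiting the positivity of $\rho_{i}=\sum_{k}\la_{k,i}^{2}+1$: a $2\times 2$ principal minor equal to $-1$ forces $\la_{i-n,j}\la_{j-n,i}=2$ and contradicts the vanishing of $c_{i,j}$ via the inequality $|\la_{i-n,j}\rho_{i}+\la_{j-n,i}\rho_{j}|\geq 2\sum_{k}|\la_{k,i}\la_{k,j}|+3>|\rho_{i,j}|$; and a minimal principal minor of rank $s\geq 3$ equal to $-1$ puts $\Lambda_{*}$ into the cyclic normal form, after which summing the corresponding vanishing conditions $c_{i,j}=0$ yields a sum of squares equal to $-(s+4)<0$, a contradiction.

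Your proposal flags this as ``the main obstacle'' but offers only the assertion that the quadratic relations ``should'' force triangularizability; that is exactly the content of the theorem, so as written the argument is incomplete --- the missing ingredients are the principal-minor reduction and the two positivity arguments above. Two smaller points: the diagonal entries $\la_{i-n,i}=\pm 1$ already follow from the nonsingularity condition (\ref{nonsingular}) at the vertices, not from $w_{2}=0$; and in fact $w_{2}=0$ plays no role in the paper's proof of this theorem --- only the vanishing of $p_{1}$ (Proposition \ref{2n coefficient}) is used.
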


Moreover, if the orbit polytope is the connected sum of a cube and another simple polytope, then for corresponding quasitoric manifolds, string property is compatible with equivariant connected sum operation. 

\begin{thm} \bf{Theorem \ref{equivariant connected sum theorem}} 
\it{$M(I^{n} \# P^{n},\Lambda)$ is string if and only if it is weakly equivariantly homeomorphic to $M(I^{n},\Lambda_{L}) \widetilde{\#} M(P^{n},\Lambda_{R})$ with both $M(I^{n},\Lambda_{L})$ and $M(P^{n},\Lambda_{R})$ string.} 
\end{thm}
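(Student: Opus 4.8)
Let me think about this carefully. The statement is about quasitoric manifolds over a connected sum $I^n \# P^n$. We want to show that $M(I^n \# P^n, \Lambda)$ is string iff it's weakly equivariantly homeomorphic to an equivariant connected sum $M(I^n, \Lambda_L) \widetilde{\#} M(P^n, \Lambda_R)$ with both pieces string.

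The "if" direction should be relatively easy: if $M = M_1 \widetilde{\#} M_2$ with both string, then string property should be preserved under equivariant connected sum. This needs: $w_2$ and $p_1$ vanish. Since equivariant connected sum over a vertex glues things together, and cohomology of connected sum of quasitoric manifolds relates to the pieces... Actually, for quasitoric manifolds, equivariant connected sum at a vertex corresponds to connected sum of polytopes at vertices. The cohomology $H^*(M_1 \# M_2)$ — hmm, connected sum of manifolds has cohomology that's... for oriented closed $n$-manifolds, $H^k(M_1 \# M_2) = H^k(M_1) \oplus H^k(M_2)$ for $0 < k < n$. So characteristic classes restrict appropriately. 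If $w_2(M_i) = 0$ and $p_1(M_i) = 0$, then $w_2(M) = 0$ and $p_1(M) = 0$. So the "if" direction follows.

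The "only if" direction is the substance. Suppose $M(I^n \# P^n, \Lambda)$ is string. We need to show $\Lambda$ can be decomposed. The polytope $I^n \# P^n$ has facets: $n$ facets from $I^n$ (near the vertex where we didn't cut), and facets from $P^n$, glued along... Let me recall the combinatorics. A connected sum $P \# Q$ at vertices $v \in P$, $w \in Q$: we remove a neighborhood of $v$ (a simplex, since $P$ is simple — the vertex figure is a simplex) and glue. The facets of $P$ through $v$ get identified with facets of $Q$ through $w$. So $I^n \# P^n$ has $n$ "new" facets (the ones meeting at the connected sum region, which are identifications of $I^n$-facets through a vertex with $P^n$-facets through a vertex) plus $n$ remaining facets of $I^n$ plus $(m - n)$ remaining facets of $P^n$ where $P^n$ has $m$ facets.

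The strategy for "only if": Use the observation/triangle-free condition and the structure of the characteristic matrix. The key point is that string property forces certain entries of $\Lambda$ to vanish, which makes $\Lambda$ block-triangular/block-diagonal in a way that exhibits the connected sum decomposition equivariantly. I'd look at how $w_2 = 0$ (parity conditions) and $p_1 = 0$ constrain the characteristic vectors of the $I^n$ part. Over a cube, the relevant facets come in $n$ pairs of "opposite" facets; the characteristic vectors can be normalized. String condition over the cube part, combined with the gluing, should force the characteristic vectors of the remaining $P^n$ facets to not interfere with the cube part — i.e., a change of basis in $GL(n,\Z)$ block-diagonalizes. Then Theorem \ref{2n manifold} (string over cube gives Bott manifold) handles the cube piece, and the $P^n$ piece is independently string.

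\medskip

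\noindent\textit{Proof plan.}

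First I would nail down the combinatorics of $I^n \# P^n$: label the facets so that $F_1, \dots, F_n$ are the facets of the cube meeting at the uncut vertex, $F_{n+1}, \dots, F_{2n}$ are the "opposite" cube facets (which get identified in the gluing with $n$ facets of $P^n$ through the cut vertex of $P^n$), and $F_{2n+1}, \dots, F_m$ are the remaining facets of $P^n$. I would write the characteristic matrix $\Lambda$ in block form with columns indexed by these facets. By an automorphism of $T^n$ (i.e., left multiplication by $GL(n,\Z)$) I may assume the columns for $F_1, \dots, F_n$ are the standard basis $e_1, \dots, e_n$, since those facets meet at a vertex. This is the normalization step.

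Second, I would extract the string constraints. The condition $w_2(M) = 0$ gives parity conditions on the entries of $\Lambda$, and $p_1(M) = 0$ gives the quadratic relations; both can be read off the formulas from Section 2 once a basis of $H^2$ and $H^4$ is fixed (dual to facets not through a fixed vertex). Restricting attention to the subcomplex coming from the cube part, the string conditions on $M(I^n \# P^n,\Lambda)$ restrict to string-type conditions on the characteristic data over $I^n$. The crucial claim — and I expect this to be the main obstacle — is that these conditions force the $P^n$-facet columns (for $F_{2n+1},\dots,F_m$) to have vanishing components in a suitable complementary direction, equivalently that a single further change of basis in $GL(n,\Z)$ simultaneously puts $\Lambda$ in the block form $\begin{pmatrix}\Lambda_L & * \\ 0 & \Lambda_R\end{pmatrix}$ compatible with the connected sum. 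Making this precise requires tracking how the quadratic relation $p_1 = 0$ couples the cube block to the rest, and showing no genuine coupling survives; this is where the specific low-facet-number combinatorics and the triangle-free observation do the work.

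Third, once the block decomposition is achieved, I would invoke the standard fact (from \cite{DJ91}, and implicit in the definition of $\widetilde{\#}$) that such a block-triangular $\Lambda$ exhibits $M(I^n \# P^n, \Lambda)$ as $M(I^n, \Lambda_L)\,\widetilde{\#}\, M(P^n, \Lambda_R)$ up to weakly equivariant homeomorphism, where $\Lambda_L$, $\Lambda_R$ are the diagonal blocks (after completing each to a valid characteristic matrix over its polytope — I should check the completion is forced and unique up to the allowed moves). Finally, the string conditions, now decoupled, say exactly that $M(I^n,\Lambda_L)$ and $M(P^n,\Lambda_R)$ are each string: $w_2 = p_1 = 0$ for the connected sum is equivalent to $w_2 = p_1 = 0$ for each summand, using $H^k(M_1 \# M_2) \cong H^k(M_1) \oplus H^k(M_2)$ for $0 < k < n$. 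Combined with the "if" direction (string is preserved by $\widetilde{\#}$, proved the same way via the cohomology splitting), this completes the proof. The main obstacle, to repeat, is step two: showing string forces the algebraic decoupling of the characteristic matrix, rather than just the obvious normalization of the cube block.
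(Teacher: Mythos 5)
Your ``if'' direction and the broad outline of ``only if'' (normalize the cube block, restrict the string conditions to the part of the cohomology coming from the cube, invoke Theorem \ref{2n manifold}) are in the right spirit, but the key step is both misidentified and left unproved. Label the facets so that $F_1,\dots,F_n$ are the cube facets away from the cut vertex (normalized to $\mathrm{I}_n$), $F_{n+1},\dots,F_{2n}$ are the $n$ glued facets, and the remaining ones come from $P^n$; then $\Lambda=(\,\mathrm{I}_n\mid A\mid \Lambda_{P^n*}\,)$, and by Definition \ref{equivariant connected sum definition} the manifold decomposes as an equivariant connected sum precisely when $A\in \mathrm{GL}_n(\Z)$, i.e.\ $\det A=\pm1$ --- a condition \emph{not} forced by (\ref{nonsingular}), since $F_{n+1}\cap\dots\cap F_{2n}$ is no longer a vertex of $I^n\# P^n$. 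There is no zero block to produce: the connected-sum form $(\Lambda_{L*}\mid\mathrm{I}_n\mid\Lambda_{R*})$ has full $n\times n$ blocks, so your ``crucial claim'' that the $P^n$-columns must acquire vanishing components in a complementary direction, putting $\Lambda$ into a block-triangular shape $\left(\begin{smallmatrix}\Lambda_L & * \\ 0 & \Lambda_R\end{smallmatrix}\right)$, is the wrong target; that shape describes products and bundles as in Remark \ref{Cartesian product}, not connected sums of two $n$-dimensional polytopes.

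The correct argument, which your proposal defers rather than supplies, is concrete: the monomials $v_iv_{i'}$ with $n+1\le i<i'\le 2n$ are involved only in the relations $v_{i-n}v_i=0$, hence survive as part of a basis of $H^4(M(I^n\# P^n,\Lambda))$, and their coefficients in $p_1$ are given by exactly the formulas of Lemma \ref{coefficient 2n}. Running the principal-minor analysis from the proof of Theorem \ref{2n manifold} on $A$ then shows that, up to equivalence, every proper principal minor of $A$ equals $1$, and a short computation with the coefficients of $v_{n+1}v_{n+k}$ rules out $\det A=-1$; hence $\det A=1$, the matrix is equivalent to a connected-sum form, and the string conditions split over the two summands by the cohomology splitting you already noted. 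As written, your proposal neither states the right unimodularity criterion nor proves the decoupling it does state, so the proof is incomplete.
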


Davis and Januszkiewicz \cite{DJ91} also gave a real version of their generalization called \emph{small cover}. Thus, one would expect a real analogue of results above. It should be pointed out that in real case, arguments become much simpler due to calculation in $\Z_{2}$-coefficients. However, parallel results may not be valid, since key problem now lies in the second Stiefel-Whitney class, whose explicit expression is different from that of the first Pontryagin class of a quasitoric manifold. Besides, in certain cases such as product of simplices, partial results of string small cover can be found in literature \cite{CMO17, DU19}. 

\hspace*{\fill}

The rest of this article is organized as follows. In Section 2, we review some basic definitions and properties of string structure and quasitoric manifold. Subsection \ref{quasitoric manifold} mainly follows from the 
collective book \cite[Section 7.3]{BP15}. Section 3 and 4 are devoted to specific characterization of string quasitoric manifolds and their orbit polytopes in certain circumstances. The former focuses on low dimensional cases while the latter deals with cases where orbit polytopes have few facets. In Section 5, we simply list several results and examples in real version without detailed explanation.

\section{Preliminaries} \label{preliminaries}
\setcounter{equation}{0}

\subsection{String structure} \label{string structure}

Let $V$ be a rank $n$ real vector bundle over a manifold $M$. The structural group of $V$ can be reduced to $O(n)$ by Gram-Schmidt orthogonalization and there is a corresponding classifying map $f: M\rightarrow BO(n)$. Additional structures on $V$ lead to further reductions of the structural group and liftings in Whitehead tower:
\[\xymatrix@C=25ex{
& BString(n) \ar[d] \\
& BSpin(n) \ar[d] \\
& BSO(n) \ar[d] \\
M \ar@{-->}"1,2"^{f_{3}} \ar@{-->}"2,2"^{f_{2}} \ar@{-->}"3,2"^{f_{1}} \ar"4,2"^{f} & BO(n)
}
\]
where $Spin(n)$ is a 2-connected cover of $SO(n)$ and $String(n)$ is a 3-connected cover of $Spin(n)$. The existence of liftings $f_{1},f_{2}$ and $f_{3}$ corresponds to orientable, spin and string structures on $V$ with obstruction $w_{1}(V),w_{2}(V)$ and $p_{1}(V)/2$ respectively. A string manifold is characterized by $w_{1}(M)=w_{2}(M)=p_{1}(M)/2=0$, i.e., the string structure on its tangent bundle.

With a more geometric viewpoint, Stolz and Teichner \cite{ST05} discovered that string structure on $M$ is related to fusive spin structure on free loop space $LM$. Relative progress was further achieved by Bunke \cite{Bu11}, Kottke-Melrose \cite{KM13} and Waldorf \cite{Wa15, Wa16}. Moreover, string structure can be regarded as orientability in a generalized cohomology theory called \emph{tmf} (topological modular form), which was developed by Hopkins and Miller via homotopy theoretical methods (see \cite{Ho02} for more details).

\subsection{Quasitoric manifold} \label{quasitoric manifold}

A combinatorial polytope is an equivalent class of convex polytopes with the same face poset. And an orientation of a combinatorial polytope is a permutation equivalent class of its facets. Since only combinatorial polytopes are concerned, we shall use \emph{polytope} throughout this article for brevity.

For an $n$-dimensional polytope $P$, let $f_{i}$ denote the number of its $i$-dimensional faces, then $\boldsymbol{f}(P)=(f_{0},f_{1},\dots,f_{n-1},1)$ is called $f$-vector of $P$ and $\boldsymbol{h}(P)=(h_{0},h_{1},\dots,h_{n-1},h_{n})$ determined by 
\begin{equation} \label{h-vector}
h_{0}s^{n}+h_{1}s^{n-1}+\dots+h_{n}=(s-1)^{n}+f_{n-1}(s-1)^{n-1}+\dots+f_{0}
\end{equation}
is called $h$-vector of $P$. Note that $h_{0}=1$ and $f_{0}=\sum_{i=0}^{n}h_{i}$ by definition. A polytope is called simple if each codimension-$k$ face is the intersection of exactly $k$ facets. When $P$ is simple, there is an additional Dehn-Sommerville relation: $h_{i}=h_{n-i}$ for $0\leq i \leq n$ (see \cite{Gr03} or \cite{Zi98}).

Given an $n$-dimensional simple polytope $P$ with facet set $\cal{F}$$(P)=\{F_{i}\}_{i=1}^{m}$, an integer valued matrix $\Lambda_{n\times m}=(\boldsymbol{\la_{1}},\cdots,\boldsymbol{\la_{m}})\in Mat_{n \times m}(\Z)$ is said to be characteristic if the following nonsingular condition holds: 
\begin{equation}
\forall\ p=F_{j_{1}}\cap \cdots \cap F_{j_{n}} \Rightarrow \mathrm{det}(\Lambda_{p})=\mathrm{det}(\boldsymbol{\la_{j_{1}}},\cdots,\boldsymbol{\la_{j_{n}}})=\pm 1. \label{nonsingular}
\end{equation}
We call $(P, \Lambda_{n\times m})$ a characteristic pair. A $2n$-dimensional quasitoric manifold can be constructed from this pair in a canonical way.

\begin{construction}
Given a characteristic pair $(P, \Lambda_{n\times m})$, for each $p \in P$, there exists a unique face $f(p)$ such that $p$ is in the relative interior of $f(p)$. Suppose that $f(p)=F_{j_{1}}\cap \cdots \cap F_{j_{k}}$, then we can regard the submatrix $(\boldsymbol{\la_{j_{1}}},\cdots,\boldsymbol{\la_{j_{k}}})=(\la_{i,j})\in Mat_{n \times k}(\Z)$ as a linear map from $T^{k}$ to $T^{n}$, sending $(t_{1},\dots,t_{k})$ to $(\prod_{i=1}^{k} t_{i}^{\la_{1,i}},\dots,\prod_{i=1}^{k} t_{i}^{\la_{n,i}})$ and denote by $G(p)$ its image. Define a quasitoric manifold over $P$ as the quotient space 
\begin{equation*}
M(P,\Lambda_{n\times m})=(P\times T^{n})/\sim \quad \mathrm{where}\ (p,\boldsymbol{t_{1}})\sim (p,\boldsymbol{t_{2}})\ \Leftrightarrow \ \boldsymbol{t_{1}^{-1}}\boldsymbol{t_{2}}\in G(p).
\end{equation*}
The free $T^{n}$-action on $P\times T^{n}$ induces an action on $M(P,\Lambda_{n\times m})$, which is free over the interior of $P$ and trivial over vertices of $P$. Furthermore, this action can be locally identified with standard $T^{n}$-action since $M(P,\Lambda_{n\times m})$ is covered by open subsets equivariantly homeomorphic to $\C^{n}=\R_{\geq 0}^{n}\times T^{n}/\sim$.
\end{construction}

In short, a $2n$-dimensional quasitoric manifold is a locally standard $T^{n}$-manifold with quotient space homeomorphic to an $n$-dimensional simple polytope as a manifold with corners. For brevity, the characteristic pair and its corresponding quasitoric manifold will be denoted by $(P,\Lambda)$ and $M(P,\Lambda)$ respectively when no confusion occurs. 

For $2n$-dimensional quasitoric manifolds $M$ and $N$, if there exists $\phi \in \mathrm{Aut}(T^{n})$ and $h \in \mathrm{Homeo}(M,N)$ such that for all $t\in T^{n}$ and $m\in M,\ h(t\cdot m)=\phi(t)\cdot h(m)$, then they are said to be weakly equivariantly homeomorphic. 

There are three group actions on characteristic pairs corresponding to weakly equivariant homeomorphism of quasitoric manifolds: left multiplication by general linear group $\mathrm{GL_{n}}(\Z)$; sign permutation of columns by $\Z_{2}^{m}$ and certain column permutation by $\mathrm{Aut}(\partial P^{*})$ (automorphism group of the dual boundary complex $\partial P^{*}$). $\mathrm{GL_{n}}(\Z)$-action corresponds to the choice of $\phi$ while $\Z_{2}^{m}$-action and $\mathrm{Aut}(\partial P^{*})$-action correspond to the choice of $h$. Two characteristic pairs are called equivalent if one can be obtained from another by a sequence of these three types of group actions. 

\begin{proposition}
\emph{\cite[Proposition 7.3.11]{BP15}} There is a one-to-one correspondence between weakly equivariant homeomorphism classes of quasitoric manifolds and equivalent classes of characteristic pairs.
\end{proposition}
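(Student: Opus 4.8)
The plan is to exhibit mutually inverse assignments between the two sets and to check that each respects the indicated equivalence relations. In one direction the Canonical Construction already sends a characteristic pair $(P,\Lambda)$ to the quasitoric manifold $M(P,\Lambda)$ together with its $T^{n}$-action, so the first task is to verify that equivalent pairs yield weakly equivariantly homeomorphic manifolds. This is checked one generator at a time: left multiplication of $\Lambda$ by $A\in\mathrm{GL_{n}}(\Z)$ changes every subtorus $G(p)$ by the automorphism of $T^{n}$ induced by $A$, so $[p,\boldsymbol{t}]\mapsto[p,A\boldsymbol{t}]$ is a homeomorphism intertwining the two actions through $\phi=A$; a sign change in the $j$-th column of $\Lambda$ is realized by the self-homeomorphism of $P\times T^{n}$ fixing $P$ and inverting a single $T^{1}$-coordinate, which preserves $\sim$; and a column permutation coming from $\sigma\in\mathrm{Aut}(\partial P^{*})$ is induced by the self-homeomorphism of $P$ permuting facets accordingly. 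Hence $(P,\Lambda)\mapsto M(P,\Lambda)$ descends to a well-defined map from equivalence classes of pairs to weakly equivariant homeomorphism classes of manifolds.

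For the converse assignment, start from a quasitoric manifold $M$ with projection $\pi\colon M\to P$ onto its orbit polytope. For each facet $F_{i}\in\mathcal{F}(P)$ the preimage $\pi^{-1}(F_{i})$ is a connected codimension-two submanifold pointwise fixed by a rank-one subtorus $T(F_{i})\subset T^{n}$, by local standardness; choosing a primitive vector generating the corresponding rank-one sublattice of $\Z^{n}$ (there are exactly two, differing by sign) gives a column $\boldsymbol{\la_{i}}$, and assembling these produces $\Lambda_{M}\in Mat_{n\times m}(\Z)$. Local standardness at a vertex $p=F_{j_{1}}\cap\cdots\cap F_{j_{n}}$, where the isotropy group is all of $T^{n}$ and is generated by $T(F_{j_{1}}),\dots,T(F_{j_{n}})$, forces $\mathrm{det}(\boldsymbol{\la_{j_{1}}},\dots,\boldsymbol{\la_{j_{n}}})=\pm1$, so $(P,\Lambda_{M})$ satisfies \eqref{nonsingular}; the per-column sign indeterminacy is precisely a $\Z_{2}^{m}$-ambiguity, so the equivalence class of $(P,\Lambda_{M})$ is well defined. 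Moreover a weakly equivariant homeomorphism $h\colon M\to N$ with $h(t\cdot m)=\phi(t)\cdot h(m)$ descends to a homeomorphism of orbit polytopes as manifolds with corners, hence a combinatorial isomorphism matching facets (an element of $\mathrm{Aut}(\partial P^{*})$), and carries $T(F_{i})$ to the subtorus of the matched facet of $N$ via $\phi$; thus $\Lambda_{N}$ and $\Lambda_{M}$ differ by the corresponding $\mathrm{GL_{n}}(\Z)$-, $\Z_{2}^{m}$- and $\mathrm{Aut}(\partial P^{*})$-actions, i.e. the pairs are equivalent.

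It remains to see that the two assignments are mutually inverse. Going from $(P,\Lambda)$ to $M(P,\Lambda)$ and back is immediate: in the quotient model the facet preimage $\pi^{-1}(F_{i})$ is fixed exactly by the circle of slope $\boldsymbol{\la_{i}}$, so the recovered matrix is $\Lambda$ again. The substantive direction is that an arbitrary quasitoric manifold $M$ is weakly equivariantly homeomorphic to $M(P,\Lambda_{M})$. For this one uses that $\pi\colon M\to P$ admits a continuous section $s\colon P\to M$, constructed inductively over the face lattice of $P$ using that each face is contractible and that $\pi$ is locally standard, hence locally modelled on $\R_{\geq 0}^{n}\times T^{n}/\sim\ \to\ \R_{\geq 0}^{n}$. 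Then $P\times T^{n}\to M$, $(p,\boldsymbol{t})\mapsto\boldsymbol{t}\cdot s(p)$, is surjective, and its fibres are exactly the classes of the equivalence relation defining $M(P,\Lambda_{M})$, since the isotropy subgroup of $s(p)$ equals $G(p)$ by construction of $\Lambda_{M}$; this produces the required $T^{n}$-equivariant homeomorphism $M(P,\Lambda_{M})\xrightarrow{\ \cong\ }M$ with $\phi=\mathrm{id}$.

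I expect the main obstacle to be the existence of the section $s$ compatible with the whole face lattice of $P$, together with the verification that $(p,\boldsymbol{t})\mapsto\boldsymbol{t}\cdot s(p)$ has precisely the right fibres: everything else is bookkeeping with the three group actions on lattices, but it is this section and its interaction with the isotropy subgroups $G(p)$ along each face that links the combinatorics of $(P,\Lambda)$ to the topology of $M$.
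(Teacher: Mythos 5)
Your outline is correct and follows the same route as the cited source: the paper gives no proof of its own (it quotes \cite[Proposition 7.3.11]{BP15}), and your argument --- well-definedness under the three group actions, recovery of $\Lambda$ from the isotropy subtori of the characteristic submanifolds, and the identification $M\cong M(P,\Lambda_{M})$ via a section of the orbit map --- is exactly the Davis--Januszkiewicz/Buchstaber--Panov proof. You have correctly isolated the only substantive point, the existence of a global section $s\colon P\to M$ compatible with the face structure; this is established in the references by induction over the faces of $P$, using contractibility of each face and local standardness, precisely as you indicate.
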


\vspace{0.5cm}

The above-mentioned construction method can be applied to the case of moment-angle manifolds as well. Recall that in an $n$-dimensional simple polytope $P$ with $m$ facets, each point $p$ is contained in the relative interior of a unique face $f(p)=F_{j_{1}}\cap \dots \cap F_{j_{k}}$. Denote $J(p)=\{j_{1},\dots,j_{k}\}$ and $T(p)=\{(t_{1},\dots,t_{m})\in T^{m}\ |\ t_{j}=1\ \mathrm{for}\ j\notin J(p)\}$. Then the moment-angle manifold corresponding to $P$ is defined by 
\begin{equation*}
\mathcal{Z}_{P}=P\times T^{m}/\sim \quad \mathrm{where}\ (p,\boldsymbol{t_{1}})\sim (p,\boldsymbol{t_{2}})\ \Leftrightarrow \ \boldsymbol{t_{1}^{-1}}\boldsymbol{t_{2}}\in T(p).
\end{equation*}
In addition, the characteristic matrix $\Lambda$ can be regarded as a linear map from $T^{m}$ to $T^{n}$ and $\mathrm{Ker}\Lambda$ is isomorphic to $T^{m-n}$ by (\ref{nonsingular}). As a matter of fact, $\mathrm{Ker}\Lambda$ plays a key role in the close relation between $\mathcal{Z}_{P}$ and $M(P,\Lambda)$:

\begin{proposition}
\emph{\cite[Proposition 7.3.13]{BP15}} $\mathrm{Ker}\Lambda$ acts freely and smoothly on $\mathcal{Z}_{P}$ with the quotient $\mathcal{Z}_{P}/\mathrm{Ker}\Lambda$ equivariantly homeomorphic to $M(P,\Lambda)$.
\end{proposition}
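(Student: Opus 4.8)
The plan is to establish the statement in two independent steps: first that the restriction to $\mathrm{Ker}\Lambda$ of the natural $T^{m}$-action on $\mathcal{Z}_{P}$ is free (and smooth), and second that the map $\mathrm{id}\times\Lambda\colon P\times T^{m}\to P\times T^{n}$ descends to a $T^{n}$-equivariant homeomorphism $\mathcal{Z}_{P}/\mathrm{Ker}\Lambda\xrightarrow{\ \sim\ }M(P,\Lambda)$. As a preliminary remark, the nonsingular condition (\ref{nonsingular}) forces $\Lambda\colon\Z^{m}\to\Z^{n}$ to be surjective (at a vertex the corresponding columns form a $\Z$-basis of $\Z^{n}$), hence a split surjection of free $\Z$-modules, so $\mathrm{Ker}\Lambda\cong T^{m-n}$ is a connected subtorus of $T^{m}$; this is what makes the quotient-bundle assertion meaningful.

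For the first step, $T^{m}$ acts on $\mathcal{Z}_{P}=P\times T^{m}/\!\sim$ by translation in the second coordinate (this descends because each $T(p)$ is a subgroup), and the stabilizer of the class of $(p,t)$ is exactly $T(p)$, since $[p,st]=[p,t]$ iff $s\in T(p)$. Thus it suffices to show $\mathrm{Ker}\Lambda\cap T(p)=\{1\}$ for every $p\in P$, equivalently that $\Lambda$ restricts to an injective homomorphism on the coordinate subtorus $T(p)\cong T^{k}$ indexed by $J(p)$. Pick a vertex $v$ of the face $f(p)$; then $J(p)\subseteq J(v)$ and hence $T(p)\subseteq T(v)$, while $\Lambda|_{T(v)}$ is given by the square matrix $\Lambda_{v}$, which lies in $\mathrm{GL}_{n}(\Z)$ by (\ref{nonsingular}); therefore $\Lambda|_{T(v)}\colon T(v)\to T^{n}$ is an isomorphism, and its restriction to $T(p)$ is injective. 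This proves freeness. Granting $\mathcal{Z}_{P}$ its standard smooth structure, on which the $T^{m}$-action is smooth (see \cite{BP15}), the restricted $\mathrm{Ker}\Lambda$-action is smooth, and being a free action of a compact Lie group it is proper; by the quotient manifold theorem $\mathcal{Z}_{P}/\mathrm{Ker}\Lambda$ is then a smooth manifold and the projection a principal $T^{m-n}$-bundle.

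For the second step, if $t_{1}^{-1}t_{2}\in T(p)$ then $\Lambda(t_{1})^{-1}\Lambda(t_{2})=\Lambda(t_{1}^{-1}t_{2})\in\Lambda(T(p))=G(p)$, the last equality being precisely how $G(p)$ was defined in the Canonical Construction; hence $\mathrm{id}\times\Lambda$ descends to a continuous map $\bar{\psi}\colon\mathcal{Z}_{P}\to M(P,\Lambda)$, which is $T^{m}$-equivariant through $\Lambda\colon T^{m}\to T^{n}$ and in particular constant on $\mathrm{Ker}\Lambda$-orbits, so it factors as a continuous map $\tilde{\psi}\colon\mathcal{Z}_{P}/\mathrm{Ker}\Lambda\to M(P,\Lambda)$. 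Surjectivity of $\tilde{\psi}$ follows from surjectivity of $\Lambda$. For injectivity, suppose $\bar{\psi}[p_{1},t_{1}]=\bar{\psi}[p_{2},t_{2}]$; since both equivalence relations preserve the $P$-coordinate, $p_{1}=p_{2}=:p$, and $\Lambda(t_{1}^{-1}t_{2})=\Lambda(s)$ for some $s\in T(p)$, so $g:=t_{1}^{-1}t_{2}s^{-1}\in\mathrm{Ker}\Lambda$ satisfies $g\cdot[p,t_{1}]=[p,t_{2}s^{-1}]=[p,t_{2}]$, whence the two points lie in one $\mathrm{Ker}\Lambda$-orbit. Thus $\tilde{\psi}$ is a continuous bijection; as $\mathcal{Z}_{P}$ and hence its quotient is compact and $M(P,\Lambda)$ is Hausdorff, $\tilde{\psi}$ is a homeomorphism. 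Finally $\Lambda$ identifies $T^{m}/\mathrm{Ker}\Lambda$ with $T^{n}$, and under this identification the residual torus action on $\mathcal{Z}_{P}/\mathrm{Ker}\Lambda$ is intertwined by $\tilde{\psi}$ with the standard $T^{n}$-action on $M(P,\Lambda)$, which gives the equivariance.

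I expect the only genuinely delicate point to be the stabilizer computation underlying freeness — more precisely, the reduction from an arbitrary face $f(p)$ to a vertex, which is exactly what lets the nonsingular condition (\ref{nonsingular}) do the work; everything afterwards is a diagram chase combined with the compact-to-Hausdorff principle. A secondary subtlety worth spelling out is the connectedness of $\mathrm{Ker}\Lambda$ noted above, without which the final clause would not literally produce a $T^{m-n}$-bundle.
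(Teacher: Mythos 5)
Your proof is correct; note that the paper itself gives no argument for this statement — it is quoted verbatim from \cite[Proposition 7.3.13]{BP15} — so there is nothing internal to compare against. Your two steps (freeness of the $\mathrm{Ker}\Lambda$-action via reduction of the stabilizer $T(p)$ to a vertex subtorus $T(v)$, where the nonsingular condition (\ref{nonsingular}) makes $\Lambda|_{T(v)}$ an isomorphism, followed by descending $\mathrm{id}\times\Lambda$ to a continuous equivariant bijection and invoking compactness versus Hausdorffness) constitute exactly the standard proof found in the cited reference, and the preliminary observation that $\mathrm{Ker}\Lambda$ is a connected subtorus $T^{m-n}$ because $\Lambda\colon\Z^{m}\to\Z^{n}$ is split surjective is a worthwhile point to make explicit.
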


On the other hand, the moment-angle manifold $\mathcal{Z}_{P}$ can be constructed directly from the following pullback diagram:
\[
\xymatrix{
\mathcal{Z}_{P} \ar[r] \ar[d] & \mathbb{D}^{m} \ar[d]^{\mu} \\
P \ar@{^(->}[r]^{c_{P}} & I^{m}
}
\]
where $\mu:(z_{1},\dots,z_{m}) \mapsto (|z_{1}|^{2},\dots,|z_{m}|^{2})$ and $c_{P}$ is the cubical subdivision of $P$ (see \cite[Chapter 2]{BP15}). In particular, $\mathcal{Z}_{P}$ admits a smooth structure, inducing the canonical smooth structure on $M(P,\Lambda)$. Furthermore, the proposition below builds an isomorphism between real equivariant bundles over $M(P,\Lambda)$, yielding the equivariant unitary structure on $M(P,\Lambda)$.

\begin{proposition} \label{unitary structure}
\emph{\cite[Theorem 6.6]{DJ91}} Let $\rho_{j}$ denote the equivariant complex line bundle $\mathcal{Z}_{P}\times_{\mathrm{Ker}\Lambda} \C_{j} \rightarrow \mathcal{Z}_{P}/\mathrm{Ker}\Lambda=M(P,\Lambda)$ induced by trivial bundle $\mathcal{Z}_{P}\times \C_{j} \rightarrow \mathcal{Z}_{P}$ viewed as an equivariant bundle with diagonal action of $T^{m}$. Then there is an isomorphism:
\begin{equation*}
TM(P,\Lambda)\oplus \underline{\R}^{2(m-n)} \cong \rho_{1}\oplus \dots \oplus \rho_{m}, 
\end{equation*}
where $\underline{\R}^{2(m-n)}$ is the trivial equivariant bundle of real dimension $2(m-n)$ over $M(P,\Lambda)$.
\end{proposition}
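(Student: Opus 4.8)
The plan is to lift the statement to the moment-angle manifold $\mathcal{Z}_P$, where it is almost tautological, and then push it down along the free torus quotient $\mathcal{Z}_P \to M(P,\Lambda)$. Concretely, fix a presentation $P=\{x\in\R^{n} : \langle \boldsymbol{a}_{i},x\rangle+b_{i}\geq 0,\ 1\leq i\leq m\}$ and let $\Gamma\in \mathrm{Mat}_{(m-n)\times m}(\R)$ be a matrix whose rows span the left kernel of $A=(\boldsymbol{a}_{1}^{\top};\dots;\boldsymbol{a}_{m}^{\top})$. Via the pullback square defining $\mathcal{Z}_P$, this realizes $\mathcal{Z}_P\subset\C^{m}$ as the common zero locus of the $m-n$ real functions $g_{k}(\boldsymbol{z})=\sum_{i}\Gamma_{ki}|z_{i}|^{2}-c_{k}$ with $\boldsymbol{c}=\Gamma\boldsymbol{b}$. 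First I would check that $\boldsymbol{0}$ is a regular value, so $\mathcal{Z}_P$ is a smooth $T^{m}$-invariant submanifold whose normal bundle $\nu$ is spanned at $\boldsymbol{z}$ by the gradients $\nabla g_{k}(\boldsymbol{z})=2(\Gamma_{k1}z_{1},\dots,\Gamma_{km}z_{m})$. The key point is that the bundle map $\mathcal{Z}_P\times\R^{m-n}\to\nu$, $(\boldsymbol{z},\boldsymbol{u})\mapsto\sum_{k}u_{k}\nabla g_{k}(\boldsymbol{z})$, is an isomorphism which is $T^{m}$-equivariant for the trivial $T^{m}$-action on $\R^{m-n}$; thus $\nu\cong\underline{\R}^{m-n}$ as a $T^{m}$-equivariant bundle with trivial action. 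Since $T\C^{m}|_{\mathcal{Z}_P}\cong\underline{\C}_{1}\oplus\dots\oplus\underline{\C}_{m}$ $T^{m}$-equivariantly, where $\underline{\C}_{j}$ is the trivial line bundle with $T^{m}$ acting through the $j$-th coordinate, splitting off $\nu$ with the flat (hence $T^{m}$-invariant) metric gives
\[
T\mathcal{Z}_P\oplus\underline{\R}^{m-n}\ \cong\ \underline{\C}_{1}\oplus\dots\oplus\underline{\C}_{m}
\]
as $T^{m}$-equivariant bundles over $\mathcal{Z}_P$.

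Next I would restrict this isomorphism to the subgroup $K=\mathrm{Ker}\,\Lambda\cong T^{m-n}$, which acts freely on $\mathcal{Z}_P$ with quotient $M(P,\Lambda)$ by Proposition 7.3.13 of \cite{BP15}, and take $K$-quotients of every bundle in sight. On the right, $\underline{\C}_{j}/K=\mathcal{Z}_P\times_{K}\C_{j}=\rho_{j}$ by definition, while $\underline{\R}^{m-n}/K$ is the trivial bundle $\underline{\R}^{m-n}$ over $M(P,\Lambda)$ because $K$ acts trivially on the fibre. On the left, $\mathcal{Z}_P\to M(P,\Lambda)$ is a principal $K$-bundle, so there is a $K$-equivariant short exact sequence $0\to\underline{\mathfrak{k}}\to T\mathcal{Z}_P\to\pi^{*}TM(P,\Lambda)\to 0$ whose vertical bundle $\underline{\mathfrak{k}}\cong\underline{\R}^{m-n}$ is trivialized by the fundamental vector fields of the (abelian, hence adjoint-trivial) $K$-action; quotienting by $K$ and splitting with an invariant metric yields $(T\mathcal{Z}_P)/K\cong TM(P,\Lambda)\oplus\underline{\R}^{m-n}$. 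Assembling these identifications gives
\[
TM(P,\Lambda)\oplus\underline{\R}^{m-n}\oplus\underline{\R}^{m-n}\ \cong\ \rho_{1}\oplus\dots\oplus\rho_{m},
\]
which is the asserted isomorphism, and it is $T^{n}=T^{m}/K$-equivariant because each construction above was.

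The step I expect to be the main obstacle is the bookkeeping of equivariant structures through the free quotient: one must be sure that the normal-bundle trivialization $\nu\cong\underline{\R}^{m-n}$ and the splitting of $0\to\underline{\mathfrak{k}}\to T\mathcal{Z}_P\to\pi^{*}TM\to 0$ can \emph{both} be chosen $K$-invariantly (via the invariant flat metric and invariant orthogonal complements) so that they descend to genuine bundle isomorphisms over $M(P,\Lambda)$ rather than only over $\mathcal{Z}_P$; granting this, the remaining input is just the routine regular-value computation of $\nu$. One could alternatively bypass $\mathcal{Z}_P$ and argue directly on $M(P,\Lambda)$, gluing the local identifications $TM|_{U_{v}}\cong(\rho_{j_{1}}\oplus\dots\oplus\rho_{j_{n}})|_{U_{v}}$ over the standard vertex charts $U_{v}$ (with $v=F_{j_{1}}\cap\dots\cap F_{j_{n}}$) together with the bundles $\rho_{j}|_{U_{v}}$ for $j\notin\{j_{1},\dots,j_{n}\}$, which are trivial there, and absorbing the transition discrepancies into the stabilizing summand $\underline{\R}^{2(m-n)}$; but the moment-angle route keeps the $T^{n}$-equivariance manifest and is shorter to write down.
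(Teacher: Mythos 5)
The paper does not prove this proposition; it is quoted verbatim from \cite[Theorem~6.6]{DJ91} (see also \cite[Theorem~7.3.15]{BP15}), so there is no in-paper argument to compare against. Your proof is a correct reconstruction of the standard one: realizing $\mathcal{Z}_{P}$ as a transverse intersection of the real quadrics $g_{k}$ in $\C^{m}$, trivializing its normal bundle $T^{m}$-equivariantly by the gradients $\nabla g_{k}$, and descending $T\mathcal{Z}_P\oplus\underline{\R}^{m-n}\cong\underline{\C}_{1}\oplus\dots\oplus\underline{\C}_{m}$ through the free $\mathrm{Ker}\,\Lambda$-action, with the vertical bundle of the principal $T^{m-n}$-bundle contributing the second trivial $\underline{\R}^{m-n}$ summand. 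The only ingredient you defer --- that $\boldsymbol{0}$ is a regular value, equivalently that the vectors $(\Gamma_{ki})_{k}$ over the indices $i$ with $z_{i}\neq 0$ span $\R^{m-n}$ at every $\boldsymbol{z}\in\mathcal{Z}_{P}$ --- is exactly the simplicity/irredundancy of the presentation of $P$ and is the standard transversality lemma, so the argument is complete in substance and agrees with the cited source.
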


Thus, formulas for characteristic classes of a quasitoric manifold can be deduced: 
\begin{proposition} \label{characteristic class}
\emph{\cite[Corollary 6.7]{DJ91}} Let $v_{j}$ denote the first Chern class of $\rho_{j}$, then 
\begin{equation*}
c(M(P,\Lambda))=\prod_{j=1}^{m}(1+v_{j}) \qquad p(M(P,\Lambda))=\prod_{j=1}^{m}(1+v_{j}^{2}).
\end{equation*}
In particular, $w_{2}(M(P,\Lambda))=\sum\limits_{j=1}^{m}v_{j} \pmod{2}$ and $p_{1}(M(P,\Lambda))=\sum\limits_{j=1}^{m}v_{j}^{2}$.
\end{proposition}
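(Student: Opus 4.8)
The plan is to deduce all four formulas directly from the stable bundle isomorphism $TM(P,\Lambda)\oplus\underline{\R}^{2(m-n)}\cong\rho_{1}\oplus\cdots\oplus\rho_{m}$ supplied by Proposition \ref{unitary structure}, using only the standard behaviour of characteristic classes under stabilization and Whitney sums. Since the right-hand side is a direct sum of complex line bundles, it carries a canonical complex structure; this endows $TM(P,\Lambda)$ with a stable complex structure, and $c(M(P,\Lambda))$ is understood with respect to it. The total Chern class is a stable invariant, so the trivial summand $\underline{\R}^{2(m-n)}$ contributes nothing, and the Whitney product formula for Chern classes (which holds over $\Z$) together with $c(\rho_{j})=1+v_{j}$ for a line bundle gives $c(M(P,\Lambda))=\prod_{j=1}^{m}(1+v_{j})$ at once.

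For the Pontryagin class I would proceed in the same spirit: the total Pontryagin class is also stable, so $p(M(P,\Lambda))=p(\rho_{1}\oplus\cdots\oplus\rho_{m})$. Here one must be slightly careful, since the Whitney product formula for Pontryagin classes holds only modulo $2$-torsion in general; this is harmless because $H^{*}(M(P,\Lambda);\Z)$ is torsion-free, as recorded in the introduction, so that $p(\rho_{1}\oplus\cdots\oplus\rho_{m})=\prod_{j=1}^{m}p(\rho_{j})$. Since the first Pontryagin class of the realification of a complex line bundle $L$ equals $c_{1}(L)^{2}$ (which follows from $p_{i}(F)=(-1)^{i}c_{2i}(F\otimes\C)$ and $L_{\R}\otimes\C\cong L\oplus\overline{L}$), we get $p(\rho_{j})=1+v_{j}^{2}$; multiplying yields $p(M(P,\Lambda))=\prod_{j=1}^{m}(1+v_{j}^{2})$, and reading off the degree-$4$ component gives $p_{1}(M(P,\Lambda))=\sum_{j=1}^{m}v_{j}^{2}$.

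Finally, for $w_{2}$ I would invoke the standard fact that for the real bundle underlying a complex bundle the odd Stiefel–Whitney classes vanish and $w_{2}$ is the mod-$2$ reduction of the first Chern class; applied to the stable complex structure on $TM(P,\Lambda)$ this gives $w_{2}(M(P,\Lambda))\equiv c_{1}(M(P,\Lambda))=\sum_{j=1}^{m}v_{j}\pmod 2$. The argument is essentially formal, being a routine application of stability and multiplicativity; the only point requiring any care is the passage from equality "modulo $2$-torsion" to genuine equality in the Pontryagin Whitney formula, which is precisely why torsion-freeness of the integral cohomology is used.
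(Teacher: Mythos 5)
Your derivation is correct and is essentially the argument behind the cited result: the paper states this proposition as \cite[Corollary 6.7]{DJ91} without proof, and Davis--Januszkiewicz obtain it exactly as you do, by applying stability and the Whitney product formula to the isomorphism of Proposition \ref{unitary structure}. Your care about the $2$-torsion caveat in the Pontryagin--Whitney formula is legitimate (and indeed dischargeable via torsion-freeness of $H^{*}(M(P,\Lambda);\Z)$), though one can also avoid it entirely by computing $p$ from $c\bigl((\bigoplus_{j}\rho_{j})\otimes\C\bigr)=\prod_{j}(1-v_{j}^{2})$, where the Chern--Whitney formula is exact.
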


\begin{remark}
For each facet $F_{j}\in \cal{F}$$(P)$, $M_{j}=\pi^{-1}(F_{j})$ is called a characteristic submanifold where $\pi : M(P,\Lambda) \rightarrow P$ is the natural projection. The restriction of $\rho_{j}$ on corresponding characteristic submanifold $M_{j}$ is the normal bundle of embedding $\iota_{j}: M_{j}\hookrightarrow M$. Thus, $v_{j}=c_{1}(\rho_{j})$ is the Poincar\'e dual of $M_{j}$.
\end{remark}

In addition, there is a natural cell decomposition of $M(P,\Lambda)$ given by \emph{Morse-theoretic argument} (see \cite{DJ91}). Consequently, cohomology of $M(P,\Lambda)$ can be characterized as follow: 

\begin{proposition} \label{Betti number}
\emph{\cite[Theorem 3.1]{DJ91}} The integral cohomology groups of $M(P,\Lambda)$ vanish in odd dimensions and are free abelian in even dimensions. The Betti numbers are given by 
\begin{equation*}
\beta^{2i}(M(P,\Lambda))=h_{i}(P), 
\end{equation*}
where $h_{i}(P)$ $(0 \leq i \leq n)$ are components of the $h$-vector of $P$. 
\end{proposition}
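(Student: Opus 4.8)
The plan is to equip $M(P,\Lambda)$ with a perfect CW structure whose cells lie only in even dimensions and are indexed by the vertices of $P$, carrying out the Morse-theoretic argument alluded to above. Embed $P$ as a convex polytope in $\R^{n}$ and choose a linear functional $\xi\colon\R^{n}\to\R$ whose restriction to the vertex set of $P$ is injective. Orienting each edge of $P$ in the direction of increasing $\xi$ gives, since $P$ is simple and every vertex lies on exactly $n$ edges, a well-defined \emph{index} $\mathrm{ind}_{\xi}(v)\in\{0,1,\dots,n\}$ for each vertex $v$, namely the number of edges pointing towards $v$.

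First I would record the combinatorial input $h_{i}(P)=\#\{v\in\mathrm{Vert}(P):\mathrm{ind}_{\xi}(v)=i\}$. Each face $F$ of $P$ has, by genericity of $\xi$, a unique $\xi$-maximal vertex, and conversely the $k$-dimensional faces with prescribed $\xi$-maximal vertex $v$ are exactly those spanned by $v$ together with a choice of $k$ of its incoming edges; hence, setting $f_{k}$ equal to the number of $k$-dimensional faces (with $f_{n}=1$), one has $f_{k}=\sum_{v}\binom{\mathrm{ind}_{\xi}(v)}{k}$. Therefore $\sum_{k}f_{k}t^{k}=\sum_{v}(1+t)^{\mathrm{ind}_{\xi}(v)}=\sum_{i}\#\{v:\mathrm{ind}_{\xi}(v)=i\}\,(1+t)^{i}$, and substituting $s=1+t$ in (\ref{h-vector}) — using the Dehn--Sommerville symmetry $h_{i}=h_{n-i}$ to rewrite $\sum_{i}h_{i}(1+t)^{n-i}$ as $\sum_{i}h_{i}(1+t)^{i}$ — identifies $\#\{v:\mathrm{ind}_{\xi}(v)=i\}$ with $h_{i}(P)$ by linear independence of the $(1+t)^{i}$.

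Next I would lift $\xi$ to the manifold. Let $\pi\colon M(P,\Lambda)\to P$ be the projection and put $\phi=\xi\circ\pi$, a smooth function for the canonical smooth structure on $M(P,\Lambda)$ (the one pulled back from $\mathcal{Z}_{P}$). Away from the vertices $\phi$ has no critical points: over the relative interior of a face $F$ of positive dimension, $\pi$ is a torus bundle, hence a submersion onto $\mathrm{relint}\,F$, while $\xi|_{F}$ is nonconstant (its $\xi$-maximal and $\xi$-minimal vertices are distinct), so $d\phi\neq 0$ there. Over a vertex $v$ the fibre $\pi^{-1}(v)$ is a single point, and in the locally standard chart in which $(P,v)\cong(\R_{\geq 0}^{n},0)$ and $M(P,\Lambda)\cong\C^{n}$ with $\pi(z)=(|z_{1}|^{2},\dots,|z_{n}|^{2})$, the affine functional $\xi$ becomes affine in the coordinates $|z_{j}|^{2}$, so $\phi(z)=c+\sum_{j=1}^{n}\epsilon_{j}|z_{j}|^{2}$ with $\epsilon_{j}=+1$ or $-1$ according as the $j$-th edge at $v$ is outgoing or incoming. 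Thus $\pi^{-1}(v)$ is a nondegenerate critical point of Morse index $2\,\mathrm{ind}_{\xi}(v)$, and since all critical indices are even the lacunary principle forces all Morse (equivalently cellular) boundary maps to vanish. Hence $M(P,\Lambda)$ has a CW structure with exactly one cell of dimension $2i$ for each vertex of index $i$ and no odd-dimensional cells; combined with the second paragraph this gives $H^{\mathrm{odd}}(M(P,\Lambda);\Z)=0$, freeness of $H^{2i}(M(P,\Lambda);\Z)$, and $\beta^{2i}(M(P,\Lambda))=h_{i}(P)$.

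The main obstacle — indeed essentially the only nontrivial point — is the local analysis at the vertices in the third paragraph: one must verify, from the locally standard smooth structure, that $\phi$ is globally smooth and genuinely Morse with the stated indices and no critical points off the vertices. One could also bypass Morse theory and exhibit the cells by hand: for a vertex $v$ of index $i$, the face $G_{v}$ spanned by the incoming edges at $v$ is $i$-dimensional, carries an induced quasitoric structure, and $\pi^{-1}(G_{v})$ is a $2i$-dimensional quasitoric manifold containing a distinguished open $2i$-cell; the content is then the verification that these cells, as $v$ ranges over $\mathrm{Vert}(P)$, assemble into a CW decomposition of $M(P,\Lambda)$. Either way, once a CW complex concentrated in even dimensions is produced, all assertions of the proposition are formal.
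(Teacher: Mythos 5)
Your proof is correct and follows essentially the same Morse-theoretic route the paper alludes to (and that Davis--Januszkiewicz carry out in the cited Theorem 3.1): pull back a generic linear functional through the orbit map, observe that the critical points sit over the vertices with even Morse indices $2\,\mathrm{ind}_{\xi}(v)$, and match the vertex-index count with the $h$-vector via $f_{k}=\sum_{v}\binom{\mathrm{ind}_{\xi}(v)}{k}$. Since the paper states this result by citation rather than proving it, there is nothing further to compare, save to note that your closing remark about assembling the cells face-by-face is in fact closer to how Davis--Januszkiewicz actually organize the decomposition than a literal gradient-flow argument.
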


\begin{proposition} \label{cohomology ring}
\emph{\cite[Theorem 4.14]{DJ91}} Write $\Lambda=(\la_{i,j})\in Mat_{n \times m}(\Z)$, then the integral cohomology ring of $M(P,\Lambda)$ is given by 
\begin{equation*}
H^{*}(M(P,\Lambda);\Z)\cong \Z[v_{1},\dots,v_{m}]/(\mathcal{I+J}), 
\end{equation*}
where ideal $\mathcal{I}$ is generated by face ring elements $v_{j_{1}}\cdots v_{j_{k}}$ whenever $F_{j_{1}}\cap \dots \cap F_{j_{k}}=\emptyset$ and ideal $\mathcal{J}$ is generated by linear elements $t_{i}=\sum_{j=1}^{m} \la_{i,j}v_{j}$ for $1\leq i \leq n$.
\end{proposition}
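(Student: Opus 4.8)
The plan is to pass to equivariant cohomology and then collapse the Borel fibration. Set $M=M(P,\Lambda)$, write $R=H^{*}(BT^{n};\Z)=\Z[t_{1},\dots,t_{n}]$ with $\deg t_{i}=2$, and consider the Borel construction with its bundle projection
\[
M\hookrightarrow ET^{n}\times_{T^{n}}M\xrightarrow{p}BT^{n}.
\]
I will establish three facts: (i) $H^{*}_{T^{n}}(M;\Z)\cong\Z[P]:=\Z[v_{1},\dots,v_{m}]/\mathcal{I}$, the face ring of $P$, with $\deg v_{j}=2$; (ii) the ring homomorphism $p^{*}\colon R\to\Z[P]$ sends $t_{i}$ to $\sum_{j=1}^{m}\la_{i,j}v_{j}$; and (iii) the Serre spectral sequence of $p$ degenerates at $E_{2}$ and $H^{*}_{T^{n}}(M)$ is a free $R$-module, so that restriction to the fibre yields
\[
H^{*}(M;\Z)\cong H^{*}_{T^{n}}(M;\Z)\otimes_{R}\Z=\Z[P]\big/(p^{*}t_{1},\dots,p^{*}t_{n}).
\]
Granting (i)--(iii), the right-hand side equals $\Z[v_{1},\dots,v_{m}]/(\mathcal{I}+\mathcal{J})$, since adjoining the relations $p^{*}t_{i}=\sum_{j}\la_{i,j}v_{j}$ to $\Z[v_{1},\dots,v_{m}]/\mathcal{I}$ is exactly adjoining $\mathcal{J}$; moreover the images of the $v_{j}$ are the first Chern classes of the line bundles $\rho_{j}$ of Proposition \ref{characteristic class}, because each $v_{j}\in H^{2}_{T^{n}}(M)$ is the equivariant first Chern class of $\rho_{j}$ (with its canonical equivariant structure) and restricts to $c_{1}(\rho_{j})$ on the fibre.

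For (i) and (ii): using $M=\mathcal{Z}_{P}/\mathrm{Ker}\Lambda$ with $\mathrm{Ker}\Lambda\cong T^{m-n}$ acting freely, and choosing a splitting $T^{m}\cong T^{n}\times\mathrm{Ker}\Lambda$ so that $ET^{m}=ET^{n}\times E(\mathrm{Ker}\Lambda)$, one obtains a homotopy equivalence $ET^{n}\times_{T^{n}}M\simeq ET^{m}\times_{T^{m}}\mathcal{Z}_{P}$ under which $p$ corresponds to the composite $ET^{m}\times_{T^{m}}\mathcal{Z}_{P}\to BT^{m}\xrightarrow{B\Lambda}BT^{n}$ of the canonical map with $B\Lambda$. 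The space $ET^{m}\times_{T^{m}}\mathcal{Z}_{P}$ is the Davis--Januszkiewicz space of $P$, and its integral cohomology is the face ring $\Z[P]$, with the $v_{j}$ the pullbacks of the polynomial generators of $H^{*}(BT^{m})$ and $\mathcal{I}$ the monomial relations forced by $F_{j_{1}}\cap\cdots\cap F_{j_{k}}=\emptyset$; this is the standard identification of the face ring with the cohomology of the Borel construction of a moment-angle manifold, proved by a Mayer--Vietoris argument over the face poset of $P$ (the Borel construction being built from the charts $F\times T^{m}$, whose Borel constructions are products of copies of $\C P^{\infty}$). Since $B\Lambda\colon BT^{m}\to BT^{n}$ acts on $H^{2}$ by the transpose of the characteristic matrix, $p^{*}t_{i}=\sum_{j}\la_{i,j}v_{j}$, which is (ii). (Equivalently, the same combination of the canonical equivariant lifts of the $v_{j}$ equals $p^{*}t_{i}$ because the $T^{m}$-weight $\sum_{j}\la_{i,j}e_{j}$ vanishes on $\mathrm{Ker}\Lambda$.)

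For (iii): the $E_{2}$-page $H^{p}(BT^{n};H^{q}(M))$ is nonzero only for $p$ even (cohomology of $BT^{n}$) and $q$ even (Proposition \ref{Betti number}), hence only in even total degree; as every differential $d_{r}$ raises total degree by one, all differentials vanish and the spectral sequence degenerates at $E_{2}$. Since $H^{*}(M)$ is in addition free abelian (Proposition \ref{Betti number}), the edge homomorphism $H^{*}_{T^{n}}(M)\to H^{*}(M)$ is surjective, and lifting a $\Z$-basis of $H^{*}(M)$ exhibits $H^{*}_{T^{n}}(M)$ as a free $R$-module on that basis (Leray--Hirsch); the kernel of restriction to the fibre is then exactly the ideal $R^{>0}\cdot H^{*}_{T^{n}}(M)=(p^{*}t_{1},\dots,p^{*}t_{n})$, which is the base change asserted in (iii). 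The main obstacle is fact (i): identifying the equivariant cohomology with the entire face ring, and with it the description (ii) of $p^{*}$, is where the combinatorial structure of $P$ genuinely enters and where care is needed, whereas (iii) and the final bookkeeping are formal. A self-contained alternative avoiding equivariant cohomology is the original Morse-theoretic route: a generic linear functional on $P$ induces a perfect Morse function giving an explicit additive basis of $H^{*}(M)$ indexed by vertices of $P$; the relations $\mathcal{I}$ hold because $v_{j_{1}}\cdots v_{j_{k}}$ is Poincar\'e dual to $M_{j_{1}}\cap\cdots\cap M_{j_{k}}=\emptyset$, the relations $\mathcal{J}$ hold because $\sum_{j}\la_{i,j}[M_{j}]=0$, and one verifies by downward induction over the faces, using Poincar\'e duality, that no further relations are needed --- but this verification again rests on the even-degreeness of $H^{*}(M)$.
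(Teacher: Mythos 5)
Your argument is correct and is essentially the standard proof of this result: the paper itself gives no proof, citing \cite[Theorem 4.14]{DJ91}, and your route --- identifying $H^{*}_{T^{n}}(M)$ with the face ring via $ET^{m}\times_{T^{m}}\mathcal{Z}_{P}$, computing $p^{*}$ from $B\Lambda$, and collapsing the Serre spectral sequence using the even-degree freeness of Proposition \ref{Betti number} --- is exactly the argument of \cite{DJ91} and \cite[Theorem 7.3.28]{BP15}. No gaps; the one place requiring real work, the identification of the equivariant cohomology with the Stanley--Reisner ring, is correctly flagged and sketched at an appropriate level of detail for a cited classical theorem.
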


Clearly, Stiefel-Whitney classes and Pontryagin classes remain unchanged while Chern classes may vary under weakly equivariant homeomorphism since $\mathrm{GL_{n}}(\Z)$-action, $\Z_{2}^{m}$-action and $\mathrm{Aut}(\partial P^{*})$-action correspond to equivalent expression of ideal $\mathcal{J}$, sign permutation of generators and permutation of generators respectively.

\begin{remark} \label{spin and Betti number count}
By equivalence, a characteristic pair $(P,\Lambda)$ can be refined such that $F_{1}\cap \dots \cap F_{n}$ is a vertex of $P$ (called \emph{initial vertex}) and $\Lambda=[\ \mathrm{I_{n}}\ |\ \Lambda_{*}\ ]$, i.e., $\boldsymbol{\la_{i}}=\boldsymbol{e_{i}}$ for $1\leq i\leq n$. Hence, in refined form, integral basis of $H^{2}(M(P,\Lambda))$ can be chosen as $\{v_{i}\}_{i=n+1}^{m}$ by ideal $\mathcal{J}$ and $w_{2}(M(P,\Lambda))=0$ is equivalent to sum of every column being odd. Consequently, every element in $H^{4}(M(P,\Lambda))$ is an integral linear combination of $\{v_{i}v_{j}\}_{n+1 \leq i, j \leq m}$. Since ideal $\mathcal{I}$ contains $\binom{m}{2}-f_{n-2}(P)$ relations in $H^{4}(M(P,\Lambda))$ and 
\[
\binom{m-n+1}{2}-[\binom{m}{2}-f_{n-2}(P)]=\binom{n}{2}-(n-1)m+f_{n-2}(P)=\beta^{4}(M(P,\Lambda))
\]
by (\ref{h-vector}), such relations are independent to each other, i.e., not a single relation is redundant. When applied to Cartesian product and equivariant connected sum, basis of 4-dimensional cohomology group can be chosen in a canonical way (see Remark \ref{Cartesian product} and Definition \ref{equivariant connected sum definition}). Within the rest of this article, characteristic matrices are assumed to be in refined form unless otherwise stated. 
\end{remark}

\vspace{0.5cm}

Another application of Proposition \ref{unitary structure} is related to signs and weights at fixed points. For a vertex $p=F_{j_{1}}\cap \dots \cap F_{j_{n}}$, the tangent space at $p$ can be decomposed as 
\begin{equation*}
TM|_{p}=\rho_{j_{1}}\oplus \dots \oplus \rho_{j_{n}}|_{p}.
\end{equation*}
Note that the orientation of left hand side is determined by the orientation of $M$ while the orientation of right hand side is determined by orientations of characteristic submanifolds $\{M_{j_{k}}\}_{k=1}^{n}$. The sign $\sigma(p)$ is defined to be $+1$ if these two orientations coincide and $-1$ otherwise. Moreover, tangential representation of $T^{n}$ at $p$ is characterized by weight vectors $\boldsymbol{w_{1}}(p),\cdots,\boldsymbol{w_{n}}(p)$ in $\Z^{n}$: for $\boldsymbol{t}=(e^{2\pi i \varphi_{1}},\dots,e^{2\pi i \varphi_{n}})\in T^{n}$ and $\boldsymbol{z}=(z_{1},\dots,z_{n})\in TM|_{p}$, 
\begin{equation*}
\boldsymbol{t}\cdot \boldsymbol{z}=(e^{2\pi i \langle \boldsymbol{w_{1}}(p),\boldsymbol{\varphi} \rangle} z_{1},\dots,e^{2\pi i \langle \boldsymbol{w_{n}}(p),\boldsymbol{\varphi} \rangle} z_{n}), 
\end{equation*}
where $\boldsymbol{\varphi}=(\varphi_{1},\dots,\varphi_{n})\in \R^{n}$ and $\langle \cdot , \cdot \rangle$ is the standard inner product in $\R^{n}$.

Weights and signs may vary under weakly equivariant homeomorphism and can be calculated directly from a characteristic pair $(P,\Lambda)$:

\begin{proposition}
\emph{\cite[Proposition 7.3.18 and Lemma 7.3.19]{BP15}} \\
(1) For $p=F_{j_{1}}\cap \dots \cap F_{j_{n}}$, weight vectors $\boldsymbol{w_{1}}(p),\cdots,\boldsymbol{w_{n}}(p)$ are the lattice basis conjugate to characteristic vectors, i.e., 
\begin{equation*}
(\boldsymbol{w_{1}}(p),\cdots,\boldsymbol{w_{n}}(p))^{\mathrm{T}}\cdot (\boldsymbol{\la_{j_{1}}},\cdots,\boldsymbol{\la_{j_{n}}})=\mathrm{I_{n}}.
\end{equation*}
(2) Denote $\boldsymbol{a_{j_{k}}}$ as the inward normal vector of facet $F_{j_{k}}$, then 
\begin{equation*}
\sigma(p)=sign(\mathrm{det}(\boldsymbol{\la_{j_{1}}},\cdots,\boldsymbol{\la_{j_{n}}})\cdot \mathrm{det}(\boldsymbol{a_{j_{1}}},\cdots,\boldsymbol{a_{j_{n}}})).
\end{equation*}
\end{proposition}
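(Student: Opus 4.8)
The plan is to prove both statements locally at the fixed point $p=F_{j_1}\cap\dots\cap F_{j_n}$, using the $T^n$-invariant chart around $p$ supplied by the Canonical Construction. A neighbourhood $V_p$ of $p$ in $P$ is combinatorially a corner, and (\ref{nonsingular}) gives $\mathrm{det}(\Lambda_p)=\pm1$ for $\Lambda_p=(\boldsymbol{\la_{j_1}},\dots,\boldsymbol{\la_{j_n}})$, so the induced endomorphism $\Lambda_p\colon T^n\to T^n$ is an automorphism. Using $\Lambda_p^{-1}$ to untwist the torus directions in the local model $(V_p\times T^n)/\!\sim$ identifies it with $\C^n=\R^n_{\ge0}\times T^n/\!\sim$; this is the chart $\psi\colon U_p\xrightarrow{\ \cong\ }\C^n$ of that construction, with $\psi(p)=0$, and it carries the $T^n$-action on $M$ to the standard coordinatewise action on $\C^n$ precomposed with $\Lambda_p^{-1}$.

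For part (1): reading off the action just described, $s\in T^n$ acts on the $k$-th coordinate $z_k$ of $\C^n$ by multiplication by the $k$-th coordinate of $\Lambda_p^{-1}(s)$. The $k$-th coordinate line of $\C^n$ is the normal line at $p$ to the locally defined characteristic submanifold $M_{j_k}$, i.e.\ the fibre $\rho_{j_k}|_p$ with its complex structure, so this matches the decomposition $TM|_p=\rho_{j_1}\oplus\dots\oplus\rho_{j_n}|_p$. Since precomposition with the torus automorphism given by the integral matrix $\Lambda_p^{-1}$ sends the character of weight $\boldsymbol{e_k}$ to the character of weight $(\Lambda_p^{-1})^{\mathrm{T}}\boldsymbol{e_k}$, the $k$-th row of $\Lambda_p^{-1}$, one reads off $(\boldsymbol{w_1}(p),\dots,\boldsymbol{w_n}(p))^{\mathrm{T}}=\Lambda_p^{-1}$; right-multiplying by $\Lambda_p$ gives the claimed identity.

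For part (2): still in this chart, compare the two orientations of $TM|_p$. Via $\psi$, the orientation of $\rho_{j_1}\oplus\dots\oplus\rho_{j_n}|_p$ coming from the complex structures of the $\rho_{j_k}$ becomes the standard orientation of $\C^n$. The orientation of $TM|_p$ induced by the chosen orientation of $M$ — which on the free part $\mathrm{int}(P)\times T^n$ is the product of the orientation of $P\subset\R^n$ and that of $T^n$ — is transported into the chart by two operations: reparametrising the corner of $P$ at $p$ by the facet-distance coordinates $x\mapsto(\langle\boldsymbol{a_{j_k}},x-p\rangle)_{k}$, whose linear part has determinant of sign $\mathrm{sign}\,\mathrm{det}(\boldsymbol{a_{j_1}},\dots,\boldsymbol{a_{j_n}})$; and untwisting the torus factor by $\Lambda_p^{-1}$, of determinant sign $\mathrm{sign}\,\mathrm{det}(\Lambda_p^{-1})=\mathrm{sign}\,\mathrm{det}(\Lambda_p)$. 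Since $\sigma(p)=+1$ precisely when these two orientations of $TM|_p$ agree, combining the sign contributions (together with the normalization fixed below) yields $\sigma(p)=sign(\mathrm{det}(\Lambda_p)\cdot\mathrm{det}(\boldsymbol{a_{j_1}},\dots,\boldsymbol{a_{j_n}}))$. One may equivalently use the primitive edge vectors $\boldsymbol{e_{j_1}},\dots,\boldsymbol{e_{j_n}}$ issuing from $p$ in place of the normals: at a simple vertex $(\langle\boldsymbol{a_{j_l}},\boldsymbol{e_{j_k}}\rangle)_{l,k}$ is diagonal with positive entries, so $\mathrm{det}(\boldsymbol{a_{j_1}},\dots,\boldsymbol{a_{j_n}})$ and $\mathrm{det}(\boldsymbol{e_{j_1}},\dots,\boldsymbol{e_{j_n}})$ have the same sign.

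I expect the main obstacle to be the orientation bookkeeping in part (2). One has to fix, once and for all, the conventions for the orientation of $M(P,\Lambda)$, for the ordering of the $T^n$-coordinates, and for the homeomorphism $\C^n\cong\R^n_{\ge0}\times T^n/\!\sim$, so that the universal sign $(-1)^{n(n-1)/2}$ produced by interleaving the real and imaginary coordinate directions on $\C^n$ is correctly accounted for, e.g.\ absorbed into the chosen orientation of $M$. One must also make precise the sense in which the orientation of the right-hand side of $TM|_p=\rho_{j_1}\oplus\dots\oplus\rho_{j_n}|_p$ is ``determined by the orientations of the characteristic submanifolds $M_{j_k}$'': namely, that the complex (normal-bundle) orientation of each $\rho_{j_k}$ is the one compatible with the chosen orientations of $M$ and of $M_{j_k}$. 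Part (1) is comparatively routine once the chart $\psi$ and the induced $T^n$-action on it are in hand.
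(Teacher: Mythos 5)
This proposition is quoted in the paper directly from \cite[Proposition 7.3.18 and Lemma 7.3.19]{BP15} without any proof, so there is no in-paper argument to compare against; your proof is correct and is essentially the standard one from that reference. For (1), untwisting the local chart $(V_p\times T^n)/\!\sim\;\cong\C^n$ by $\Lambda_p^{-1}$ and reading off the weights as the rows of $\Lambda_p^{-1}$ is exactly right, and for (2) the two determinant contributions you identify (the corner reparametrization by the inward normals and the torus untwisting by $\Lambda_p^{-1}$) are precisely the sources of the sign $\sigma(p)$; the orientation conventions you flag at the end (the interleaving sign for $\R^n_{\ge 0}\times T^n$ versus $\C^n$, and the normal-bundle convention for orienting the $\rho_{j_k}$) are indeed the only bookkeeping that must be fixed for the argument to close.
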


\begin{example} \label{quasitoric manifold over triangle}
For $P=\Delta^{2}$, the characteristic matrix $\Lambda$ can be refined to $\left(\begin{smallmatrix} 1 & 0 & \delta_{1} \\ 0 & 1 & \delta_{2} \end{smallmatrix}\right)$ with $\delta_{1},\delta_{2}=\pm 1$ by (\ref{nonsingular}). The integral cohomology ring of $M(\Delta^{2},\Lambda)$ is $\Z[v]/\langle v^{3} \rangle$ and $w_{2}(M(\Delta^{2},\Lambda))=v$, $p_{1}(M(\Delta^{2},\Lambda))=3v^{2}$. 

There is only one weakly equivariant homeomorphism class and the corresponding quasitoric manifold is $\C P^{2}$. On the other hand, let $p=F_{2}\cap F_{3}$ (see Figure \ref{triangle with coloring}), then $\boldsymbol{w_{1}}(p)=(-1,1)^{\mathrm{T}}, \boldsymbol{w_{2}}(p)=(1,0)^{\mathrm{T}}$, $\sigma(p)=-1$ when $\delta_{1}=\delta_{2}=1$, and $\boldsymbol{w_{1}}(p)=(-1,1)^{\mathrm{T}}, \boldsymbol{w_{2}}(p)=(-1,0)^{\mathrm{T}}$, $\sigma(p)=+1$ when $\delta_{1}=\delta_{2}=-1$. Similar argument holds for $n$-simplex $\Delta^{n}$.
\end{example}

\begin{figure}[htb]
\centering
\includegraphics[height=4cm]{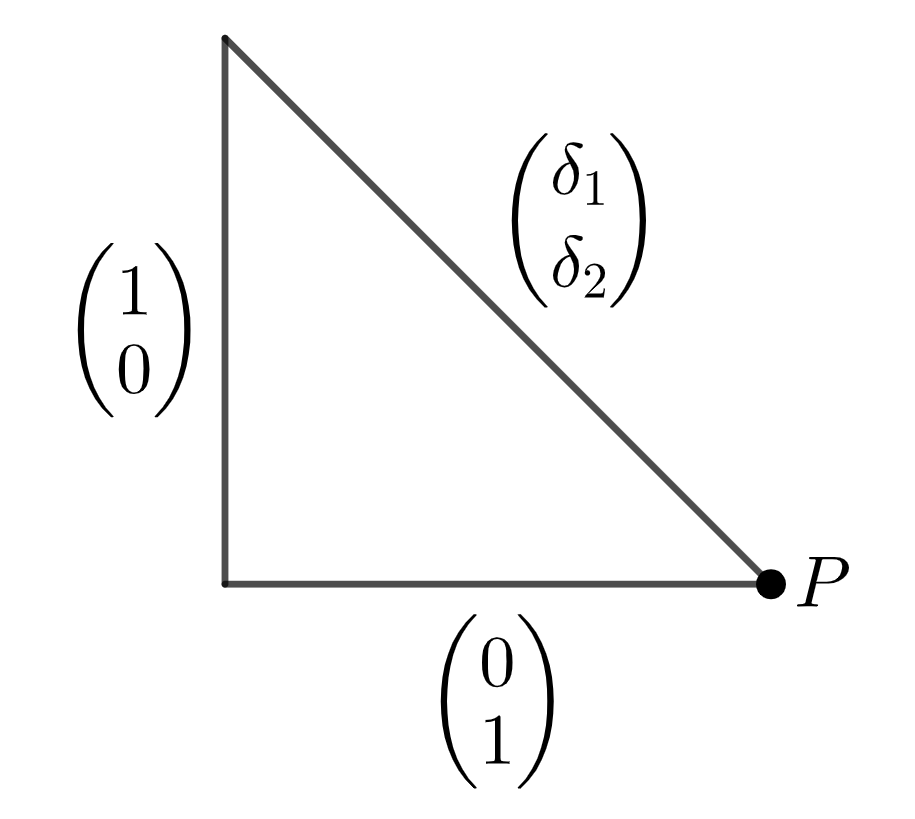}
\caption{$\Delta^{2}$ with characteristic map}
\label{triangle with coloring}
\end{figure}

\section{Low dimensional case}
\setcounter{equation}{0}

\subsection{$\mathrm{dim}P=2$} \label{dimP=2}

For any 4-dimensional quasitoric manifold $M$, $\beta^{4}(M)=1$ and every non-vanishing element in $H^{4}(M)$ can be chosen as the $\Q$-basis. Let $C_{2}(m)$ denote the $m$-gon and label the facets such that $F_{i} \cap F_{j} \neq \emptyset$ if and only if $|i-j| \equiv 1 \pmod{m}$. 

\begin{lemma} \label{dimP=2 coefficient}
Given a quasitoric manifold $M=M(C_{2}(m),\Lambda)$ with $\Lambda=(\la_{i,j})_{2 \times m}$, set $\Delta_{i,j}=\mathrm{det} \left( \begin{smallmatrix} \la_{1,i} & \la_{1,j} \\ \la_{2,i} & \la_{2,j}\end{smallmatrix} \right)$ and $l_{i}=\Delta_{i-1,i} \cdot \Delta_{i,i+1} \cdot \Delta_{i+1,i-1}$ with subscripts taken modulo $m$. Then 
\[
p_{1}(M)=(\sum_{i=1}^{m} l_{i}) v_{1}v_{2}. 
\]
\end{lemma}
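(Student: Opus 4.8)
The plan is to compute $p_1(M)$ directly from Proposition \ref{characteristic class}, which gives $p_1(M) = \sum_{j=1}^{m} v_j^2$, by expressing everything in terms of a convenient basis of $H^4(M;\Z) \cong \Z$ (here $\beta^4(M) = h_2(C_2(m)) = 1$). First I would fix the refined form of $\Lambda$ from Remark \ref{spin and Betti number count}, so that $\boldsymbol{\la_1} = \boldsymbol{e_1}$, $\boldsymbol{\la_2} = \boldsymbol{e_2}$, and $H^2(M;\Z)$ has basis $v_3, \dots, v_m$ — though it is cleaner here to keep the computation symmetric in the cyclic labelling and only specialize at the end. The key structural input is Proposition \ref{cohomology ring}: the ideal $\mathcal{I}$ is generated by $v_i v_j$ for $|i-j| \not\equiv 0, \pm 1 \pmod m$ (non-adjacent facets), and $\mathcal{J}$ is generated by the two linear forms $t_1 = \sum_j \la_{1,j} v_j$ and $t_2 = \sum_j \la_{2,j} v_j$. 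So in $H^*(M)$ every product $v_i v_j$ with $i,j$ non-adjacent vanishes, and only the "diagonal" terms $v_i^2$ and "adjacent" terms $v_i v_{i+1}$ survive in degree $4$.

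The main computation is to pin down each $v_i^2$ in terms of a single generator of $H^4(M)$. I would pick an adjacent pair, say $v_{i}v_{i+1}$, as a local generator, and use the linear relations to rewrite $v_i^2$. Concretely, near the vertex $F_{i-1} \cap F_i$ the characteristic vectors $\boldsymbol{\la_{i-1}}, \boldsymbol{\la_i}$ form a $\Z$-basis of $\Z^2$ (by the nonsingularity condition \eqref{nonsingular}), so every other column expands as $\boldsymbol{\la_j} = \alpha_j \boldsymbol{\la_{i-1}} + \beta_j \boldsymbol{\la_i}$ with $\alpha_j, \beta_j$ expressible via the $2\times 2$ minors $\Delta_{\cdot,\cdot}$; feeding this into $t_1, t_2$ and eliminating produces a relation of the shape $\Delta_{i-1,i}\, v_j = (\text{minor})\, v_{i-1} + (\text{minor})\, v_i \pmod{\mathcal I}$ once the non-adjacent $v_j$'s are killed. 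Applying this at consecutive vertices expresses $v_{i-1}^2$, $v_i^2$, and the cross term $v_{i-1}v_i$ all as integer multiples of one chosen generator, and the integers that appear are exactly (up to sign conventions fixed by the cyclic orientation) the triple products $l_i = \Delta_{i-1,i}\Delta_{i,i+1}\Delta_{i+1,i-1}$. Summing $\sum_j v_j^2$ then collects the coefficient $\sum_{i=1}^m l_i$.

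I expect the main obstacle to be bookkeeping of signs and the cyclic indexing: making sure the local generators chosen at different vertices are consistently identified (i.e., that the transition factors telescope correctly around the $m$-gon), and that the minors $\Delta_{i-1,i}$, which may be divisors rather than units, cancel properly so that the final coefficient is the stated integer $\sum l_i$ rather than something only rationally equal to it. A clean way to sidestep the worst of this is to verify the claimed formula first for $m=3$ (where $M = \C P^2$, $p_1 = 3v^2$, and indeed $l_1 = l_2 = l_3$ with $\sum l_i = 3$ after normalization — consistent with Example \ref{quasitoric manifold over triangle}) and for $m = 4$, then argue the general case by the same elimination, or alternatively by an induction that cuts the polygon along a diagonal; but the most transparent route is the direct linear-algebra elimination described above, tracking the single generator of $H^4(M;\Z)$ throughout.
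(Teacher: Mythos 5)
Your proposal is correct and follows essentially the same route as the paper: multiply the two linear relations $t_1,t_2$ by $v_i$, use the Stanley--Reisner relations to kill non-adjacent products, solve the resulting $2\times 2$ system (invertible since $\Delta_{i-1,i},\Delta_{i,i+1}=\pm1$ by the nonsingularity condition, so your worry about non-unit minors does not arise) to get $v_i^2=l_i\,\Delta_{i,i+1}v_iv_{i+1}$ and the telescoping identity $\Delta_{i-1,i}v_{i-1}v_i=\Delta_{i,i+1}v_iv_{i+1}$, then sum. The paper's proof is exactly this elimination, concluding with $\Delta_{1,2}=1$ in refined form.
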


\begin{proof}
Based on Proposition \ref{cohomology ring}, for $1 \leq i \leq m$, 
\[
\left\{
\begin{aligned}
& \la_{1,i-1} v_{i-1}v_{i}+\la_{1,i} v_{i}^{2}+\la_{1,i+1} v_{i}v_{i+1}=0; \\
& \la_{2,i-1} v_{i-1}v_{i}+\la_{2,i} v_{i}^{2}+\la_{2,i+1} v_{i}v_{i+1}=0;
\end{aligned}
\right.
\]
with subscripts taken modulo $m$. Since (\ref{nonsingular}) requires that $\Delta_{i-1,i}, \Delta_{i,i+1}= \pm 1$, equations above are equivalent to 
\[
\left\{
\begin{aligned}
& \Delta_{i-1,i} v_{i-1}v_{i}=\Delta_{i,i+1} v_{i}v_{i+1}; \\
& v_{i}^{2}=l_{i} \cdot \Delta_{i,i+1} v_{i}v_{i+1}. 
\end{aligned}
\right.
\]
This leads to the following expression: 
\[
p_{1}(M)= \sum_{i=1}^{m} v_{i}^{2}= \sum_{i=1}^{m} l_{i} \cdot \Delta_{i,i+1} v_{i}v_{i+1}= (\sum_{i=1}^{m} l_{i}) \cdot \Delta_{1,2} v_{1}v_{2}= (\sum_{i=1}^{m} l_{i}) v_{1}v_{2}.  
\]
\end{proof}

As shown in Example \ref{quasitoric manifold over triangle}, quasitoric manifolds over $C_{2}(3)$ can not be spin, let alone string. As a matter of fact, this can be generalized to the case of arbitrary odd-gon. 

\begin{lemma} \label{dimP=2 main}
For a quasitoric manifold $M$ over $C_{2}(m)$: 
\begin{enumerate}[(1)]
\item $M$ is string $\Leftrightarrow$ $M$ is spin; 
\item $M$ is spin $\Rightarrow$ $M$ bounds in $\Omega_{4}^{SO}$; 
\item $M$ bounds in $\Omega_{4}^{O}$ $\Leftrightarrow$ $m \equiv 0 \pmod{2}$. 
\end{enumerate}
\end{lemma}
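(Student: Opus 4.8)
The plan is to prove the three statements together. From Lemma~\ref{dimP=2 coefficient} and Proposition~\ref{characteristic class} one gets the translation: $M=M(C_{2}(m),\Lambda)$ is spin exactly when $w_{2}(M)=0$, and string exactly when in addition $\sum_{i=1}^{m}l_{i}=0$. (Here $\beta^{4}(M)=h_{2}(C_{2}(m))=1$ and $\langle v_{1}v_{2},[M]\rangle=\pm1$, so $H^{4}(M;\Z)\cong\Z$ is generated by $v_{1}v_{2}$ and $p_{1}(M)=0$ is equivalent to $\sum l_{i}=0$.) I will also use, for a closed oriented $4$-manifold, four standard facts: $\chi(M)=f_{0}(C_{2}(m))=m$; the Hirzebruch signature theorem $p_{1}(M)[M]=3\sigma(M)$; that the reduction mod $2$ of $p_{1}(M)$ is $w_{2}(M)^{2}$; and the elementary congruence $\sigma(M)\equiv\chi(M)\pmod 2$. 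Given these, (2) and (3) will be short consequences of (1).

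For (1), "string $\Rightarrow$ spin" is immediate. For the converse I would first note, refining $\Lambda$ so that $\boldsymbol{\la_{1}}=\boldsymbol{e_{1}}$, $\boldsymbol{\la_{2}}=\boldsymbol{e_{2}}$, that $w_{2}(M)=0$ means every column of $\Lambda$ has odd entry–sum; since consecutive columns form a lattice basis, hence are independent mod $2$, the residues $\boldsymbol{\la_{j}}\bmod 2$ must alternate between $\boldsymbol{e_{1}}$ and $\boldsymbol{e_{2}}$, which is cyclically consistent only when $m$ is even, say $m=2k$. It then remains to show $\sum l_{i}=0$. Writing the two linear relations of Lemma~\ref{dimP=2 coefficient} as the single vector identity $l_{i}\boldsymbol{\la_{i}}=-\Delta_{i-1,i}\boldsymbol{\la_{i-1}}-\Delta_{i,i+1}\boldsymbol{\la_{i+1}}$, the alternating–parity condition ($\boldsymbol{\la_{i-1}}\equiv\boldsymbol{\la_{i+1}}\bmod 2$) forces every $l_{i}$ to be even, and one then deduces $\sum l_{i}=0$ — either by directly exploiting the cyclic closure of this three–term recursion around the $2k$-gon, or, more transparently, via the equivariant connected–sum decomposition of a quasitoric $4$-manifold into copies of $\C P^{2}$, $\overline{\C P^{2}}$ and $\C P^{1}\times\C P^{1}$: the intersection form being even (which is what $w_{2}(M)=0$ gives, $H^{*}(M;\Z)$ being torsion–free) excludes the first two summands, so $M$ is weakly equivariantly homeomorphic to $\#^{k-1}(\C P^{1}\times\C P^{1})$, for which $p_{1}=0$ is manifest. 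Hence $M$ is string.

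For (2): $\Omega_{4}^{SO}\cong\Z$ is detected by the signature (equivalently by $p_{1}[M]$), so $M$ bounds in $\Omega_{4}^{SO}$ iff $\sigma(M)=0$ iff $p_{1}(M)=0$; if $M$ is spin then by (1) it is string, so $p_{1}(M)=0$ and $M$ bounds. For (3): since $w_{1}(M)=0$, the unoriented bordism class of $M$ is determined by the Stiefel–Whitney numbers $w_{4}[M]$ and $w_{2}^{2}[M]$. Now $w_{4}[M]=\chi(M)\bmod 2=m\bmod 2$, while $w_{2}^{2}[M]\equiv p_{1}(M)[M]=3\sigma(M)\equiv\sigma(M)\equiv\chi(M)=m\pmod 2$; hence both Stiefel–Whitney numbers vanish precisely when $m\equiv 0\pmod 2$, i.e.\ precisely when $M$ bounds in $\Omega_{4}^{O}$.

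The main obstacle is the step "$w_{2}(M)=0\Rightarrow\sum l_{i}=0$" inside (1): the evenness of each $l_{i}$ is local and easy, but that the $l_{i}$ sum to zero is a global fact — it amounts to controlling how the parity constraints interact with the cyclic relation expressing that the $m$ characteristic vectors close up, equivalently to pinning down the weakly equivariant homeomorphism type of a spin quasitoric manifold over $C_{2}(2k)$. Everything else in the lemma is formal once that is established.
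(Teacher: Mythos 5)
Your proposal is correct in substance, but it reaches both nontrivial points of the lemma by routes genuinely different from the paper's. For spin $\Rightarrow$ string in (1), the paper uses the Atiyah--Hirzebruch theorem: the circle action on a spin $M$ forces $\hat{A}(M)=0$, and $\hat{A}(M)=-\tfrac{1}{24}p_{1}[M]$ in dimension $4$, so $p_{1}(M)=0$ immediately. You instead go through the Orlik--Raymond decomposition: $w_{2}=0$ makes the intersection form even, which excludes $\C P^{2}$ and $\overline{\C P^{2}}$ summands, so $\sigma(M)=0$ and $p_{1}[M]=3\sigma(M)=0$. That works, with one caveat: Orlik--Raymond gives the decomposition only up to homeomorphism, not weak equivariant homeomorphism (indeed Remark \ref{dimP=2 counting} notes there are countably many weakly equivariant classes over $C_{2}(2m_{0})$), but homeomorphism plus the signature theorem is all you need, so this is a misstatement rather than a gap. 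Your first suggested alternative (``directly exploiting the cyclic closure'') is only a gesture, and your closing paragraph flags the step as an unresolved obstacle even though your Orlik--Raymond route does close it. For (3), the paper proves the combinatorial congruence $\sum_{j}l_{j}\equiv m\pmod{2}$ (formula (\ref{mod 2})) by a block-by-block analysis of the mod-$2$ characteristic matrix; you replace this entirely by the chain $w_{2}^{2}[M]\equiv p_{1}[M]=3\sigma(M)\equiv\chi(M)=m\pmod{2}$ together with $w_{4}[M]\equiv\chi(M)\pmod 2$, which is shorter, purely topological, and in fact re-derives the paper's identity (\ref{mod 2}) as a corollary of Lemma \ref{dimP=2 coefficient}. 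Part (2) is essentially the same in both treatments. The trade-off: the paper's argument for (3) is self-contained at the level of characteristic matrices, while yours buys brevity at the cost of importing standard closed $4$-manifold facts ($w_{4}=\chi\bmod 2$, $p_{1}\equiv w_{2}^{2}$, $\sigma\equiv\chi\bmod 2$) and, for (1), the Orlik--Raymond classification.
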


\begin{proof}
Since $M$ always admits a non-trivial circle action, spin property induces the vanishing of $\hat{A}(M)$ by a classical result of Atiyah and Hirzebruch \cite{AH70}. On the other hand, $\hat{A}(M)=-\frac{1}{24}p_{1}(M)$ when $\mathrm{dim}M=4$ \cite{Hi66}. Therefore, every 4-dimensional spin quasitoric manifold is indeed string, i.e., (1) is valid. 

Now suppose $M$ is spin, then characteristic numbers corresponding to $w_{1}^{4}$, $w_{1}^{2}w_{2}$, $w_{1}w_{3}$, $w_{2}^{2}$ and $p_{1}$ vanish. On the other hand, Todd genus corresponds to $\frac{1}{12}(c_{1}^{2}+c_{2})$ in dimension 4. And it is integral by Hirzebruch-Riemann-Roch theorem \cite{Hi66}, leading to $w_{4} \equiv c_{2} \equiv c_{1}^{2} \equiv p_{1} \equiv 0 \pmod{2}$. Thus, spin property of $M$ induces its boundness in $\Omega_{4}^{SO}$. 

Combine the argument above with Lemma \ref{dimP=2 coefficient}, the proof of (3) boils down to verification of the following formula: 
\begin{equation} \label{mod 2}
\sum_{j=1}^{m} l_{j} \equiv m \pmod{2}. 
\end{equation}

Let $\mu_{i,j}$ characterize the parity of $\la_{i,j}$, i.e., $\mu_{i,j} \equiv \la_{i,j} \pmod{2}$ for $1 \leq i \leq 2, 1 \leq j \leq m$. If $\mu_{1,j}+\mu_{2,j}=1$ is valid for all $j$, then by (\ref{nonsingular}), 
\[
\overline{\Lambda}=(\mu_{i,j})_{2 \times m}=\begin{pmatrix}
1 & 0 & \cdots & 1 & 0 \\
0 & 1 & \cdots & 0 & 1 
\end{pmatrix}. 
\]
Clearly, (\ref{mod 2}) holds in this case since $l_{j} \equiv m \equiv 0 \pmod{2}$ $(1 \leq j \leq m)$. Otherwise, by (\ref{nonsingular}) again, there are no adjacent columns with both column sums equal to 2. Consider the following type of block within $\overline{\Lambda}$: $B=(\mu_{i,j})_{1 \leq i \leq 2, s_{1} \leq j \leq s_{2}}$ such that 
\[
\mu_{1,j}+\mu_{2,j}=
\left\{
\begin{aligned}
& 1 & \qquad j=s_{1}-1, s_{1}, s_{1}+2, \dots, s_{2}-2, s_{2}, s_{2}+1; \\
& 2 & j=s_{1}+1, s_{1}+3, \dots, s_{2}-3, s_{2}-1, 
\end{aligned}
\right. 
\]
where all subscripts are taken modulo $m$. It is evident that 
\[
l_{j} \equiv 
\left\{
\begin{aligned}
& 0 \pmod{2} & \qquad j \in \mathcal{N}_{1}\setminus \{s_{1}, s_{2}\} \text{ or } j \in \mathcal{N}_{2}; \\
& 1 \pmod{2} & j \in \{s_{1}, s_{2}\} \text{ or } j \in \mathcal{N}_{3},
\end{aligned}
\right.
\]
where $\mathcal{N}_{1}=\{s_{1} \leq j \leq s_{2}\ |\ \mu_{1,j}+\mu_{2,j}=1\}$, $\mathcal{N}_{2}=\{s_{1} \leq j \leq s_{2}\ |\ \mu_{1,j}+\mu_{2,j}=2, \mu_{1,j-1}=\mu_{1,j+1} \}$ and $\mathcal{N}_{3}=\{s_{1} \leq j \leq s_{2}\ |\ \mu_{1,j}+\mu_{2,j}=2, \mu_{1,j-1} \neq \mu_{1,j+1} \}$. Thus, $\sum_{j=s_{1}}^{s_{2}} l_{j} \equiv |\mathcal{N}_{3}| \pmod{2}$. If $|\mathcal{N}_{3}|$ is even, then column $s_{1}$ and $s_{2}$ are identical. In this case, the parity of both $\sum_{j=1}^{m} l_{j}$ and $m$ remain unchanged after deleting the block $B$ except for column $s_{1}$. If $|\mathcal{N}_{3}|$ is odd, then column $s_{1}-1$ and $s_{2}$ are identical. In this case, the parity of both $\sum_{j=1}^{m} l_{j}$ and $m$ change after deleting the block $B$. All columns with column sum 2 can be removed from $\overline{\Lambda}$ by such deletion process, leading to the validity of (\ref{mod 2}). 
\end{proof}

\begin{proposition} \label{dimP=2 manifold}
A quasitoric manifold $M(C_{2}(m),\Lambda)$ is string if and only if 
\[
\la_{1,i}+\la_{2,i} \equiv 1 \pmod{2} \qquad 3 \leq i \leq m. 
\]
\end{proposition}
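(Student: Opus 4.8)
The plan is to reduce the claim entirely to a computation of $w_2(M)$, using that in dimension $4$ the spin and string conditions coincide. Throughout, $\Lambda$ is taken in the refined form $\Lambda=[\,\mathrm{I_{2}}\mid\Lambda_{*}\,]$ fixed by the standing convention of Remark \ref{spin and Betti number count}, so that $v_3,\dots,v_m$ is an integral basis of $H^2(M;\Z)$. Since $M=M(C_2(m),\Lambda)$ is an orientable $4$-manifold with torsion-free cohomology, it is string if and only if $w_2(M)=0$ and $p_1(M)=0$; but by Lemma \ref{dimP=2 main}(1) these two vanishings are together equivalent to $w_2(M)=0$ alone. Hence it suffices to characterize $w_2(M)=0$ in terms of the entries of $\Lambda$.

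For this I would invoke Proposition \ref{characteristic class}, giving $w_2(M)\equiv\sum_{j=1}^{m}v_j\pmod 2$, together with the linear relations in the ideal $\mathcal{J}$ of Proposition \ref{cohomology ring}: the relations $\sum_{j}\la_{1,j}v_j=\sum_{j}\la_{2,j}v_j=0$ read, in refined form, as $v_1\equiv\sum_{i=3}^{m}\la_{1,i}v_i$ and $v_2\equiv\sum_{i=3}^{m}\la_{2,i}v_i\pmod 2$. Substituting, $w_2(M)\equiv\sum_{i=3}^{m}\bigl(1+\la_{1,i}+\la_{2,i}\bigr)v_i\pmod 2$, and since the $v_i$ for $3\le i\le m$ form a basis, this class vanishes precisely when $1+\la_{1,i}+\la_{2,i}$ is even for every such $i$, i.e. when $\la_{1,i}+\la_{2,i}\equiv 1\pmod 2$ for $3\le i\le m$. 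This yields both implications of the proposition at once.

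I do not expect a genuine obstacle here: the substantive point --- that odd column sums already force $p_1(M)=0$ and not merely $w_2(M)=0$ --- is entirely packaged into Lemma \ref{dimP=2 main}(1), which itself rests on Atiyah--Hirzebruch vanishing of $\hat A$ under the circle action on $M$. If one wished to bypass that input, the alternative would be to argue directly from Lemma \ref{dimP=2 coefficient} that $\sum_{i=1}^{m}l_i=0$ whenever all column sums are odd, which is far less transparent than the congruence (\ref{mod 2}) proved there; so routing the argument through the identification of spin with string is the economical path. The only bookkeeping to keep honest is that $w_2(M)$ has been expressed against the basis $\{v_i\}_{i=3}^{m}$, so that its vanishing can be read off coefficientwise.
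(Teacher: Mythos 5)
Your argument is correct and is essentially the proof the paper intends: the proposition follows by combining Lemma \ref{dimP=2 main}(1) (spin $\Leftrightarrow$ string in this dimension, via Atiyah--Hirzebruch) with the observation of Remark \ref{spin and Betti number count} that, in refined form, $w_2(M)=0$ is equivalent to every column sum of $\Lambda$ being odd, which is exactly your substitution of the linear relations into $\sum_j v_j$.
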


\begin{proposition} \label{dimP=2 polytope}
$P=C_{2}(m)$ can be realized as the orbit polytope of a string (spin) quasitoric manifold if and only if $P$ is 2-colorable, i.e., $m \equiv 0 \pmod{2}$. 
\end{proposition}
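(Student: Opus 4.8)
The plan is to reduce the statement to the manifold-level criterion of Proposition \ref{dimP=2 manifold}, and then read off the answer from the combinatorics of the boundary cycle of $C_2(m)$.

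For the \emph{if} direction, assume $m$ is even, so the boundary cycle of the $m$-gon is bipartite and $C_2(m)$ is $2$-colorable. Concretely, pick $\Lambda$ in refined form whose columns alternate between $\boldsymbol{e_1}$ and $\boldsymbol{e_2}$. Two facets $F_i$ and $F_{i+1}$ are adjacent and receive the two distinct colors, so $\Delta_{i,i+1}=\pm 1$ and the nonsingular condition (\ref{nonsingular}) holds; every column sum equals $1$, so $w_2(M)=\sum_j v_j \equiv 0 \pmod 2$ by Proposition \ref{characteristic class}; and for every $i$ the facets $F_{i-1}$ and $F_{i+1}$ share a color, whence $\Delta_{i+1,i-1}=0$, so $l_i=0$ and $p_1(M)=0$ by Lemma \ref{dimP=2 coefficient}. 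Thus $M(C_2(m),\Lambda)$ is string. Alternatively, this is a special case of the general fact \cite{DJ91} that $n$-colorability of an $n$-polytope always yields a string quasitoric manifold.

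For the \emph{only if} direction, suppose $M(C_2(m),\Lambda)$ is string; after an equivalence we may take $\Lambda$ refined. Proposition \ref{dimP=2 manifold} then says every column of $\Lambda$ has odd sum, so each column of the mod-$2$ reduction $\overline{\Lambda}$ equals $(1,0)^{\mathrm{T}}$ or $(0,1)^{\mathrm{T}}$. Applying (\ref{nonsingular}) to each vertex $F_i\cap F_{i+1}$ forces consecutive columns of $\overline{\Lambda}$ to be distinct, so $\overline{\Lambda}$ is precisely a proper $2$-coloring of the boundary $m$-cycle; such a coloring exists only when $m$ is even, which is what we want. One can bypass Proposition \ref{dimP=2 manifold} entirely: string forces $p_1(M)=(\sum_i l_i)\,v_1v_2=0$ with $v_1v_2$ a generator of $H^4(M;\Q)$ (it is Poincar\'e dual to the point $\pi^{-1}(F_1\cap F_2)$), so $\sum_i l_i=0$, and then formula (\ref{mod 2}) in the proof of Lemma \ref{dimP=2 main} gives $m\equiv 0\pmod 2$.

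Neither direction is long, so there is no real obstacle; the only point requiring a word of care is the identification ``all column sums of $\Lambda$ are odd $\Leftrightarrow$ $\overline{\Lambda}$ is a proper $2$-coloring of $C_m$''. The forward implication is immediate, and the reverse one uses that in a simple $2$-polytope the adjacent pairs of facets are exactly $\{F_i,F_{i+1}\}$, so the nonsingular condition says precisely that adjacent columns differ mod $2$; after that the claim is just the elementary fact that a cycle is $2$-colorable if and only if its length is even.
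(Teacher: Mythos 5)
Your proof is correct and follows essentially the same route the paper intends: the proposition is meant to be read off from Proposition \ref{dimP=2 manifold} (equivalently, from $w_{2}=0$ forcing odd column sums) together with the observation that the nonsingular condition (\ref{nonsingular}) then makes $\overline{\Lambda}$ a proper $2$-coloring of the boundary $m$-cycle, while the converse is the linear-model construction; your explicit check that $l_{i}=0$ for the alternating $\boldsymbol{e_1},\boldsymbol{e_2}$ matrix is a clean self-contained substitute for citing \cite{DJ91}. Your alternative ``only if'' argument via $\sum_i l_i=0$ and formula (\ref{mod 2}) is also valid and matches the bordism-flavored chain of Lemma \ref{dimP=2 main}.
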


\begin{remark}
$M(C_{2}(4),\Lambda_{1})$ with $\Lambda_{1}=\left( \begin{smallmatrix} 1 & 0 & 1 & 1 \\ 0 & 1 & 0 & 1 \end{smallmatrix} \right)$ is not spin but bounds in $\Omega_{4}^{SO}$; $M(C_{2}(4),\Lambda_{2})$ with $\Lambda_{2}=\left( \begin{smallmatrix} 1 & 0 & 1 & 2 \\ 0 & 1 & 1 & 1 \end{smallmatrix} \right)$ bounds in $\Omega_{4}^{O}$ but does not bound in $\Omega_{4}^{SO}$. Therefore, relationships in Proposition \ref{mod 2} are strict. 
\end{remark}

\begin{remark} \label{dimP=2 counting}
Orlik and Raymond \cite{OR70} showed that up to homeomorphism, every 4-dimensional quasitoric manifold is the equivariant connected sum (see Definition \ref{equivariant connected sum definition}) of $\C P^{1} \times \C P^{1}, \C P^{2}$ and $\overline{\C P^{2}}$. This leads to the following counting result: there is exactly one homeomorphism class of string quasitoric manifold over $C_{2}(2m_{0})$ for each $m_{0} \geq 2$, namely $\widetilde{\#}_{m_{0}-1} (\C P^{1} \times \C P^{1})$. On the other hand, there are countably many weakly equivariant homeomorphism classes since characteristic matrices 
\[
\Lambda=
\left(
\begin{array}{cc|cc|cc|c|cc}
1 & 0 & 1 & 2a_{1} & 1 & 2a_{2} & \cdots & 1 & 2a_{m_{0}-1} \\
0 & 1 & 0 & 1 & 0 & 1 & \cdots & 0 & 1
\end{array}
\right)
\]
are not equivalent to each other for general integral parameters $a_{1}, \dots, a_{m_{0}-1}$. 
\end{remark}

\subsection{$\mathrm{dim}P=3$} \label{dimP=3}

For a 3-dimensional simple polytope $P$ with $m$ facets $(m \geq 4)$, $\boldsymbol{f}(P)=(2m-4, 3m-6, m, 1)$ and $\boldsymbol{h}(P)=(1, m-3, m-3, 1)$. Unlike 2-dimensional case, the same $f$-vector ($h$-vector) may correspond to different combinatorial types. Therefore, a general characterization for 6-dimensional string quasitoric manifolds analogous to Proposition \ref{dimP=2 manifold} does not exist. However, the following proposition on the orbit polytope serves as an analogue to Proposition \ref{dimP=2 polytope}: 

\begin{proposition} \label{dimP=3 polytope}
A 3-dimensional simple polytope $P$ can be realized as the orbit polytope of a string quasitoric manifold if and only if $P$ is 3-colorable. 
\end{proposition}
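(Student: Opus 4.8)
The plan is to prove both directions using the combinatorial and cohomological machinery set up in Section 2, together with the well-known fact (due to Davis--Januszkiewicz) that $n$-colorability is always sufficient for realizability as the orbit polytope of a string quasitoric manifold. The sufficiency direction of the proposition is therefore free: if $P$ is $3$-colorable, the coloring $\mathcal{F}(P)\to\{e_1,e_2,e_3\}$ itself defines a characteristic matrix $\Lambda$ with all entries $0$ or $1$; one checks directly via Proposition \ref{characteristic class} that $w_2(M(P,\Lambda))=\sum_j v_j\equiv 0$ (each vertex meets one facet of each color, so the column sums are all $1$, which by the refined-form criterion in Remark \ref{spin and Betti number count} gives $w_2=0$) and that $p_1(M(P,\Lambda))=\sum_j v_j^2=0$ because each $v_j^2$ is, by the relations coming from ideal $\mathcal{J}$ restricted to the monochromatic facets, expressible so that the total vanishes — in fact the standard argument shows $M(P,\Lambda)$ is a small-cover-type real-model complexification whose Pontryagin class vanishes. (Alternatively one just cites \cite{DJ91} for this implication, as the excerpt does in its discussion of Theorem 1.1.)

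The substance is the necessity direction: if some $M(P,\Lambda)$ is string, then $P$ is $3$-colorable. First I would record the parity reformulation: by Remark \ref{spin and Betti number count}, $w_2(M(P,\Lambda))=0$ forces every column of $\Lambda$ to have odd sum, equivalently the mod-$2$ reduction $\overline\Lambda=(\mu_{i,j})\in\mathrm{Mat}_{3\times m}(\Z_2)$ has every column of odd weight, so each column of $\overline\Lambda$ is one of $(1,0,0)^{\mathrm T},(0,1,0)^{\mathrm T},(0,0,1)^{\mathrm T},(1,1,1)^{\mathrm T}$. Assigning the first three column types the three colors, a $3$-coloring of $P$ exists precisely when we can perturb $\Lambda$ within its weakly-equivariant-equivalence class (in particular by $\mathrm{GL}_3(\Z)$-action, which acts on $\overline\Lambda$ by $\mathrm{GL}_3(\Z_2)=S_3$... more precisely by all of $\mathrm{GL}_3(\Z_2)$) so that no column reduces to $(1,1,1)^{\mathrm T}$. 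So the goal becomes: show that the existence of a string $\Lambda$ (equivalently: an odd-column-sum characteristic matrix $\Lambda$ with $p_1=\sum v_j^2=0$ in $H^4(M(P,\Lambda);\Z)$) implies $P$ has no ``odd obstruction'' to $3$-coloring.

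The key bridge will be the observation advertised in the introduction as imposing strong restrictions on orbit polytopes of string quasitoric manifolds — namely that such a polytope must be \emph{triangle-free} (no $2$-face is a triangle). For $3$-dimensional simple polytopes this is decisive: by a classical theorem (the polytopal case of the Four Color Theorem, or directly for simple $3$-polytopes: $P^3$ simple is $3$-colorable iff every $2$-face has an even number of edges, which holds once there are no triangular facets — this is the theorem that a simple $3$-polytope with all even faces is edge-$3$-colorable, equivalently facet-$3$-colorable, going back to the planar-graph reformulation and to \cite{Gr03}), triangle-freeness of a simple $3$-polytope is equivalent to $3$-colorability only after one also argues that \emph{all} $2$-faces are even. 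So the real work is: (i) prove triangle-freeness from stringness — here I would use Lemma \ref{dimP=2 coefficient}'s local computation, applied to the restriction of $\Lambda$ to the three facets around a triangular $2$-face $F=F_{i}\cap\!\dots$ (a triangular facet is combinatorially a $C_2(3)$ with a quasitoric structure, giving a $\C P^2$-summand as in Example \ref{quasitoric manifold over triangle}, whose $p_1=3v^2\neq0$ cannot be killed); and (ii) upgrade ``triangle-free'' to ``all $2$-faces even'', which for the necessity of $3$-colorability is where I expect the main obstacle to lie, since a simple $3$-polytope can be triangle-free yet have a pentagonal face (e.g. the dodecahedron). For pentagonal and other odd faces I would run the parity argument of Lemma \ref{dimP=2 main}: a $(2k+1)$-gon $2$-face carries an induced characteristic pair, and formula (\ref{mod 2}), $\sum l_j\equiv m\pmod 2$, together with the fact that $p_1$ restricted to that face's ``sub-quasitoric-$4$-manifold'' must vanish, forces $m$ even — contradiction. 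Thus every $2$-face of $P$ is even, hence $P$ is $3$-colorable. I would be careful that ``restricting $\Lambda$ to a $2$-face'' genuinely produces a characteristic matrix for a $4$-dimensional quasitoric manifold whose $p_1$ is controlled by the ambient $p_1=0$; the cleanest route is to note that the characteristic submanifold $M_F$ over a $2$-face $F$ (intersection of $m-2$ facets... more precisely $F$ itself being $2$-dimensional is an intersection of $n-2=1$ facet in the $3$-polytope, so $M_F$ is a $4$-manifold) is a quasitoric manifold over $F$ whose characteristic data is the projection of $\Lambda$, and that its Pontryagin class is a restriction/pullback of relevant classes — so vanishing of $p_1(M)$ yields the needed vanishing on $M_F$, and then Lemma \ref{dimP=2 main}(3) applies.

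The anticipated main obstacle is exactly this last reduction step — cleanly relating the first Pontryagin class of the $4$-dimensional quasitoric submanifold over a $2$-face to the vanishing of $p_1$ on the ambient $6$-manifold, so that the already-proven $2$-dimensional parity obstruction (Proposition \ref{dimP=2 polytope}) can be invoked face-by-face. Once that restriction principle is in place, the proposition follows by combining it with the planar-graph fact that a simple $3$-polytope all of whose faces are even-gons is $3$-colorable.
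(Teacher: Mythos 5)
Your sufficiency direction and your identification of Joswig's criterion (3-colorable $\Leftrightarrow$ every facet is an even-gon) match the paper, as does your treatment of a triangular facet. The gap is in your step (ii), the case of an odd facet with at least five edges. First, the restriction principle you hope for is false: for the facial submanifold $M_F=\pi^{-1}(F)$ one has $TM|_{M_F}=TM_F\oplus\nu_F$ with $\nu_F=\rho_F|_{M_F}$, so $p_1(M_F)=\iota_F^*p_1(M)-\iota_F^*(v_F)^2=-\iota_F^*(v_F)^2$, which need not vanish; likewise $w_2(M_F)=\iota_F^*(v_F)\bmod 2$, so $M_F$ need not even be spin. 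Hence neither Lemma \ref{dimP=2 main} nor Proposition \ref{dimP=2 polytope} can be invoked face-by-face, and the self-intersection term $\iota_F^*(v_F)^2$ is exactly the quantity you would still have to control. Second, and more fatally, the mod-2 obstruction that drives the two-dimensional argument vanishes identically in the ambient computation: once $w_2(M)=0$ every column sum of $\Lambda$ is odd, so each $\rho_i=\sum_{t=1}^3\la_{t,i}^2+1$ is even and each $\rho_{i,j}$ is even by definition, whence the coefficient of the relevant basis element of $H^4(M)$ in $p_1(M)$ is automatically even. No parity argument modeled on $\sum l_j\equiv m\pmod 2$ can therefore produce a contradiction.

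What the paper actually does for an odd facet $F=C_2(2k-1)$, $k\ge 3$, is work entirely in the ambient $H^4(M)$: after a relabeling step (using a Jordan-curve/3-belt argument to ensure the facets around $F$ have no extra pairwise intersections, so the relations among the $v_iv_j$ have a controlled cyclic form), it computes the coefficient $c_{2k-1,2k}$ of the basis element $v_{2k-1}v_{2k}$ and proves via two algebraic lemmas (Lemma \ref{algebraic lemma 1}, a mod-8 congruence using nonsingularity and the spin condition on all three rows, and Lemma \ref{algebraic lemma 2}, an exact telescoping cancellation) that $c_{2k-1,2k}\equiv 4\pmod 8$, hence is nonzero. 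So the obstruction lives mod 8, not mod 2, and your proposal is missing the idea needed to detect it.
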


\begin{proof}
The ``if" part follows directly from the fact that $n$-coloring of an $n$-dimensional simple polytope yields a quasitoric manifold with trivial tangent bundle (called \emph{pull back of the linear model} in \cite{DJ91}). 

Now suppose $P$ is not 3-colorable. By a well-known result of Joswig \cite{Jo02}, $P$ has at least one facet $F$ with odd number of edges. 

If $F$ is a triangle, then let $w=F \cap F_{a} \cap F_{b}$ and $F_{c}$ be the other facet adjacent to $F$. Fix $w$ as the initial vertex, then $v_{c}^{2}$ does not appear in any relation in $H^{4}(M)$ for every quasitoric manifold over $P$. Thus, both $v_{c}^{2}$ and its corresponding coefficient in the expression of $p_{1}(M)$ are nonzero (see Key Observation in Section \ref{few facets case} for the general case). In conclusion, $P$ can not be the orbit polytope of a string quasitoric manifold. 

\begin{figure}[htb]
\centering
\includegraphics[height=4cm]{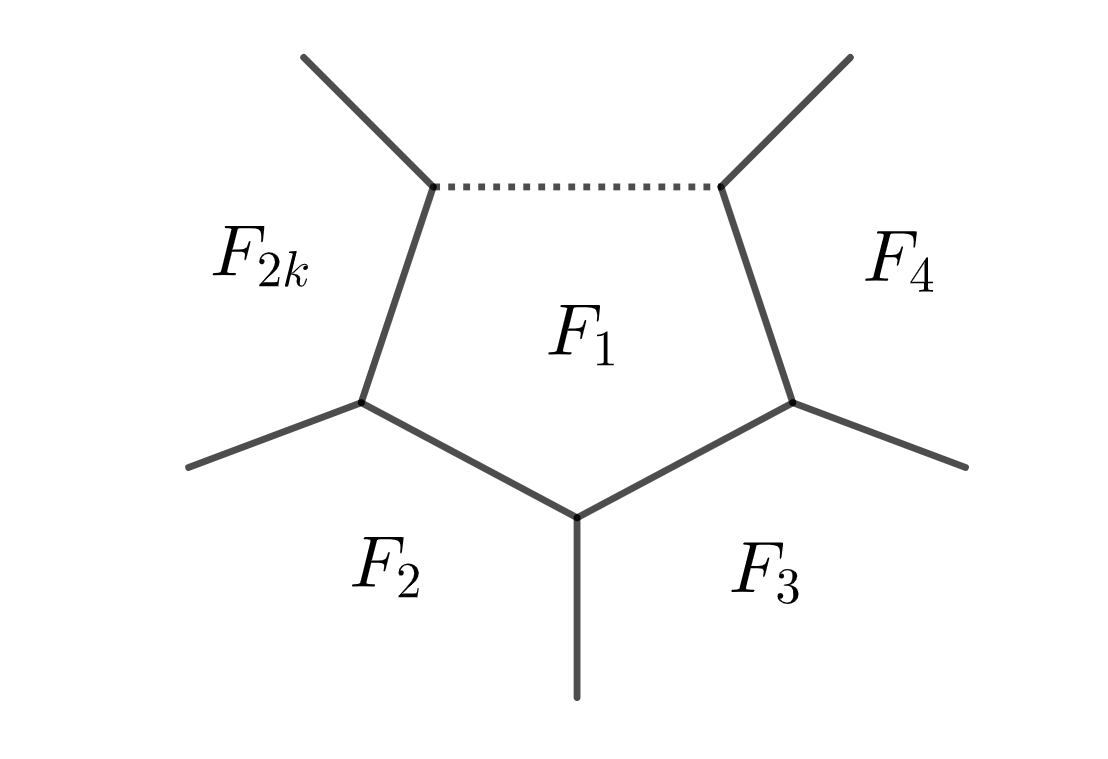}
\caption{Labels around a facet with odd number of edges}
\label{oddpolygon}
\end{figure}

If $F=C_{2}(2k-1)$ $(k \geq 3)$, then let $F_{1}=F$ and label the facets adjacent to $F$ as shown in Figure \ref{oddpolygon}. For $2 \leq i < j \leq 2k$ with $j-i \neq 1, 2k-2$, set $d(i,j)=j-i$ and $d(j,i)=2k-1+i-j$ whenever $F_{i} \cap F_{j} \neq \emptyset$. If such pair of $i,j$ does not exist, keep the labels unchanged. Otherwise, without loss of generality, suppose $d(i_{0},j_{0})$ reaches the minimum value of all such $d(i,j)$ and $d(j,i)$, then for $2 \leq t \leq 2k$, relabel $F_{t}$ as $F_{t-i_{0}+1}$ with subscripts taken modulo $2k-1$. It should be pointed out that similar to argument in the paragraph above, one can deduce $d(i_{0},j_{0}) \geq 3$ from string property. In this way, $F_{2} \cap F_{t}=\emptyset$ for $4 \leq t \leq j_{0}-i_{0}+1$ by minimum value condition. Meanwhile, $F_{2} \cap F_{t}=\emptyset$ for $j_{0}-i_{0}+2 \leq t \leq 2k-1$ by Jordan Curve Theorem since $F_{1}, F_{j_{0}-i_{0}+1}$ and $F_{2k}$ form a 3-belt of $P$. Parallel results are valid for $F_{3}$. To summarize, $\{4 \leq t \leq 2k\ |\ v_{2}v_{t} \neq 0\}=\{2k\}$ and $\{4 \leq t \leq 2k\ |\ v_{3}v_{t} \neq 0\}=\{4\}$. 

Label other facets of $P$ arbitrarily and suppose string property is satisfied for $M=M(P,\Lambda)$ with 
\[
\Lambda=
\left(
\begin{array}{c|ccccc|c}
1 & 0 & 0 & \la_{1,4} & \cdots & \la_{1,2k} & \cdots \\
\hline
0 & 1 & 0 & 1 & \cdots & \la_{2,2k} & \cdots \\
0 & 0 & 1 & \la_{3,4} & \cdots & 1 & \cdots 
\end{array}
\right). 
\]
Similar to Lemma \ref{dimP=2 coefficient}, for $2 \leq i, j \leq 2k$, let $\Delta_{i,j}=\mathrm{det} \left( \begin{smallmatrix} \la_{2,i} & \la_{2,j} \\ \la_{3,i} & \la_{3,j}\end{smallmatrix} \right)$ and $l_{i}=\Delta_{i-1,i} \cdot \Delta_{i,i+1} \cdot \Delta_{i+1,i-1}$ with subscripts taken modulo $2k-1$. Then the following relations can be deduced from cohomology ring structure of $M$: 
\[
\left\{
\begin{aligned}
& \Delta_{i-1,i} v_{i-1}v_{i}=\Delta_{i,i+1} v_{i}v_{i+1}+\cdots & \qquad 5 \leq i \leq 2k-1; \\
& v_{i}^{2}=l_{i} \cdot \Delta_{i,i+1} v_{i}v_{i+1}+\cdots & 4 \leq i \leq2k-1; \\
& v_{2k}^{2}=l_{2k} \cdot \Delta_{2k-1,2k} v_{2k-1}v_{2k}+\cdots. 
\end{aligned}
\right.
\]
Let $\rho_{i}=\sum_{t=1}^{3} \la_{t,i}^{2}+1$, $\rho_{i,j}=\rho_{j,i}=2 \sum_{t=1}^{3} \la_{t,i}\la_{t,j}$ for $1 \leq i < j \leq m$ and choose $v_{2k-1}v_{2k}$ to be one of the $\Q$-basis of $H^{4}(M)$. Then in the expression of $p_{1}(M)$, the corresponding coefficient 
\begin{equation} \label{cyclic coefficient}
c_{2k-1,2k}=\Delta_{2k-1,2k} (\sum_{i=4}^{2k} l_{i} \rho_{i}+\sum_{i=4}^{2k-1} \Delta_{i,i+1} \rho_{i,i+1}). 
\end{equation}
(\ref{cyclic coefficient}) can be further modified as 
\begin{equation} \label{cyclic coefficient 2}
c_{2k-1,2k}=\Delta_{2k-1,2k} \sum_{i=2}^{2k} (l_{i} \rho_{i}+ \Delta_{i,i+1} \rho_{i,i+1})
\end{equation}
with indices taken modulo $2k-1$ since 
\begin{align*}
& \Delta_{2k,2}\rho_{2k,2}+l_{2}\rho_{2}+\Delta_{2,3}\rho_{2,3}+l_{3}\rho_{3}+\Delta_{3,4}\rho_{3,4} \\
= & -2\la_{2,2k}\la_{3,2k}+2\la_{2,2k}\la_{3,2k}+0+2\la_{2,4}\la_{3,4}-2\la_{2,4}\la_{3,4} \\
= &\ 0. 
\end{align*}
Combine (\ref{cyclic coefficient 2}) with algebraic lemma \ref{algebraic lemma 1} and \ref{algebraic lemma 2} below, $c_{2k-1,2k} \equiv 4 \pmod{8}$. In particular, $c_{2k-1,2k} \neq 0$, leading to contradiction. 
\end{proof}

\begin{lemma} \label{algebraic lemma 1}
With indices taken modulo $2k-1$, 
\begin{equation} \label{mod 8}
\sum_{i=2}^{2k} [l_{i}(\la_{1,i}^2+1)+2\Delta_{i,i+1}\la_{1,i}\la_{1,i+1}] \equiv 4 \pmod{8}. 
\end{equation}
\end{lemma}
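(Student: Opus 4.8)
The plan is to interpret the left-hand side of (\ref{mod 8}) as the pairing of a cohomology class on an auxiliary $4$-dimensional quasitoric manifold with its fundamental class, and then to extract the congruence from the behaviour of the signature modulo $8$.

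First I would introduce $M':=M(C_{2}(2k-1),\Lambda')$, where $\Lambda'$ is the $2\times(2k-1)$ matrix whose $i$-th column is $(\la_{2,i},\la_{3,i})^{\mathrm{T}}$, $i=2,\dots,2k$. The nonsingular condition for $M'$ is exactly $\Delta_{i,i+1}=\pm1$, so $M'$ is a genuine quasitoric $4$-manifold over the odd $(2k-1)$-gon, and its second and third columns are $\boldsymbol{e_{1}},\boldsymbol{e_{2}}$. Carrying out the computation in the proof of Lemma \ref{dimP=2 coefficient} with the two rows of $\Lambda'$ playing the roles of the two rows there gives, in $H^{*}(M';\Z)$ and with indices modulo $2k-1$, the relations $v_{i}^{2}=l_{i}w$ and $v_{i}v_{i+1}=\Delta_{i,i+1}w$, where $w:=\Delta_{2,3}v_{2}v_{3}$ generates $H^{4}(M';\Z)\cong\Z$ (indeed $v_{2},v_{3}$ are Poincar\'e dual to the characteristic submanifolds over $F_{2},F_{3}$, which meet transversally in one point, so $\langle w,[M']\rangle=\pm1$). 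Setting $y:=\sum_{i=2}^{2k}\la_{1,i}v_{i}\in H^{2}(M';\Z)$ and using $v_{i}v_{j}=0$ for non-adjacent facets together with $p_{1}(M')=\sum v_{i}^{2}=\bigl(\sum l_{i}\bigr)w$, one obtains the identity
\[
y^{2}+p_{1}(M')=\Bigl(\textstyle\sum_{i=2}^{2k}\bigl[\,l_{i}(\la_{1,i}^{2}+1)+2\Delta_{i,i+1}\la_{1,i}\la_{1,i+1}\,\bigr]\Bigr)\,w ,
\]
so that the left-hand side of (\ref{mod 8}) is precisely the $w$-coefficient of $y^{2}+p_{1}(M')$.

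The heart of the argument is that $y$ is a characteristic element of the unimodular intersection form of $M'$; this is the point at which the hypothesis of Proposition \ref{dimP=3 polytope} (that $M(P,\Lambda)$ is string) is used. Indeed $w_{2}(M(P,\Lambda))=0$ forces every column sum of $\Lambda$ to be odd, i.e.\ $\la_{1,i}\equiv 1+\la_{2,i}+\la_{3,i}\pmod 2$, and combining this with the linear relations $\sum\la_{2,i}v_{i}=\sum\la_{3,i}v_{i}=0$ in $H^{2}(M';\Z_{2})$ yields $y\equiv\sum v_{i}=w_{2}(M')\pmod 2$; since by the Wu formula the characteristic vectors of the intersection form are exactly those reducing to $w_{2}(M')$ mod $2$, $y$ is characteristic. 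Now evaluate the displayed identity on $[M']$: van der Blij's lemma gives $\langle y^{2},[M']\rangle\equiv\sigma(M')\pmod 8$, the Hirzebruch signature theorem gives $\langle p_{1}(M'),[M']\rangle=3\sigma(M')$, and $\langle w,[M']\rangle=\epsilon$ with $\epsilon\in\{\pm1\}$ as above. Hence the left-hand side of (\ref{mod 8}) equals $\epsilon\bigl(\langle y^{2},[M']\rangle+3\sigma(M')\bigr)\equiv 4\epsilon\,\sigma(M')\pmod 8$. Finally $\beta^{2}(M')=h_{1}(C_{2}(2k-1))=2k-3$ is odd, so $\sigma(M')$ is odd, and therefore $4\epsilon\,\sigma(M')\equiv 4\pmod 8$, which is the claim.

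The cohomology bookkeeping in the second paragraph and the orientation sign $\epsilon$ are routine, and $\epsilon$ washes out precisely because $\sigma(M')$ is odd. The two inputs that really do the work are the recognition of $y$ as a characteristic vector -- the only place the string hypothesis enters, and without which the congruence is false -- together with the ``square of a characteristic vector $\equiv$ signature $\pmod 8$'' lemma and the oddness of $\beta^{2}(M')$, which is what pins the residue to $4$ rather than $0$. I do not anticipate any serious obstacle beyond keeping the orientation conventions consistent throughout.
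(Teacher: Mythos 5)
Your proof is correct, and it takes a genuinely different route from the paper's. The paper proves Lemma \ref{algebraic lemma 1} by brute-force arithmetic: it reduces the matrix mod $2$, partitions the columns into blocks according to the parity pattern of the first row, reduces (\ref{mod 8}) to $\sum_{i\in\mathcal{E}}l_{i}+\sum_{i\in\mathcal{O}}2l_{i}\equiv 4\pmod 8$, then expands each $l_{i}$ and regroups the sum as $\sum_{i}A_{i}\la_{2,i}\la_{3,i}$, evaluating $A_{i}$ modulo $8$ case by case; the final residue $4$ comes from an odd count of indices $i$ with $\mu_{1,i}=1$ and $\mu_{2,i-1}\neq\mu_{2,i+1}$, forced by the normalization $\la_{2,2}=\la_{3,3}=1$, $\la_{2,3}=\la_{3,2}=0$. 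You instead package the left-hand side as $\langle y^{2}+p_{1}(M'),[M']\rangle$ for an auxiliary quasitoric $4$-manifold $M'$ over the odd polygon (the nonsingularity $\Delta_{i,i+1}=\pm1$ needed for $M'$ to exist is exactly the restriction of (\ref{nonsingular}) to the vertices of $F_{1}$), observe that the spin condition makes $y$ a characteristic element of the unimodular intersection form, and conclude via van der Blij and the Hirzebruch signature theorem that the sum is $\pm 4\sigma(M')\bmod 8$, with $\sigma(M')$ odd because $\beta^{2}(M')=2k-3$ is odd. All the ingredients check out: the relations $v_{i}^{2}=l_{i}w$, $v_{i}v_{i+1}=\Delta_{i,i+1}w$ follow from the computation in Lemma \ref{dimP=2 coefficient}; $y\equiv w_{2}(M')\pmod 2$ follows from odd column sums plus the two linear relations of $\mathcal{J}$; and the sign $\epsilon=\langle w,[M']\rangle=\pm1$ is absorbed since $4\cdot(\text{odd})\equiv 4\pmod 8$. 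Your argument is shorter, explains conceptually why the answer is $4$ (it is $4\sigma$ of a $4$-manifold with odd second Betti number, i.e., ultimately the oddness of the polygon), and shows the congruence holds under only the adjacent-minor and odd-column-sum hypotheses without the normalization of columns $2$ and $3$; what it costs is the appeal to the signature theorem and van der Blij's lemma, whereas the paper's computation, though longer, is entirely elementary and self-contained.
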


\begin{proof}
Let $\mu_{i,j}$ characterize the parity of $\la_{i,j}$, i.e., $\mu_{i,j} \equiv \la_{i,j} \pmod{2}$ for $1 \leq i \leq 3, 2 \leq j \leq 2k$. Set $\mathcal{E}=\{2 \leq j \leq 2k\ |\ \mu_{1,j}=0\}$ and $\mathcal{O}=\{2 \leq j \leq 2k\ |\ \mu_{1,j}=1\}$. By (\ref{nonsingular}) and spin property, $\mathcal{O}$ does not contain adjacent indices. Moreover, submatrix $(\mu_{i,j})_{1 \leq i \leq 3, 2\leq j \leq 2k}$ can be divided into two types of blocks: $B_{E}=(\mu_{i,j})_{1 \leq i \leq 3, s_{1} \leq j \leq s_{2}}$ such that 
\[
s_{1}-1, s_{1}, \dots, s_{2}, s_{2}+1 \in \mathcal{E}, 
\]
and $B_{O}=(\mu_{i,j})_{1 \leq i \leq 3, t_{1} \leq j \leq t_{2}}$ such that
\[
\left\{
\begin{aligned}
& t_{1}-1, t_{1}, t_{1}+2, \dots, t_{2}-2, t_{2}, t_{2}+1 \in \mathcal{E}; \\
& t_{1}+1, t_{1}+3, \dots, t_{2}-3, t_{2}-1 \in \mathcal{O}. 
\end{aligned}
\right.
\]

In the first type, $l_{i} \equiv \la_{1,i} \equiv \la_{1,i+1} \equiv 0 \pmod{2}$ for $s_{1} \leq i \leq s_{2}$. Thus, 
\[
\sum_{i=s_{1}}^{s_{2}} [l_{i}(\la_{1,i}^2+1)+2\Delta_{i,i+1}\la_{1,i}\la_{1,i+1}] \equiv \sum_{i=s_{1}}^{s_{2}} l_{i} \pmod{8}. 
\]
In the second type, let $\mathcal{E}_{O}=\{t_{1} \leq j \leq t_{2}\ |\ \mu_{1,j}=0\}$ and $\mathcal{O}_{O}=\{t_{1} \leq j \leq t_{2}\ |\ \mu_{1,j}=1\}$, then with mod 8 operation, 
\begin{align*}
& \sum_{i=t_{1}}^{t_{2}} [l_{i}(\la_{1,i}^2+1)+2\Delta_{i,i+1}\la_{1,i}\la_{1,i+1}] \\
\equiv & \sum_{i \in \mathcal{O}_{O}} 2l_{i}+\sum_{i \in \mathcal{E}_{O}} l_{i}+\la_{1,t_{1}}^{2}+\la_{1,t_{2}}^{2}+2\la_{1,t_{1}}+2\la_{1,t_{2}} \\
\equiv & \sum_{i \in \mathcal{O}_{O}} 2l_{i}+\sum_{i \in \mathcal{E}_{O}} l_{i}. 
\end{align*}
In summary, (\ref{mod 8}) is equivalent to 
\begin{equation} \label{simplified mod 8}
\sum_{i \in \mathcal{E}} l_{i}+\sum_{i \in \mathcal{O}} 2l_{i} \equiv 4 \pmod{8}. 
\end{equation}

On the other hand, it follows from definition that 
\begin{align*}
l_{i} = &\ (\la_{2,i-1}\la_{3,i}-\la_{2,i}\la_{3,i-1}) (\la_{2,i}\la_{3,i+1}-\la_{2,i+1}\la_{3,i}) (\la_{2,i+1}\la_{3,i-1}-\la_{2,i-1}\la_{3,i+1}) \\
= &\ \la_{2,i-1}\la_{3,i-1}(\la_{2,i}^{2}\la_{3,i+1}^{2}-\la_{2,i+1}^{2}\la_{3,i}^{2})+\la_{2,i}\la_{3,i}(\la_{2,i+1}^{2}\la_{3,i-1}^{2}-\la_{2,i-1}^{2}\la_{3,i+1}^{2}) \\
& +\la_{2,i+1}\la_{3,i+1}(\la_{2,i-1}^{2}\la_{3,i}^{2}-\la_{2,i}^{2}\la_{3,i-1}^{2}). 
\end{align*}
In this way, one can rewrite the left hand side of (\ref{simplified mod 8}) to be $\sum_{i=2}^{2k} A_{i} \la_{2,i}\la_{3,i}$, where 
\begin{align*}
A_{i} = &\ (\la_{2,i-2}^{2}\la_{3,i-1}^{2}-\la_{2,i-1}^{2}\la_{3,i-2}^{2})(1+\mu_{1,i-1})+(\la_{2,i+1}^{2}\la_{3,i-1}^{2}-\la_{2,i-1}^{2}\la_{3,i+1}^{2}) \\
&\ (1+\mu_{1,i})+(\la_{2,i+1}^{2}\la_{3,i+2}^{2}-\la_{2,i+2}^{2}\la_{3,i+1}^{2})(1+\mu_{1,i+1}). 
\end{align*}

For each column $i$ in $B_{E}$, $A_{i} \equiv 1+0-1 \equiv 0 \pmod{4}$ and $A_{i} \la_{2,i}\la_{3,i} \equiv 0 \pmod{8}$. As for columns in $B_{O}$: 

If $i=t_{1}$, then
\[
(\mu_{s,t})_{1 \leq s \leq 3, i-2 \leq t \leq i+2}=
\begin{pmatrix}
x & 0 & 0 & 1 & 0 \\
1 & 0 & 1 & 1 & y \\
x & 1 & 0 & 1 & 1-y 
\end{pmatrix}
\text{ or }
\begin{pmatrix}
x & 0 & 0 & 1 & 0 \\
x & 1 & 0 & 1 & y \\
1 & 0 & 1 & 1 & 1-y 
\end{pmatrix}
\]
with $x, y \in \{0, 1\}$ and $A_{i} \equiv 1+1\pm 2 \equiv 0 \pmod{4}$. The same result holds for $i=t_{2}$ in a similar manner. If $i \in \mathcal{E}_{O}\setminus \{t_{1}, t_{2}\}$, then
\[
(\mu_{s,t})_{1 \leq s \leq 3, i-2 \leq t \leq i+2}=
\begin{pmatrix}
0 & 1 & 0 & 1 & 0 \\
x & 1 & a & 1 & y \\
1-x & 1 & 1-a & 1 & 1-y 
\end{pmatrix}
\]
with $a, x, y \in \{0, 1\}$ and $A_{i} \equiv 2+0+2 \equiv 0 \pmod{4}$. Thus, $A_{i}\la_{2,i}\la_{3,i} \equiv 0 \pmod{8}$ is valid for all $i \in \mathcal{E}_{O}$. 

In addition, if $i=t_{1}+1$ and $\mu_{2,i-1}=\mu_{2,i+1}$, then 
\[
(\mu_{s,t})_{1 \leq s \leq 3, i-2 \leq t \leq i+2}=
\begin{pmatrix}
0 & 0 & 1 & 0 & 1 \\
1-a & a & 1 & a & 1 \\
a & 1-a & 1 & 1-a & 1 
\end{pmatrix}
\]
with $a \in \{0, 1\}$ and $A_{i} \equiv \la_{2,i+1)}^{2}+\la_{3,i+1}^{2}-1 \pmod{8}$. If $i=t_{1}+1$ and $\mu_{2,i-1} \neq \mu_{2,i+1}$, then 
\[
(\mu_{s,t})_{1 \leq s \leq 3, i-2 \leq t \leq i+2}=
\begin{pmatrix}
0 & 0 & 1 & 0 & 1 \\
1-a & a & 1 & 1-a & 1 \\
a & 1-a & 1 & a & 1 
\end{pmatrix}
\]
with $a \in \{0, 1\}$ and $A_{i} \equiv (\la_{2,i+1}^{2}+\la_{3,i+1}^{2}-1)+4 \pmod{8}$. Parallel results hold for $i=t_{2}-1$. Similarly, for $i \in \mathcal{O}_{O}\setminus \{t_{1}+1, t_{2}-1\}$, 
\[
A_{i} \equiv 
\left\{
\begin{aligned}
& (\la_{2,i-1}^{2}+\la_{3,i-1}^{2}-1)+(\la_{2,i+1}^{2}+\la_{3,i+1}^{2}-1) \pmod{8} & \qquad \mu_{2,i-1}=\mu_{2,i+1}; \\
& (\la_{2,i-1}^{2}+\la_{3,i-1}^{2}-1)+(\la_{2,i+1}^{2}+\la_{3,i+1}^{2}-1)+4 \pmod{8} & \mu_{2,i-1} \neq \mu_{2,i+1}. 
\end{aligned}
\right. 
\]
Note that for each $i \in \mathcal{O}_{O}$, $A_{i} \la_{2,i}\la_{3,i} \equiv A_{i} \pmod{8}$ since $A_{i} \equiv 0 \pmod{4}$. 

In conclusion, with mod 8 operation and $\mathcal{O'}=\{i \in \mathcal{O}| \mu_{2,i-1} \neq \mu_{2,i+1}\}$, 
\begin{align*}
\sum_{i=2}^{2k} A_{i} \la_{2,i} \la_{3,i} & = \sum_{i \in \mathcal{E}} A_{i} \la_{2,i} \la_{3,i}+\sum_{i \in \mathcal{O}} A_{i} \la_{2,i} \la_{3,i} \\
& \equiv \sum_{i \in \mathcal{O}} A_{i} \\
& \equiv \sum_{i \in \mathcal{O'}} 4 \\
& \equiv 4. 
\end{align*}
Here the last equivalence is induced by the fact $\la_{2,2}=\la_{3,3}=1$ and $\la_{2,3}=\la_{3,2}=0$. 
\end{proof}

\begin{lemma} \label{algebraic lemma 2}
With indices taken modulo $2k-1$, 
\begin{equation} \label{vanish}
\sum_{i=2}^{2k} [l_{i}(\la_{2,i}^2+\la_{3,i}^2)+2\Delta_{i,i+1}(\la_{2,i}\la_{2,i+1}+\la_{3,i}\la_{3,i+1})]=0. 
\end{equation}
\end{lemma}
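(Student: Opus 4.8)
The plan is to realize the left-hand side of (\ref{vanish}) as a cohomology relation in an auxiliary $4$-dimensional quasitoric manifold over the $(2k-1)$-gon, so that Lemma \ref{dimP=2 coefficient} and Proposition \ref{cohomology ring} carry all the bookkeeping. First I would cut the bottom two rows out of $\Lambda$: put $\boldsymbol{\la'_i}=(\la_{2,i},\la_{3,i})^{\mathrm T}$ for $2\le i\le 2k$ and $\Lambda'=(\boldsymbol{\la'_2},\dots,\boldsymbol{\la'_{2k}})$, a $2\times(2k-1)$ integer matrix. Then $\Lambda'$ is a characteristic matrix for $C_2(2k-1)$, with the facets labelled $F_2,\dots,F_{2k}$ in the cyclic order of Figure \ref{oddpolygon}: every vertex of that polygon is of the form $F_1\cap F_i\cap F_{i+1}$ for a cyclically consecutive pair, and since $\boldsymbol{\la_1}=\boldsymbol{e_1}$ one has $\det(\boldsymbol{\la_1},\boldsymbol{\la_i},\boldsymbol{\la_{i+1}})=\Delta_{i,i+1}$, which (\ref{nonsingular}) forces to be $\pm 1$ for every such pair (including the wrap-around $(2k,2)$). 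So $M'=M(C_2(2k-1),\Lambda')$ is a genuine quasitoric manifold, and because $\boldsymbol{\la'_2}=\boldsymbol{e_1}$, $\boldsymbol{\la'_3}=\boldsymbol{e_2}$ we have $\Delta_{2,3}=1$.

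Next I would use Proposition \ref{cohomology ring} for $M'$: ideal $\mathcal{J}$ of $H^{*}(M';\Z)$ contains $\sum_{i=2}^{2k}\la_{2,i}v_i$ and $\sum_{i=2}^{2k}\la_{3,i}v_i$, so these are $0$ in $H^2(M')$. Squaring both, discarding the products $v_iv_j$ that vanish by ideal $\mathcal{I}$ (those with $F_i\cap F_j=\emptyset$, i.e. $i,j$ non-adjacent), and adding yields
\[
\sum_{i=2}^{2k}(\la_{2,i}^2+\la_{3,i}^2)v_i^2+2\sum_{i=2}^{2k}(\la_{2,i}\la_{2,i+1}+\la_{3,i}\la_{3,i+1})v_iv_{i+1}=0
\]
in $H^4(M')$, all indices modulo $2k-1$. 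Now I substitute the relations established inside the proof of Lemma \ref{dimP=2 coefficient}, applied to $M'$: $v_i^2=l_i\Delta_{i,i+1}v_iv_{i+1}$ and $\Delta_{i,i+1}v_iv_{i+1}=\Delta_{i-1,i}v_{i-1}v_i=\cdots=\Delta_{2,3}v_2v_3$, so that $v_i^2=l_i\Delta_{2,3}v_2v_3$ and $v_iv_{i+1}=\Delta_{i,i+1}\Delta_{2,3}v_2v_3$ (using $\Delta_{i,i+1}^2=1$). After substitution the displayed identity becomes
\[
\Delta_{2,3}\Bigl(\sum_{i=2}^{2k}\bigl[l_i(\la_{2,i}^2+\la_{3,i}^2)+2\Delta_{i,i+1}(\la_{2,i}\la_{2,i+1}+\la_{3,i}\la_{3,i+1})\bigr]\Bigr)v_2v_3=0 .
\]

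Finally, $v_2v_3$ is the Poincar\'e dual of the single point $\pi^{-1}(F_2\cap F_3)$, hence a generator of $H^4(M';\Z)\cong\Z$ (whose rank is $h_2(C_2(2k-1))=1$ by Proposition \ref{Betti number}); since $\Delta_{2,3}=1$, the bracketed integer must vanish, which is exactly (\ref{vanish}). There is no deep obstacle here: the real content is that the $2\times(2k-1)$ block sitting inside $\Lambda$ is itself a polytopal characteristic matrix, which lets the ring relations of Lemma \ref{dimP=2 coefficient} do the algebra. The only points needing a little care are the cyclic index conventions — checking that $\Delta_{i,i+1}v_iv_{i+1}$ is one and the same class $\Delta_{2,3}v_2v_3$ for every $i$ modulo $2k-1$ — and the last step, namely that $v_2v_3$ is an honest free generator of $H^4(M';\Z)$ so that an equation $c\,v_2v_3=0$ really forces $c=0$.
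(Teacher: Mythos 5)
Your argument is correct, but it is a genuinely different route from the one in the paper. You realize the left-hand side of (\ref{vanish}) as the coefficient of the generator $v_{2}v_{3}$ of $H^{4}$ of an auxiliary quasitoric $4$-manifold $M'=M(C_{2}(2k-1),\Lambda')$ built from rows $2,3$ of $\Lambda$, the identity then being forced by $(\sum_{i}\la_{2,i}v_{i})^{2}+(\sum_{i}\la_{3,i}v_{i})^{2}=0$ in $H^{4}(M')$. The two points you flag as needing care are indeed the only ones, and both go through: the minors $\Delta_{i,i+1}=\pm1$ needed to make $\Lambda'$ characteristic come from nonsingularity of $\Lambda$ at the vertices of $F_{1}$ together with $\boldsymbol{\la_{1}}=\boldsymbol{e_{1}}$, and $v_{2}v_{3}$ generates $H^{4}(M')\cong\Z$ (every degree-$4$ monomial is an integer multiple of it and $\beta^{4}=h_{2}=1$), so an integer multiple of it vanishes only if the integer does. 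The paper instead proves the identity by bare algebra: it verifies that each summand satisfies
\[
l_{i}(\la_{2,i}^2+\la_{3,i}^2)+\Delta_{i,i+1}(\la_{2,i}\la_{2,i+1}+\la_{3,i}\la_{3,i+1})=-\Delta_{i-1,i}(\la_{2,i-1}\la_{2,i}+\la_{3,i-1}\la_{3,i}),
\]
using $\Delta_{i-1,i}^{2}=\Delta_{i,i+1}^{2}=1$, so that the cyclic sum telescopes to zero. Your approach buys conceptual clarity --- it exhibits Lemma \ref{algebraic lemma 2} as the statement that the rows-$2,3$ contribution to the coefficient in (\ref{cyclic coefficient 2}) is annihilated by the ideal $\mathcal{J}$ of the sub-characteristic pair, and it recycles the bookkeeping of Lemma \ref{dimP=2 coefficient} --- and it makes the generalization to other polygons and other pairs of rows transparent. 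The paper's computation is shorter, self-contained, and requires no topology, but hides the reason the cancellation occurs. Both arguments rest on the same hypothesis $\Delta_{i,i+1}=\pm1$, which holds in the context where the lemma is applied.
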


\begin{proof}
By direct computation, 
\begin{align*}
&\ l_{i}(\la_{2,i}^2+\la_{3,i}^2)+\Delta_{i,i+1}(\la_{2,i}\la_{2,i+1}+\la_{3,i}\la_{3,i+1}) \\
= & -\Delta_{i-1,i} \cdot \Delta_{i,i+1} \cdot [(\la_{2,i}^{2}+\la_{3,i}^{2})(\la_{2,i-1} \la_{3,i+1}-\la_{2,i+1} \la_{3,i-1}) \\
& -(\la_{2,i} \la_{2,i+1}+\la_{3,i} \la_{3,i+1})(\la_{2,i-1} \la_{3,i}-\la_{2,i} \la_{3,i-1})] \\
= & -\Delta_{i-1,i} \cdot \Delta_{i,i+1} \cdot (\la_{2,i-1} \la_{2,i} \Delta_{i,i+1}+\la_{3,i-1} \la_{3,i} \Delta_{i,i+1}) \\
= & -\Delta_{i-1,i} \cdot (\la_{2,i-1} \la_{2,i}+\la_{3,i-1} \la_{3,i}). 
\end{align*}
Therefore, the $i^{th}$ term in left hand side of (\ref{vanish}) equals to 
\[
-\Delta_{i-1,i} \cdot (\la_{2,i-1} \la_{2,i}+\la_{3,i-1} \la_{3,i})+\Delta_{i,i+1}(\la_{2,i}\la_{2,i+1}+\la_{3,i}\la_{3,i+1}), 
\]
and taking cyclic sum yields (\ref{vanish}). 
\end{proof}

\vspace{0.5cm}

As for concrete examples, combine Theorem \ref{dimP=3 polytope} with arguments in Section \ref{dimP=2}, further discussions on string quasitoric manifolds over prism $L_{2k}=C_{2}(2k) \times I$ $(k \geq 2)$ are available. Label the facets of $L_{2k}$ such that $F_{1}$ (resp. $F_{2k+2}$) is the top (resp. bottom) facet and for $2 \leq i < j \leq 2k+1$, $F_{i} \cap F_{j} \neq \emptyset$ if and only if $j-i=1$ or $2k-1$ (see Figure \ref{L2k and its dual with label}). Let the initial vertex be $v=F_{1} \cap F_{2} \cap F_{3}$, then $\Lambda$ can be refined to 
\[
\left(
\begin{array}{ccc|cccc|c}
1 & 0 & 0 & \la_{1,4} & \la_{1,5} & \cdots & \la_{1,2k+1} & 1 \\
0 & 1 & 0 & 1 & \la_{2,5} & \cdots & \la_{2,2k+1} & \la_{2,2k+2} \\
0 & 0 & 1 & \la_{3,4} & \la_{3,5} & \cdots & 1 & \la_{3,2k+2} \\
\end{array}
\right).
\]
Fix the corresponding basis of $H^{2}(M(L_{2k},\Lambda))$ as $\{v_{i}\}_{i=4}^{2k+2}$, then $\{v_{i}v_{2k+2}\}_{4 \leq i \leq 2k+1}$ along with $v_{k+2}v_{k+3}$ form a basis of $H^{4}(M(L_{2k},\Lambda))$. In this way, $p_{1}(M(L_{2k},\Lambda))$ can be expressed as $c_{k+2,k+3} v_{k+2}v_{k+3}+\sum_{i=4}^{2k+1} c_{i,2k+2} v_{i}v_{2k+2}$. 

\begin{figure}[htb]
\begin{minipage}[t]{0.45\linewidth}
\centering
\includegraphics[height=6cm]{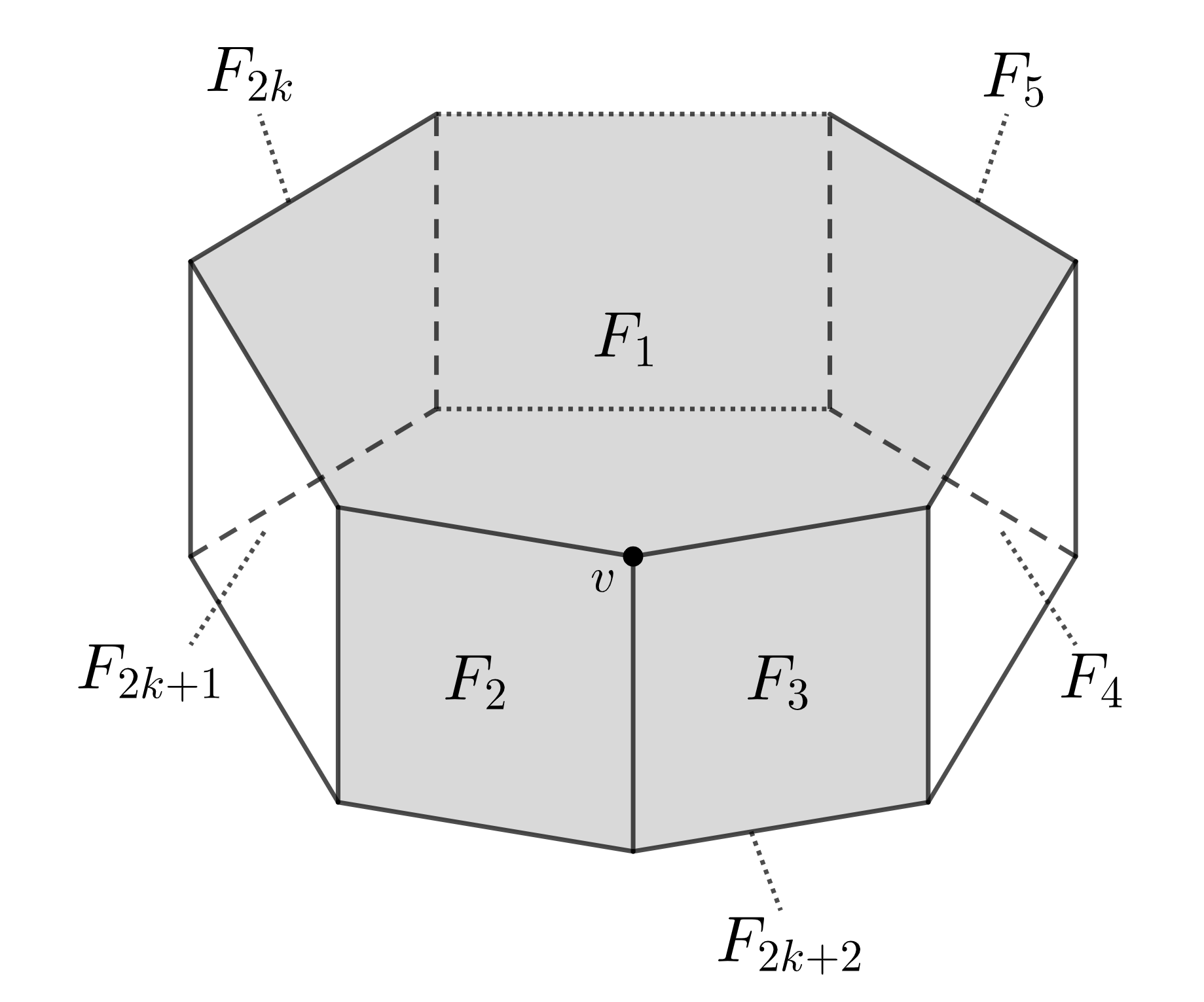}
\end{minipage}
\begin{minipage}[t]{0.45\linewidth}
\centering
\includegraphics[height=6cm]{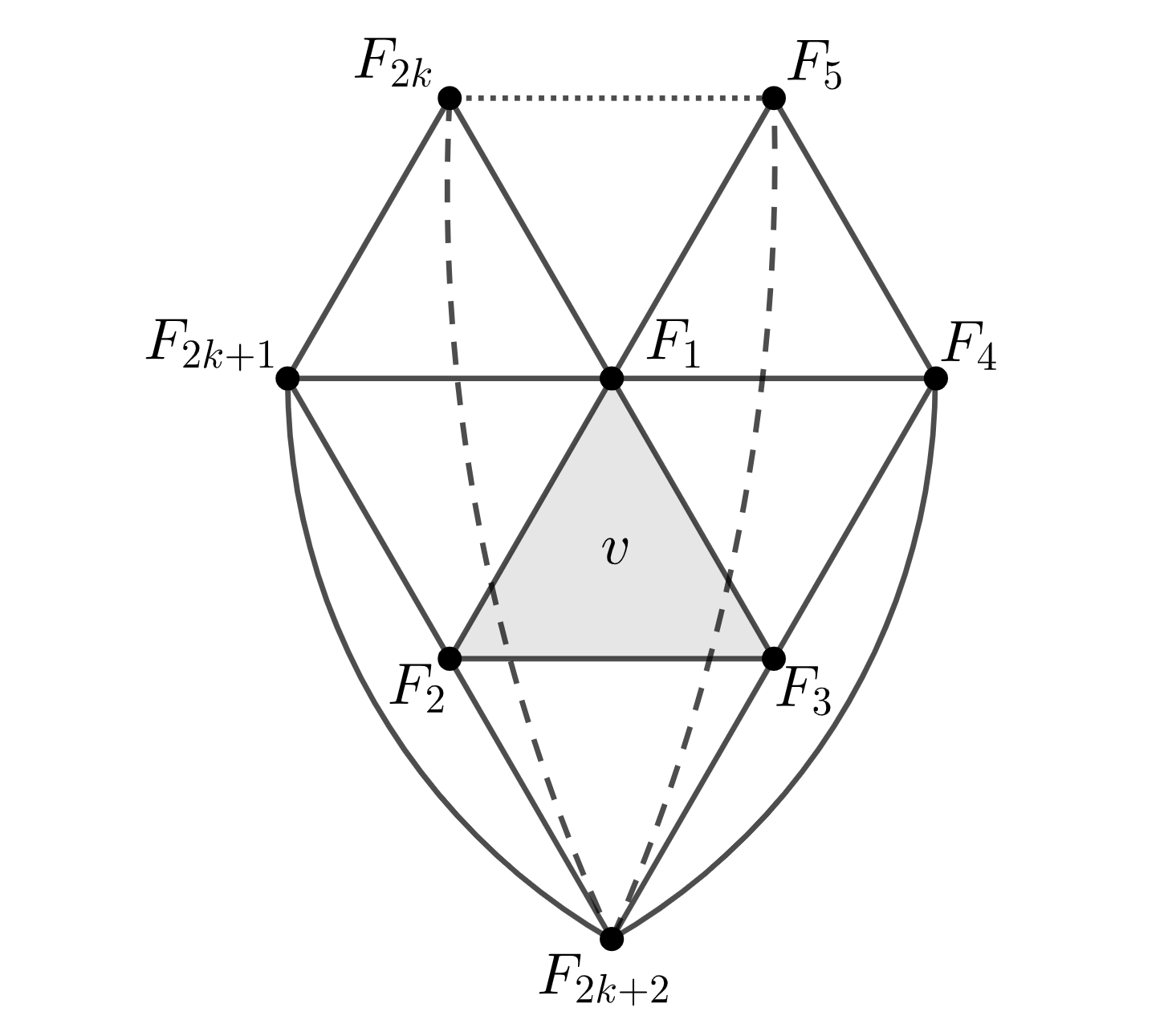}
\end{minipage}
\centering
\caption{$L_{2k}$ and its dual with label}
\label{L2k and its dual with label}
\end{figure}

\begin{lemma} \label{coefficient prism}
Using notations similar to those in Theorem \ref{dimP=3 polytope}, 
\[
c_{k+2,k+3}=\Delta_{k+2,k+3} \sum_{i=2}^{2k+1} (l_{i} \rho_{i}+\Delta_{i,i+1} \rho_{i,i+1}). 
\]
Moreover, for $4 \leq i \leq k+2$, 
\begin{equation*}
c_{i,2k+2}= -\Delta_{i-1,i}\Delta_{i-1,2k+2} \rho_{i}-\la_{1,i} \rho_{2k+2}+\rho_{i,2k+2}+\Delta_{i,2k+2} \sum_{s=4}^{i-1} (l_{s} \rho_{s}+\Delta_{s,s+1} \rho_{s,s+1}). 
\end{equation*}
And for $k+3 \leq i \leq 2k+1$, 
\begin{equation*}
c_{i,2k+2}= -\Delta_{i+1,i}\Delta_{i+1,2k+2} \rho_{i}-\la_{1,i} \rho_{2k+2}+\rho_{i,2k+2}-\Delta_{i,2k+2} \sum_{s=i+1}^{2k+1} (l_{s} \rho_{s}+\Delta_{s-1,s} \rho_{s-1,s}). 
\end{equation*}
Here all subscripts containing $i$ are taken modulo $2k$. 
\end{lemma}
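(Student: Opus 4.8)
The plan is to read off both coefficients directly inside the integral cohomology ring $H^{*}(M(L_{2k},\Lambda);\Z)=\Z[v_{1},\dots,v_{2k+2}]/(\mathcal{I}+\mathcal{J})$ of Proposition~\ref{cohomology ring}, using the refined matrix displayed above and proceeding as in the proof of Proposition~\ref{dimP=3 polytope}, but now retaining every term instead of passing to a residue. In refined form $\la_{1,2}=\la_{1,3}=\la_{2,1}=\la_{2,3}=\la_{3,1}=\la_{3,2}=0$ while $\la_{1,2k+2}=\la_{2,4}=\la_{3,2k+1}=1$, so the relations $t_{1}=t_{2}=t_{3}=0$ read $v_{r}\equiv-\sum_{j=4}^{2k+2}\la_{r,j}v_{j}$ for $r=1,2,3$; substituting these into $p_{1}(M)=\sum_{j=1}^{2k+2}v_{j}^{2}$ yields, in $H^{4}(M)$,
\[
p_{1}(M)=\sum_{i=4}^{2k+2}\rho_{i}\,v_{i}^{2}+\sum_{4\le i<j\le 2k+2}\rho_{i,j}\,v_{i}v_{j},
\]
the ``$+1$'' inside $\rho_{i}$ being the diagonal term of $v_{i}^{2}$ itself and the other $\la_{t,i}^{2},\la_{t,i}\la_{t,j}$ coming from $v_{1}^{2},v_{2}^{2},v_{3}^{2}$. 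It then remains to rewrite the right-hand side in the basis $\{v_{i}v_{2k+2}\}_{i=4}^{2k+1}\cup\{v_{k+2}v_{k+3}\}$ of $H^{4}(M)$.

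The second step is to extract the reduction rules from the remaining part of $\mathcal{I}+\mathcal{J}$. The relations $v_{i}v_{j}=0$ for non-adjacent side facets, together with $v_{i}t_{2}=v_{i}t_{3}=0$ for $i$ running through the side facets $2,\dots,2k+1$ (indices cyclic mod $2k$), give --- exactly as in Lemma~\ref{dimP=2 coefficient} --- the propagation identity
\[
\Delta_{i-1,i}\,v_{i-1}v_{i}=\Delta_{i,i+1}\,v_{i}v_{i+1}+\Delta_{i,2k+2}\,v_{i}v_{2k+2},
\]
and, by solving the same pair for $v_{i}^{2}$, the ``forward'' identity $v_{i}^{2}=l_{i}\Delta_{i,i+1}\,v_{i}v_{i+1}-\Delta_{i-1,i}\Delta_{i-1,2k+2}\,v_{i}v_{2k+2}$ together with its ``backward'' mirror $v_{i}^{2}=l_{i}\Delta_{i-1,i}\,v_{i-1}v_{i}+\Delta_{i,i+1}\Delta_{i+1,2k+2}\,v_{i}v_{2k+2}$; moreover $v_{1}v_{2k+2}=0$ (top and bottom are disjoint), after the substitution for $v_{1}$, gives $v_{2k+2}^{2}=-\sum_{j=4}^{2k+1}\la_{1,j}\,v_{j}v_{2k+2}$. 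Telescoping the propagation identity around the cycle --- upward from the edge $F_{k+2}\cap F_{k+3}$ for indices $\le k+2$ and downward for indices $\ge k+2$ --- then rewrites each $v_{i}v_{i+1}$ in the basis: the coefficient of $v_{k+2}v_{k+3}$ is $\Delta_{i,i+1}\Delta_{k+2,k+3}$, and the coefficient of $v_{t}v_{2k+2}$ is $\Delta_{i,i+1}\Delta_{t,2k+2}$ (up to a global sign that depends only on whether $i\le k+2$) for each $t$ lying between $i$ and $k+2$.

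Finally one assembles the coefficients by collecting contributions through these rules. For $c_{k+2,k+3}$: $v_{2k+2}^{2}$ and the basis monomials $v_{s}v_{2k+2}$ contribute nothing; each $v_{s}^{2}$ with $4\le s\le 2k+1$ contributes $\Delta_{k+2,k+3}\,l_{s}\rho_{s}$, and each $v_{s}v_{s+1}$ contributes $\Delta_{k+2,k+3}\,\Delta_{s,s+1}\rho_{s,s+1}$, so $c_{k+2,k+3}=\Delta_{k+2,k+3}\sum_{s=4}^{2k+1}(l_{s}\rho_{s}+\Delta_{s,s+1}\rho_{s,s+1})$; the range is then enlarged to $s=2,\dots,2k+1$ as in Proposition~\ref{dimP=3 polytope}, the new terms summing to zero because $\la_{1,2}=\la_{1,3}=0$ and $\la_{2,4}=\la_{3,3}=1$ force $l_{2}\rho_{2}+l_{3}\rho_{3}+\Delta_{2,3}\rho_{2,3}+\Delta_{3,4}\rho_{3,4}+\Delta_{2k+1,2}\rho_{2k+1,2}=0$. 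For $c_{i,2k+2}$ with $4\le i\le k+2$: the $v_{i}v_{2k+2}$-part of $v_{i}^{2}$ (via the forward identity) gives $-\Delta_{i-1,i}\Delta_{i-1,2k+2}\rho_{i}$, reducing $\rho_{2k+2}v_{2k+2}^{2}$ gives $-\la_{1,i}\rho_{2k+2}$, the monomial $\rho_{i,2k+2}v_{i}v_{2k+2}$ is already a basis vector, and the squares $v_{s}^{2}$ and cross terms $v_{s}v_{s+1}$ with $4\le s\le i-1$ each contribute $\Delta_{i,2k+2}(l_{s}\rho_{s}+\Delta_{s,s+1}\rho_{s,s+1})$ once telescoped past $v_{i}v_{2k+2}$; the range $k+3\le i\le 2k+1$ is the mirror image, obtained from the backward identity and downward telescoping. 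The one genuine difficulty is the bookkeeping: controlling the signs in the telescoping of the second step and determining exactly which squares and cross terms reach a prescribed basis vector $v_{i}v_{2k+2}$ --- this is the source of the case split $i\le k+2$ versus $i\ge k+3$ and of the partial sums $\sum_{s=4}^{i-1}$ and $\sum_{s=i+1}^{2k+1}$ --- and it is carried out most cleanly by induction on $i$.
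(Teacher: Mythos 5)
Your proposal is correct and follows essentially the same route as the paper: the paper likewise derives the propagation identity $\Delta_{i-1,i}v_{i-1}v_{i}=\Delta_{i,i+1}v_{i}v_{i+1}+\Delta_{i,2k+2}v_{i}v_{2k+2}$ together with the forward elimination of $v_i^2$ for $5\le i\le k+2$ and its backward mirror for $k+3\le i\le 2k$, then obtains $c_{k+2,k+3}$ as the analogue of (3.4) and the $c_{i,2k+2}$ by repeated application of these eliminations. Your extra details (the substitution giving $p_1=\sum\rho_i v_i^2+\sum\rho_{i,j}v_iv_j$, the use of $v_1v_{2k+2}=0$ to reduce $v_{2k+2}^2$, and the vanishing of the added terms when the sum is extended to $i=2,\dots,2k+1$) are all consistent with the paper's computation.
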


\begin{proof}
By cohomology ring structure of $M(L_{2k},\Lambda)$, 
\[
\left\{
\begin{aligned}
& \la_{2,i-1} v_{i-1}v_{i}+\la_{2,i} v_{i}^{2}+\la_{2,i+1} v_{i}v_{i+1}+\la_{2,2k+2} v_{i}v_{2k+2}=0 & \qquad 5 \leq i \leq 2k; \\
& \la_{3,i-1} v_{i-1}v_{i}+\la_{3,i} v_{i}^{2}+\la_{3,i+1} v_{i}v_{i+1}+\la_{3,2k+2} v_{i}v_{2k+2}=0 & 5 \leq i \leq 2k; \\
& v_{4}^{2}+\la_{2,5} v_{4}v_{5}+\la_{2,2k+2} v_{4}v_{2k+2}=0; & \\
& \la_{3,2k} v_{2k}v_{2k+1}+v_{2k+1}^{2}+\la_{3,2k+2} v_{2k+1}v_{2k+2}=0. & \\
\end{aligned}
\right. 
\]
If $5 \leq i \leq k+2$, then elimination of $v_{i-1}v_{i}, v_{i}^{2}$ can be expressed as 
\[
\left\{
\begin{aligned}
& \Delta_{i-1,i} v_{i-1}v_{i}=\Delta_{i,i+1} v_{i}v_{i+1}+\Delta_{i,2k+2} v_{i}v_{2k+2}; \\
& v_{i}^{2}= l_{i} \cdot \Delta_{i,i+1} v_{i}v_{i+1}-\Delta_{i-1,i}\Delta_{i-1,2k+2} v_{i}v_{2k+2}. 
\end{aligned}
\right. 
\]
If $k+3 \leq i \leq 2k$, then elimination of $v_{i}^{2}, v_{i}v_{i+1}$ can be expressed as 
\[
\left\{
\begin{aligned}
& \Delta_{i,i+1} v_{i}v_{i+1}=\Delta_{i-1,i} v_{i-1}v_{i}-\Delta_{i,2k+2} v_{i}v_{2k+2}; \\
& v_{i}^{2}= l_{i} \cdot \Delta_{i-1,i} v_{i-1}v_{i}+\Delta_{i,i+1}\Delta_{i+1,2k+2} v_{i}v_{2k+2}. 
\end{aligned}
\right. 
\]
Since coefficients related to terms $v_{i}v_{i+1}$ $(4 \leq i \leq 2k)$ are exactly the same as their counterparts appear in the proof of Theorem \ref{dimP=3 polytope}, the formula for $c_{k+2,k+3}$ is nothing but a parallel version of (\ref{cyclic coefficient 2}). Meanwhile, formulas for $c_{i,2k+2}$ can be deduced from repeated use of eliminations above. 
\end{proof}

\begin{proposition}
A quasitoric manifold $M(L_{2k},\Lambda)$ is string if and only if 
\[
\la_{1,i}+\la_{2,i}+\la_{3,i} \equiv 1 \pmod{2} \qquad 4 \leq i \leq 2k+2
\]
and all 3 types of coefficients listed in Lemma \ref{coefficient prism} vanish. 
\end{proposition}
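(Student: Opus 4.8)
The plan is to assemble this proposition from three facts already established: the reduction of the string property to the vanishing of $w_{2}$ and $p_{1}$, the parity criterion for $w_{2}=0$, and the explicit expansion of $p_{1}$ supplied by Lemma \ref{coefficient prism}. The first step is to observe that $M(L_{2k},\Lambda)$ is an orientable quasitoric manifold whose integral cohomology is torsion-free and concentrated in even degrees (Proposition \ref{Betti number}); hence, as recalled in Subsection \ref{string structure}, it admits a string structure if and only if $w_{2}(M)=0$ and $p_{1}(M)=0$, and I would treat these two conditions in turn.

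For $w_{2}(M)=0$, I would work with $\Lambda$ in the refined form displayed just before Lemma \ref{coefficient prism}. By Remark \ref{spin and Betti number count}, $\{v_{i}\}_{i=4}^{2k+2}$ is a $\Z$-basis of $H^{2}(M;\Z)$, and substituting the relations $v_{1}\equiv\sum\la_{1,i}v_{i}$, $v_{2}\equiv\sum\la_{2,i}v_{i}$, $v_{3}\equiv\sum\la_{3,i}v_{i}\pmod 2$ coming from $\mathcal{J}$ into $w_{2}(M)=\sum_{j=1}^{2k+2}v_{j}\pmod 2$ (Proposition \ref{characteristic class}) turns it into a linear combination of this basis whose coefficient of $v_{i}$ is $1+\la_{1,i}+\la_{2,i}+\la_{3,i}\pmod 2$. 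Thus $w_{2}(M)=0$ is equivalent to $\la_{1,i}+\la_{2,i}+\la_{3,i}\equiv 1\pmod 2$ for $4\le i\le 2k+2$; equivalently, every column of $\Lambda$ has odd sum, the first three columns being automatic from the identity block.

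For $p_{1}(M)=0$, I would invoke the setup preceding Lemma \ref{coefficient prism}: the $2k-1$ classes $\{v_{i}v_{2k+2}\}_{4\le i\le 2k+1}$ together with $v_{k+2}v_{k+3}$ form a $\Z$-basis of $H^{4}(M;\Z)$ — their number agreeing with $\beta^{4}(M)=h_{2}(L_{2k})=2k-1$ by Proposition \ref{Betti number} — and in this basis Lemma \ref{coefficient prism} gives $p_{1}(M)=c_{k+2,k+3}v_{k+2}v_{k+3}+\sum_{i=4}^{2k+1}c_{i,2k+2}v_{i}v_{2k+2}$ with the coefficients computed there. Since this is the expansion of $p_{1}(M)$ in a basis, $p_{1}(M)=0$ if and only if $c_{k+2,k+3}=0$ and $c_{i,2k+2}=0$ for all $4\le i\le 2k+1$; the latter range splits as $4\le i\le k+2$ and $k+3\le i\le 2k+1$, which is exactly the partition into the three types of coefficients recorded in Lemma \ref{coefficient prism}. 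Combining this with the previous paragraph yields the claimed equivalence.

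The only point that is not pure substitution is confirming that the indicated family of products $v_{i}v_{j}$ really is a $\Z$-basis of $H^{4}(M;\Z)$: one must check that every other product $v_{i}v_{j}$ with $F_{i}\cap F_{j}\neq\emptyset$ reduces, through the degree-$4$ consequences of the ideal $\mathcal{J}$, to a $\Z$-combination of these $2k-1$ classes, and that no relation among them is redundant. This is precisely the Betti-number count of Remark \ref{spin and Betti number count} specialized to $L_{2k}$ together with the chain of eliminations already performed in the proof of Lemma \ref{coefficient prism}, so I expect this bookkeeping — rather than any new idea — to be the main thing to pin down; modulo it, the proposition is in effect a corollary of Lemma \ref{coefficient prism} and the $w_{2}$ criterion of Remark \ref{spin and Betti number count}.
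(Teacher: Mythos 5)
Your proposal is correct and follows exactly the route the paper intends: the proposition is stated without proof precisely because it is the combination of the string criterion $w_{1}=w_{2}=p_{1}/2=0$ (with $w_{1}=0$ automatic and $p_{1}/2=0$ equivalent to $p_{1}=0$ by torsion-freeness), the parity criterion of Remark \ref{spin and Betti number count}, and the basis expansion of $p_{1}$ in Lemma \ref{coefficient prism}. Your count $2k-1=h_{2}(L_{2k})=\beta^{4}$ confirming that the chosen products form a basis is the right bookkeeping and matches the eliminations carried out in the proof of that lemma.
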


\begin{example} \label{not bundle type example}
The quasitoric manifold $M=M(L_{6},\Lambda)$ is string if $\Lambda$ is equivalent to 
\[
\left(
\begin{array}{ccc|cccc|c}
1 & 0 & 0 & 1 & 0 & 0 & 0 & 1 \\
0 & 1 & 0 & 1 & 0 & 1 & 0 & 0 \\
0 & 0 & 1 & 1 & 1 & 0 & 1 & 2 
\end{array}
\right).
\]
\end{example}

Note that the polytope $L_{6}$ is a Cartesian product of $C_{2}(6)$ and $I$. But the manifold $M$ is neither a Cartesian product, nor a quasitoric manifold of bundle type up to weakly equivariant homeomorphism (see Remark \ref{Cartesian product} for the general case). As a matter of fact, $M$ is not homeomorphic to any bundle type quasitoric manifold. 

\begin{proposition} \label{not bundle type proposition}
For any bundle type quasitoric manifold $M'$, $H^{*}(M) \not\cong H^{*}(M')$. 
\end{proposition}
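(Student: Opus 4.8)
The plan is to distinguish $M = M(L_6,\Lambda)$ from every bundle type quasitoric manifold by comparing cohomology rings, exploiting the fact that a bundle type structure forces a splitting of the multiplicative structure. First I would recall that a bundle type quasitoric manifold $M'$ over a product polytope $Q_1 \times Q_2$ (here both factors $C_2(6)$ and $I$, or more precisely $M'$ is the total space of a quasitoric fiber bundle) has a cohomology ring that is a free module over the cohomology of the base, so that in particular there is a subring $H^*(M_{\mathrm{base}})$ over which $H^*(M')$ splits; the relations among the $v_j$'s come in a block-triangular form reflecting the bundle structure. Since $L_6$ is combinatorially $C_2(6) \times I$, the candidate bundles $M'$ are $\mathbb{C}P^1$-bundles over $M(C_2(6),\Lambda')$ or $M(C_2(6),\cdot)$-bundles over $\mathbb{C}P^1$; in either case one gets a distinguished degree-$2$ class $x$ (pulled back from the base, or the fiber class) whose powers and whose interaction with the remaining generators is constrained.

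The key steps, in order: (1) Write down the ring $H^*(M;\mathbb{Z}) \cong \mathbb{Z}[v_4,\dots,v_8]/(\mathcal I + \mathcal J)$ explicitly from Propositions~\ref{cohomology ring} with the given $\Lambda$, reducing to a presentation with a small number of degree-$2$ generators and listing all quadratic relations (these are exactly the eliminations computed in Lemma~\ref{coefficient prism}, specialized to $k=3$). (2) Write down the general form of $H^*(M';\mathbb{Z})$ for a bundle type $M'$ over a $6$-prism: by the Leray–Hirsch / Borel-type description it is $H^*(F)\otimes H^*(B)$ as a module, with the ring determined by how the Euler class of the fiber bundle (a degree-$2$ class) sits in $H^2(B)$. (3) Identify a ring-theoretic invariant that separates the two. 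The natural candidate is the structure of the set of classes $y \in H^2$ with $y^2 = 0$, or more refined, the cup-product form $H^2 \times H^2 \to H^4$ together with the ideal structure of $H^4$ spanned by squares; for a bundle the fiber class $f$ satisfies $f^2 = (\text{pullback})\cdot f$, whereas in $M$ one checks (using the Example's claim that $M$ is string, hence $p_1(M) = \sum v_j^2 = 0$, together with the explicit quadratic relations) that no such splitting element exists. Concretely I expect the cleanest route is: the quadratic relations of $M$ force $\sum v_j^2 = 0$ but no single nonzero $v \in H^2$ has $v^2$ proportional to a product $v\cdot w$ in the way a fiber/base class would; alternatively, compare the rank of the map "square" $H^2 \to H^4$ or the signature/intersection data of some characteristic submanifold. (4) Conclude $H^*(M) \not\cong H^*(M')$ as graded rings.

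The main obstacle will be step (3): pinning down a clean, computable ring invariant that provably differs. The phrase "bundle type" covers both $\mathbb{C}P^1$-bundles over a $12$-gon manifold and rank-$2$ constructions over $\mathbb{C}P^1$, and a priori also iterated bundles; I need an invariant insensitive to the weakly equivariant homeomorphism ambiguity (the $\mathrm{GL}_n(\mathbb Z)$, $\mathbb Z_2^m$, $\mathrm{Aut}(\partial P^*)$ actions) but sensitive enough to detect the "twisting" in $\Lambda$ that the string condition produces via the $\la_{3,8} = 2$ entry. My expectation is that the right invariant is: does there exist $x \in H^2(M;\mathbb{Z})$, $x \neq 0$, such that $H^*(M;\mathbb{Z})$ is a free module over $\mathbb{Z}[x]/(x^2)$ (equivalently over $\mathbb{Z}[x]/(x^2 - \ell x)$ for some linear $\ell$)? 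Every bundle type $M'$ over a $6$-prism admits such an $x$ (the fiber class of the $\mathbb{C}P^1$-direction), and I would show by brute inspection of the explicit quadratic relations of $M$ — there are only $f_1(L_6) = 18$ of them, and after fixing the basis $\{v_i\}_{i=4}^{8}$ only a handful are independent in degree $4$ — that no element of $H^2(M;\mathbb{Z})$ can play this role, because any candidate $x = \sum a_i v_i$ would have to satisfy incompatible divisibility conditions coming from the $2$ in the last column. The remaining steps are routine linear algebra over $\mathbb Z$ once this invariant is fixed.
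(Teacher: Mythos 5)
Your plan has a genuine gap at its center, and it is precisely the step you defer to ``brute inspection.'' The invariant you propose --- the existence of a nonzero $x\in H^2$ with $x^2=\ell x$ for some linear $\ell$, over which $H^*$ splits as a free module --- does not obviously separate $M$ from the bundle types, and in its weak form it provably fails to. From the explicit presentation of $H^*(M)$ one reads off the relations $\alpha_1^2=0$, $\alpha_3^2=0$ and $X^2=-\alpha_1 X$, so $H^2(M;\Z)$ contains several classes of exactly the shape a fiber or base class would have; your remark that ``no single nonzero $v\in H^2$ has $v^2$ proportional to a product $v\cdot w$ in the way a fiber/base class would'' is false as stated. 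What you would actually have to rule out is the full Leray--Hirsch splitting (freeness of the rank-$12$ ring $H^*(M)$ over $\Z[x]/(x^2-\ell x)$, or over the image of $H^*(B)$ in the $\C P^1$-fiber case, where your candidate subring $\Z[y]/(y^2-\ell y)$ is not even well defined inside $H^*(M')$ unless $\ell$ is a multiple of $y$). Verifying that no choice of $x$ admits such a splitting is not ``routine linear algebra once the invariant is fixed'' --- it is the entire content of the proposition, and it is essentially equivalent in difficulty to the direct computation: assume a graded ring isomorphism $\varphi\colon H^*(M)\to H^*(M')$, write its degree-$2$ matrix, and derive a contradiction. That is what the paper does, in two cases according to whether the $\C P^1$ is the fiber or the base, by showing that the quadratic relations force all entries in a row (resp.\ column) of the matrix of $\varphi$ to have fixed parities incompatible with $\det=\pm1$; the entry $\la_{3,8}=2$ enters exactly there.

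A second, smaller gap: you assume from the outset that any bundle type $M'$ with $H^*(M')\cong H^*(M)$ has orbit polytope $L_6=C_2(6)\times I$. The Betti numbers only pin down the $h$-vector $(1,5,5,1)$, which is shared by several combinatorial types of $3$-polytopes with $8$ facets, so this reduction needs the cohomological rigidity result of Choi--Panov--Suh \cite{CPS10}, which the paper cites explicitly. With that citation added and with step (3) replaced by an actual matrix-of-isomorphism computation (or a genuinely verified splitting obstruction), the architecture of your argument would match the paper's; as written, the decisive step is missing.
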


\begin{proof}
Let $\alpha_{1}, \alpha_{2}, \alpha_{3}, \alpha_{4}$ and $X$ denote generators of $H^{2}(M)$ corresponding to $F_{4}, F_{5}, F_{6}, F_{7}$ and $F_{8}$ respectively. By Proposition \ref{cohomology ring}, relations in $H^{4}(M)$ are 
\[
\left\{
\begin{aligned}
& \alpha_{1}\alpha_{3}=\alpha_{1}\alpha_{4}=\alpha_{2}\alpha_{4}=0; \\
& (\alpha_{1}+X) X=0; \\
& \alpha_{1}^{2}=(\alpha_{1}+\alpha_{3}) \alpha_{2}=\alpha_{3}^{2}=0; \\
& (\alpha_{1}+\alpha_{2}+2X) \alpha_{2}=(\alpha_{2}+\alpha_{4}+2X) \alpha_{3}=(\alpha_{4}+2X) \alpha_{4}=0. 
\end{aligned}
\right.
\]

Suppose there exists a bundle type quasitoric manifold $M'$ such that $H^{*}(M) \cong H^{*}(M')$, then the orbit polytope of $M'$ is also $L_{6}$ \cite{CPS10}. Let $\Lambda'=[\ \mathrm{I}_{3}\ |\ \Lambda'_{*}\ ]$ be the characteristic matrix of $M'$ and $\varphi: H^{*}(M) \rightarrow H^{*}(M')$, $\varphi': H^{*}(M') \rightarrow H^{*}(M)$ be cohomology ring isomorphisms. Moreover, let $\beta_{1}, \beta_{2}, \beta_{3}, \beta_{4}$ and $Y$ denote generators of $H^{2}(M')$ corresponding to $F_{4}, F_{5}, F_{6}, F_{7}$ and $F_{8}$ respectively. Then one can suppose that $\varphi(\alpha_{i})=\sum_{j=1}^{4} s_{i,j} \beta_{j}+t_{i} Y$ $(1 \leq i \leq 4)$, $\varphi(X)=\sum_{j=1}^{4} r_{j} \beta_{j}+r Y$ and $\varphi'(\beta_{i})=\sum_{j=1}^{4} s'_{i,j} \alpha_{j}+t'_{i} X$ $(1 \leq i \leq 4)$, $\varphi'(Y)=\sum_{j=1}^{4} r'_{j} \alpha_{j}+r' X$. In this way, 
\begin{equation} \label{cohomology ring isomorphism}
\mathrm{det}
\begin{pmatrix}
s_{1,1} & s_{1,2} & s_{1,3} & s_{1,4} & t_{1} \\
s_{2,1} & s_{2,2} & s_{2,3} & s_{2,4} & t_{2} \\
s_{3,1} & s_{3,2} & s_{3,3} & s_{3,4} & t_{3} \\
s_{4,1} & s_{4,2} & s_{4,3} & s_{4,4} & t_{4} \\
r_{1} & r_{2} & r_{3} & r_{4} & r 
\end{pmatrix}
= \pm 1 
\qquad 
\mathrm{det}
\begin{pmatrix}
s'_{1,1} & s'_{1,2} & s'_{1,3} & s'_{1,4} & t'_{1} \\
s'_{2,1} & s'_{2,2} & s'_{2,3} & s'_{2,4} & t'_{2} \\
s'_{3,1} & s'_{3,2} & s'_{3,3} & s'_{3,4} & t'_{3} \\
s'_{4,1} & s'_{4,2} & s'_{4,3} & s'_{4,4} & t'_{4} \\
r'_{1} & r'_{2} & r'_{3} & r'_{4} & r' 
\end{pmatrix}
= \pm 1. 
\end{equation}

\noindent \emph{Case 1.} $M'$ is a $\C P^{1}$-bundle over 4-dimensional quasitoric manifold, i.e., 
\[
\Lambda'_{*}=
\left(
\begin{array}{cccc|c}
x & y & z & w & 1 \\
\hline
\multicolumn{4}{c|}{\multirow{2}*{$*_{2 \times 4}$}} & 0 \\
& & & & 0
\end{array}
\right) 
\]
with $x, y, z, w \in \Z$. Note that among relations in $H^{4}(M')$, $\beta_{j} Y$ $(1 \leq j \leq 4)$ and $Y^{2}$ only appear in $x \beta_{1} Y+y \beta_{2} Y+z \beta_{3} Y+w \beta_{4} Y+Y^{2}=0$. Therefore, $\varphi(\alpha_{1}) \varphi(\alpha_{3})=(\sum_{j=1}^{4} s_{1,j} \beta_{j}+t_{1} Y)(\sum_{j=1}^{4} s_{3,j} \beta_{j}+t_{3} Y)=0$ induces the following equations: 
\begin{equation} \label{coefficient relation case 1}
\left\{
\begin{aligned}
& s_{1,1} t_{3}+s_{3,1} t_{1}=x t_{1} t_{3}; \\
& s_{1,2} t_{3}+s_{3,2} t_{1}=y t_{1} t_{3}; \\
& s_{1,3} t_{3}+s_{3,3} t_{1}=z t_{1} t_{3}; \\
& s_{1,4} t_{3}+s_{3,4} t_{1}=w t_{1} t_{3}. 
\end{aligned}
\right. 
\end{equation}
By (\ref{cohomology ring isomorphism}) and (\ref{coefficient relation case 1}), $t_{1}=0$ is equivalent to $t_{3}=0$ and $t_{1} \equiv 0 \pmod{2}$ is equivalent to $t_{3} \equiv 0 \pmod{2}$. Parallel results can be deduced from the vanishing of $\varphi(\alpha_{1}) \varphi(\alpha_{4})$ and $\varphi(\alpha_{2}) \varphi(\alpha_{4})$. Thus, $\{t_{i}\}_{i=1}^{4}$ have the same parity and either all of them or none of them vanishes. 

On the other hand, by elementary transformation based on (\ref{coefficient relation case 1}), 
\[
t_{1} t_{3} \cdot \mathrm{det}
\begin{pmatrix}
x & y & z & w & 2 \\
s_{2,1} & s_{2,2} & s_{2,3} & s_{2,4} & t_{2} \\
s_{3,1} & s_{3,2} & s_{3,3} & s_{3,4} & t_{3} \\
s_{4,1} & s_{4,2} & s_{4,3} & s_{4,4} & t_{4} \\
r_{1} & r_{2} & r_{3} & r_{4} & r 
\end{pmatrix}
=
t_{3} \cdot \mathrm{det}
\begin{pmatrix}
s_{1,1} & s_{1,2} & s_{1,3} & s_{1,4} & t_{1} \\
s_{2,1} & s_{2,2} & s_{2,3} & s_{2,4} & t_{2} \\
s_{3,1} & s_{3,2} & s_{3,3} & s_{3,4} & t_{3} \\
s_{4,1} & s_{4,2} & s_{4,3} & s_{4,4} & t_{4} \\
r_{1} & r_{2} & r_{3} & r_{4} & r 
\end{pmatrix}
= \pm t_{3}. 
\]
If $t_{3}=0$, then $t_{i}=0$ $(1 \leq i \leq 4)$ and $r= \pm 1$. Hence, $\varphi(\alpha_{4}+2X) \varphi(\alpha_{4})=[\sum_{j=1}^{4} (s_{4,j}+2 r_{j}) \beta_{j}\pm 2Y](\sum_{j=1}^{4} s_{4,j} \beta_{j}) \neq 0$, resulting in contradiction. If $t_{3} \neq 0$, then $t_{i} \equiv 1 \pmod{2}$ $(1 \leq i \leq 4)$. Hence, (\ref{coefficient relation case 1}) along with the parallel result induced by $\varphi(\alpha_{2}) \varphi(\alpha_{4})=0$ indicates 
\[
\left\{
\begin{aligned}
& s_{1,1}+s_{3,1} \equiv s_{2,1}+s_{4,1} \equiv x \pmod{2}; \\
& s_{1,2}+s_{3,2} \equiv s_{2,2}+s_{4,2} \equiv y \pmod{2}; \\
& s_{1,3}+s_{3,3} \equiv s_{2,3}+s_{4,3} \equiv z \pmod{2}; \\
& s_{1,4}+s_{3,4} \equiv s_{2,4}+s_{4,4} \equiv w \pmod{2}. 
\end{aligned}
\right. 
\]
This leads to $\sum_{i=1}^{4} s_{i,1} \equiv \sum_{i=1}^{4} s_{i,2} \equiv \sum_{i=1}^{4} s_{i,3} \equiv \sum_{i=1}^{4} s_{i,4} \equiv \sum_{i=1}^{4} t_{i} \equiv 0 \pmod{2}$ and contradicts (\ref{cohomology ring isomorphism}). 

\vspace{0.5cm}

\noindent \emph{Case 2.} $M'$ is a 4-dimensional quasitoric manifold-bundle over $\C P^{1}$, i.e., 
\[
\Lambda'_{*}=
\left(
\begin{array}{cccc|c}
0 & 0 & 0 & 0 & 1 \\
\hline
\multicolumn{4}{c|}{*_{2 \times 4}} & *_{2 \times 1}
\end{array}
\right). 
\]
Note that among relations in $H^{4}(M)$: $\alpha_{1} X, X^{2}$ only appear in $\alpha_{1} X+X^{2}=0$; $\alpha_{2} X, \alpha_{2}^{2}$ only appear in $\alpha_{1} \alpha_{2}+\alpha_{2}^{2}+2\alpha_{2} X=0$; $\alpha_{3} X, \alpha_{3} \alpha_{4}$ only appear in $\alpha_{2} \alpha_{3}+\alpha_{3} \alpha_{4}+2\alpha_{3} X=0$ and $\alpha_{4} X, \alpha_{4}^{2}$ only appear in $\alpha_{4}^{2}+2\alpha_{4} X=0$. Therefore, $\varphi'(\beta_{1}) \varphi'(\beta_{3})=(\sum_{j=1}^{4} s'_{1,j} \alpha_{j}+t'_{1} X)(\sum_{j=1}^{4} s'_{3,j} \alpha_{j}+t'_{3} X)=0$ induces the following equations: 
\begin{equation} \label{coefficient relation case 2}
\left\{
\begin{aligned}
& s'_{1,1} t'_{3}+s'_{3,1} t'_{1}=t'_{1} t'_{3}; \\
& s'_{1,2} t'_{3}+s'_{3,2} t'_{1}=2 s'_{1,2} s'_{3,2}; \\
& s'_{1,3} t'_{3}+s'_{3,3} t'_{1}=2 (s'_{1,3} s'_{3,4}+s'_{1,4} s'_{3,3}); \\
& s'_{1,4} t'_{3}+s'_{3,4} t'_{1}=2 s'_{1,4} s'_{3,4}. 
\end{aligned}
\right. 
\end{equation}
Similar to arguments in \emph{Case 1}, (\ref{coefficient relation case 2}) and the parallel results indicate that $\{t'_{i}\}_{i=1}^{4}$ have the same parity. Moreover, $\varphi'(Y^2)=(\sum_{j=1}^{4} r'_{j} \alpha_{j}+r' X)^{2}=0$ implies $2 r'_{1}r'=(r')^{2}$. In particular, $\{t'_{i}\}_{i=1}^{4}$ must be odd since $r'$ is even. This leads to $\sum_{i=1}^{4} s'_{i,1} \equiv \sum_{i=1}^{4} s'_{i,2} \equiv \sum_{i=1}^{4} s'_{i,3} \equiv \sum_{i=1}^{4} s'_{i,4} \equiv \sum_{i=1}^{4} t'_{i} \equiv 0 \pmod{2}$ and contradicts (\ref{cohomology ring isomorphism}). 
\end{proof}

Although the manifold in Example \ref{not bundle type example} is not of bundle type, it is indeed weakly equivariantly homeomorphic to the equivariant edge connected sum of two bundle type quasitoric manifolds. 

\begin{definition} \label{equivariant edge connected sum definition} (see \cite{LY11} for real case based on $\Z_{2}$-coloring) Given $2n$-dimensional quasitoric manifolds $M(P_{1},\Lambda_{1})$ and $M(P_{2},\Lambda_{2})$, Let $\mathcal{F}(P_{1})=\{F_{i}\}_{i=1}^{m_{1}}$, $\mathcal{F}(P_{2})=\{F'_{j}\}_{j=1}^{m_{2}}$ denote facet sets and $\Lambda_{1}=(\boldsymbol{\la_{1}}, \cdots, \boldsymbol{\la_{m_{1}}})$, $\Lambda_{2}=(\boldsymbol{\la'_{1}}, \cdots, \boldsymbol{\la'_{m_{2}}})$ denote characteristic matrices. Suppose there are edges $a= \cap_{k=1}^{n-1} F_{i_{k}}, a'=\ \cap_{k=1}^{n-1} F'_{j_{k}}$ and vertices $v=a \cap F_{i_{n}}, w=a \cap F_{i_{n+1}}, v'=a' \cap F'_{j_{n}}, w'=a' \cap F'_{j_{n+1}}$ satisfying $\boldsymbol{\la_{i_{k}}}=\boldsymbol{\la'_{j_{k}}}$ for $1 \leq k \leq n+1$. Then reorder facets such that $F_{k}=F_{i_{k}}, F'_{k}=F'_{j_{k}}$ for $1 \leq k \leq n+1$. In this way, the \emph{equivariant edge connected sum} at $a, a'$ is defined to be a quasitoric manifold $M(P,\Lambda)$, where $P=P_{1} \#^{e} P_{2}$ is the edge connected sum of $P_{1}$ and $P_{2}$ at $a, a'$ (see Figure \ref{cube edge connected sum} as an example) and 
\[
\Lambda=(\boldsymbol{\la_{1}}, \cdots, \boldsymbol{\la_{m_{1}}}, \boldsymbol{\la'_{n+2}}, \cdots, \boldsymbol{\la'_{m_{2}}}). 
\]
It should be pointed out that $\Lambda$ is NOT in refined form here. The explicit notation should be $M(P_{1},\Lambda_{1}) \widetilde{\#^{e}}_{a, a'} M(P_{2},\Lambda_{2})$, but it can be simplified as $M(P_{1},\Lambda_{1}) \widetilde{\#^{e}} M(P_{2},\Lambda_{2})$ when there is no confusion. 
\end{definition}

\begin{figure}[htb]
\centering
\includegraphics[height=4cm]{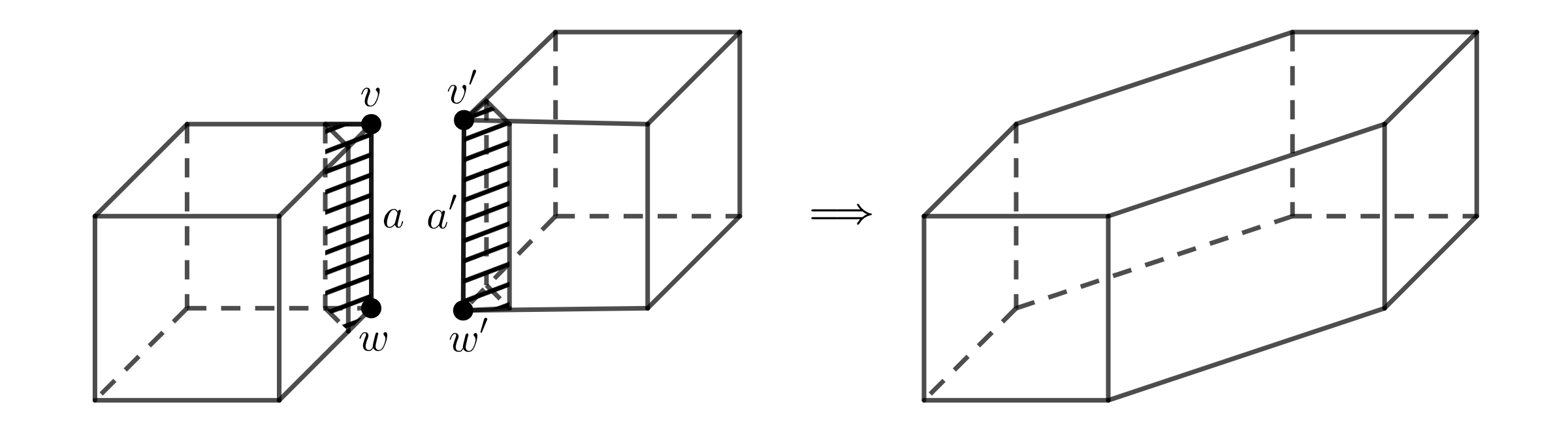}
\caption{Edge connected sum of two cubes}
\label{cube edge connected sum}
\end{figure}

Let $(\boldsymbol{\la_{1}}, \cdots, \boldsymbol{\la_{8}})$ denote the characteristic matrix in Example \ref{not bundle type example}. Note that $L_{6}=L_{4} \#^{e} L_{4}$ and $\mathrm{det}(\boldsymbol{\la_{1}}, \boldsymbol{\la_{4}}, \boldsymbol{\la_{7}})=\mathrm{det}(\boldsymbol{\la_{8}}, \boldsymbol{\la_{4}}, \boldsymbol{\la_{7}})=1$. Thus, $M(L_{6},\Lambda)=M(L_{4},\Lambda_{1}) \widetilde{\#^{e}} M(L_{4},\Lambda_{2})$, where 
\[
\Lambda_{1}=
\begin{pmatrix}
1 & 0 & 0 & 1 & 0 & 1 \\
0 & 1 & 0 & 1 & 0 & 0 \\
0 & 0 & 1 & 1 & 1 & 2 
\end{pmatrix} 
\qquad 
\Lambda_{2}=
\begin{pmatrix}
1 & 1 & 0 & 0 & 0 & 1 \\
0 & 1 & 0 & 1 & 0 & 0 \\
0 & 1 & 1 & 0 & 1 & 2 
\end{pmatrix}. 
\]
Since $\Lambda_{1}$ and $\Lambda_{2}$ are equivalent to 
\[
\begin{pmatrix}
1 & 0 & 0 & 2 & 1 & 1 \\
0 & 1 & 0 & 1 & 1 & 0 \\
0 & 0 & 1 & 0 & 1 & 0 
\end{pmatrix}, 
\]
both $M(L_{4},\Lambda_{1})$ and $M(L_{4},\Lambda_{2})$ are of bundle type ($\C P^{1}$-bundle over $\C P^{2} \# \overline{\C P^{2}}$). Moreover, direct computation yields that $M(L_{4},\Lambda_{1})$ and $M(L_{4},\Lambda_{2})$ are string. 

In general, all string quasitoric manifolds over prisms can be constructed from bundle type string quasitoric manifolds over prisms via equivariant edge connected sum. 

\begin{theorem} \label{equivariant edge connected sum theorem}
If a quasitoric manifold $M(L_{2k},\Lambda)$ is string, then there exist quasitoric manifolds $\{M(L_{2k_{i}},\Lambda_{i})\}_{i=1}^{s}$ such that \\
(1) $M(L_{2k_{i}},\Lambda_{i})$ is of bundle type and string for $1 \leq i \leq s$; \\
(2) $M(L_{2k},\Lambda)=M(L_{2k_{1}},\Lambda_{1}) \widetilde{\#^{e}} \cdots \widetilde{\#^{e}} M(L_{2k_{s}},\Lambda_{s})$ up to weakly equivariant homeomorphism. 
\end{theorem}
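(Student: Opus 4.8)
The plan is to prove Theorem~\ref{equivariant edge connected sum theorem} by induction on $k$, reducing the number of facets of the prism each time we split off a bundle-type summand via an inverse equivariant edge connected sum. The base case $k=2$, i.e.\ $M(L_4,\Lambda)$ over the cube, follows from Theorem~\ref{2n manifold} (every string quasitoric manifold over the cube is weakly equivariantly homeomorphic to a Bott manifold, which is of bundle type), so we may take $s=1$. For the inductive step, suppose $M(L_{2k},\Lambda)$ is string with $k\geq 3$. The goal is to locate an edge $a$ of $L_{2k}$ along which $L_{2k}$ decomposes as $L_{4}\#^e L_{2(k-1)}$ (or more symmetrically $L_{2k_1}\#^e L_{2k_2}$ with $k_1+k_2=k+1$) so that the corresponding splitting of the characteristic matrix produces a bundle-type string summand $M(L_{2k_1},\Lambda_1)$ and a string prism summand $M(L_{2(k-1)},\Lambda_2)$ to which the inductive hypothesis applies.

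First I would fix the refined form of $\Lambda$ used in Section~\ref{dimP=3}, with initial vertex $v=F_1\cap F_2\cap F_3$ and $\Lambda=\bigl[\,\mathrm{I}_3 \mid \Lambda_*\,\bigr]$, and analyze the string conditions supplied by the preceding propositions: the mod-$2$ conditions $\la_{1,i}+\la_{2,i}+\la_{3,i}\equiv 1\pmod 2$ for $4\leq i\leq 2k+2$, together with the vanishing of the coefficients $c_{k+2,k+3}$ and $c_{i,2k+2}$ from Lemma~\ref{coefficient prism}. The key structural point is that the side facets $F_2,\dots,F_{2k+1}$ of $L_{2k}$ form a cycle, and by the same belt/Jordan-curve reasoning used in the proof of Proposition~\ref{dimP=3 polytope} (together with the nonsingularity condition~(\ref{nonsingular}) and the mod-$2$ string conditions), one should be able to show that there exist two adjacent side facets, say $F_j$ and $F_{j+1}$, such that the characteristic vectors $\boldsymbol{\la_1}$ (top), $\boldsymbol{\la_j}$, $\boldsymbol{\la_{j+1}}$, $\boldsymbol{\la_{2k+2}}$ (bottom) realize, after a $\mathrm{GL}_3(\Z)$-change of basis and sign changes, a configuration that glues a cube $L_4$ onto the prism along the edge $a=F_1\cap F_j\cap \text{(bottom)}$; concretely one wants $\det(\boldsymbol{\la_1},\boldsymbol{\la_j},\boldsymbol{\la_{j+1}})=\det(\boldsymbol{\la_{2k+2}},\boldsymbol{\la_j},\boldsymbol{\la_{j+1}})=\pm1$, which is exactly the matching condition in Definition~\ref{equivariant edge connected sum definition}.

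Once such an edge is found, I would (i) verify that the combinatorial decomposition $L_{2k}=L_{2k_1}\#^e L_{2k_2}$ holds along $a$, (ii) write $\Lambda = \Lambda_1 \,\widetilde{\#^e}\, \Lambda_2$ accordingly, obtaining $M(L_{2k},\Lambda)=M(L_{2k_1},\Lambda_1)\,\widetilde{\#^e}\,M(L_{2k_2},\Lambda_2)$ up to weakly equivariant homeomorphism, (iii) choose the split so that $M(L_{2k_1},\Lambda_1)$ is bundle type — a cube summand $L_4$ is automatic by Theorem~\ref{2n manifold}, and (iv) show both summands inherit the string property. For (iv), the decisive computation is that the coefficient formulas of Lemma~\ref{coefficient prism} are compatible with the splitting: the cyclic coefficient $c_{k+2,k+3}$ and the coefficients $c_{i,2k+2}$ of $M(L_{2k},\Lambda)$ should decompose into the analogous coefficients of $M(L_{2k_1},\Lambda_1)$ and $M(L_{2k_2},\Lambda_2)$ plus cross terms that cancel by the same cyclic-sum identities already used in Lemma~\ref{algebraic lemma 1}, Lemma~\ref{algebraic lemma 2}, and the proof of Proposition~\ref{dimP=3 polytope}. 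Granting this, string-ness of $M(L_{2k},\Lambda)$ forces string-ness of each summand, and the inductive hypothesis applied to $M(L_{2k_2},\Lambda_2)$ finishes the proof.

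The main obstacle I anticipate is step~(iv) combined with the existence claim for the splitting edge: one must show that the mod-$2$ and coefficient-vanishing conditions genuinely guarantee the presence of an adjacent pair of side facets whose characteristic data both (a) satisfies the $\mathrm{GL}_3(\Z)$-matching needed for an equivariant edge connected sum with a \emph{bundle-type} $L_4$ block, and (b) yields a residual string structure on the smaller prism. There is a real danger that a naive greedy choice of edge breaks the string condition on one side; the resolution is likely to mimic the "relabel to minimize $d(i_0,j_0)$" trick from the proof of Proposition~\ref{dimP=3 polytope}, using the belt structure of $L_{2k}$ to isolate a block of side facets whose parities force $c_{k+2,k+3}\equiv 0$ only when the block detaches cleanly, exactly paralleling the "$|\mathcal{N}_3|$ even/odd" case analysis in Lemma~\ref{dimP=2 main}. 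A secondary technical point is bookkeeping the non-refined form of $\Lambda$ that the edge connected sum produces, and re-refining at each inductive stage so that Lemma~\ref{coefficient prism} applies to the smaller prism.
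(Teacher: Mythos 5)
Your high-level skeleton --- induct on $k$, split off bundle-type pieces by equivariant edge connected sum, with the cube case handled by Theorem \ref{2n manifold} --- matches the paper's strategy, but the two steps you yourself flag as the main obstacles are exactly where the proposal is not yet a proof, and the tools you propose for them are not the right ones. First, your inductive step presumes that a splitting edge always exists when $k\geq 3$. It does not: the correct dichotomy is ``either $M(L_{2k},\Lambda)$ is itself of bundle type, or it splits as $M(L_{4},\Lambda_{1})\widetilde{\#^{e}}M(L_{2k-2},\Lambda_{2})$.'' In the paper's Case 3 (when $\la_{2,2k+2}$ and $\la_{3,2k+2}$ are both nonzero and $\la_{3,4}\la_{2,2k+1}=4$), no splitting occurs; instead a separate computation with the coefficients $c_{t-1,2k+2}$ and $c_{t,2k+2}$ forces $\la_{1,i}=0$ for all side facets, so the manifold is already a bundle over $\C P^{1}$. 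An argument that insists on finding an edge to cut would stall on precisely these characteristic matrices.

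Second, the mechanism you propose for locating the edge --- parity/mod-2 conditions, the Jordan-curve/belt reasoning from Proposition \ref{dimP=3 polytope}, and the $|\mathcal{N}_{3}|$ case analysis of Lemma \ref{dimP=2 main} --- cannot produce the required $\mathrm{GL}_3(\Z)$ matching condition, because that condition is an integral, not mod-2, statement. The paper instead uses the \emph{integral} vanishing of the two specific coefficients $c_{4,2k+2}$ and $c_{2k+1,2k+2}$ from Lemma \ref{coefficient prism}, combined with the nonsingularity constraint $|1-\la_{1,4}\la_{2,2k+2}|=1$, and an elementary inequality ($|\la_{2,2k+2}\rho_{4}+\la_{1,4}\rho_{2k+2}|>|\rho_{4,2k+2}|$ when the product is $2$) to force $\la_{1,4}\la_{2,2k+2}=\la_{1,2k+1}\la_{3,2k+2}=0$; the determinant conditions $\mathrm{det}(\boldsymbol{\la_{1}},\boldsymbol{\la_{4}},\boldsymbol{\la_{2k+1}})=\mathrm{det}(\boldsymbol{\la_{2k+2}},\boldsymbol{\la_{4}},\boldsymbol{\la_{2k+1}})=1$ then follow. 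Likewise, your step (iv) is asserted rather than proved: the paper verifies that $M(L_{2k-2},\Lambda_{2})$ is string not by a cyclic-sum cancellation but by observing that $p_{1}(M(L_{2k},\Lambda))$ and $p_{1}(M(L_{2k-2},\Lambda_{2}))$ have literally the same expression and that the cohomology relations differ only in the single monomial $v_{4}v_{2k+1}$, which does not occur in that expression. Without these integral arguments your plan does not close.
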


\begin{proof}
Let the basis of $H^{4}(M(L_{2k},\Lambda))$ be $\{v_{i}v_{2k+2}\}_{4 \leq i \leq 2k+1}$ along with $v_{k+2}v_{k+3}$. The vanishing of $p_{1}(M(L_{2k},\Lambda))$ requires that $c_{4,2k+2}=c_{2k+1,2k+2}=0$, i.e., 
\begin{equation} \label{coefficient prism 1}
\left\{
\begin{aligned}
& \la_{2,2k+2} \rho_{4}+\la_{1,4} \rho_{2k+2}=\rho_{4,2k+2}; \\
& \la_{3,2k+2} \rho_{2k+1}+\la_{1,2k+1} \rho_{2k+2}=\rho_{2k+1,2k+2}. 
\end{aligned}
\right.
\end{equation}
On the other hand, (\ref{nonsingular}) requires $|1-\la_{1,4}\la_{2,2k+2}|=|1-\la_{1,2k+1}\la_{3,2k+2}|=1$. Note that $\la_{1,4}\la_{2,2k+2}=2$ leads to the following contradiction against (\ref{coefficient prism 1}): 
\begin{align*}
|\la_{2,2k+2} \rho_{4}+\la_{1,4} \rho_{2k+2}| \geq & \sum_{i=1}^{3} (\la_{i,4}^{2}+\la_{i,2k+2}^{2})+3 \\
\geq & 2 \sum_{i=1}^{3} |\la_{i,4}\la_{i,2k+2}|+3 \\
> & |\rho_{4,2k+2}|. 
\end{align*}
Thus, $\la_{1,4}\la_{2,2k+2}=\la_{1,2k+1}\la_{3,2k+2}=0$. 

\vspace{0.5cm}

\noindent \emph{Case 1: $\it{\la_{2,2k+2}=\la_{3,2k+2}=0}$.} $M(L_{2k},\Lambda)$ itself is of bundle type. 

\vspace{0.5cm}

\noindent \emph{Case 2: $\it{\la_{2,2k+2}=0; \la_{3,2k+2} \neq 0}$ ($\it{\la_{2,2k+2} \neq 0; \la_{3,2k+2}=0}$ is a parallel case).} Then $\la_{1,2k+1}=0$ and (\ref{coefficient prism 1}) implies 
\[
\la_{1,4}\la_{3,2k+2}-2 \la_{3,4}=\la_{2,2k+1}=0. 
\]

In this way, $\mathrm{det}(\boldsymbol{\la_{1}}, \boldsymbol{\la_{4}}, \boldsymbol{\la_{2k+1}})=\mathrm{det}(\boldsymbol{\la_{2k+2}}, \boldsymbol{\la_{4}}, \boldsymbol{\la_{2k+1}})=1$. Hence, for $k \geq 3$, $M(L_{2k},\Lambda)$ can be decomposed as $M(L_{4},\Lambda_{1}) \widetilde{\#^{e}} M(L_{2k-2},\Lambda_{2})$, where 
\[
\Lambda_{1}=
\begin{pmatrix}
1 & 0 & 0 & \la_{1,4} & 0 & 1 \\
0 & 1 & 0 & 1 & 0 & 0 \\
0 & 0 & 1 & \la_{3,4} & 1 & \la_{3,2k+2} 
\end{pmatrix} 
\qquad 
\Lambda_{2}=
\begin{pmatrix}
1 & \la_{1,4} & \la_{1,5} & \cdots & 0 & 1 \\
0 & 1 & \la_{2,5} & \cdots & 0 & 0 \\
0 & \la_{3,4} & \la_{3,5} & \cdots & 1 & \la_{3,2k+2} 
\end{pmatrix}. 
\]
Since $\Lambda_{1}$ is equivalent to 
\[
\begin{pmatrix}
1 & 0 & 0 & \la_{3,2k+2} & \la_{3,4} & 1 \\
0 & 1 & 0 & 1 & \la_{1,4} & 0 \\
0 & 0 & 1 & 0 & 1 & 0 
\end{pmatrix}, 
\]
$M(L_{4},\Lambda_{1})$ is of bundle type. And direct verification leads to the vanishing of $p_{1}(M(L_{4},\Lambda_{1}))$. In particular, if $k=2$, then $M(L_{4},\Lambda)$ itself is of bundle type and string. Meanwhile, 
\[
p_{1}(M(L_{2k-2},\Lambda_{2}))=\sum_{i=5}^{2k} v_{i}^{2}+(\sum_{i=4}^{2k} \la_{1,i} v_{i})^{2}+(\sum_{i=5}^{2k} \la_{2,i} v_{i})^{2}+(\sum_{i=4}^{2k} \la_{3,i} v_{i}+\la_{3,2k+2} v_{2k+2})^{2}. 
\]
It should be noted that $p_{1}(M(L_{2k},\Lambda))$ has exactly the same expression. Besides, for $M(L_{2k},\Lambda)$ and $M(L_{2k-2},\Lambda_{2})$, relations among $\{v_{i}v_{j}\}_{4 \leq i, j \leq 2k+1}$ and $\{v_{i}v_{2k+2}\}_{i=4}^{2k+2}$ are identical, except for $v_{4}v_{2k+1}$. Since $v_{4}v_{2k+1}$ does not appear in the expression above, $p_{1}(M(L_{2k},\Lambda))=0$ induces $p_{1}(M(L_{2k-2},\Lambda_{2}))=0$. 

\vspace{0.5cm}

\noindent \emph{Case 3: $\it{\la_{2,2k+2} \neq 0; \la_{3,2k+2} \neq 0}$.} Then $\la_{1,4}=\la_{1,2k+1}=0$ and (\ref{coefficient prism 1}) implies 
\[
\left\{
\begin{aligned}
& \la_{2,2k+2}\la_{3,4}^{2}=2 \la_{3,4}\la_{3,2k+2}; \\
& \la_{3,2k+2}\la_{2,2k+1}^{2}=2 \la_{2,2k+1}\la_{2,2k+2}. 
\end{aligned}
\right.
\]
If $\la_{3,4}\la_{2,2k+1}=0$, then arguments in \emph{Case 2} can be applied to decompose $M(L_{2k},\Lambda)$ via equivariant edge connected sum. Otherwise, $\la_{3,4}\la_{2,2k+1}=4$. Along with restrictions given by spin property, $\la_{3,4}=\la_{2,2k+1}=2$ up to equivalence, and then $\la_{2,2k+2}=\la_{3,2k+2}=a$. 

Suppose there exists $5 \leq t \leq k+2$ such that $\la_{1,4}=\cdots=\la_{1,t-1}=0$ but $\la_{1,t} \neq 0$. With necessary column sign permutations, one can assume $\Delta_{i-1,i}=-1$ for $4 \leq i \leq t$. Then determinant restrictions indicate 
\[
|1-\la_{1,t}a (\la_{3,t-1}-\la_{2,t-1})|=1. 
\]
Since $\la_{1,t} \neq 0, a \neq 0$ by assumption and $\la_{3,t-1}-\la_{2,t-1} \equiv 1 \pmod{2}$ by spin property, $\la_{3,t-1}-\la_{2,t-1}=\frac{2}{\la_{1,t}a}=\pm 1$. Now check the vanishing of coefficients $c_{t-1,2k+2}$ and $c_{t,2k+2}$ with formulas in Lemma \ref{coefficient prism}: $c_{t-1,2k+2}=0$ indicates 
\begin{align*}
\sum_{s=4}^{t-2} (l_{s} \rho_{s}+\Delta_{s,s+1} \rho_{s,s+1}) 
= &\ \frac{-1}{\Delta_{t-1,2k+2}} (\Delta_{t-2,2k+2} \rho_{t-1}+\rho_{t-1,2k+2}) \\
= &\ (\la_{3,t-1}-\la_{2,t-1})[(\la_{2,t-2}-\la_{3,t-2})\rho_{t-1} \\
& +2(\la_{2,t-1}+\la_{3,t-1})]. 
\end{align*}
Substitution into $c_{t,2k+2}=0$ yields 
\begin{equation} \label{c_{t-1,2k+2} and c_{t,2k+2}}
\begin{aligned}
4+\rho_{t}= 
&\ (\la_{3,t-1}-\la_{2,t-1})[2(\la_{2,t}+\la_{3,t})-(\la_{2,t}-\la_{3,t}) \rho_{t-1,t}] \\
& +(\la_{2,t}-\la_{3,t})[2(\la_{2,t-1}+\la_{3,t-1})+(\la_{2,t-2}-\la_{3,t-2}) \rho_{t-1}] \\
& +(\la_{3,t-1}-\la_{2,t-1})(\la_{2,t}-\la_{3,t})(\la_{2,t}\la_{3,t-2}-\la_{2,t-2}\la_{3,t}) \rho_{t-1}. 
\end{aligned}
\end{equation}
When $\la_{3,t-1}-\la_{2,t-1}=1$, submatrix $(\la_{i,j})_{1 \leq i \leq 3, t-2 \leq j \leq t}$ can be expressed as 
\[
\begin{pmatrix}
0 & 0 & \la_{1,t} \\
bc-1 & b & bd+1 \\
bc+c-1 & b+1 & bd+d+1
\end{pmatrix}
\]
with $b, c, d \in \Z$. And (\ref{c_{t-1,2k+2} and c_{t,2k+2}}) is simplified to $\la_{1,t}^{2}+d^{2}+3=0$, resulting in contradiction. When $\la_{3,t-1}-\la_{2,t-1}=-1$, submatrix $(\la_{i,j})_{1 \leq i \leq 3, t-2 \leq j \leq t}$ can be expressed as 
\[
\begin{pmatrix}
0 & 0 & \la_{1,t} \\
bc+c+1 & b+1 & bd+d-1 \\
bc+1 & b & bd-1 
\end{pmatrix}
\]
with $b, c, d \in \Z$. And (\ref{c_{t-1,2k+2} and c_{t,2k+2}}) is simplified to $\la_{1,t}^{2}+d^{2}(2b+1)^{2}+3=0$, resulting in contradiction again. Therefore, $\la_{1,4}=\cdots=\la_{1,k+2}=0$. Likewise, $\la_{1,2k+1}=\cdots=\la_{1,k+3}=0$. And $M(L_{2k},\Lambda)$ itself is of bundle type. 

In conclusion, $M(L_{2k},\Lambda)$ is either of bundle type, or an equivariant edge connected sum $M(L_{4},\Lambda_{1}) \widetilde{\#^{e}} M(L_{2k-2},\Lambda_{2})$, where $M(L_{4},\Lambda_{1})$ is of bundle type and string, and $M(L_{2k-2},\Lambda_{2})$ is string. The proof finishes with induction. 
\end{proof} 

\begin{remark}
Combine the proof with results in Section \ref{dimP=2}, $M(L_{2k_{i}},\Lambda_{i})$ $(1 \leq i \leq s-1)$ can be assumed as the total space of a 3-stage $\C P^{1}$-bundle tower: 
\[
B^{6} \xrightarrow{\C P^{1}} B^{4} \xrightarrow{\C P^{1}} B^{2} \xrightarrow{\C P^{1}} {pt}.
\]
And $M(L_{2k_{s}},\Lambda_{s})$ is either a $\C P^{1}$-bundle over 4-dimensional quasitoric manifold, or a $\widetilde{\#}_{k_{s}-1} (\C P^{1} \times \C P^{1})$-bundle over $\C P^{1}$. In addition, with discussions similar to those in \emph{Case 2}, one can deduce $[M(L_{2k},\Lambda)]=[M(L_{2k_{s}},\Lambda_{s})]=0$ in $\Omega_{6}^{O}$, $\Omega_{6}^{SO}$ and there exists an omniorientation such that $[M(L_{2k},\Lambda)]=0$ in $\Omega_{6}^{U}$. 
\end{remark}

\subsection{$\mathrm{dim}P=4$} \label{dimP=4}

General results on 8-dimensional string quasitoric manifolds and their orbit polytopes are much more difficult to reach. On one hand, there are 4-dimensional simple polytopes that can not be realized as the orbit polytope of any quasitoric manifold, such as dual of cyclic polytopes with more than 7 facets \cite{Ha15}. On the other hand, the property parallel to Proposition \ref{dimP=2 polytope} and \ref{dimP=3 polytope} is NOT valid, as illustrated in the example below. 

\begin{example} \label{5-colorable}
The quasitoric manifold $M(C_{2}(4) \times C_{2}(5),\Lambda)$ is string if $\Lambda$ is equivalent to 
\[
\left(
\begin{array}{cc|cc|cc|ccc}
1 & 0 & 0 & 0 & 1 & 0 & 0 & 1 & 0 \\
0 & 1 & 0 & 0 & 0 & 1 & 2 & 2 & 2 \\
\hline
0 & 0 & 1 & 0 & 0 & 0 & 1 & 1 & 0 \\
0 & 0 & 0 & 1 & 0 & 0 & 0 & 1 & 1 
\end{array}
\right). 
\]
Here facets are labeled such that $F_{1}, F_{2}, F_{5}, F_{6}$ (resp. $F_{3}, F_{4}, F_{7}, F_{8}, F_{9}$) correspond to facets of $C_{2}(4)$ (resp. $C_{2}(5)$). This example can be generalized to obtain string quasitoric manifolds over $C_{2}(2s) \times C_{2}(2t+1) \times I^{n}$ for all $s, t \geq 2$ and $n \geq 0$. 
\end{example}

When restricted to the product of two polygons, we are led to the following characterization similar to Proposition \ref{dimP=2 polytope}: 

\begin{proposition} \label{dimP=4 polytope}
$P=C_{2}(m_{1}) \times C_{2}(m_{2})$ can be realized as the orbit polytope of a string quasitoric manifold if and only if $m_{1}, m_{2} \geq 4$ and $m_{1}m_{2} \equiv 0 \pmod{2}$. 
\end{proposition}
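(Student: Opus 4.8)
\emph{The ``if'' direction.}  If $m_1$ and $m_2$ are both even, then each $C_2(m_i)$ is $2$-colourable, so assigning two colours to the facets of $P$ coming from $C_2(m_1)$ and two further colours to those coming from $C_2(m_2)$ exhibits $P$ as an $n$-colourable simple polytope ($n=4$); the pull-back of the linear model over $P$ then has trivial tangent bundle (see \cite{DJ91}) and is in particular string.  If exactly one of $m_1,m_2$ is even --- so the other is odd, hence $\ge 5$ by Step~1 below --- one uses the characteristic matrix of Example~\ref{5-colorable}: it has an evident ``product'' shape that extends verbatim to a characteristic matrix over $C_2(2s)\times C_2(2t+1)$ for all $s,t\ge 2$, and the vanishing of $w_2$ and $p_1$ is verified exactly as in that example.

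\emph{The ``only if'' direction, Step~1: $m_1,m_2\ge 4$.}  A polygon has at least three facets, so it remains to exclude $m_i=3$.  If, say, $m_1=3$, then $C_2(3)\times\{w\}$ is a triangular $2$-face of $P$ for every vertex $w$ of $C_2(m_2)$, so $P$ is not triangle-free and, by the Key Observation of Section~\ref{few facets case}, cannot be the orbit polytope of a string quasitoric manifold.

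\emph{The ``only if'' direction, Step~2: $m_1m_2\equiv 0\pmod 2$.}  Assume for contradiction that $m_1,m_2$ are both odd and that $M=M(P,\Lambda)$ is string.  Refine $\Lambda$ so that $F_1\cap F_2$ and $G_1\cap G_2$ are vertices of the two polygonal factors, giving $\boldsymbol{\la_{F_1}}=\boldsymbol{e_1},\ \boldsymbol{\la_{F_2}}=\boldsymbol{e_2},\ \boldsymbol{\la_{G_1}}=\boldsymbol{e_3},\ \boldsymbol{\la_{G_2}}=\boldsymbol{e_4}$, so that $w_2(M)=0$ means every column of $\Lambda$ has odd sum.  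Fix a vertex $w=G_1'\cap G_2'$ of $C_2(m_2)$, put $Q=G_1\cap G_2\cong C_2(m_1)$ and $N=\pi^{-1}(Q)$, a $4$-dimensional quasitoric manifold whose reduced characteristic matrix is the top two rows of $(\boldsymbol{\la_{F_1}},\dots,\boldsymbol{\la_{F_{m_1}}})$.  Since $Q$ meets no $G_j$ with $j\ge 3$, restriction kills $v_{G_j}$ for $j\ge 3$, while the linear relations of $\mathcal J$ restricted to $N$ give $v_{G_1}|_N=-\sum_{i\ge 3}\la_{3,F_i}\,\bar v_{F_i}$ and $v_{G_2}|_N=-\sum_{i\ge 3}\la_{4,F_i}\,\bar v_{F_i}$, with $\bar v_{F_i}:=v_{F_i}|_N$.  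From $TM|_N=TN\oplus(\rho_{G_1}\oplus\rho_{G_2})|_N$ and Proposition~\ref{characteristic class} one gets, in $H^4(N;\Z)\cong\Z$ (generated by $\bar v_{F_1}\bar v_{F_2}$),
\[
0=p_1(M)|_N=p_1(N)+\big(v_{G_1}|_N\big)^2+\big(v_{G_2}|_N\big)^2 .
\]
By Lemma~\ref{dimP=2 main} and (\ref{mod 2}), $p_1(N)$ is an \emph{odd} multiple of $\bar v_{F_1}\bar v_{F_2}$ because $m_1$ is odd; expanding $(v_{G_1}|_N)^2+(v_{G_2}|_N)^2$ through the relations of Lemma~\ref{dimP=2 coefficient} shows its coefficient is $\equiv\sum_{i\ge 3}(\la_{3,F_i}+\la_{4,F_i})\pmod 2$, which by ``odd column sums'' equals $\sum_{i\ge 3}(\la_{1,F_i}+\la_{2,F_i})+1\pmod 2$.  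Hence the string hypothesis pins down a parity of $\sum_{i\ge 3}(\la_{1,F_i}+\la_{2,F_i})$, i.e.\ of the number of mod-$2$ columns of the reduced matrix of $N$ equal to $(1,1)^{\mathrm T}$.  Running this for every vertex $w$ of $C_2(m_2)$ (projecting $\Lambda$ modulo each adjacent pair $\boldsymbol{\la_{G_j}},\boldsymbol{\la_{G_{j+1}}}$), together with the symmetric statements obtained from the faces $\{v\}\times C_2(m_2)$ and with $p_1(M)=0$ itself, produces a family of mod-$2$ conditions on $\Lambda$; one checks that this family is jointly unsatisfiable precisely when $m_1$ and $m_2$ are both odd, which is the desired contradiction.

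\emph{Expected main obstacle.}  The ``if'' part and Step~1 are routine; the essential work is Step~2.  Two things have to be done carefully: (i) compute the coefficient of $\bar v_{F_1}\bar v_{F_2}$ in $(v_{G_1}|_N)^2+(v_{G_2}|_N)^2$ explicitly in terms of the $\ell_i$'s and the signs $\delta_{i,i+1}$ of Lemma~\ref{dimP=2 coefficient}; and (ii) assemble the parity relations coming from \emph{all} codimension-two polygonal faces in both directions and from $p_1(M)=0$, and show their conjunction is empty when $m_1,m_2$ are odd.  It is this bookkeeping --- the interaction of the congruence $\sum_i\ell_i\equiv m\pmod 2$ with the ``odd column sum'' constraint across the two polygonal directions --- rather than any single conceptual point, where the difficulty lies.
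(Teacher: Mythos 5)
Your ``if'' direction and Step~1 match the paper: both-even uses the pull-back of the linear model, the mixed-parity case generalizes Example \ref{5-colorable}, and $m_i=3$ is excluded via the triangular $2$-face / Key Observation argument. The problem is Step~2, which is the entire substance of the proposition, and there you have a genuine gap: the conclusion ``one checks that this family of mod-$2$ conditions is jointly unsatisfiable precisely when $m_1$ and $m_2$ are both odd'' is asserted, not proven. That unsatisfiability check is not routine bookkeeping to be deferred --- it is the theorem. Moreover, the one coefficient you do compute is wrong as stated: restricting to $N=\pi^{-1}(G_1\cap G_2)$ and expanding $(v_{G_1}|_N)^2=\bigl(\sum_{i\ge 3}\la_{3,F_i}\bar v_{F_i}\bigr)^2$ via Lemma \ref{dimP=2 coefficient} gives, modulo $2$, the coefficient $\sum_{i\ge 3}\la_{3,F_i}\,l_i$ of $\bar v_{F_1}\bar v_{F_2}$ (since $\bar v_{F_i}^2=l_i\cdot\Delta_{i,i+1}\bar v_{F_i}\bar v_{F_{i+1}}$), not $\sum_{i\ge 3}\la_{3,F_i}$; the weights $l_i$ are frequently even and cannot be dropped, so the parity relation you extract from each face is not the one you wrote down.

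It is also worth noting that the paper's actual argument for the both-odd case is structurally different and, in a sense, stronger: it never uses $p_1(M)=0$. Working only with the nonsingularity condition (\ref{nonsingular}) and the spin condition (odd column sums), it partitions the column indices of each polygonal block of the mod-$2$ reduction of $\Lambda$ into classes $\mathcal{O}_1,\mathcal{O}_2,\mathcal{O}_3,\mathcal{E}$ (and primed analogues) according to the parity patterns of a column and its two neighbours, proves three mutual-exclusion implications between the classes of the two blocks, and then derives incompatible parity constraints on $|\mathcal{O}_1\sqcup\mathcal{O}_2|$, $|\mathcal{O}_1'\sqcup\mathcal{O}_2'|$, $|\mathcal{E}|$, $|\mathcal{E}'|$ from the refined form of $\Lambda$. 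So the contradiction already rules out spin quasitoric manifolds over $C_2(\mathrm{odd})\times C_2(\mathrm{odd})$. If you want to salvage your face-restriction strategy, you would need to (i) correct the coefficient to include the $l_i$ weights, (ii) control those weights using the block analysis from the proof of Lemma \ref{dimP=2 main}, and (iii) actually carry out the global parity bookkeeping across all codimension-two faces in both directions --- at which point you would essentially be reconstructing the paper's direct mod-$2$ analysis of $\Lambda$.
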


\begin{proof}
With pull back of the linear model and Example \ref{5-colorable}, it remains to prove the necessity. 

Label the facets of $P$ such that $\{F_{i}\}_{i=1}^{m_{1}}$ correspond to facets of $C_{2}(m_{1})$ and $\{F_{i}\}_{i=m_{1}+1}^{m_{1}+m_{2}}$ correspond to facets of $C_{2}(m_{2})$. Fix the initial vertex $w=F_{1} \cap F_{2} \cap F_{m_{1}+1} \cap F_{m_{1}+2}$. For any quasitoric manifold $M(P,\Lambda)$, let the basis of $H^{2}(M(P,\Lambda))$ be $\{v_{i}\}_{i=3}^{m_{1}}$ and $\{v_{i}\}_{i=m_{1}+3}^{m_{1}+m_{2}}$. 

Without loss of generality, suppose $m_{1} \leq m_{2}$. If $m_{1}=3$, then non-vanishing element $v_{3}^{2}$ does not appear in any relation in $H^{4}(M(P,\Lambda))$ (see Key Observation in Section \ref{few facets case} for the general case). Thus, $p_{1}(M(P,\Lambda)) \neq 0$, leading to contradiction. 

Now suppose $m_{1}=2s+1, m_{2}=2t+1$ with $2 \leq s \leq t$. Let $\mu_{i,j}$ characterizes the parity of $\la_{i,j}$, i.e., $\mu_{i,j} \equiv \la_{i,j} \pmod{2}$. For $1 \leq i \leq 2s+1$, set 
\[
\left\{
\begin{aligned}
& \mathcal{O}_{1}=\{ i\ |\ \mu_{1,i}=\mu_{2,i}, \mu_{j,i-1} \neq \mu_{j,i+1} \Leftrightarrow j=1, 2, 3, 4 \}; \\
& \mathcal{O}_{2}=\{ i\ |\ \mu_{1,i}=\mu_{2,i}, \mu_{j,i-1} \neq \mu_{j,i+1} \Leftrightarrow j=1, 2 \}; \\
& \mathcal{O}_{3}=\{ i\ |\ \mu_{1,i}=\mu_{2,i}, \mu_{j,i-1} \neq \mu_{j,i+1} \Leftrightarrow j=3, 4 \}; \\
& \mathcal{E}=\{ i\ |\ \mu_{1,i} \neq \mu_{2,i}, \mu_{j,i} \neq \mu_{j,i+1} \Leftrightarrow j=1, 2, 3, 4 \}; 
\end{aligned}
\right. 
\]
with subscripts taken modulo $2s+1$. Similarly, for $2s+2 \leq i \leq 2s+2t+2$, set 
\[
\left\{
\begin{aligned}
& \mathcal{O}_{1}'=\{ i\ |\ \mu_{3,i}=\mu_{4,i}, \mu_{j,i-1} \neq \mu_{j,i+1} \Leftrightarrow j=1, 2, 3, 4 \}; \\
& \mathcal{O}_{2}'=\{ i\ |\ \mu_{3,i}=\mu_{4,i}, \mu_{j,i-1} \neq \mu_{j,i+1} \Leftrightarrow j=3, 4 \}; \\
& \mathcal{O}_{3}'=\{ i\ |\ \mu_{3,i}=\mu_{4,i}, \mu_{j,i-1} \neq \mu_{j,i+1} \Leftrightarrow j=1, 2 \}; \\
& \mathcal{E}'=\{ i\ |\ \mu_{3,i} \neq \mu_{4,i}, \mu_{j,i} \neq \mu_{j,i+1} \Leftrightarrow j=1, 2, 3, 4 \}; 
\end{aligned}
\right. 
\]
with subscripts taken modulo $2t+1$. Direct check on restrictions imposed by (\ref{nonsingular}) along with spin property yields 
\begin{enumerate}[(1)]
\item $\mathcal{O}_{1} \neq \emptyset \Rightarrow \mathcal{O}_{2}'=\mathcal{O}_{3}'=\emptyset$ and $\mathcal{O}_{1}' \neq \emptyset \Rightarrow \mathcal{O}_{2}=\mathcal{O}_{3}=\emptyset$; 
\item $\mathcal{O}_{2} \neq \emptyset \Rightarrow \mathcal{O}_{1}'=\mathcal{O}_{2}'=\emptyset$ and $\mathcal{O}_{2}' \neq \emptyset \Rightarrow \mathcal{O}_{1}=\mathcal{O}_{2}=\emptyset$; 
\item $\mathcal{E} \neq \emptyset \Rightarrow \mathcal{E}'=\emptyset$ and $\mathcal{E}' \neq \emptyset \Rightarrow \mathcal{E}=\emptyset$. 
\end{enumerate}
On the other hand, $\la_{1,1}=\la_{2,2}=1, \la_{1,2}=\la_{2,1}=0$ and $\la_{3,2s+2}=\la_{4,2s+3}=1, \la_{3,2s+3}=\la_{4,2s+2}=0$ require that cardinal numbers $|\mathcal{O}_{1} \sqcup \mathcal{O}_{2}|\equiv |\mathcal{O}_{1}' \sqcup \mathcal{O}_{2}'| \equiv 1 \pmod{2}$. Thus, $\mathcal{O}_{2}=\mathcal{O}_{3}=\mathcal{O}_{2}'=\mathcal{O}_{3}'=\emptyset$ and $|\mathcal{O}_{1}| \equiv |\mathcal{O}_{1}'| \equiv 1 \pmod{2}$ by (1) and (2). Consequently, $\la_{3,1}=\la_{3,2}=\la_{4,1}=\la_{4,2}=0$ and $\la_{1,2s+2}=\la_{1,2s+3}=\la_{2,2s+2}=\la_{2,2s+3}=0$ require that $|\mathcal{E}| \equiv |\mathcal{E}'| \equiv 1 \pmod{2}$. But this is a contradiction against (3). 
\end{proof}

\section{Few facets case} \label{few facets case}
\setcounter{equation}{0}

\begin{observation} \label{key observation}
For an $n$-dimensional simple polytope $P$ with facet set $\mathcal{F}(P)$, if there exist facets $F$ and $\{F_{j_{i}}\}_{i=1}^{n}$ such that $\bigcap_{i=1}^{n} F_{j_{i}} \neq \emptyset$ and $F \cap F_{j_{i}} \neq \emptyset$ for $1 \leq i \leq n$, then one can relabel elements of $\mathcal{F}(P)$ with $F'_{i}=F_{j_{i}}$ $(1 \leq i \leq n)$ and $F'_{n+1}=F$. In this way, $v_{n+1}^{2}$ does NOT appear in any relation in $H^{4}(M(P,\Lambda))$. As a result, non-zero element $v_{n+1}^{2}$ must appear in the expression of $p_{1}(M(P,\Lambda))$ and its coefficient is equal to $\sum_{i=1}^{n}\la_{i(n+1)}^2+1 \neq 0$. In conclusion, $P$ can not be realized as the orbit polytope of a string quasitoric manifold. 
\end{observation}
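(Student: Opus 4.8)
The plan is to pass to refined form and to follow the single monomial $v_{n+1}^{2}$ through the presentation of $H^{4}(M(P,\Lambda))$ supplied by Proposition \ref{cohomology ring}. First I would note that, $P$ being simple and $F_{j_{1}},\dots,F_{j_{n}}$ being $n$ distinct facets, their nonempty common intersection is a vertex; after the relabeling $F'_{i}=F_{j_{i}}$ $(1\le i\le n)$ and $F'_{n+1}=F$ --- these $n+1$ facets being pairwise distinct --- this vertex becomes the initial vertex, so by Remark \ref{spin and Betti number count} we may assume $\Lambda=[\ \mathrm{I}_{n}\ |\ \Lambda_{*}\ ]$. Then $\{v_{i}\}_{i=n+1}^{m}$ is an integral basis of $H^{2}(M(P,\Lambda))$, and ideal $\mathcal{J}$ yields $v_{i}\equiv-\sum_{k=n+1}^{m}\la_{i,k}v_{k}$ for $1\le i\le n$.

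Next I would describe the relations carving out $H^{4}(M(P,\Lambda))$ inside the free abelian group on the monomials $\{v_{k}v_{l}\}_{n+1\le k\le l\le m}$: because each $v_{i}$ has cohomological degree $2$, these relations are generated precisely by the images, after the substitution above, of the quadratic elements $v_{a}v_{b}$ of $\mathcal{I}$, i.e.\ the pairs with $F_{a}\cap F_{b}=\emptyset$. The point to verify is that no such image carries a nonzero $v_{n+1}^{2}$ term. I would check three cases. If $a,b\ge n+1$, the image is $v_{a}v_{b}$ itself, which equals $v_{n+1}^{2}$ only for $a=b=n+1$, impossible. If exactly one of the indices, say $a$, is $\le n$, then $F_{a}=F'_{a}$ meets $F'_{n+1}=F$ by hypothesis, forcing $b\ne n+1$, hence $b\ge n+2$, and the substituted monomials $v_{k}v_{b}$ $(k\ge n+1)$ are never $v_{n+1}^{2}$. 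If $a,b\le n$, then $F_{a}\cap F_{b}\supseteq\bigcap_{i=1}^{n}F_{j_{i}}\ne\emptyset$, so no such generator exists. Consequently the operation of reading off the coefficient of $v_{n+1}^{2}$ descends to a well-defined $\Z$-linear functional on $H^{4}(M(P,\Lambda))$.

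Finally I would evaluate this functional on $p_{1}(M(P,\Lambda))=\sum_{j=1}^{m}v_{j}^{2}$ (Proposition \ref{characteristic class}): the term $j=n+1$ contributes $1$, each term with $j\le n$ contributes $\la_{j,n+1}^{2}$ after the substitution, and every term with $j\ge n+2$ contributes $0$. Hence the coefficient of $v_{n+1}^{2}$ in $p_{1}(M(P,\Lambda))$ equals $\sum_{i=1}^{n}\la_{i,n+1}^{2}+1\ge 1>0$, so $p_{1}(M(P,\Lambda))\ne 0$ and $M(P,\Lambda)$ is not string. Since $M(P,\Lambda)$ was an arbitrary quasitoric manifold over $P$, the polytope $P$ cannot be realized as the orbit polytope of a string quasitoric manifold.

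The only genuinely delicate point is the case analysis of the middle paragraph: one must use that $F$ is adjacent to each $F_{j_{i}}$ to rule out the mixed case $b=n+1$, and that the $F_{j_{i}}$ share a vertex to rule out the case $a,b\le n$. Once it is established that no relation in degree $4$ involves $v_{n+1}^{2}$, the remainder is merely the bookkeeping identity $p_{1}=\sum_{j}v_{j}^{2}$.
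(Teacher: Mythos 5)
Your proposal is correct and follows essentially the same line of reasoning the paper embeds in the statement of the Key Observation itself (the paper gives no separate proof): pass to refined form with initial vertex $\bigcap_{i=1}^{n}F_{j_{i}}$, check that no degree-4 relation coming from $\mathcal{I}$ after eliminating $v_{1},\dots,v_{n}$ via $\mathcal{J}$ involves $v_{n+1}^{2}$, and read off the coefficient $\sum_{i=1}^{n}\la_{i,n+1}^{2}+1$ from $p_{1}=\sum_{j}v_{j}^{2}$. Your three-case analysis makes explicit exactly the adjacency hypotheses the paper is implicitly using, so nothing further is needed.
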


This observation rules out a large amount of simple polytopes such as $P=\prod_{i=1}^{k} P_{i}$ with some $P_{i}$ having a 2-neighborly dual and $P$ with a triangular $2$-face. In particular, a necessary and sufficient condition is obtained in the case of product of simplices: 
\begin{proposition} \label{product of simplices}
$P=\prod_{i=1}^{k}\Delta^{n_{i}}$ can be realized as the orbit polytope of a string quasitoric manifold if and only if $n_{i}=1$ for all $1 \leq i \leq k$, i.e., $P$ is the cube $I^{k}$.
\end{proposition}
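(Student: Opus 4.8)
The plan is to derive both directions from the Key Observation (Observation~\ref{key observation}) together with an explicit analysis of what the nonsingular condition (\ref{nonsingular}) forces on a product of simplices. For the ``if'' direction, if $n_i=1$ for all $i$ then $P=I^k$ is a $k$-colorable simple polytope (color each pair of opposite facets with a distinct color), so by the remark following Theorem~1.1 (the ``pull back of the linear model'' of \cite{DJ91}) it carries a quasitoric manifold with trivial tangent bundle, which is trivially string. So the real content is the ``only if'' direction.

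For the ``only if'' direction I would argue by contrapositive: suppose some $n_{i_0}\geq 2$, and exhibit facets $F$ and $F_{j_1},\dots,F_{j_n}$ as in Observation~\ref{key observation}. Write $P=\prod_{i=1}^k\Delta^{n_i}$ with $n=\sum n_i$, and recall each $\Delta^{n_i}$ has $n_i+1$ facets, any $n_i$ of which meet in a vertex and all $n_i+1$ of which are pairwise adjacent (the dual of $\Delta^{n_i}$ is the complete simplex, hence $2$-neighborly, in fact fully neighborly). Concretely: in the factor $\Delta^{n_{i_0}}$ pick three distinct facets $G_1,G_2,G_3$ (possible since $n_{i_0}+1\geq 3$); in every other factor $\Delta^{n_i}$ pick $n_i$ facets meeting in a vertex of that factor. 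Then take $\{F_{j_1},\dots,F_{j_n}\}$ to be $G_1,G_2$ together with all the chosen facets of the other factors --- these $n$ facets have nonempty common intersection (a vertex of $P$), since in $\Delta^{n_{i_0}}$ the two facets $G_1,G_2$ meet in a face. Take $F=G_3$. Since $G_3$ is adjacent to both $G_1$ and $G_2$ in $\Delta^{n_{i_0}}$, and $G_3$ (a facet coming from the $i_0$-th factor) is automatically adjacent to every facet coming from a different factor, we get $F\cap F_{j_\ell}\neq\emptyset$ for all $\ell$. This is exactly the hypothesis of Observation~\ref{key observation}, so $P$ cannot be the orbit polytope of a string quasitoric manifold.

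I would present this cleanly by first stating the two combinatorial facts about $\Delta^{n_i}$ that I need (any $n_i$ of its $n_i+1$ facets meet in a vertex; any two of its facets are adjacent), then the observation that facets from distinct Cartesian factors are always simultaneously ``compatible'' --- precisely, for any choice of facets $H_i$ from the $i$-th factor, $\bigcap_i H_i\neq\emptyset$ iff each intersection within a factor is nonempty, and $H\cap H'\neq\emptyset$ whenever $H,H'$ come from different factors. Then the construction of $F$ and $\{F_{j_i}\}$ above is a one-paragraph verification. The conclusion follows immediately from Observation~\ref{key observation} once $n_{i_0}\geq 2$ guarantees a third facet $G_3$ in that factor distinct from $G_1,G_2$.

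The only mild subtlety --- and the step I would be most careful with --- is the bookkeeping of face intersections in a Cartesian product: making sure that the $n$ chosen facets $F_{j_1},\dots,F_{j_n}$ genuinely cut out a \emph{vertex} of $P$ (so that $\bigcap F_{j_i}\neq\emptyset$), which requires that in the $i_0$-th factor $G_1\cap G_2$ is a codimension-$2$ face (true since $\Delta^{n_{i_0}}$ is simple and $G_1\neq G_2$) and in every other factor the $n_i$ chosen facets meet in a vertex. A clean way to phrase it: label the facets of $\Delta^{n_i}$ as $H^{(i)}_0,\dots,H^{(i)}_{n_i}$ with $\bigcap_{s\neq 0}H^{(i)}_s$ a vertex; then take the $F_{j_\ell}$ to be $\{H^{(i_0)}_1,H^{(i_0)}_2\}\cup\bigcup_{i\neq i_0}\{H^{(i)}_1,\dots,H^{(i)}_{n_i}\}$ and $F=H^{(i_0)}_0$, and check the count is $2+\sum_{i\neq i_0}n_i=n$ using $n_{i_0}\geq 2$. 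Nothing here is deep; the proposition is essentially an application of the Key Observation, and no analogue of the intricate mod-$8$ computations from Section~3 is needed.
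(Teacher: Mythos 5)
Your overall strategy is exactly the intended one: the paper offers no separate proof of Proposition \ref{product of simplices}, presenting it as an immediate consequence of the Key Observation (a simplex factor of dimension $\geq 2$ has a $2$-neighborly dual), and the ``if'' direction via $k$-colorability of $I^{k}$ is also standard. However, the step you yourself flagged as the delicate one contains a genuine error. With your choice, the collection $\{F_{j_\ell}\}=\{H^{(i_0)}_1,H^{(i_0)}_2\}\cup\bigcup_{i\neq i_0}\{H^{(i)}_1,\dots,H^{(i)}_{n_i}\}$ has $2+\sum_{i\neq i_0}n_i = n-n_{i_0}+2$ elements, and this equals $n$ precisely when $n_{i_0}=2$, not for all $n_{i_0}\geq 2$ as you assert. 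For $n_{i_0}\geq 3$ you have strictly fewer than $n$ facets, their common intersection is a positive-dimensional face rather than a vertex, and the hypothesis of Observation \ref{key observation} (which needs $n$ facets meeting in a vertex, so that they can be relabelled as $F'_1,\dots,F'_n$ and the characteristic matrix put in refined form with that vertex as initial vertex) is simply not met.

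The repair is easy and worth stating precisely: from the factor $\Delta^{n_{i_0}}$ take \emph{all} of $H^{(i_0)}_1,\dots,H^{(i_0)}_{n_{i_0}}$ (that is, $n_{i_0}$ facets, all but one), together with the $n_i$ chosen facets from each other factor; the count is then $n_{i_0}+\sum_{i\neq i_0}n_i=n$ and the intersection is a genuine vertex of $P$. Set $F=H^{(i_0)}_0$, the omitted facet. The hypothesis $n_{i_0}\geq 2$ enters only at this point: it guarantees that any two facets of $\Delta^{n_{i_0}}$ meet (false for $\Delta^1$, whose two facets are disjoint), so $F$ meets each $H^{(i_0)}_s$ with $s\geq 1$; and $F$ meets every facet coming from a different Cartesian factor automatically. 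With that correction your argument is complete and coincides with the paper's.
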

On the other hand, G. Blind and R. Blind classified all triangle-free simple polytopes with few facets: 
\begin{theorem}
\emph{\cite[Theorem 3]{BB92}} If an $n$-dimensional simple polytope $P$ is triangle-free, then the number of facets $f_{n-1}(P) \geq 2n$. Moreover, \\
(1) $f_{n-1}(P)=2n \Rightarrow P=I^{n}$; \\
(2) $f_{n-1}(P)=2n+1 \Rightarrow P=C_{2}(5) \times I^{n-2}$; \\
(3) $f_{n-1}(P)=2n+2 \Rightarrow P=C_{2}(6) \times I^{n-2}$ or $Q \times I^{n-3}$ or $C_{2}(5) \times C_{2}(5) \times I^{n-4}$ where $Q$ can be obtained from an edge cut of $C_{2}(5) \times I$ (see Figure \ref{Q3 as an edge cut}).
\end{theorem}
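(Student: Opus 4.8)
The plan is to run an induction on the dimension $n$, taking as induction hypothesis the \emph{full} classification rather than just the inequality $f_{n-1}(P) \ge 2n$. It helps to first record the combinatorial content of triangle-freeness: writing $K = \partial P^{*}$ for the boundary complex of the dual simplicial polytope, a $2$-face of $P$ is a polygon whose number of edges equals the length of the link $\mathrm{lk}_{K}(\sigma)$ of the corresponding $(n-3)$-face $\sigma$ of $K$, so $P$ is triangle-free exactly when every $(n-3)$-face of $K$ is contained in at least four facets of $P$. The base case $n = 2$ is the elementary fact that a triangle-free simple $2$-polytope is an $m$-gon with $m \ge 4$, realized by $C_{2}(4) = I^{2}$, $C_{2}(5)$, $C_{2}(6)$ for $m = 2n, 2n+1, 2n+2$; the factors $Q \times I^{n-3}$ and $C_{2}(5) \times C_{2}(5) \times I^{n-4}$ are excluded here purely by dimension.

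For the inductive step fix a facet $F$. Since faces of $F$ are faces of $P$, $F$ is itself a triangle-free simple $(n-1)$-polytope, and the induction hypothesis identifies its combinatorial type from $f_{n-2}(F) = |\mathcal{A}|$, where $\mathcal{A}$ is the set of facets of $P$ meeting $F$ and $G \in \mathcal{A}$ contributes the facet $F \cap G$ of $F$. This already yields $f_{n-1}(P) \ge |\mathcal{A}| + 1 \ge 2(n-1) + 1 = 2n - 1$. To close the remaining gap one shows, using triangle-freeness, that either $P$ has a facet disjoint from $F$ (so $f_{n-1}(P) \ge 2n$), or $f_{n-2}(F)$ already exceeds $2(n-1)$ (the configuration in which every facet meets $F$ being ruled out by a short local analysis); combining this with a look at how the facets of $\mathcal{A}$ are glued around $F$ gives $f_{n-1}(P) \ge 2n$, with equality forcing $f_{n-2}(F) = 2(n-1)$, hence $F = I^{n-1}$ by the induction hypothesis, and then $P = F \times I = I^{n}$.

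The cases $f_{n-1}(P) \in \{2n+1, 2n+2\}$ proceed by the same mechanism with more bookkeeping. Now $f_{n-2}(F)$ lies between $2(n-1)$ and $2(n-1) + (f_{n-1}(P)-2n) + 1$, so the induction hypothesis leaves only finitely many possibilities for the facet $F$; reading off the adjacency pattern of the remaining facets of $P$ relative to $F$, and using triangle-freeness to constrain how the non-adjacent facets attach, one shows that $P$ either splits off a factor of $I$ — i.e. $P \cong P_{0} \times I$ with $P_{0}$ a triangle-free simple $(n-1)$-polytope having two fewer facets, which the induction hypothesis then handles — or $n \le 3$ and $P$ is one of the finitely many indecomposable exceptions: the pentagon, the hexagon, or the $3$-polytope $Q$ obtained by an edge cut of $C_{2}(5) \times I$ (Figure~\ref{Q3 as an edge cut}). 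Forming all prisms over these three bases reproduces exactly the list $C_{2}(5) \times I^{n-2}$; and $C_{2}(6) \times I^{n-2}$, $Q \times I^{n-3}$, $C_{2}(5) \times C_{2}(5) \times I^{n-4}$.

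I expect the main obstacle to be the reconstruction step in each dimension: having pinned down a facet $F$ and the exact facet count of $P$, one must enumerate \emph{every} triangle-free way of completing $F$ to a simple polytope $P$, and it is precisely here that the non-product polytope $Q$ appears and must be distinguished from $C_{2}(6) \times I$. A second delicate point is making the prism reduction rigorous — proving that a pair of disjoint facets adjacent to the same set of remaining facets, matched up under that correspondence, actually exhibits $P$ as a Cartesian product with $I$ — since this is what collapses the whole classification onto the finitely many small-dimensional base cases.
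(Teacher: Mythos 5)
First, a point of comparison: the paper does not prove this statement at all --- it is quoted from Blind and Blind \cite{BB92} and used as a black box --- so there is no internal proof to measure your argument against; it has to stand on its own. As a strategy, induction on dimension via a facet $F$ (which is again a triangle-free simple polytope) is reasonable, and the easy bound $f_{n-1}(P)\ge f_{n-2}(F)+1\ge 2n-1$ is correct. But everything beyond that is asserted rather than proved. The step from $2n-1$ to $2n$ rests on a dichotomy (``either some facet is disjoint from $F$, or $f_{n-2}(F)>2(n-1)$'') whose justification is deferred to ``a short local analysis'' that is never given, and the phrases ``one shows that $P$ either splits off a factor of $I$\dots'' and ``reading off the adjacency pattern'' are placeholders for precisely the content of the theorem. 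The prism-recognition lemma you yourself flag as delicate --- that a suitable pair of disjoint facets forces $P\cong F\times I$ --- is the engine of the whole reduction and cannot be left as a remark.

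More seriously, the structural dichotomy driving your induction is false as stated. You claim that a triangle-free simple $n$-polytope with at most $2n+2$ facets either splits off a Cartesian factor of $I$ or satisfies $n\le 3$ and is one of the pentagon, the hexagon, or $Q$, and that forming prisms over these three bases reproduces the list. But $C_{2}(5)\times C_{2}(5)$ is a triangle-free simple $4$-polytope with $10=2\cdot 4+2$ facets which does not split off an interval factor: if it did, the complementary factor would be a $3$-dimensional triangle-free simple polytope with $8$ facets, hence $C_{2}(6)\times I$ or $Q$ by the very theorem being proved, and every facet of the resulting prism would have $8$ facets, whereas every facet of $C_{2}(5)\times C_{2}(5)$ is a pentagonal prism with $7$ facets. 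Nor is it a prism over any of your three exceptional bases. So the induction as designed either terminates incorrectly or must be redesigned --- for instance by allowing the reduction to split off a factor of $C_{2}(5)$ as well as of $I$, or by admitting $C_{2}(5)\times C_{2}(5)$ as an additional sporadic base case in dimension $4$. As it stands the proposal is a plan for a proof, with the genuinely hard reconstruction steps missing and one of its key claimed reductions contradicted by a polytope appearing in the theorem's own conclusion.
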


\begin{figure}[htb]
\begin{minipage}[t]{0.45\linewidth}
\centering
\includegraphics[height=4cm]{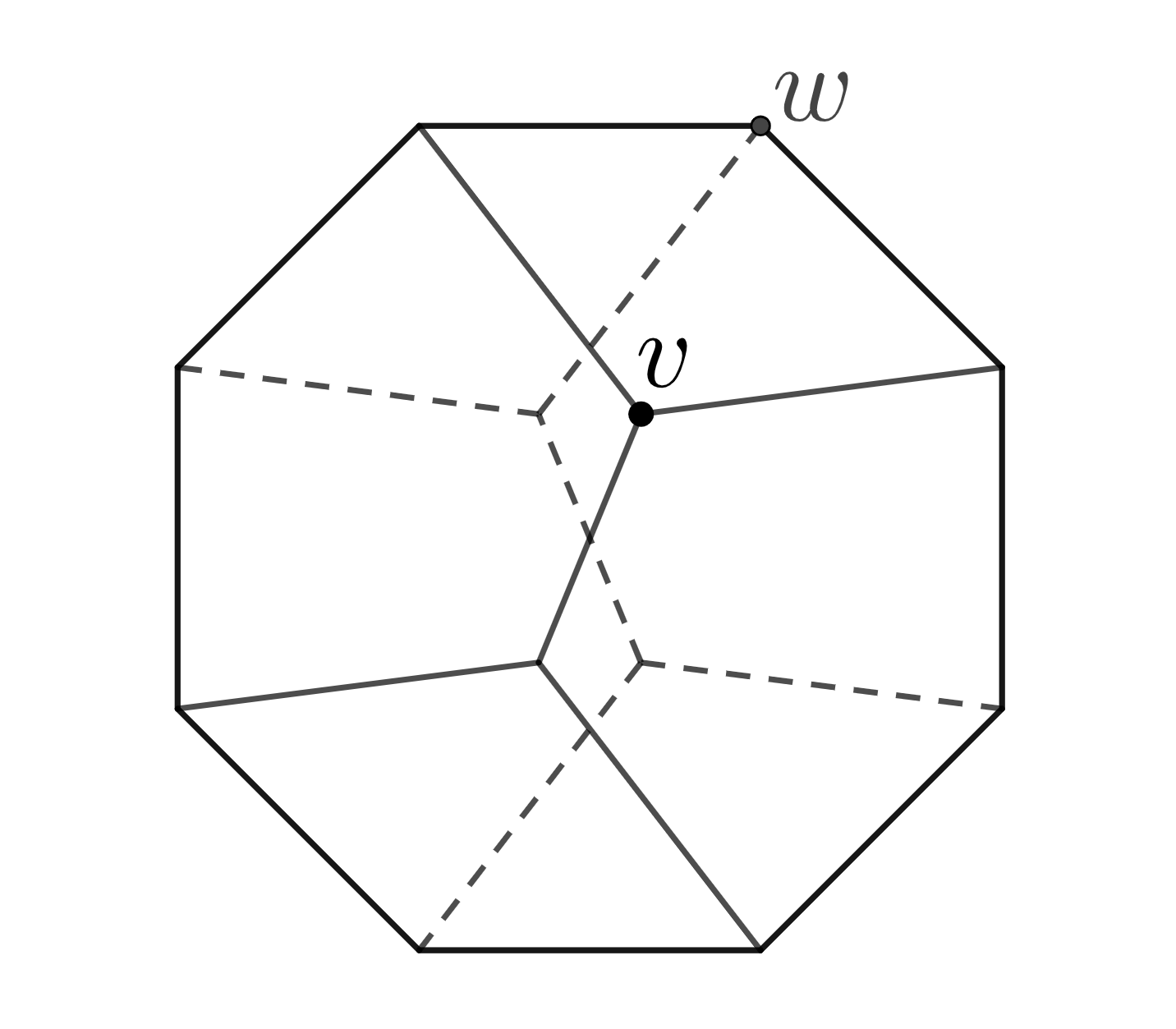}
\end{minipage}
\begin{minipage}[t]{0.45\linewidth}
\centering
\includegraphics[height=4cm]{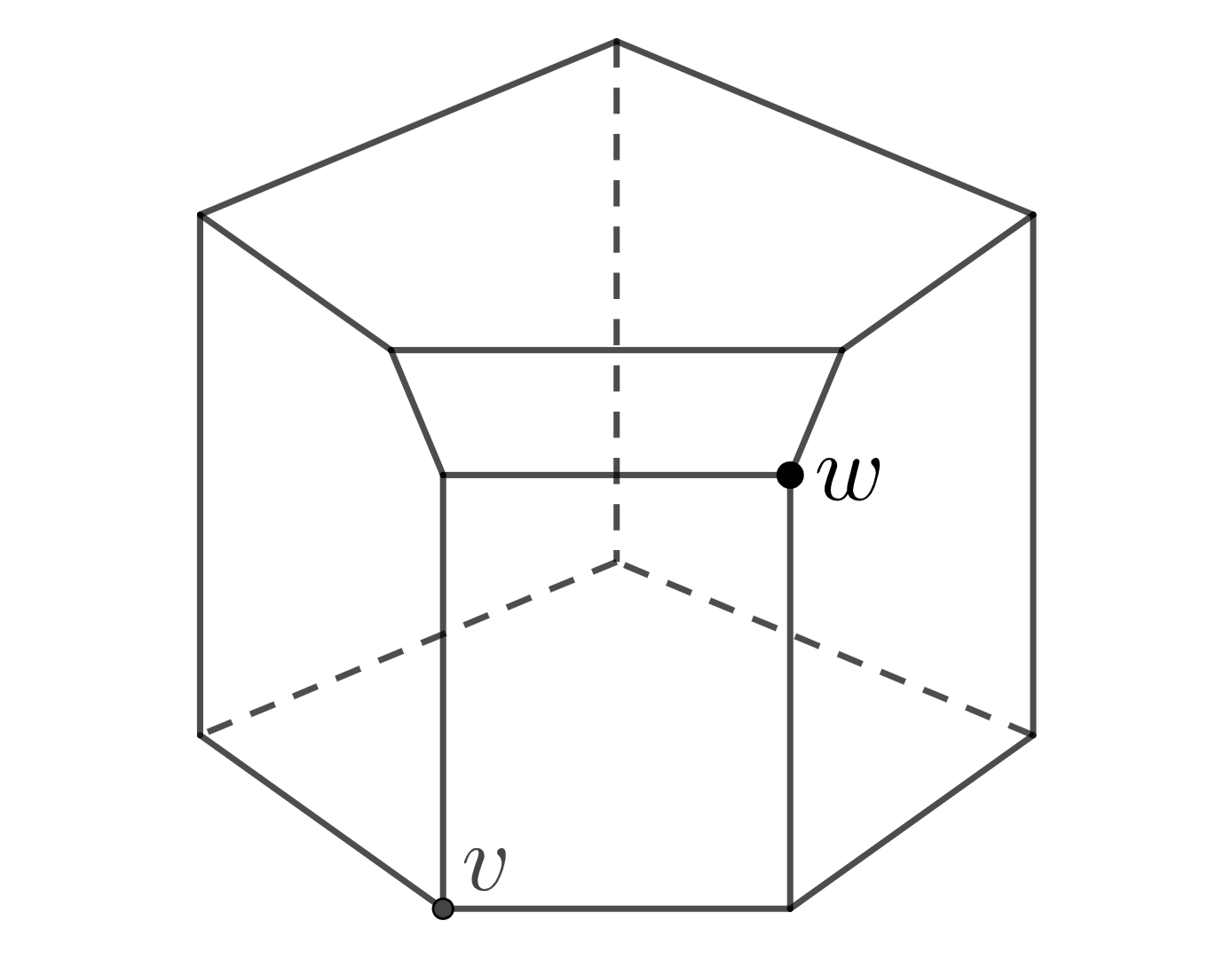}
\end{minipage}
\centering
\caption{$Q$ as an edge cut of $C_{2}(5) \times I$}
\label{Q3 as an edge cut}
\end{figure}

\noindent The classification above enables us to give a totally combinatorial characterization of string quasitoric manifolds over $n$-dimensional simple polytopes with the number of facets $f_{n-1}(P) \leq 2n+2$. 

\subsection{$f_{n-1}(P)=2n$} \label{2n}

Label the facets of $I^{n}$ such that $F_{i} \cap F_{n+i} = \emptyset$ for $1 \leq i \leq n$ (see Figure \ref{I3 and its dual with label} for $n=3$ case). Choose the basis of $H^{4}(M(I^{n},\Lambda))$ as $\{v_{i}v_{j}\}_{n+1 \leq i < j \leq 2n}$ and write $p_{1}(M)=p_{1}(M(I^{n},\Lambda))=\sum_{n+1 \leq i < j \leq 2n} c_{i,j} v_{i}v_{j}$. 

\begin{figure}[htb]
\begin{minipage}[t]{0.45\linewidth}
\centering
\includegraphics[height=4cm]{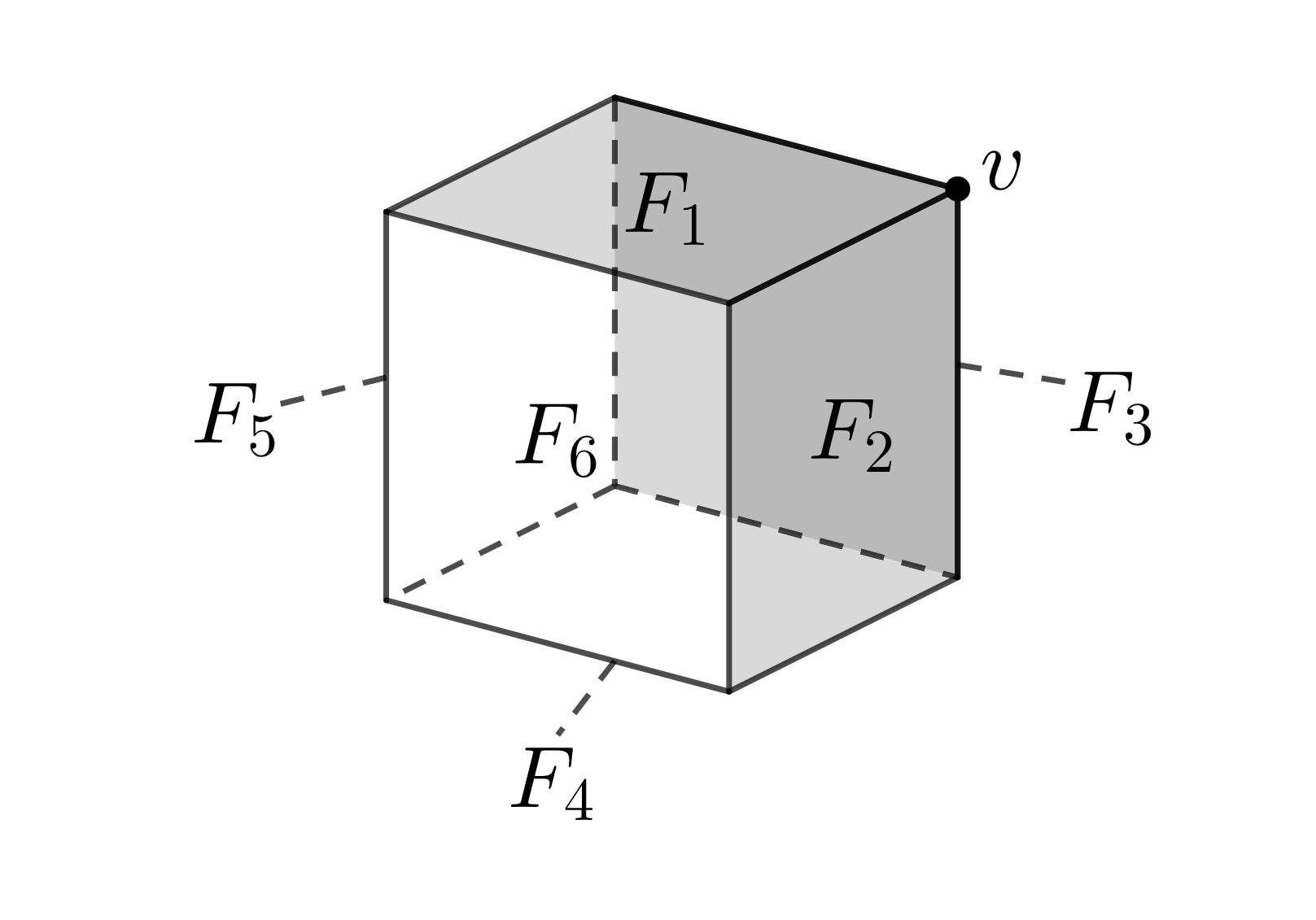}
\end{minipage}
\begin{minipage}[t]{0.45\linewidth}
\centering
\includegraphics[height=4cm]{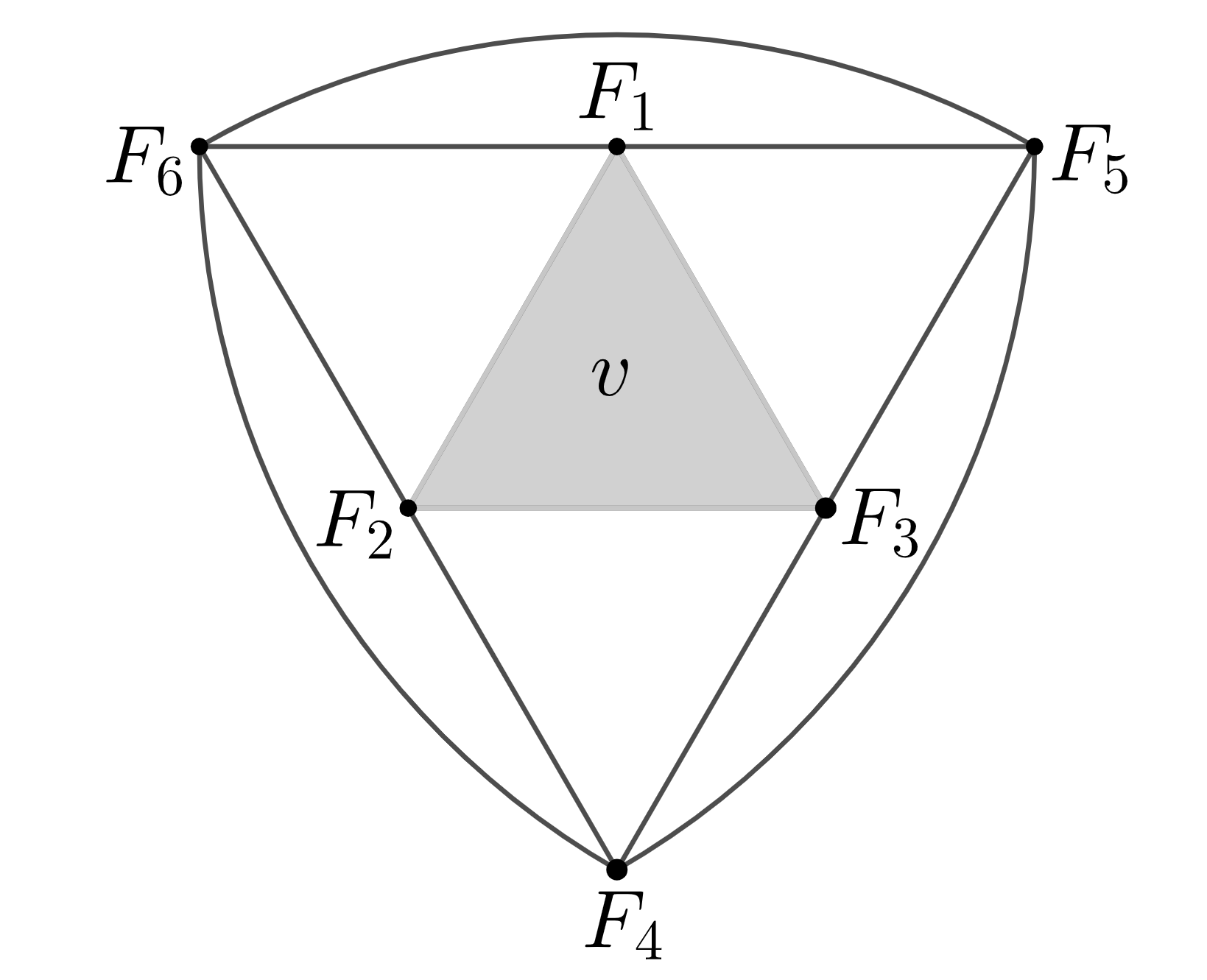}
\end{minipage}
\centering
\caption{$I^{3}$ and its dual with label}
\label{I3 and its dual with label}
\end{figure}

\begin{lemma} \label{coefficient 2n}
Write $\Lambda=(\la_{s,t})_{n \times 2n}$ and let $\rho_{i}=\sum_{k=1}^{n} \la_{k,i}^{2}+1$, $\rho_{j}=\sum_{k=1}^{n} \la_{k,j}^{2}+1$ and $\rho_{i,j}=\rho_{j,i}=2 \sum_{k=1}^{n} \la_{k,i}\la_{k,j}$ for $n+1 \leq i < j \leq 2n$. Then 
\[
c_{i,j}=-\la_{i-n,i}\la_{i-n,j} \rho_{i}-\la_{j-n,j}\la_{j-n,i} \rho_{j}+\rho_{i,j}. 
\]
\end{lemma}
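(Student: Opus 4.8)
The plan is to read off $p_1(M)$ from Proposition \ref{characteristic class} and then rewrite the resulting degree-$4$ class in the prescribed basis using the two ideals of Proposition \ref{cohomology ring}. In the refined form $\Lambda=[\ \mathrm{I}_{n}\ |\ \Lambda_{*}\ ]$ the linear relations in $\mathcal{J}$ read $v_{s}=-\sum_{j=n+1}^{2n}\la_{s,j}v_{j}$ for $1\le s\le n$, while the only minimal non-faces of $I^{n}$ are the opposite pairs $F_{k}\cap F_{n+k}=\emptyset$, so in degree $4$ the ideal $\mathcal{I}$ is generated by $v_{k}v_{n+k}$ ($1\le k\le n$). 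Substituting the linear relations into $p_{1}(M)=\sum_{j=1}^{2n}v_{j}^{2}$ and collecting terms, one finds
\[
p_{1}(M)=\sum_{j=n+1}^{2n}\rho_{j}\,v_{j}^{2}+\sum_{n+1\le i<j\le 2n}\rho_{i,j}\,v_{i}v_{j},
\]
because the coefficient of $v_{j}^{2}$ acquires an extra $+1$ from the square $v_{j}^{2}$ already present in $\sum_{j=1}^{2n}v_{j}^{2}$, and the coefficient of $v_{i}v_{j}$ with $i<j$ is exactly $2\sum_{k=1}^{n}\la_{k,i}\la_{k,j}=\rho_{i,j}$.

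It then remains to eliminate the squares $v_{j}^{2}$ for $n+1\le j\le 2n$. Multiplying $v_{j-n}=-\sum_{l=n+1}^{2n}\la_{j-n,l}v_{l}$ by $v_{j}$ and invoking $v_{j-n}v_{j}=0$ gives $\la_{j-n,j}v_{j}^{2}=-\sum_{l\ne j}\la_{j-n,l}v_{j}v_{l}$; applying (\ref{nonsingular}) to the vertex of $I^{n}$ cut out by $F_{j}$ together with the facets $F_{k}$ for $1\le k\le n$, $k\ne j-n$, shows $\la_{j-n,j}=\pm 1$, so that
\[
v_{j}^{2}=-\la_{j-n,j}\sum_{\substack{l=n+1\\ l\ne j}}^{2n}\la_{j-n,l}\,v_{j}v_{l}.
\]
Plugging this into the displayed expression for $p_{1}(M)$ and extracting the coefficient of a fixed basis vector $v_{i}v_{j}$ with $n+1\le i<j\le 2n$: the term $\rho_{i}v_{i}^{2}$ contributes $-\la_{i-n,i}\la_{i-n,j}\rho_{i}$ (take $l=j$), the term $\rho_{j}v_{j}^{2}$ contributes $-\la_{j-n,j}\la_{j-n,i}\rho_{j}$ (take $l=i$), every other square $v_{t}^{2}$ contributes nothing since its reduction only involves products containing $v_{t}$, and the off-diagonal part already contributes $\rho_{i,j}$. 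Summing these three contributions gives the asserted formula.

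The steps needing attention are bookkeeping rather than computation. First, that $\{v_{i}v_{j}\}_{n+1\le i<j\le 2n}$ is indeed a basis of $H^{4}(M(I^{n},\Lambda))$: by Remark \ref{spin and Betti number count}, $f_{n-2}(I^{n})=2n(n-1)$ forces $\mathcal{I}$ to contribute exactly $n$ independent relations in degree $4$ — precisely the $n$ relations that solve for the squares $v_{j}^{2}$ above — and $\beta^{4}=\binom{n}{2}$, which is the size of the proposed basis. Second, that no off-diagonal product $v_{i}v_{j}$ with $n+1\le i<j\le 2n$ vanishes (two distinct facets among $F_{n+1},\dots,F_{2n}$ are never an opposite pair of $I^{n}$), so the elimination above is consistent and the coefficient extraction is unambiguous. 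I do not expect a genuine obstacle here; the entire content lies in organizing the substitutions so that the $+1$'s inside the $\rho$'s and the signs $\la_{j-n,j}=\pm 1$ are tracked correctly.
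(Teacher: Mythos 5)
Your proposal is correct and follows essentially the same route as the paper: substitute the linear relations from $\mathcal{J}$ into $p_{1}(M)=\sum_{j=1}^{2n}v_{j}^{2}$, then use the relations $v_{j-n}v_{j}=0$ together with $\la_{j-n,j}=\pm1$ to eliminate the squares $v_{j}^{2}$ and read off the coefficients. The only difference is that you explicitly verify that $\{v_{i}v_{j}\}_{n+1\le i<j\le 2n}$ is a basis of $H^{4}$, which the paper leaves implicit (it is justified by Remark \ref{spin and Betti number count} and the setup preceding the lemma).
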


\begin{proof}
By Proposition \ref{cohomology ring}, 
\[
\sum_{k=n+1}^{2n} \la_{i-n,k} v_{i}v_{k}=0 \qquad n+1 \leq i \leq 2n. 
\]
Since (\ref{nonsingular}) requires $\la_{i-n,i}=\pm 1$, direct computation based on Proposition \ref{characteristic class} yields 
\begin{align*}
p_{1}(M)=& \sum_{j=1}^{2n} v_{j}^{2} \\
=& \sum_{k=1}^{n} (\sum_{j=n+1}^{2n} \la_{k,j} v_{j})^{2}+\sum_{j=n+1}^{2n} v_{j}^{2} \\
=& \sum_{j=n+1}^{2n} (\sum_{k=1}^{n} \la_{k,j}^{2}+1) v_{j}^{2}+\sum_{n+1 \leq i < j \leq 2n} (2\sum_{k=1}^{n} \la_{k,i} \la_{k,j}) v_{i}v_{j} \\
=& -\sum_{n+1 \leq i < j \leq 2n} [(\sum_{k=1}^{n} \la_{k,i}^{2}+1) \la_{i-n,i}\la_{i-n,j} \\
& + (\sum_{k=1}^{n} \la_{k,j}^{2}+1) \la_{j-n,j}\la_{j-n,i}-2\sum_{k=1}^{n} \la_{k,i}\la_{k,j}] v_{i}v_{j} 
\end{align*}
\end{proof}

\begin{proposition} \label{2n coefficient}
A quasitoric manifold $M(I^{n},\Lambda)$ is string if and only if 
\[
\left\{
\begin{aligned}
& \sum_{k=1}^{n} \la_{k,i} \equiv 1 \pmod{2} & n+1 \leq i \leq 2n; \\
& -\la_{i-n,i}\la_{i-n,j} \rho_{i}-\la_{j-n,j}\la_{j-n,i} \rho_{j}+\rho_{i,j}=0 & \qquad n+1 \leq i < j \leq 2n. 
\end{aligned}
\right.
\]
\end{proposition}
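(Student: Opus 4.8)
The plan is to reduce the proposition directly to two facts already in hand: the combinatorial criterion from Remark~\ref{spin and Betti number count} that $w_2(M)=0$ holds precisely when every column of $\Lambda$ has odd entry-sum, and the closed formula for $p_1(M)$ from Lemma~\ref{coefficient 2n}. Since $M(I^{n},\Lambda)$ is orientable and $H^{*}(M(I^{n},\Lambda);\Z)$ is torsion-free (Proposition~\ref{Betti number}), a string structure on $M(I^{n},\Lambda)$ exists if and only if $w_2(M)=0$ and $p_1(M)=0$ simultaneously; so the whole argument amounts to translating these two vanishings into the two displayed systems.

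For $w_2$: in refined form $\Lambda=[\,\mathrm{I_{n}}\mid\Lambda_{*}\,]$ the first $n$ columns are $\boldsymbol{e_1},\dots,\boldsymbol{e_n}$, each with entry-sum $1$, so the odd-column-sum condition of Remark~\ref{spin and Betti number count} is automatic there and reduces to $\sum_{k=1}^{n}\la_{k,i}\equiv 1\pmod 2$ for $n+1\le i\le 2n$ — the first system. For $p_1$: Lemma~\ref{coefficient 2n} writes $p_1(M)=\sum_{n+1\le i<j\le 2n}c_{i,j}v_iv_j$ with the stated $c_{i,j}$, and since $\{v_iv_j\}_{n+1\le i<j\le 2n}$ is a $\Z$-basis of $H^{4}(M(I^{n},\Lambda))$ — there are $\binom{n}{2}$ of them, equal to $\beta^{4}=h_2(I^{n})$ by Proposition~\ref{Betti number}, and they are independent because the only degree-$4$ relations are the $n$ relations $\sum_{j>n}\la_{i-n,j}v_iv_j=0$, which solve for each diagonal $v_i^{2}$ in terms of the off-diagonal products — we get $p_1(M)=0$ if and only if $c_{i,j}=0$ for all $n+1\le i<j\le 2n$, which is the second system. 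Combining the two equivalences proves the proposition.

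I do not anticipate a real obstacle: all the substantive work has already been done in Lemma~\ref{coefficient 2n} and Remark~\ref{spin and Betti number count}. The only point deserving care is confirming that the $c_{i,j}$ genuinely are the coordinates of $p_1$ relative to a basis of $H^{4}$ — i.e. that no hidden relation among the degree-$4$ classes could make some nonzero $p_1$ have all $c_{i,j}=0$ — and this is settled by the dimension count $\binom{n}{2}=\beta^{4}(M(I^{n},\Lambda))$ together with the non-redundancy of the degree-$4$ relations noted in Remark~\ref{spin and Betti number count}.
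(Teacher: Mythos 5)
Your proposal is correct and follows essentially the same route as the paper, which states Proposition \ref{2n coefficient} as an immediate consequence of the $w_2$ criterion in Remark \ref{spin and Betti number count} and the coefficient formula of Lemma \ref{coefficient 2n}, computed relative to the basis $\{v_iv_j\}_{n+1\le i<j\le 2n}$ of $H^4(M(I^n,\Lambda))$. Your extra care in checking that these $\binom{n}{2}$ classes really form a basis (matching $\beta^4=h_2(I^n)$ and noting that the $n$ face-ring relations only eliminate the diagonal terms $v_i^2$) is exactly the point the paper settles in Remark \ref{spin and Betti number count} and the setup preceding Lemma \ref{coefficient 2n}.
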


The following algebraic lemma can be applied to further analyze string quasitoric manifolds over cube: 
\begin{lemma} \label{key lemma}
\emph{\cite[Theorem 6]{Do01}, see also \cite{CMS10}} Let $R$ be a commutative ring with unit 1 and $A$ be an $n \times n$ matrix with elements in $R$. Suppose every proper principal minor of $A$ is equal to 1. If $\mathrm{det}A=1$, then $A$ is conjugate to a unipotent upper triangular matrix. If $\mathrm{det}A=-1$, then $A$ is conjugate to the following matrix: 
\[
\begin{pmatrix}
1 & b_{1} & 0 & \cdots & 0 \\
0 & 1 & b_{2} & \cdots & 0 \\
\vdots & \vdots & \ddots & \ddots & \vdots \\
0 & 0 & \cdots & 1 & b_{n-1} \\
b_{n} & 0 & \cdots & 0 & 1
\end{pmatrix}
\]
with $\prod_{i=1}^{n} b_{i}=(-1)^{n} \cdot 2$. Here conjugacy is defined up to row and column permutations. 
\end{lemma}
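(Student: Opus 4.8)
\emph{Proof proposal.} Since the statement is quoted from \cite[Theorem 6]{Do01}, I only describe the route I would take; assume $n\ge 2$, the case $n=1$ being trivial. The plan is to peel off the identity, recast the hypothesis as a purely combinatorial condition on a digraph, and then split into an acyclic and a single-cycle case. Write $A=I_n+N$. The $1\times1$ proper principal minors being $1$ forces the diagonal of $N$ to vanish. Next I would apply the elementary-symmetric expansion $\det(tI_m+B)=\sum_{k=0}^m t^{m-k}e_k(B)$, where $e_k(B)$ is the sum of the $k\times k$ principal minors of $B$ (with $e_0(B)=1$), evaluated at $t=1$ on every principal submatrix, giving $\det A[S]=\sum_{T\subseteq S}\det N[T]$ with the convention $\det N[\emptyset]=1$. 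A short induction on $|S|$ then turns ``all proper principal minors of $A$ equal $1$'' into the equivalent statement $\det N[T]=0$ for all $T$ with $1\le|T|\le n-1$, together with $\det N=\det A-1\in\{0,-2\}$.

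For the combinatorial core I would work first over an integral domain (in particular over $\mathbb Z$, the case needed for Theorem \ref{2n manifold}) and introduce the loopless digraph $D$ on $\{1,\dots,n\}$ with an arc $i\to j$ whenever $N_{ij}\ne 0$. The crucial claim is that \emph{every proper induced subdigraph of $D$ is acyclic}, which I would prove by minimality: if some proper $S$ had $D[S]$ containing a directed cycle, pick such an $S$ of least size; then any directed cycle of $D[S]$ missing a vertex of $S$, and any chord of a Hamiltonian directed cycle of $S$, would exhibit a directed cycle on a strictly smaller vertex set, contradicting minimality, so $D[S]$ must be a single Hamiltonian directed cycle of $S$ with no other arcs. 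But then $N[S]$ has zero diagonal and support exactly that cycle, so $\det N[S]=\pm\prod(\text{nonzero entries})\ne 0$ in a domain, contradicting $\det N[S]=0$.

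Applying the same dichotomy to $D$ itself leaves two possibilities. If $D$ is acyclic, a topological ordering is a permutation conjugating $N$ to a strictly upper triangular matrix, so $A$ becomes unipotent upper triangular, and then necessarily $\det A=1$. If $D$ is a single directed $n$-cycle, then after relabelling so the cycle is $1\to2\to\cdots\to n\to1$ the matrix $A$ already has the displayed form with $b_i=N_{i,i+1}$ and $b_n=N_{n,1}$, all nonzero; a one-line expansion gives $\det A=1+(-1)^{n+1}b_1\cdots b_n$, which cannot equal $1$, hence equals $-1$, forcing $\prod_i b_i=(-1)^n\cdot 2$. Reading the cases backwards: $\det A=1$ excludes the cyclic possibility, so $A$ is unipotent upper triangular; $\det A=-1$ excludes the triangular possibility, so $A$ has the stated cyclic form.

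I expect the real difficulty to be the passage from an integral domain to an arbitrary commutative ring $R$: once $R$ has zero divisors, $N_{ij}N_{ji}=0$ and, more generally, the vanishing of $\det N[T]$ no longer pin down the support pattern encoded by $D$, so one must either reduce along the minimal primes of $R$ and reassemble the conclusion, or argue directly and more carefully, or simply invoke \cite[Theorem 6]{Do01} (see also \cite{CMS10}). A secondary, purely bookkeeping point is to state precisely what ``conjugacy up to row and column permutations'' allows; in the domain argument above the conjugating matrix is always a permutation matrix, so the stronger permutation-conjugacy conclusion holds there, but this should be tracked in general.
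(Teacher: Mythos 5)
The paper offers no proof of this lemma --- it is quoted verbatim from \cite{Do01} (see also \cite{CMS10}) --- so there is no in-paper argument to compare yours against. Your reconstruction over an integral domain is correct and complete in outline: the reduction of the hypothesis to ``$\det N[T]=0$ for every nonempty proper $T$'' via $\det(I+N)=\sum_{T}\det N[T]$, the minimal-counterexample/chord argument showing that every proper induced subdigraph of the support digraph is acyclic, and the final dichotomy (a topological order making $N$ strictly upper triangular versus a chordless Hamiltonian cycle with $\det A=1+(-1)^{n+1}b_1\cdots b_n$) are all sound, and the sign bookkeeping $\prod_i b_i=(-1)^n\cdot 2$ checks out. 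Since the only places the paper invokes the lemma are Theorem \ref{2n manifold}, Remark \ref{Cartesian product} and Theorem \ref{equivariant connected sum theorem}, where $R=\Z$, your argument fully covers the paper's use of it.

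The one substantive point: the residual difficulty you flag for arbitrary commutative rings is not a gap you could close by arguing ``more carefully,'' because the statement as quoted is false once $R$ has zero divisors. Over $R=\Z/6$ the matrix $\left(\begin{smallmatrix} 1 & 2 \\ 3 & 1 \end{smallmatrix}\right)$ has both proper principal minors equal to $1$ and determinant $1-6=1$, yet no row-and-column permutation makes it unipotent upper triangular. The same example shows why your ``reduce along the minimal primes'' fallback cannot be reassembled: modulo $2$ and modulo $3$ the matrix does become triangular, but via different permutations, and an entry that is nonzero in $R$ may die in one quotient and survive in the other. The honest repair is to add the hypothesis that $R$ is an integral domain (or simply state the lemma over $\Z$), which is exactly the setting in which the paper applies it; your proof then stands as a complete substitute for the citation.
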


For characteristic matrix $\Lambda=[\ \mathrm{I_{n}}\ |\ \Lambda_{*}\ ]$ of a quasitoric manifold over $I^{n}$, Dobrinskaya \cite{Do01} showed that $\Lambda_{*}$ is equivalent to a unipotent upper triangular matrix if and only if $M(I^{n},\Lambda)$ is weakly equivariantly homeomorphic to a Bott manifold, i.e., the total space $B^{2n}$ of a $\C P^{1}$-bundle tower: 
\[
B^{2n} \xrightarrow{\C P^{1}} B^{2n-2} \xrightarrow{\C P^{1}} \cdots \xrightarrow{\C P^{1}} B^{2} \xrightarrow{\C P^{1}} {pt}.
\]
Bott manifold may not be string (even spin) in general, but we have the following theorem in the other direction: 

\begin{theorem} \label{2n manifold}
Every string quasitoric manifold over $I^{n}$ is weakly equivariantly homeomorphic to a Bott manifold. 
\end{theorem}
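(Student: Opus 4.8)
The plan is to show that a string condition on $M(I^n,\Lambda)$ forces the matrix $\Lambda_*$ (or rather, a suitable representative of its equivalence class) to be conjugate to a unipotent upper triangular matrix, and then invoke Dobrinskaya's theorem to conclude that $M(I^n,\Lambda)$ is weakly equivariantly homeomorphic to a Bott manifold. By Lemma \ref{key lemma}, once we normalize $\Lambda = [\,\mathrm{I}_n \mid \Lambda_*\,]$ so that every proper principal minor of $\Lambda_*$ equals $1$, the only obstruction to $\Lambda_*$ being conjugate to unipotent upper triangular is the case $\det\Lambda_* = -1$, in which $\Lambda_*$ is conjugate to a ``cyclic'' matrix with off-diagonal entries $b_1,\dots,b_n$ satisfying $\prod b_i = (-1)^n\cdot 2$. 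So the heart of the argument is to rule out this cyclic case under the string hypothesis.

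\textbf{Step 1: Normalization.} First I would argue that, up to the equivalence relation on characteristic pairs (the $\mathrm{GL}_n(\Z)$-, $\Z_2^m$- and $\mathrm{Aut}(\partial P^*)$-actions), one may assume $\Lambda$ is in refined form $[\,\mathrm{I}_n \mid \Lambda_*\,]$ and moreover that every proper principal minor of $\Lambda_*$ equals $1$. The nonsingular condition (\ref{nonsingular}) applied to the vertices of $I^n$ says precisely that each square submatrix of $\Lambda_*$ obtained by choosing a set of rows and the ``same'' set of columns (under the pairing $F_i \leftrightarrow F_{n+i}$) has determinant $\pm 1$; using column sign changes one normalizes these to $+1$, which is exactly the hypothesis of Lemma \ref{key lemma}. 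This step is essentially bookkeeping about how the combinatorics of $I^n$ interacts with the three group actions.

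\textbf{Step 2: Ruling out the cyclic case.} Suppose for contradiction that $\det\Lambda_* = -1$, so after conjugation $\Lambda_*$ has the cyclic form of Lemma \ref{key lemma} with $\prod_{i=1}^n b_i = (-1)^n\cdot 2$. I would then compute the coefficients $c_{i,j}$ from Lemma \ref{coefficient 2n} (equivalently, apply Proposition \ref{2n coefficient}) for this explicit matrix and derive a contradiction from $p_1(M)=0$. Because exactly one of the $b_i$ is even (of $2$-adic valuation $1$) and the rest are odd, the mod-$2$ string condition $\sum_k \la_{k,i} \equiv 1$ forces constraints on the parities of the entries in each column; writing out $c_{i,j}$ for an adjacent pair $(i,j)$ in the cyclic pattern, the integrality plus the product constraint $\prod b_i = \pm 2$ should make the sum $-\la_{i-n,i}\la_{i-n,j}\rho_i - \la_{j-n,j}\la_{j-n,i}\rho_j + \rho_{i,j}$ provably nonzero — most cleanly by a $2$-adic / mod-$4$ or mod-$8$ argument analogous to the one in Lemma \ref{algebraic lemma 1}, where the ``odd square plus one'' terms are $\equiv 2 \pmod 8$ and the cross terms are controlled. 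I expect this to reduce, after conjugating so the even entry sits in a convenient spot, to an equation of the shape (sum of squares) $+ 3 = 0$ or a congruence $4 \equiv 0$, exactly as in the prism computations in Theorem \ref{equivariant edge connected sum theorem}.

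\textbf{Step 3: Conclusion.} With the cyclic case excluded, $\det\Lambda_* = 1$ and Lemma \ref{key lemma} gives that $\Lambda_*$ is conjugate (up to row and column permutations, i.e.\ up to the allowed equivalences) to a unipotent upper triangular matrix; Dobrinskaya's criterion then yields that $M(I^n,\Lambda)$ is weakly equivariantly homeomorphic to a Bott manifold, completing the proof. \textbf{Main obstacle.} The delicate point is Step 2: Lemma \ref{key lemma} controls $\Lambda_*$ only up to row \emph{and} column permutations and up to conjugation, whereas Proposition \ref{2n coefficient} is stated for a fixed refined form, so I must be careful that the string condition is preserved under whatever normalization brings $\Lambda_*$ into cyclic form (Stiefel--Whitney and Pontryagin classes are invariant under weakly equivariant homeomorphism, so this is legitimate, but it needs to be stated), and then push through the number-theoretic estimate showing the cyclic matrix cannot satisfy all the equations $c_{i,j}=0$ simultaneously.
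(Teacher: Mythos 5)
There is a genuine gap in your Step 1, and it is located exactly where the string hypothesis has to do its work. Column sign changes give you only $n$ signs to play with, and these are already consumed in normalizing the $n$ diagonal entries (the rank-$1$ principal minors) of $\Lambda_{*}$ to $+1$; after that, all higher-rank principal minors are determined, and the nonsingular condition (\ref{nonsingular}) only guarantees that they are $\pm 1$, not $+1$. For instance $\Lambda_{*}=\left(\begin{smallmatrix}1 & 1\\ 2 & 1\end{smallmatrix}\right)$ has both diagonal entries equal to $1$ but determinant $-1$, and no further sign change can repair this. So the assertion that ``every proper principal minor of $\Lambda_{*}$ equals $1$'' is not bookkeeping about the group actions: it is a nontrivial consequence of $p_{1}=0$, and proving it is the bulk of the argument. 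Concretely, the paper argues by taking a principal minor of minimal rank $s$ equal to $-1$. For $s=2$ one gets $\la_{i-n,j}\la_{j-n,i}=2$, which contradicts $c_{i,j}=0$ via the estimate $|\la_{i-n,j}\rho_{i}+\la_{j-n,i}\rho_{j}|\geq 2\sum_{k}|\la_{k,i}\la_{k,j}|+3>|\rho_{i,j}|$. For $s\geq 3$ one applies Lemma \ref{key lemma} to that $s\times s$ block (whose proper principal minors are all $1$ by minimality), normalizes $(b_{1},\dots,b_{s})=(-1,\dots,-1,-2)$, and sums the equations $c_{i,j}=0$ over all pairs in the block to arrive at $\sum_{k=s+1}^{n}[(\sum_{i=n+1}^{n+s}\la_{k,i})^{2}+\la_{k,n+s}^{2}]=-(s+4)<0$, a contradiction by positivity rather than by a mod-$8$ congruence.

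Your Step 2 contains essentially the right computation for ruling out the cyclic form, but as written it is applied only to the full matrix ($s=n$, i.e.\ $\det\Lambda_{*}=-1$). Since Step 1 fails, you cannot assume the proper principal minors are already $1$, so the cyclic-form contradiction must be run for every rank $s$ from $2$ (where a separate, easier inequality suffices) up to $n$, organized as an induction on the minimal rank of a principal minor equal to $-1$. With that reorganization your strategy coincides with the paper's proof; without it, the argument does not close.
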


\begin{proof}
Given a string quasitoric manifold $M(I^{n},\Lambda)$, it suffices to show that up to equivalence, all principal minors of $\Lambda_{*}$ are equal to 1. Principal minors with rank 1 are just $\la_{i-n,i}$ for $n+1 \leq i \leq 2n$ and they can be assumed to be 1 by sign permutation of columns. Consequently, principal minors with rank 2 are $1-\la_{i-n,j}\la_{j-n,i}$ for $n+1 \leq i < j \leq 2n$. If $1-\la_{i-n,j}\la_{j-n,i} \neq 1$, then $\la_{i-n,j}\la_{j-n,i}=2$ by (\ref{nonsingular}), leading to contradiction against Proposition \ref{2n coefficient}: 
\begin{align*}
|\la_{i-n,j} \rho_{i}+\la_{j-n,i} \rho_{j}| \geq & \sum_{k=1}^{n} (\la_{k,i}^{2}+\la_{k,j}^{2})+3 \\
\geq & 2 \sum_{k=1}^{n} |\la_{k,i}\la_{k,j}|+3 \\
> & |\rho_{i,j}|. 
\end{align*}

Now suppose there exists a principal minor with rank $s$ and value $-1$ such that all principal minors with rank less than $s$ are equal to 1. Then $s \geq 3$ and $\Lambda_{*}$ is equivalent to 
\[
\left(
\begin{array}{cccc|c}
1 & b_{1} & \cdots & 0 & \multirow{4}*{$*_{(n-s) \times s}$} \\
\vdots & \ddots & \ddots & \vdots & \\
0 & \cdots & 1 & b_{s-1} & \\
b_{s} & \cdots & 0 & 1 & \\
\hline
\la_{s+1,n+1} & \cdots & \la_{s+1,n+s-1} & \la_{s+1,n+s} & \multirow{3}*{$*_{(n-s) \times (n-s)}$} \\
\vdots & \vdots & \vdots & \vdots & \\
\la_{n,n+1} & \cdots & \la_{n,n+s-1} & \la_{n,n+s} & 
\end{array}
\right)
\]
with $\prod_{k=1}^{s} b_{k}=(-1)^{s} \cdot 2$. Moreover, we can assume that $(b_{1}, b_{2}, \dots, b_{s})=(-1, -1, \dots, -2)$ by taking necessary conjugations and sign permutations. Then by Proposition \ref{2n coefficient}: 
\[
2\sum_{k=1}^{n} \la_{k,i}\la_{k,j}= 
\left\{
\begin{aligned}
& (-1)(\sum_{k=1}^{n} \la_{k,i}^{2}+1) & \qquad n+1 \leq i \leq n+s-1, j=i+1; \\
& (-2)(\sum_{k=1}^{n} \la_{k,n+s}^{2}+1) & i=n+1, j=n+s; \\
& 0 & \mathrm{otherwise}. 
\end{aligned}
\right.
\]
Taking the sum for all $n+1 \leq i < j \leq n+s$, we have 
\begin{equation*}
2\sum_{k=s+1}^{n}\sum_{n+1 \leq i < j \leq n+s} \la_{k,i}\la_{k,j}-2(s+1) = -\sum_{k=s+1}^{n}(\sum_{i=n+1}^{n+s-1} \la_{k,i}^{2}+2\la_{k,n+s}^{2})-3(s+2), 
\end{equation*}
leading to the following contradiction: 
\begin{equation*}
\sum_{k=s+1}^{n} [(\sum_{i=n+1}^{n+s} \la_{k,i})^{2}+\la_{k,n+s}^{2}]=-(s+4)<0. 
\end{equation*}
\end{proof}

\begin{remark} \label{Cartesian product}
A characteristic pair $(P,\Lambda)$ with $P=P_{1}^{n_{1}} \times P_{2}^{n_{2}}$ induces quasitoric manifolds $M_{1}=M(P_{1}^{n_{1}},\Lambda_{1})$ and $M_{2}=M(P_{2}^{n_{2}},\Lambda_{2})$ such that $\Lambda_{1}=[\ \mathrm{I_{n_{1}}}\ |\ \Lambda_{1*}\ ]$, $\Lambda_{2}=[\ \mathrm{I_{n_{2}}}\ |\ \Lambda_{2*}\ ]$ and $\Lambda$ is equivalent to 
\[
\left(
\begin{array}{c|c|c|c}
\mathrm{I_{n_{1}}} & 0 & \Lambda_{1*} & *_{12} \\
\hline
0 & \mathrm{I_{n_{2}}} & *_{21} & \Lambda_{2*} 
\end{array}
\right).
\]
$M(P,\Lambda)$ is an $M_{2}$ bundle over $M_{1}$ (resp. $M_{1}$ bundle over $M_{2}$) if and only if $*_{12}$ (resp. $*_{21}$) is a zero submatrix. And $M(P,\Lambda)=M_{1} \times M_{2}$ if and only if both $*_{12}$ and $*_{21}$ are zero submatrices. 

Suppose $f_{n_{1}-1}(P_{1})=m_{1}$ and $f_{n_{2}-1}(P_{2})=m_{2}$, then one can choose the basis of $H^{4}(M(P,\Lambda))$ to be $\mathcal{B}_{1} \sqcup \mathcal{B}_{2} \sqcup \mathcal{B}_{12}$, where $\mathcal{B}_{1}$ (resp. $\mathcal{B}_{2}$) corresponds to basis of $H^{4}(M_{1})$ (resp. $H^{4}(M_{2})$) and $\mathcal{B}_{12}$ includes all non-vanishing $v_{i} v_{j}$ for $n_{1}+n_{2}+1 \leq i \leq m_{1}+n_{2}$, $m_{1}+n_{2}+1 \leq j \leq m_{1}+m_{2}$. 

When $P_{2}^{n_{2}}=I^{n_{2}}$, the proof above still works for $M_{2}$, i.e., if $M(P,\Lambda)$ is string, then $\Lambda_{2*}$ is equivalent to a unipotent upper triangular matrix and $M_{2}$ is weakly equivariantly homeomorphic to a Bott manifold. Note that in this case, $M_{2}$ may not be string (even spin). 
\end{remark}

\begin{remark}
Every Bott manifold has vanishing Stiefel-Whitney numbers and Pontryagin numbers \cite{Lu17}. Thus, every string quasitoric manifold over cube bounds non-equivariantly in $\Omega_{*}^{O}$ and $\Omega_{*}^{SO}$.
\end{remark}

\begin{example}
For string quasitoric manifolds $M=M(I^3,\Lambda)$, $\Lambda_{*}$ is equivalent to 
\[
\begin{pmatrix}
1 & 0 & x \\
0 & 1 & y \\
0 & 0 & 1
\end{pmatrix}\ 
\mathrm{or}\ 
\begin{pmatrix}
1 & 2a & ab \\
0 & 1 & b \\
0 & 0 & 1
\end{pmatrix}
\]
with $x \equiv y \pmod{2}$ and $ab \equiv b \pmod{2}$. Since $\{ \Lambda_{*}=\left( \begin{smallmatrix} 1 & 0 & 2t \\ 0 & 1 & 0 \\ 0 & 0 & 1\end{smallmatrix} \right) |\ t \in \mathbb{N} \}$ are not equivalent to each other, there are countably many weakly equivariant homeomorpism classes. On the other hand, note that 
\[
H^{*}(M) \cong 
\left\{
\begin{aligned}
& \Z[\alpha, \beta, \gamma]/\langle \alpha^{2}, \beta^{2}, \gamma^{2}\rangle & \quad x \equiv 0 \pmod{2}\ \mathrm{or}\ ab \equiv 0 \pmod{2}; \\
& \Z[\alpha, \beta, \gamma]/\langle \alpha(\alpha+\gamma), \beta(\beta+\gamma), \gamma^{2}\rangle & x \equiv 1 \pmod{2}\ \mathrm{or}\ ab \equiv 1 \pmod{2},
\end{aligned}
\right.
\]
and $6$-dimensional string quasitoric manifolds admit strong cohomology rigidity \cite{Ju73, Wall66}, there are only 2 homeomorphism classes in this case. But in general, one can not get similar counting results at homeomorphism level since cohomology rigidity problem is still open in higher dimensions (see \cite{CMS11} for more details). 
\end{example}

\vspace{0.5cm}

In addition, we shall see that string property for quasitoric manifolds over $I^{n} \# P^{n}$ is related with decomposition via equivariant connected sum operation.

\begin{definition} \label{equivariant connected sum definition} (see \cite{BPR07} and references given there) 
Given $2n$-dimensional quasitoric manifolds $M(P_{L},\Lambda_{L})$ and $M(P_{R},\Lambda_{R})$, write $\Lambda_{L}=[\ \mathrm{I_{n}}\ |\ \Lambda_{L*}\ ]$, $\Lambda_{R}=[\ \mathrm{I_{n}}\ |\ \Lambda_{R*}\ ]$ and let $w_{L}, w_{R}$ be initial vertices. The \emph{equivariant connected sum} at $w_{L}, w_{R}$ is denoted by $M(P_{L},\Lambda_{L}) \widetilde{\#}_{w_{L}, w_{R}} M(P_{R},\Lambda_{R})$ and defined to be a quasitoric manifold $M(P,\Lambda)$, where $P=P_{L} \# P_{R}$ is the connected sum of $P_{L}$ and $P_{R}$ at $w_{L}, w_{R}$ (see Figure \ref{cube connected sum} as an example) and 
\[
\Lambda=
\left(
\begin{array}{c|c|c}
\Lambda_{L*} & \mathrm{I_{n}} & \Lambda_{R*}
\end{array}
\right).
\]

\begin{figure}[htb]
\centering
\includegraphics[height=4cm]{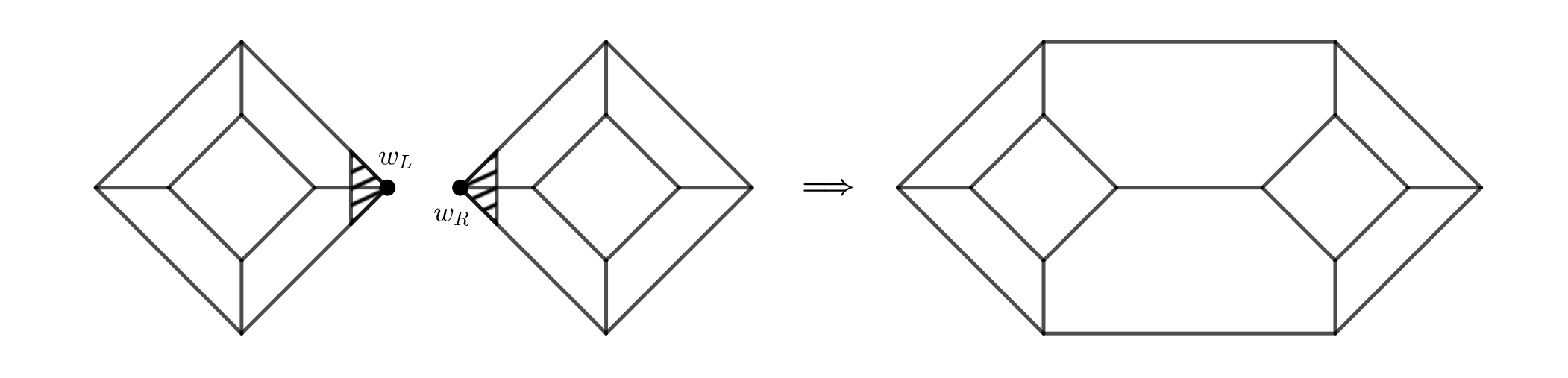}
\caption{Connected sum of two cubes}
\label{cube connected sum}
\end{figure}

Note that $\Lambda$ is NOT in refined form here and one can choose basis of $H^{4}(M(P,\Lambda))$ to be $\mathcal{B}_{1} \sqcup \mathcal{B}_{2}$, where $\mathcal{B}_{1}$ (resp. $\mathcal{B}_{2}$) corresponds to basis of $H^{4}(M(P_{L},\Lambda_{L}))$ (resp. $H^{4}(M(P_{R},\Lambda_{R}))$). In this way, it is evident that 
\[
\left\{
\begin{aligned}
& w_{2}(M(P,\Lambda))=0 \Leftrightarrow w_{2}(M(P_{L},\Lambda_{L}))=w_{2}(M(P_{R},\Lambda_{R}))=0; \\
& p_{1}(M(P,\Lambda))=0 \Leftrightarrow p_{1}(M(P_{L},\Lambda_{L}))=p_{1}(M(P_{R},\Lambda_{R}))=0. 
\end{aligned}
\right. 
\]
Also note that different choices of initial vertices may lead to different results, but one can simplify the notation as $M(P_{L},\Lambda_{L}) \widetilde{\#} M(P_{R},\Lambda_{R})$ when no confusion can arise. 
\end{definition} 

\begin{theorem} \label{equivariant connected sum theorem}
$M(I^{n} \# P^{n},\Lambda)$ is string if and only if it is weakly equivariantly homeomorphic to $M(I^{n},\Lambda_{L}) \widetilde{\#} M(P^{n},\Lambda_{R})$ with both $M(I^{n},\Lambda_{L})$ and $M(P^{n},\Lambda_{R})$ string. 
\end{theorem}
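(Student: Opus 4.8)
The ``if'' direction is immediate from the last displayed equivalence in Definition \ref{equivariant connected sum definition}: if $M(I^n\#P^n,\Lambda)$ is weakly equivariantly homeomorphic to $M(I^n,\Lambda_L)\widetilde{\#}M(P^n,\Lambda_R)$ with both summands string, then $w_2$ and $p_1$ vanish on each summand, hence on the connected sum, hence on $M(I^n\#P^n,\Lambda)$ (Stiefel--Whitney and Pontryagin classes are invariant under weakly equivariant homeomorphism). So the work is entirely in the ``only if'' direction: starting from an arbitrary string $M(I^n\#P^n,\Lambda)$, one must produce an equivalence of characteristic pairs bringing $\Lambda$ into the block form $(\Lambda_{L*}\mid \mathrm I_n\mid \Lambda_{R*})$ displayed in Definition \ref{equivariant connected sum definition}, with both pieces satisfying the string conditions.

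The plan is as follows. First, refine $\Lambda$ using the connected-sum structure of $P=I^n\#P^n$: the connected sum is performed at a vertex, and after the surgery the facets split into those ``coming from $I^n$'' and those ``coming from $P^n$'', sharing the $n$ facets through the gluing vertex. Choose the initial vertex at the gluing site so that $\Lambda=(A\mid \mathrm I_n\mid B)$ where the columns of $A$ are indexed by the non-distinguished facets of $I^n$ and the columns of $B$ by the non-distinguished facets of $P^n$; the middle $\mathrm I_n$ columns correspond to the $n$ shared facets. With this labelling, as noted in Remark \ref{spin and Betti number count} and Definition \ref{equivariant connected sum definition}, $H^4$ of the total space has a basis splitting as $\mathcal B_1\sqcup\mathcal B_2$ coming from $H^4$ of the two pieces, and the cube-part $\mathcal B_1$ is governed by exactly the relations analysed in Section \ref{2n}. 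Second, apply the argument of Theorem \ref{2n manifold} (and its extension in Remark \ref{Cartesian product}, where the same proof is observed to work for a cube factor sitting inside a larger polytope) to the cube half: the string condition forces all relevant principal minors of $A$ to equal $1$, so by Lemma \ref{key lemma} $A$ is equivalent to a unipotent upper-triangular matrix, i.e.\ $M(I^n,[\,\mathrm I_n\mid A\,])$ is a Bott manifold. Third — and this is the crux — one must show that the ``mixed'' part of $p_1$, the terms involving products $v_iv_j$ with $i$ a cube-facet index and $j$ a $P^n$-facet index, together with the string conditions on the cube half, forces the cross-block entries of $\Lambda$ to vanish up to equivalence, so that $\Lambda$ genuinely decomposes as $(A'\mid \mathrm I_n\mid B')$ with no coupling; then the remaining string equations on the $B'$ block say precisely that $M(P^n,[\,\mathrm I_n\mid B'\,])$ is string.

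The main obstacle is this third step: decoupling the two blocks. Unlike the Cartesian-product situation of Remark \ref{Cartesian product}, a connected sum does not come with an a priori product structure, and there is no canonical ``$*_{12}$ submatrix'' to kill — one has freedom in how the $\mathrm I_n$ middle block interacts with both sides. I expect one proceeds by first using the cube-half analysis (Theorem \ref{2n manifold}) to conjugate $A$ to unipotent upper-triangular form, which simultaneously normalizes the shared columns; then examining, one pair at a time, the coefficient $c_{i,j}$ of $v_iv_j$ in $p_1$ for $i$ ranging over cube indices and $j$ over $P^n$ indices. Each such coefficient, by Proposition \ref{characteristic class} and the cohomology relations of Proposition \ref{cohomology ring}, is a quadratic expression in the entries of $\Lambda$; the nonsingularity condition (\ref{nonsingular}) restricts the relevant $2\times 2$ minors to $\pm1$, and — exactly as in the sign-of-coefficient estimates used in Theorem \ref{equivariant edge connected sum theorem} and Theorem \ref{2n manifold} — a triangle-inequality bound of the form $|{\la}\rho_i+\la'\rho_j|\ge \text{(sum of squares)}+3>|\rho_{i,j}|$ will show that the only way $c_{i,j}=0$ is for the cross-entries to vanish after a sign permutation. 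Once all cross terms are gone the characteristic matrix is in the equivariant-connected-sum form, the induced splitting $H^4=\mathcal B_1\sqcup\mathcal B_2$ makes $p_1$ and $w_2$ block-diagonal, and the converse equivalence in Definition \ref{equivariant connected sum definition} gives both summands string, completing the proof.
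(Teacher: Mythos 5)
The ``if'' direction is fine, but the ``only if'' direction has a genuine gap at its very first step, and the step you identify as the crux is not actually the difficulty. You propose to ``choose the initial vertex at the gluing site'' so that $\Lambda=(A\mid \mathrm I_n\mid B)$ with the middle $\mathrm I_n$ block on the $n$ shared facets. But the $n$ shared facets have \emph{no common vertex} in $I^{n}\#P^{n}$: the connected sum is performed precisely by destroying the vertices $w_L$ and $w_R$ at which those facets met. Hence the nonsingularity condition (\ref{nonsingular}) gives no control over the determinant of the shared-facet columns, and normalizing them to $\mathrm I_n$ by a $\mathrm{GL}_n(\Z)$-action is exactly equivalent to the decomposability you are trying to prove. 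The remark following the theorem shows this is not automatic even for spin manifolds: there is a spin, non-string quasitoric manifold over $I^{3}\#I^{3}$ whose shared-facet block has determinant $3$, so it is indecomposable. Any correct proof must use the string hypothesis at precisely this point, and your outline never does.

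The paper's route is: put the initial vertex at the surviving cube vertex $\cap_{i=1}^{n}F_{i}$, so $\Lambda=(\,\mathrm I_n\mid A\mid \Lambda_{P^{n}*}\,)$ with $A$ the shared-facet block, and prove $\det A=1$ up to equivalence. The elements $\{v_iv_{i'}\}_{n+1\le i<i'\le 2n}$ survive as part of a basis of $H^{4}$ with the same $p_1$-coefficients as in the pure cube case, so the estimates of Theorem \ref{2n manifold} force every \emph{proper} principal minor of $A$ to equal $1$ (these are the minors nonsingularity does control, since every proper subset of the shared facets still meets the complementary $F_i$'s in a surviving vertex); a final coefficient computation then pins down the one uncontrolled minor, $\det A$ itself. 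By contrast, your ``third step'' — forcing the cross-block entries to vanish — is both unnecessary and false in general: Definition \ref{equivariant connected sum definition} imposes no constraint on how $\Lambda_{L*}$ and $\Lambda_{R*}$ interact, and once the shared-facet block is $\mathrm I_n$ the relations $v_{i-n}v_j=0$ automatically split the $H^{4}$ basis as $\mathcal B_1\sqcup\mathcal B_2$ and block-diagonalize $p_1$; string equivariant connected sums typically have plenty of nonzero ``cross'' entries.
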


\begin{proof}
By explanation above, it remains to prove the necessity. Let $\mathcal{F}(I^{n})=\{F'_{i}\}_{i=1}^{2n}$, $\mathcal{F}(P^{n})=\{F''_{i}\}_{i=1}^{m}$ denote the facet sets and suppose connected sum is taken at $w_{L}=\cap_{i=n+1}^{2n} F'_{i},\ w_{R}=\cap_{i=1}^{n} F''_{i}$. Label the facets of $I^{n} \# P^{n}$ such that $F_{i}=F'_{i}$ for $1 \leq i \leq n$, $F_{i}=F''_{i-n}$ for $2n+1 \leq i \leq n+m$ and $F_{j}$ is formed by $F'_{j}$ together with $F''_{j-n}$ for $n+1 \leq j \leq 2n$. Choose the initial vertex as $w=\cap_{i=1}^{n} F_{i}$, then 
\[
\Lambda=
\left(
\begin{array}{c|c|c}
\mathrm{I_{n}} & A & \Lambda_{P^{n}*}
\end{array}
\right), 
\]
and it suffices to show $\mathrm{det}A=1$ up to equivalence. 

Take $\{v_{i}\}_{i=n+1}^{n+m}$ as the basis of $H^{2}(M(I^{n} \# P^{n},\Lambda))$, then $H^{4}(M(I^{n} \# P^{n},\Lambda))$ includes 5 types of elements: 
\begin{align*}
& T_{1}=\{v_{i}^{2}\}_{n+1 \leq i \leq 2n}; \\
& T_{2}=\{v_{i} v_{i'}\}_{n+1 \leq i < i' \leq 2n}; \\
& T_{3}=\{v_{i} v_{j}\}_{n+1 \leq i \leq 2n, 2n+1 \leq j \leq n+m}; \\
& T_{4}=\{v_{j}^{2}\}_{2n+1 \leq j \leq n+m}; \\
& T_{5}=\{v_{j} v_{j'}\}_{2n+1 \leq j < j' \leq n+m}. 
\end{align*}

Aside from vanishing elements in $T_{2}, T_{3}$ and $T_{5}$, ideal $\mathcal{I}$ contains 2 types of relations in $H^{4}(M(I^{n} \# P^{n},\Lambda))$: 
\[
\left\{
\begin{aligned}
& v_{i-n}v_{i}=0 & n+1 \leq i \leq 2n; \\
& v_{i-n}v_{j}=0 & \qquad n+1 \leq i \leq 2n, 2n+1 \leq j \leq n+m. 
\end{aligned}
\right. 
\]
Clearly, the former can kill all elements in $T_{1}$ while the latter can kill some elements in $T_{3}, T_{4}$ and $T_{5}$. In particular, elements in $T_{2}$ are only involved in the former type of relations. Thus, they can be chosen as part of basis of $H^{4}(M(I^{n} \# P^{n},\Lambda))$ and corresponding coefficients are the same as cube case. Similar to the argument in Theorem \ref{2n manifold}, one can show that up to equivalence, every proper principal minor of $A$ is equal to 1 and 
\[
A=(a_{i,j})_{n \times n}=
\left(
\begin{array}{c|c}
1 & *_{1 \times (n-1)} \\
\hline
*_{(n-1) \times 1} & B
\end{array}
\right), 
\]
where $B$ is a unipotent upper triangular matrix. If $a_{n,1}=0$, then expansion by minors on the last row induces $\mathrm{det}A=1$. Otherwise, since the coefficient of $v_{n+1}v_{n+k}$ is equal to $a_{n,1}^{2}a_{1,k}$ for $2 \leq k \leq n$, one gets $a_{1,k}=0$ for $2 \leq k \leq n$, leading to $\mathrm{det}A=\mathrm{det}B=1$. 
\end{proof}

\begin{remark}
String condition is essential to the validity of Theorem \ref{equivariant connected sum theorem}. Let $M$ be the quasitoric manifold over $I^{3} \# I^{3}$ (see Figure \ref{cube connected sum 1}) with characteristic matrix 
\[
\Lambda=
\begin{pmatrix}
1 & 0 & 0 & 2 & 2 & 3 & 1 & 2 & 2 \\
0 & 1 & 0 & 0 & 1 & 1 & 0 & 0 & 1 \\
0 & 0 & 1 & 1 & 0 & 1 & 0 & 1 & 0 
\end{pmatrix}. 
\] 
It is easy to verify that $M$ is spin but not string. And it is indecomposable via equivariant connected sum since $\mathrm{det} \left( \begin{smallmatrix} 0 & 0 & 3 \\ 1 & 0 & 1 \\ 0 & 1 & 1 \end{smallmatrix} \right)=3 \neq \pm 1$. 

\begin{figure}[htb]
\centering
\includegraphics[height=4cm]{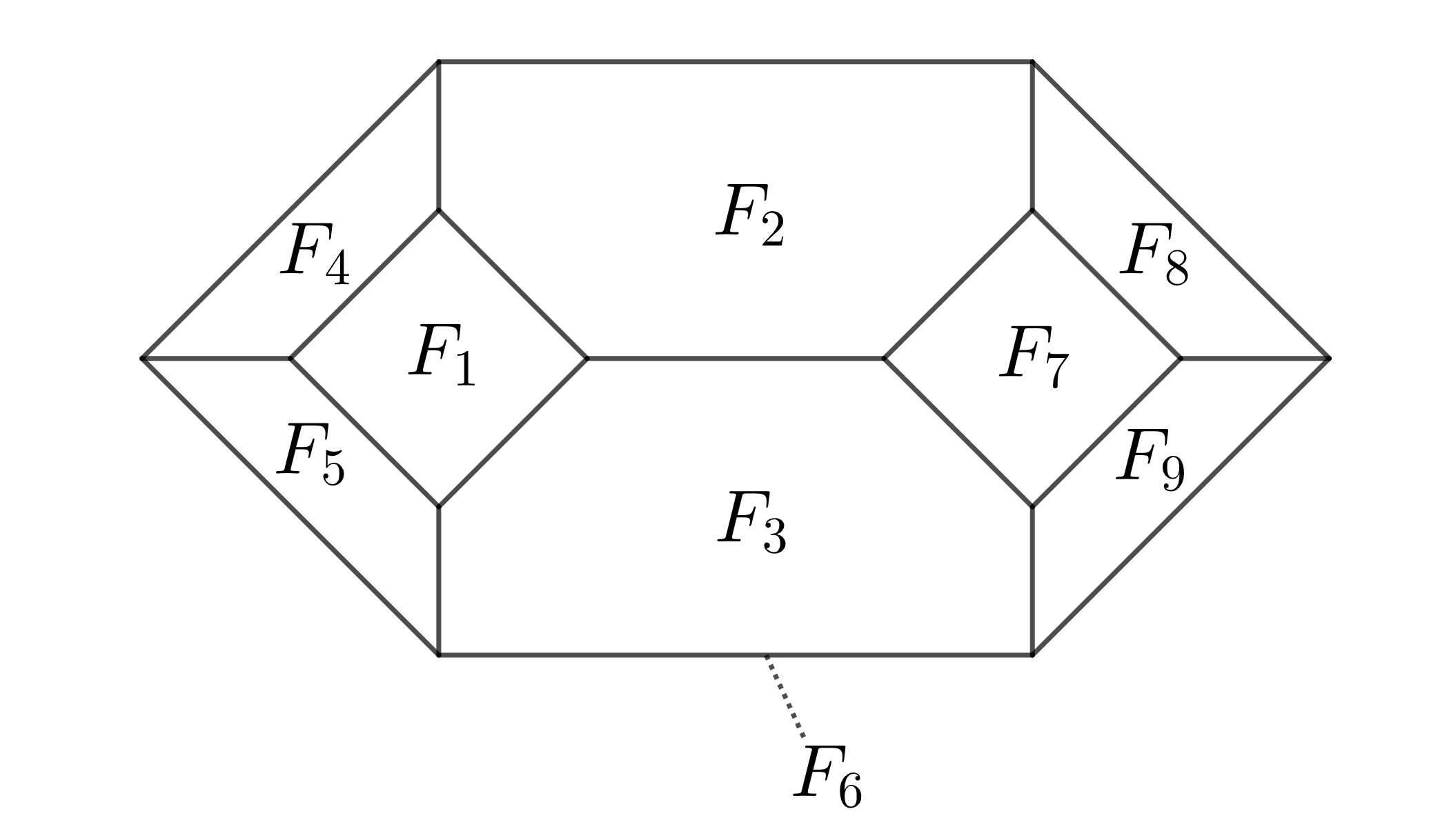}
\caption{$I^{3} \# I^{3}$ with label}
\label{cube connected sum 1}
\end{figure}

\end{remark}

\subsection{$f_{n-1}(P)=2n+1$} \label{2n+1}

As shown in Figure \ref{2n+1facets}, label the facets of $P=C_{2}(5) \times I^{n-2}$ such that 
\[
F_{i}=
\left\{
\begin{aligned}
& f_{i} \times I^{n-2} & 1 \leq i \leq 5; \\
& C_{2}(5) \times I^{i-6} \times a_{i-5} \times I^{n+3-i} & 6 \leq i \leq n+3; \\
& C_{2}(5) \times I^{i-n-4} \times b_{i-n-3} \times I^{2n+1-i} & \qquad n+4 \leq i \leq 2n+1. 
\end{aligned}
\right.
\]

\begin{figure}[htb]
\centering
\includegraphics[height=3.5cm]{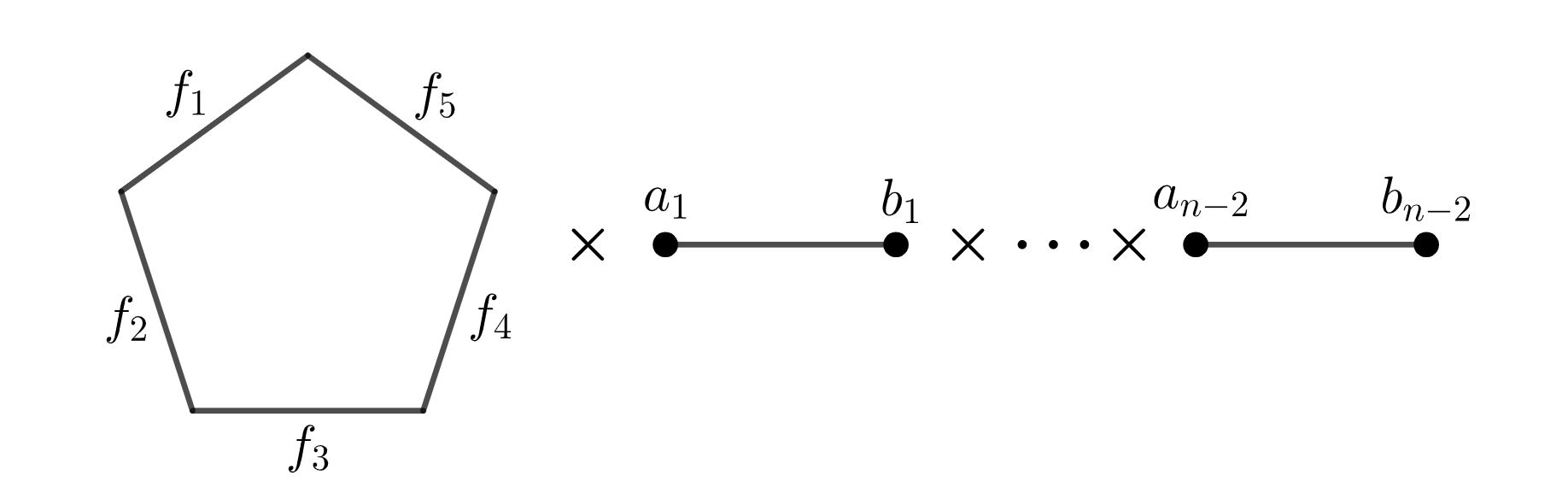}
\caption{$P=C_{2}(5) \times I^{n-2}$}
\label{2n+1facets}
\end{figure}

\noindent Let the initial vertex be intersection of $F_{1}, F_{2}$ and $\{F_{i}\}_{i=6}^{n+3}$, then $\Lambda$ is equivalent to 
\[
\left(
\begin{array}{c|ccc|c|cccc}
\multirow{2}*{$\mathrm{I}_{2}$} & 1 & \la_{1,4} & \la_{1,5} & \multirow{2}*{0} & \multicolumn{4}{c}{\multirow{2}*{$*_{2 \times (n-2)}$}} \\
& \la_{2,3} & \la_{2,4} & 1 & & \\
\hline
\multirow{4}*{0} & \multicolumn{3}{c|}{\multirow{4}*{$*_{(n-2) \times 3}$}} & \multirow{4}*{$\mathrm{I_{n-2}}$} & 1 & \la_{3,n+5} & \cdots & \la_{3,2n+1} \\
& & & & & \la_{4,n+4} & 1 &\cdots & \la_{4,2n+1} \\
& & & & & \vdots & \vdots & \ddots & \vdots \\
& & & & & \la_{n,n+4} & \la_{n,n+5} & \cdots & 1
\end{array}
\right).
\]
Fix the corresponding basis of $H^{2}(M(P,\Lambda))$ as $\{v_{3}, v_{4}, v_{5}\}$ together with $\{v_{i}\}_{i=n+4}^{2n+1}$, then $v_{4}v_{5}$, $\{v_{i}v_{j}\}_{3 \leq i \leq 5, n+4 \leq j \leq 2n+1}$ and $\{v_{i}v_{j}\}_{n+4 \leq i < j \leq 2n+1}$ form a basis of $H^{4}(M(P,\Lambda))$. 

Combine arguments in the proof of Lemma \ref{coefficient prism} and \ref{coefficient 2n}, we are led to the following formulas for corresponding coefficients: 

\begin{lemma} \label{coefficient 2n+1}
Using notations introduced in Lemma \ref{coefficient 2n}, 
\begin{align*}
& c_{i,j}=-\la_{i-n-1,j}\rho_{i}-\la_{j-n-1,i}\rho_{j}+\rho_{i,j} & \qquad n+4 \leq i < j \leq 2n+1; \\
& c_{3,i}=-\la_{1,i}\rho_{3}-\la_{i-n-1,3}\rho_{i}+\rho_{3,i} & n+4 \leq i \leq 2n+1; \\
& c_{5,i}=-\la_{2,i}\rho_{5}-\la_{i-n-1,5}\rho_{i}+\rho_{5,i} & n+4 \leq i \leq 2n+1. 
\end{align*}
Moreover, similar to Lemma \ref{dimP=2 coefficient}, set $\Delta_{i,j}=\mathrm{det} \left( \begin{smallmatrix} \la_{1,i} & \la_{1,j} \\ \la_{2,i} & \la_{2,j} \end{smallmatrix}\right)$ for $1 \leq i,j \leq 2n+1$ and $l_{i}=\Delta_{i-1,i} \cdot \Delta_{i,i+1} \cdot \Delta_{i+1,i-1}$ with subscripts taken modulo 5. Then 
\[
c_{4,5}=\Delta_{4,5} \sum_{k=1}^{5} (l_{k} \rho_{k}+\Delta_{k,k+1} \rho_{k,k+1}), 
\]
and for $ n+4 \leq i \leq 2n+1$, 
\[
c_{4,i}=-\Delta_{3,4}\Delta_{3,i} \rho_{4}-\la_{i-n-1,4} \rho_{i}+\rho_{4,i}+\Delta_{4,i} (l_{3} \rho_{3}+\Delta_{3,4} \rho_{3,4}). 
\]
\end{lemma}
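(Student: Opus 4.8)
The plan is to run the computation of Lemma~\ref{dimP=2 coefficient} (in the refined form used in Lemma~\ref{coefficient prism}) on the pentagon factor $C_{2}(5)$, run the computation of Lemma~\ref{coefficient 2n} on the cube factor $I^{n-2}$, and track how the two interact through the chosen $H^{4}$-basis. First I would start from $p_{1}(M)=\sum_{j=1}^{2n+1}v_{j}^{2}$ (Proposition~\ref{characteristic class}) and use the linear relations of ideal $\mathcal{J}$ (Proposition~\ref{cohomology ring}) coming from the refined $\Lambda$ to eliminate the pivot classes $v_{1},v_{2}$ and $v_{6},\dots,v_{n+3}$, rewriting each as a combination of the $H^{2}$-basis $\{v_{3},v_{4},v_{5}\}\cup\{v_{i}\}_{i=n+4}^{2n+1}$. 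Squaring and summing turns $p_{1}(M)$ into $\sum_{j}\rho_{j}v_{j}^{2}+\sum_{j<j'}\rho_{j,j'}v_{j}v_{j'}$, the sums running over the non-pivot indices $\{3,4,5\}\cup\{n+4,\dots,2n+1\}$, exactly as in the opening display of the proof of Lemma~\ref{coefficient 2n}. I would also record the vanishing degree-$4$ monomials from ideal $\mathcal{I}$: the pentagon non-adjacencies $v_{3}v_{5}=v_{1}v_{3}=v_{1}v_{4}=v_{2}v_{4}=v_{2}v_{5}=0$ and the opposite-facet relations $v_{k+5}v_{k+n+3}=0$; among the surviving products, $v_{4}v_{5}$, the $v_{i}v_{j}$ with $3\le i\le 5<j$, and the $v_{i}v_{j}$ with $n+4\le i<j\le 2n+1$ form the chosen $H^{4}$-basis.

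For the cube part, for each $b$-facet $F_{i}$ ($n+4\le i\le 2n+1$) multiplying the relation $t_{i-n-1}=0$ by $v_{i}$ and using $v_{i-n+2}v_{i}=0$ (opposite facets of a common $I$-factor) together with the vanishing of $\la_{i-n-1,1},\la_{i-n-1,2}$ and of the off-diagonal entries of the $a$-block gives $v_{i}^{2}=-\sum_{c}\la_{i-n-1,c}v_{c}v_{i}$ with $c$ over $\{3,4,5\}\cup(\{n+4,\dots,2n+1\}\setminus\{i\})$, all basis monomials; substituting back and using (\ref{nonsingular}) reproduces, just as in Lemma~\ref{coefficient 2n}, the formulas for $c_{i,j}$ ($n+4\le i<j\le 2n+1$), $c_{3,i}$ and $c_{5,i}$, where the $-\la_{1,i}\rho_{3}$ and $-\la_{2,i}\rho_{5}$ summands come from the pentagon step. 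For the pentagon part, multiplying $t_{1}=0$ and $t_{2}=0$ by $v_{i}$, $i\in\{1,\dots,5\}$, discarding the monomials $v_{i}v_{j}$ with $|i-j|\not\equiv\pm1\pmod{5}$, and eliminating $v_{i}^{2}$ between the two rows as in Lemma~\ref{dimP=2 coefficient} gives, with indices mod $5$,
\[
\Delta_{i-1,i}v_{i-1}v_{i}=\Delta_{i,i+1}v_{i}v_{i+1}+\sum_{j=n+4}^{2n+1}\Delta_{i,j}v_{i}v_{j},\qquad v_{i}^{2}=l_{i}\Delta_{i,i+1}v_{i}v_{i+1}+\sum_{j=n+4}^{2n+1}(\cdots)v_{i}v_{j},
\]
the $j$-sums being the new feature of the $b$-columns. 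Using these to express $v_{3}^{2},v_{4}^{2},v_{5}^{2},v_{3}v_{4}$ in terms of $v_{4}v_{5}$ and the $v_{c}v_{j}$ ($c\in\{3,4,5\}$, $j\ge n+4$), then taking the cyclic sum over the five pentagon indices with the cyclic vanishing identity used in the proof of Lemma~\ref{coefficient prism}, yields $c_{4,5}=\Delta_{4,5}\sum_{k=1}^{5}(l_{k}\rho_{k}+\Delta_{k,k+1}\rho_{k,k+1})$, and propagating the same eliminations into the $v_{4}v_{i}$ coefficients gives the formula for $c_{4,i}$.

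The one step beyond routine bookkeeping is verifying that the $b$-column correction terms carried through the pentagon relations recombine into exactly $-\Delta_{3,4}\Delta_{3,i}\rho_{4}$, $-\la_{i-n-1,4}\rho_{i}$, $\rho_{4,i}$ and $\Delta_{4,i}(l_{3}\rho_{3}+\Delta_{3,4}\rho_{3,4})$ inside $c_{4,i}$ and do not leak into the $b$-$b$ coefficients, and that the cyclic telescoping over the pentagon closes up over $\Z$; here one uses $\Delta_{i-1,i}^{2}=1$, the normalizations $\la_{1,1}=\la_{1,3}=\la_{2,2}=\la_{2,5}=1$ and $\la_{i-n-1,i}=1$ forced by the refined form of $\Lambda$, and the vanishing of its lower-left block. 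Everything else is a direct expansion via Proposition~\ref{characteristic class}.
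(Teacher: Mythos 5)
Your proposal is correct and follows exactly the route the paper intends: the paper gives no separate proof of Lemma \ref{coefficient 2n+1}, saying only that one combines the arguments of Lemma \ref{coefficient prism} (the cyclic elimination on the polygon factor, with the extra non-adjacent column contributing cross terms) and Lemma \ref{coefficient 2n} (the opposite-facet elimination $v_i^2=-\sum_c\la_{i-n-1,c}v_cv_i$ on the cube factor), which is precisely your computation. Your bookkeeping of the pivot columns, the vanishing monomials from $\mathcal{I}$, and the way the $b$-column corrections enter only $c_{4,i}$ and $c_{4,5}$ via the $v_3v_4$ elimination all agree with the paper's formulas.
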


\begin{proposition} \label{2n+1 coefficient}
A quasitoric manifold $M(C_{2}(5) \times I^{n-2},\Lambda)$ is string if and only if
\[
\sum_{k=1}^{n} \la_{k,i} \equiv 1 \pmod{2} \qquad 3 \leq i \leq 5; n+4 \leq i \leq 2n+1, 
\]
and all 5 types of coefficients listed in Lemma \ref{coefficient 2n+1} vanish. 
\end{proposition}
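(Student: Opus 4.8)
The plan is to unwind the string condition into the two advertised families of constraints by invoking the characteristic-class formulas already in place. Since every quasitoric manifold is orientable (it carries a stable complex structure by Proposition \ref{unitary structure}) and has torsion-free integral cohomology (Proposition \ref{Betti number}), the manifold $M = M(C_{2}(5) \times I^{n-2},\Lambda)$ is string if and only if $w_{2}(M) = 0$ and $p_{1}(M) = 0$ (see Subsection \ref{string structure}). I would treat these two vanishing conditions in turn; since each reduction below is a biconditional, this settles both directions of the proposition simultaneously.

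For the $w_{2}$ part: in the refined form fixed above, with initial vertex the intersection of $F_{1}, F_{2}$ and $\{F_{i}\}_{i=6}^{n+3}$, the columns $\boldsymbol{\la_{1}}, \boldsymbol{\la_{2}}, \boldsymbol{\la_{6}}, \dots, \boldsymbol{\la_{n+3}}$ are the standard basis vectors of $\Z^{n}$, hence each has column sum $1$, which is odd. By Remark \ref{spin and Betti number count}, $w_{2}(M) = \sum_{j=1}^{2n+1} v_{j} \pmod{2}$, and since $\{v_{3},v_{4},v_{5}\} \cup \{v_{i}\}_{i=n+4}^{2n+1}$ is an integral basis of $H^{2}(M;\Z)$ after quotienting by the linear ideal $\mathcal{J}$, the class $w_{2}(M)$ vanishes exactly when every column of $\Lambda$ has odd sum. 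The remaining columns, those indexed by $3 \leq i \leq 5$ and $n+4 \leq i \leq 2n+1$, therefore must satisfy $\sum_{k=1}^{n} \la_{k,i} \equiv 1 \pmod{2}$, which is the first family of conditions.

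For the $p_{1}$ part: by Proposition \ref{characteristic class}, $p_{1}(M) = \sum_{j=1}^{2n+1} v_{j}^{2}$. The idea is to rewrite this in the fixed basis $\{v_{4}v_{5}\} \cup \{v_{i}v_{j}\}_{3 \leq i \leq 5,\, n+4 \leq j \leq 2n+1} \cup \{v_{i}v_{j}\}_{n+4 \leq i < j \leq 2n+1}$ of $H^{4}(M;\Z)$, eliminating the non-basis monomials $v_{i}^{2}$ and $v_{i}v_{i+1}$ by means of the relations of Proposition \ref{cohomology ring} — the cyclic relations coming from the $C_{2}(5)$ factor (as in the proof of Proposition \ref{dimP=3 polytope}), the cube-type relations coming from the $I^{n-2}$ factor (as in Lemma \ref{coefficient 2n}), and the mixed relations. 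But this is exactly the content of Lemma \ref{coefficient 2n+1}, which expresses $p_{1}(M)$ as a linear combination of those basis monomials with coefficients $c_{4,5}$, $c_{3,i}$, $c_{5,i}$, $c_{4,i}$ and $c_{i,j}$ given by the five displayed formulas. Since the monomials are $\Z$-linearly independent, $p_{1}(M) = 0$ if and only if all five types of coefficients vanish. Combining with the $w_{2}$ analysis yields the proposition.

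I do not expect a real obstacle here: once Lemma \ref{coefficient 2n+1} is granted, the proof is a bookkeeping assembly of Propositions \ref{Betti number}, \ref{characteristic class}, \ref{cohomology ring} and Remark \ref{spin and Betti number count}. The one step deserving care is confirming that the monomials retained in Lemma \ref{coefficient 2n+1} genuinely form a basis of $H^{4}(M;\Z)$, equivalently that $1 + 3(n-2) + \binom{n-2}{2} = \beta^{4}(M) = h_{2}(C_{2}(5) \times I^{n-2})$; this follows from (\ref{h-vector}) together with the multiplicativity of $h$-polynomials, using $h(C_{2}(5)) = (1,3,1)$ and $h_{i}(I^{n-2}) = \binom{n-2}{i}$. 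With this in hand no relation is double-counted and the elimination in Lemma \ref{coefficient 2n+1} loses no information, so the simultaneous vanishing of the coefficients is genuinely equivalent to $p_{1}(M) = 0$.
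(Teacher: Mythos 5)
Your proposal is correct and matches the argument the paper intends (the paper states Proposition \ref{2n+1 coefficient} without proof, treating it as immediate from Remark \ref{spin and Betti number count} for the $w_{2}$ condition and Lemma \ref{coefficient 2n+1} for the $p_{1}$ condition, exactly as you do). Your verification that the retained monomials number $1+3(n-2)+\binom{n-2}{2}=h_{2}(C_{2}(5)\times I^{n-2})=\beta^{4}(M)$ is a worthwhile check that the paper leaves implicit.
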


\begin{remark}
In general, string quasitoric manifolds over $P=C_{2}(s) \times I^{n-2}$ $(s \geq 4)$ can be characterized in a similar way. In particular, such manifolds always exist when $s \equiv 0 \pmod{2}$ while they only exist for $n \geq 4$ when $s \equiv 1 \pmod{2}$. 
\end{remark}

\subsection{$f_{n-1}(P)=2n+2$} \label{2n+2}

Arguments in Subsection \ref{2n+1} apply to the case of $C_{2}(6) \times I^{n-2}$ and $C_{2}(5) \times C_{2}(5) \times I^{n-4}$. The details are left to the reader. 

\vspace{0.5cm}

When $P=Q \times I^{n-3}$, label the facets such that $F_{i}=f_{i} \times I^{n-3}$ for $1 \leq i \leq 8$ (see Figure \ref{Q3 and its dual with label}) and $F_{i} \cap F_{n-3+i}=\emptyset$ for $9 \leq i \leq n+5$. 

\begin{figure}[htb]
\begin{minipage}[t]{0.45\linewidth}
\centering
\includegraphics[height=4cm]{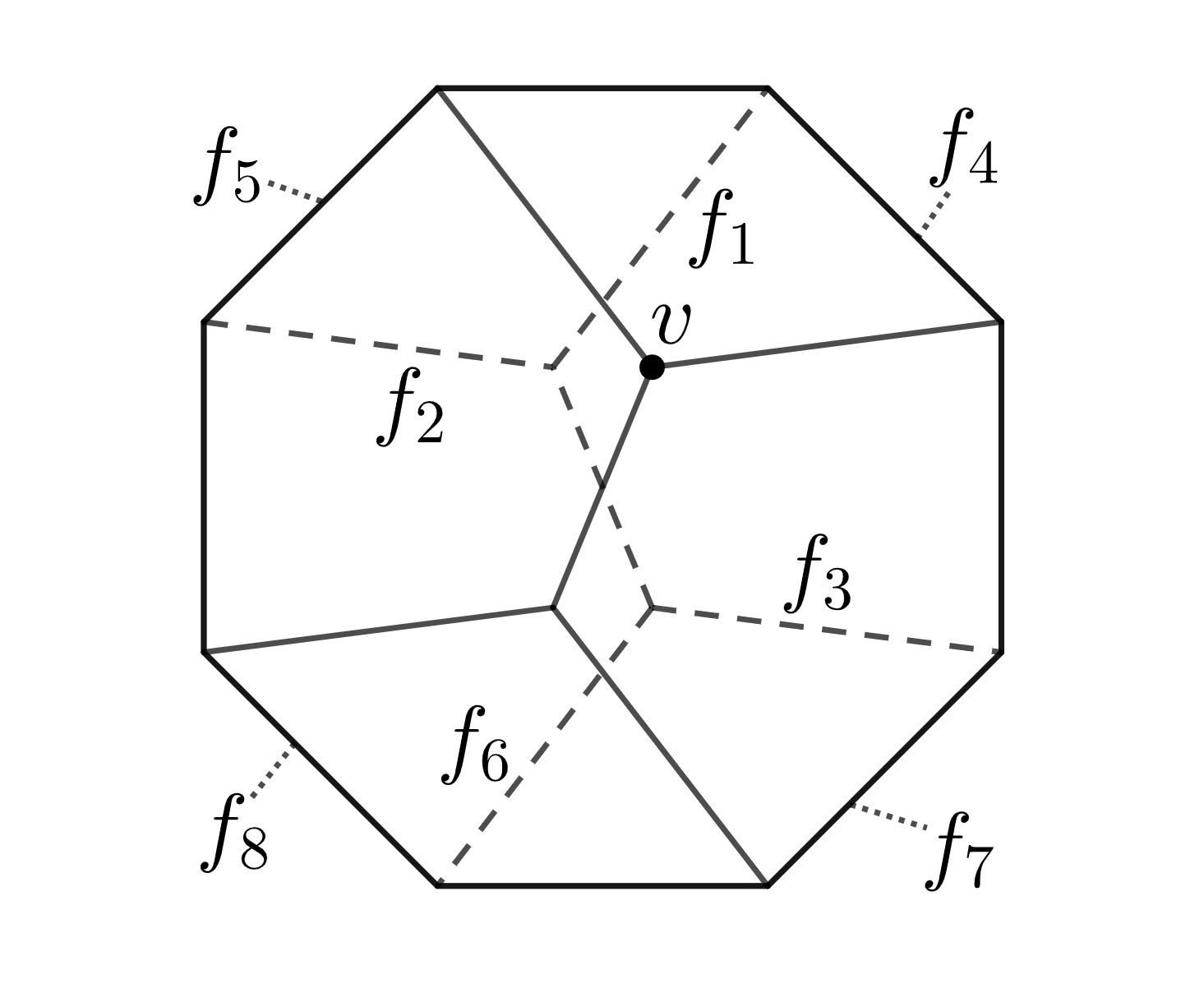}
\end{minipage}
\begin{minipage}[t]{0.45\linewidth}
\centering
\includegraphics[height=4cm]{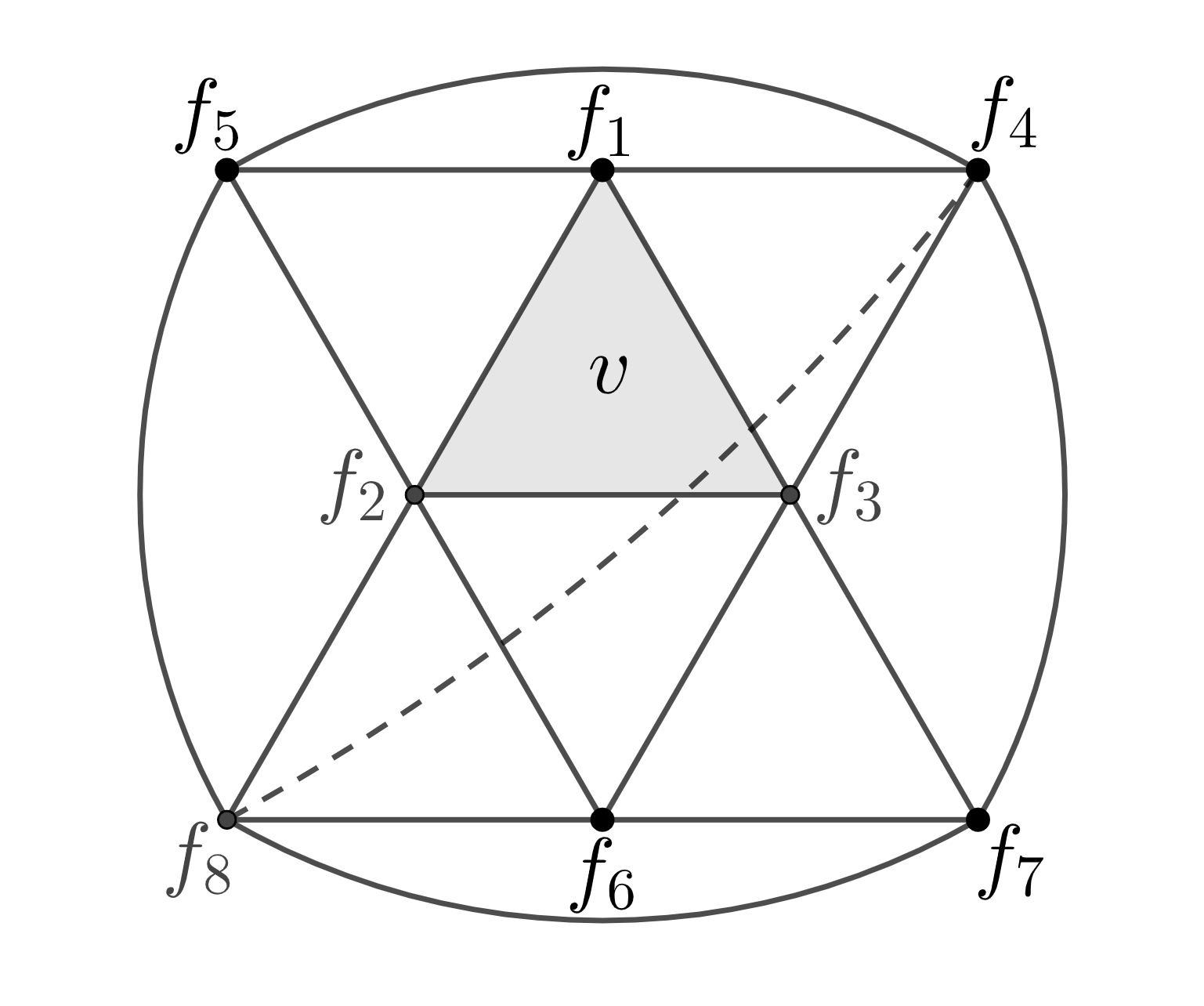}
\end{minipage}
\caption{$Q^{3}$ and its dual with label}
\label{Q3 and its dual with label}
\end{figure}

\noindent Let the initial vertex be the intersection of $F_{1}, F_{2}, F_{3}$ and $\{F_{i}\}_{i=9}^{n+5}$, then $\Lambda$ is equivalent to 
\[
\left(
\begin{array}{c|ccccc|c|cccc}
\multirow{3}*{$\mathrm{I}_{3}$} & \la_{1,4} & \la_{1,5} & 1 & \la_{1,7} & \la_{1,8} & \multirow{3}*{0} & \multicolumn{4}{c}{\multirow{3}*{$*_{3 \times (n-3)}$}} \\
& 1 & \la_{2,5} & \la_{2,6} & \la_{2,7} & \la_{2,8} & & \\
& \la_{3,4} & 1 & \la_{3,6} & \la_{3,7} & \la_{3,8} & & \\
\hline
\multirow{4}*{0} & \multicolumn{5}{c|}{\multirow{4}*{$*_{(n-3) \times 5}$}} & \multirow{4}*{$\mathrm{I_{n-3}}$} & 1 & \la_{4,n+7} & \cdots & \la_{4,2n+2} \\
& & & & & & & \la_{5,n+6} & 1 &\cdots & \la_{5,2n+2} \\
& & & & & & & \vdots & \vdots & \ddots & \vdots \\
& & & & & & & \la_{n,n+6} & \la_{n,n+7} & \cdots & 1
\end{array}
\right).
\]
Fix the corresponding basis of $H^{2}(M(P,\Lambda))$ as $\{v_{i}\}_{i=4}^{8}$ together with $\{v_{i}\}_{i=n+6}^{2n+2}$, then the following 5 classes of elements form a basis of $H^{4}(M(P,\Lambda))$: 
\begin{align*}
& C_{1}=\{v_{i}v_{j}\}_{n+6 \leq i < j \leq 2n+2}; \\
& C_{2}=\{v_{4}v_{i}, v_{5}v_{i}, v_{6}v_{i}\}_{n+6 \leq i \leq 2n+2}; \\
& C_{3}=\{v_{4}v_{5}, v_{5}v_{8}, v_{7}v_{4}\}; \\
& C_{4}=\{v_{7}v_{i}, v_{8}v_{i}\}_{n+6 \leq i \leq 2n+2}; \\
& C_{5}=\{v_{4}v_{8}, v_{7}v_{8}\}. 
\end{align*}

In order to get relatively simple and explicit formulas for corresponding coefficients in the expression of $p_{1}(M(P,\Lambda))$, more generalized notations are needed: 

\begin{notation} \label{modified notation}
Set $\Delta_{i,j,k}=\mathrm{det} \left( \begin{smallmatrix} \la_{1,i} & \la_{1,j} & \la_{1,k} \\ \la_{2,i} & \la_{2,j} & \la_{2,k} \\ \la_{3,i} & \la_{3,j} & \la_{3,k} \end{smallmatrix} \right)$ for $1 \leq i, j, k \leq 2n+2$. Let $g_{1}$ be a function over integers with period 4 and $g_{2}, g_{3}$ be functions over integers with period 5, such that 
\[
g_{1}(a)=\left\{
\begin{aligned}
& 2 & \quad a=1; \\
& 3 & a=2; \\
& 4 & a=3; \\
& 5 & a=4; 
\end{aligned}
\right.
\qquad g_{2}(a)=\left\{
\begin{aligned}
& 3 & \quad a=1; \\
& 1 & a=2; \\
& 5 & a=3; \\
& 8 & a=4; \\
& 6 & a=5; 
\end{aligned}
\right.
\qquad g_{3}(a)=\left\{
\begin{aligned}
& 1 & \quad a=1; \\
& 2 & a=2; \\
& 6 & a=3; \\
& 7 & a=4; \\
& 4 & a=5. 
\end{aligned}
\right.
\]
Note that values of $g_{i}$ are nothing but labels of facets adjacent to $f_{i}$ in $Q$ with counter-clockwise order (see Figure \ref{Q3 and its dual with label}). Then set $\widetilde{\rho}_{i,j}=\rho_{g_{i}(j)}$, $\widetilde{\rho}_{i,j,k}=\rho_{g_{i}(j),g_{i}(k)}$, $\widetilde{\Delta}_{i,j,k}=\Delta_{i,g_{i}(j),g_{i}(k)}$ and $\widetilde{l}_{i,j}=\widetilde{\Delta}_{i,j-1,j} \cdot \widetilde{\Delta}_{i,j,j+1} \cdot \widetilde{\Delta}_{i,j+1,j-1}$ for $1 \leq i \leq 3$ and $1 \leq j, k \leq 2n+2$. 
\end{notation}

\begin{lemma} \label{coefficient 2n+2B}
For classes $C_{1}$ and $C_{2}$: 
\[
\begin{aligned}
& c_{i,j}=-\la_{i-n-2,j} \rho_{i}-\la_{j-n-2,i} \rho_{j}+\rho_{i,j} & \qquad n+6 \leq i < j \leq 2n+2; \\
& c_{4,i}=-\la_{2,i} \rho_{4}-\la_{i-n-2,4} \rho_{i}+\rho_{4,i} & n+6 \leq i \leq 2n+2; \\
& c_{5,i}=-\la_{3,i} \rho_{5}-\la_{i-n-2,5} \rho_{i}+\rho_{5,i} & n+6 \leq i \leq 2n+2; \\
& c_{6,i}=-\la_{1,i} \rho_{6}-\la_{i-n-2,6} \rho_{i}+\rho_{6,i} & n+6 \leq i \leq 2n+2. 
\end{aligned}
\]
For class $C_{3}$: 
\[
\begin{aligned}
& c_{4,5}=\Delta_{1,4,5} \sum_{k=1}^{4} (\widetilde{l}_{1,k} \widetilde{\rho}_{1,k}+\widetilde{\Delta}_{1,k,k+1} \widetilde{\rho}_{1,k,k+1}); \\
& c_{5,8}=\Delta_{2,5,8} \sum_{k=1}^{5} (\widetilde{l}_{2,k} \widetilde{\rho}_{2,k}+\widetilde{\Delta}_{2,k,k+1} \widetilde{\rho}_{2,k,k+1}); \\
& c_{7,4}=\Delta_{3,7,4} \sum_{k=1}^{5} (\widetilde{l}_{3,k} \widetilde{\rho}_{3,k}+\widetilde{\Delta}_{3,k,k+1} \widetilde{\rho}_{3,k,k+1}). 
\end{aligned}
\]
For class $C_{4}$ with $n+6 \leq i \leq 2n+2$: 
\[
\begin{aligned}
& c_{7,i}=-\Delta_{3,6,7}\Delta_{3,6,i} \rho_{7}-\la_{i-n-2,7} \rho_{i}+\rho_{7,i}+\Delta_{3,7,i}(\widetilde{l}_{3,3} \widetilde{\rho}_{3,3}+\widetilde{\Delta}_{3,3,4} \widetilde{\rho}_{3,3,4}); \\
& c_{8,i}=-\Delta_{2,6,8}\Delta_{2,6,i} \rho_{8}-\la_{i-n-2,8} \rho_{i}+\rho_{8,i}+\Delta_{2,8,i}(\widetilde{l}_{2,5} \widetilde{\rho}_{2,5}+\widetilde{\Delta}_{2,4,5} \widetilde{\rho}_{2,4,5}). 
\end{aligned}
\]
For class $C_{5}$: 
\[
\begin{aligned}
c_{4,8}= & -\la_{2,8} \rho_{4}-\Delta_{2,6,4}\Delta_{2,6,8} \rho_{8}+\rho_{4,8}+\Delta_{2,4,8}(\widetilde{l}_{2,5} \widetilde{\rho}_{2,5}+\widetilde{\Delta}_{2,4,5} \widetilde{\rho}_{2,4,5}); \\
c_{7,8}= & -\Delta_{3,6,7}\Delta_{3,6,8} \rho_{7}-\Delta_{2,6,7}\Delta_{2,6,8} \rho_{8}+\rho_{7,8} \\
& +\Delta_{3,7,8}(\widetilde{l}_{3,3} \widetilde{\rho}_{3,3}+\widetilde{\Delta}_{3,3,4} \widetilde{\rho}_{3,3,4})+\Delta_{2,7,8}(\widetilde{l}_{2,5} \widetilde{\rho}_{2,5}+\widetilde{\Delta}_{2,4,5} \widetilde{\rho}_{2,4,5}). 
\end{aligned}
\]
\end{lemma}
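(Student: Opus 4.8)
\emph{Proof proposal.}
The plan is to imitate the elimination carried out in Lemmas \ref{dimP=2 coefficient}, \ref{coefficient prism} and \ref{coefficient 2n}, now organized according to the face poset of $Q$ recorded in Figure \ref{Q3 and its dual with label}. I would start from the identity $p_{1}(M(P,\Lambda))=\sum_{j=1}^{2n+2}v_{j}^{2}$ of Proposition \ref{characteristic class}. The first step kills the non-basis degree-two classes using the linear part $\mathcal{J}$ of Proposition \ref{cohomology ring}: the identity blocks sitting in columns $1,2,3$ and $9,\dots,n+5$ of the refined matrix let me write each of $v_{1},v_{2},v_{3}$ and $v_{9},\dots,v_{n+5}$ as an explicit integral combination of the chosen generators $\{v_{i}\}_{i=4}^{8}\cup\{v_{i}\}_{i=n+6}^{2n+2}$. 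Substituting and expanding the squares rewrites $p_{1}$ as the quadratic form $\sum_{i}\rho_{i}v_{i}^{2}+\sum_{i<j}\rho_{i,j}v_{i}v_{j}$ in these $n+2$ generators, with $\rho_{i},\rho_{i,j}$ exactly the quantities appearing in Lemma \ref{coefficient 2n} and Notation \ref{modified notation}.

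The second step reduces every quadratic monomial occurring there to the prescribed basis $C_{1}\sqcup\cdots\sqcup C_{5}$, using repeatedly the relations of ideal $\mathcal{I}$, i.e.\ $v_{i}v_{j}=0$ whenever $F_{i}\cap F_{j}=\emptyset$, together with the quadratic consequences $v_{i}\,t_{s}=0$ of $\mathcal{J}$. The reduction decomposes along the combinatorics of $Q$. For the pairs indexing $C_{1}$ (two cube directions) the argument of Lemma \ref{coefficient 2n} applies verbatim. For $C_{2}$ the facets $f_{4},f_{5},f_{6}$ each sit in a $3$-belt of $Q$ meeting the relevant cube direction, so a single elimination step produces the stated one-line formulas. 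The classes $C_{3},C_{4},C_{5}$ require the telescoping elimination of Lemma \ref{coefficient prism}: around each of the ``initial'' facets $f_{1},f_{2},f_{3}$ the remaining facets of $Q$ occur in a definite cyclic order, which is precisely what the period-$4$ function $g_{1}$ and the period-$5$ functions $g_{2},g_{3}$ record, and the $3\times 3$ determinants $\widetilde{\Delta}_{i,j,k}=\Delta_{i,g_{i}(j),g_{i}(k)}$ play the role of the $2\times 2$ determinants $\Delta_{i,i+1}$ there. Chaining the relations $\widetilde{\Delta}_{i,j-1,j}\,v_{i}v_{g_{i}(j-1)}=\widetilde{\Delta}_{i,j,j+1}\,v_{i}v_{g_{i}(j+1)}+\cdots$ around this cycle, exactly as in the passage from (\ref{cyclic coefficient}) to (\ref{cyclic coefficient 2}) in the proof of Proposition \ref{dimP=3 polytope}, yields the cyclic-sum formulas for $c_{4,5},c_{5,8},c_{7,4}$; feeding the outcome into the remaining eliminations --- the cube direction entering $v_{7},v_{8}$ and the ``long'' edges $v_{4}v_{8},v_{7}v_{8}$ --- gives the formulas for $C_{4}$ and $C_{5}$, which are the evident hybrids of the cube formula and the cyclic formula, just as $c_{i,2k+2}$ in Lemma \ref{coefficient prism} combines both effects.

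The genuinely delicate point is the combinatorial bookkeeping rather than the algebra: one must read off from Figure \ref{Q3 and its dual with label} exactly which pairs of facets of $Q$ are non-adjacent and which triples form $3$-belts, and verify that after the cyclic reduction around each of $f_{1},f_{2},f_{3}$ every monomial lands in $C_{3}$ --- this is why the chosen representatives are $v_{4}v_{5},v_{5}v_{8},v_{7}v_{4}$, the ``last'' edges of the three cycles $g_{1},g_{2},g_{3}$. One also needs the base cases of the telescoping, where the entries of $\Lambda$ forced to be $0$ or $1$ by the refined form collapse the boundary terms, in complete analogy with the identity $\Delta_{2k,2}\rho_{2k,2}+l_{2}\rho_{2}+\Delta_{2,3}\rho_{2,3}+l_{3}\rho_{3}+\Delta_{3,4}\rho_{3,4}=0$ used for Proposition \ref{dimP=3 polytope}. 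Beyond this the computation is long but entirely mechanical, and I would organize it as three parallel blocks, one per initial facet $f_{1},f_{2},f_{3}$, each yielding the corresponding entries of $C_{3},C_{4},C_{5}$, plus a final cube-type block giving $C_{1},C_{2}$.
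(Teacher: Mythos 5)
Your proposal matches the derivation the paper intends: the paper itself states Lemma \ref{coefficient 2n+2B} without proof, remarking only that the arguments of Lemmas \ref{coefficient prism} and \ref{coefficient 2n} (as combined in Subsection \ref{2n+1}) carry over, and your plan --- cube-type elimination for $C_{1},C_{2}$, cyclic telescoping around $f_{1},f_{2},f_{3}$ encoded by $g_{1},g_{2},g_{3}$ for $C_{3}$, and the hybrid formulas for $C_{4},C_{5}$ --- is exactly that combination. The sketch is correct in outline and identifies the right delicate points (the adjacency bookkeeping in $Q$ and the boundary identities that close the cyclic sums), though like the paper it stops short of carrying out the mechanical verification.
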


\begin{proposition} \label{2n+2B coefficient}
A quasitoric manifold $M(Q \times I^{n-3},\Lambda)$ is string if and only if 
\[
\sum_{k=1}^{n} \la_{k,i} \equiv 1 \pmod{2} \qquad 4 \leq i \leq 8; n+6 \leq i \leq 2n+2, 
\]
and all coefficients listed in Lemma \ref{coefficient 2n+2B} vanish. 
\end{proposition}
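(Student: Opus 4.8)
The plan is to obtain the statement as an immediate consequence of the machinery already in place. By Proposition \ref{characteristic class} (and because $M(Q\times I^{n-3},\Lambda)$ is orientable with torsion-free integral cohomology, so that the string obstruction $p_1(M)/2$ vanishes exactly when $p_1(M)$ does), ``$M$ is string'' is equivalent to the pair of equations $w_2(M)=0$ and $p_1(M)=0$, and I would expand each in a fixed basis. For $w_2$: taking $\Lambda$ in the refined form displayed before Notation \ref{modified notation} and $\{v_i\}_{i=4}^{8}\cup\{v_i\}_{i=n+6}^{2n+2}$ for the induced basis of $H^2(M)$, Proposition \ref{characteristic class} together with the linear relations of ideal $\mathcal J$ gives $w_2(M)\equiv\sum_i\bigl(\textstyle\sum_{k=1}^{n}\la_{k,i}\bigr)v_i\pmod 2$ over the basis indices $i$; the columns $1,2,3$ and $9,\dots,n+5$ are unit vectors and drop out, so $w_2(M)=0$ is precisely the displayed parity condition on the columns $4,\dots,8$ and $n+6,\dots,2n+2$.

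For $p_1$, the five classes $C_1,\dots,C_5$ have already been fixed as a basis of $H^4(M)$: one gets $\beta^4(M)$ from the $h$-vector identity (\ref{h-vector}), reads off from the face lattice of $Q$ (encoded by the adjacency functions $g_1,g_2,g_3$ of Notation \ref{modified notation}, i.e.\ $Q$ realized as an edge cut of $C_2(5)\times I$) which degree-$4$ monomials $v_iv_j$ survive the ideal $\mathcal I$, and argues independence as in Remark \ref{spin and Betti number count}. Lemma \ref{coefficient 2n+2B} then gives the expansion of $p_1(M)=\sum_{j=1}^{2n+2}v_j^2$ on this basis, so $p_1(M)=0$ holds if and only if every coefficient listed there vanishes. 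Putting the two reductions together yields the claimed equivalence in both directions.

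The substantive content thus lies in Lemma \ref{coefficient 2n+2B}, and the only thing requiring care is its derivation: eliminate $v_1,\dots,v_n$ via the relations $v_i=-\sum_{j>n}\la_{i,j}v_j$ and then rewrite each surviving $v_iv_j$ and $v_j^2$ on $C_1\sqcup\cdots\sqcup C_5$ by running the cyclic two-step eliminations of the proofs of Lemma \ref{coefficient prism} and Lemma \ref{coefficient 2n+1} along the three facet-cycles of $Q$ attached to $f_1,f_2,f_3$. This is exactly why Notation \ref{modified notation} bundles the relevant indices into $\widetilde\rho,\widetilde\Delta,\widetilde l$: with that notation $c_{4,5},c_{5,8},c_{7,4}$ become verbatim analogues of the cyclic coefficient (\ref{cyclic coefficient 2}), while $c_{7,i},c_{8,i},c_{4,8},c_{7,8}$ additionally carry the ``$I^{n-3}$-direction'' terms treated as in Lemma \ref{coefficient 2n}. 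I do not expect a genuinely new obstacle: once the basis of $H^4$ and the three cycles are pinned down, the argument runs parallel to Propositions \ref{2n coefficient} and \ref{2n+1 coefficient}, so the main effort is careful bookkeeping of the overlapping cycles rather than any new idea.
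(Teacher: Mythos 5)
Your reduction is exactly the one the paper intends: string $\Leftrightarrow$ $w_2=0$ and $p_1=0$ (orientability plus torsion-free cohomology), $w_2=0$ $\Leftrightarrow$ the parity condition on the non-identity columns, and $p_1=0$ $\Leftrightarrow$ vanishing of all coefficients in the basis $C_1\sqcup\cdots\sqcup C_5$ supplied by Lemma \ref{coefficient 2n+2B}. This matches the paper's (implicit) proof, which treats the proposition as an immediate corollary of that lemma in the same way as Propositions \ref{2n coefficient} and \ref{2n+1 coefficient}.
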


\begin{example}
The quasitoric manifold $M(Q \times I^{2},\Lambda)$ is string if $\Lambda$ is equivalent to 
\[
\left(
\begin{array}{ccc|ccccc|cc|cc}
1 & 0 & 0 & 0 & 0 & 1 & 1 & 1 & 0 & 0 & 0 & 0 \\
0 & 1 & 0 & 1 & 0 & 0 & 1 & 0 & 0 & 0 & 0 & 0 \\
0 & 0 & 1 & 0 & 1 & 0 & 0 & 1 & 0 & 0 & 0 & 0 \\
\hline
0 & 0 & 0 & 2 & 0 & 2 & 2 & 0 & 1 & 0 & 1 & 0 \\
0 & 0 & 0 & 0 & 2 & 2 & 1 & 3 & 0 & 1 & 0 & 1 
\end{array}
\right)
\]
\end{example}

\section{Real analogue} \label{real analogue}
\setcounter{equation}{0}

A real version of quasitoric manifold called small cover, denoted by $M_{\R}(P,\Lambda)$, can be defined with $T^{n}$ replaced by $\Z_{2}^{n}$ in Canonical Construction in Section \ref{preliminaries} \cite{DJ91}. Suppose $P$ is an $n$-dimensional simple polytope with facet set $\mathcal{F}(P)=\{F_{i}\}_{i=1}^{m}$ and let $v_{i}$ be Poincar\'e dual of characteristic submanifold corresponding to $F_{i}$ for $1 \leq i \leq m$. Results parallel to Proposition \ref{Betti number} and \ref{cohomology ring} are valid with $\Z_{2}$-coefficients. As for characteristic classes, 
\[
w(M_{\R}(P,\Lambda))=\prod_{i=1}^{m} (1+v_{i}) \qquad p(M_{\R}(P,\Lambda))=1. 
\]
In particular, $w_{1}(M_{\R}(P,\Lambda))=\sum\limits_{i=1}^{m} v_{i}$, $w_{2}(M_{\R}(P,\Lambda))=\sum\limits_{1 \leq i < j \leq m} v_{i}v_{j}$ and $p_{1}(M_{\R}(P,\Lambda))=0$. 

Therefore, when $\Lambda$ is in refined form, $M_{\R}(P,\Lambda)$ is orientable if and only if all column sums of $\Lambda$ are odd. Moreover, string property and spin property are equivalent for small covers. Note that in the case of small cover, $w_{2}$ is the sum of square-free elements instead of square elements, and calculations are taken in $\Z_{2}$. Thus, it is not surprising that different results are obtained via simpler arguments. In the sequel, several statements are listed without proof and some of which have been mentioned in the literature. 

First of all, 2-dimensional string small covers are nothing but equivariant connected sum of $\R P^{1} \times \R P^{1}$. 

Secondly, combine the Four Color Theorem and results in \cite{Hu20}, every 3-dimensional simple polytope can be realized as the orbit polytope of a string small cover. In particular, every string small cover over prism is induced by 3-coloring or 4-coloring, and must be of bundle type. 

Thirdly, $P=C_{2}(m_{1}) \times C_{2}(m_{2})$ can be realized as the orbit polytope of a string small cover if and only if $m_{1}m_{2} \equiv 0 \pmod{2}$. 

\begin{example}
The small cover $M_{\R}(C_{2}(4) \times C_{2}(3),\Lambda)$ is string if $\Lambda$ is equivalent to 
\[
\left(
\begin{array}{cc|cc|cc|c}
1 & 0 & 0 & 0 & 1 & 0 & 0 \\
0 & 1 & 0 & 0 & 0 & 1 & 1 \\
\hline
0 & 0 & 1 & 0 & 0 & 0 & 1 \\
0 & 0 & 0 & 1 & 0 & 0 & 1 
\end{array}
\right).
\]
\end{example}

Lastly, each string small cover over product of simplices $\prod_{i=1}^{k} \Delta^{n_{i}}$ is a generalized real Bott manifold, i.e., the total space $B^{n}$ of an $\R P^{n_{i}}$-bundle tower: 
\[
B^{n} \xrightarrow{\R P^{n_{k}}} B^{n-n_{k}} \xrightarrow{\R P^{n_{k-1}}} \cdots \xrightarrow{\R P^{n_{2}}} B^{n_{1}} \xrightarrow{\R P^{n_{1}}} {pt}, 
\]
where $n=\sum_{i=1}^{k} n_{i}$. In this case, calculation based on an explicit formula for $w_{2}$ given in \cite{DU19} (see also \cite{CMO17}) induces the following proposition: 
\begin{proposition}
$P=\prod_{i=1}^{k}\Delta^{n_{i}}$ with $n_{i}\geq 2$ can be realized as the orbit polytope of a string small cover if and only if $n_{i}\equiv 1 \pmod{2}$ for $1 \leq i \leq k$ and $\exists\ i_{0}$ such that $n_{i_{0}}\equiv 3 \pmod{4}$.
\end{proposition}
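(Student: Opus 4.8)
The plan is to build on the structure result recalled just above the proposition: every string small cover $B$ over $P=\prod_{i=1}^{k}\Delta^{n_{i}}$ is a generalized real Bott manifold. After fixing a tower structure and writing $\gamma_{1},\dots,\gamma_{k}$ for the tautological line bundles, one then has the explicit description
\[
H^{*}(B;\Z_{2})\cong\Z_{2}[x_{1},\dots,x_{k}]\big/\bigl(x_{i}\prod_{s=1}^{n_{i}}(x_{i}+\ell^{(i)}_{s})\ :\ 1\le i\le k\bigr),\qquad w(B)=\prod_{i=1}^{k}(1+x_{i})\prod_{s=1}^{n_{i}}(1+x_{i}+\ell^{(i)}_{s}),
\]
where $x_{i}=w_{1}(\gamma_{i})$ and each $\ell^{(i)}_{s}$ is a $\Z_{2}$-linear combination of $x_{1},\dots,x_{i-1}$; this is exactly the data entering the $w_{2}$-formula of \cite{DU19} (see also \cite{CMO17}) in the case of products of simplices. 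The first thing I would record is that, since every $n_{i}\ge2$, all defining relations lie in degree $\ge3$; hence $H^{\le 2}(B;\Z_{2})$ is the free truncated polynomial algebra on $x_{1},\dots,x_{k}$, so $w_{1}(B)=0$ and $w_{2}(B)=0$ become polynomial identities among monomials of degree $1$ and $2$.

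For the ``if'' direction I would exhibit an explicit string model. Reorder the factors so that $n_{k}\equiv3\pmod4$ (possible by hypothesis), let $r$ be the number of indices with $n_{i}\equiv1\pmod4$, and over the untwisted product $B_{k-1}=\prod_{i<k}\R P^{n_{i}}$ form the $\R P^{n_{k}}$-bundle $B=\mathbb{P}\bigl(\underline{\R}^{\,n_{k}-1}\oplus L\oplus L\bigr)$ with $L=\bigotimes_{i\le r}\gamma_{i}$. Writing $n_{i}+1=2m_{i}$ and using $(1+x_{i})^{2m_{i}}\equiv1+m_{i}x_{i}^{2}$ modulo degree $\ge3$, a short computation gives $w(B)\equiv1+\sum_{i\le r}(m_{i}+1)x_{i}^{2}+\sum_{r<i\le k}m_{i}x_{i}^{2}$ up to degree $2$; since $m_{i}$ is odd exactly when $n_{i}\equiv1\pmod4$ and even exactly when $n_{i}\equiv3\pmod4$, every surviving coefficient is even, so $w_{1}(B)=w_{2}(B)=0$ and $B$ is string. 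The degenerate cases $k=1$ (where $B=\R P^{n_{1}}$) and $r=0$ (where $B=\prod\R P^{n_{i}}$) are checked the same way.

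For the ``only if'' direction I would argue in two steps. Fixing $i$ and quotienting $H^{*}(B;\Z_{2})$ by the ideal generated by $\{x_{j}:j\ne i\}$ kills every defining relation except $x_{i}^{n_{i}+1}$, identifies the quotient with $H^{*}(\R P^{n_{i}};\Z_{2})$, and carries $w(B)$ to $(1+x_{i})^{\,n_{i}+1+N_{i}}$, where $N_{i}$ is the number of line summands at the later stages whose twisting class $\ell^{(m)}_{s}$ involves $x_{i}$. Vanishing of $w_{1}(B)$ forces $n_{i}+1+N_{i}$ to be even, and then vanishing of $w_{2}(B)$, together with $\binom{2M}{2}\equiv M\pmod2$, forces $n_{i}+1+N_{i}\equiv0\pmod4$ for every $i$; applied to the top stage, where $N_{k}=0$, this gives $n_{k}\equiv3\pmod4$, which already furnishes the required index $i_{0}$ and shows $n_{k}$ is odd. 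To see that \emph{all} the $n_{i}$ are odd I would instead read the coefficient of each mixed monomial $x_{i}x_{j}$ ($i<j$) out of $w_{2}(B)$: using $w_{1}(B)=0$ it reduces modulo $2$ to $c_{j,i}+\sum_{m>j}\sum_{s}[\ell^{(m)}_{s}]_{x_{i}}[\ell^{(m)}_{s}]_{x_{j}}$, where $c_{j,i}$ counts the stage-$j$ summands twisted by $x_{i}$. Assuming some $n_{i}$ is even (so that the first step forces $N_{i}$ odd) and feeding these vanishing conditions back into the parity of $N_{i}$, one propagates the odd parity down the tower through the mixed relations until it collides with $w_{1}(B)=0$.

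The main obstacle I expect is precisely this last bookkeeping: although each individual relation is elementary, organizing the mixed degree-$2$ conditions into a clean downward induction on the tower is where the real work lies. A secondary but essential dependency is the cited identification of (string) small covers over products of simplices with generalized real Bott manifolds; without the resulting explicit presentations of $H^{*}(B;\Z_{2})$ and of $w(B)$, neither the construction in the ``if'' direction nor the degree-$\le2$ analysis in the ``only if'' direction is available, and it is the parallel $w_{2}$-formula of \cite{CMO17, DU19} that makes the computation routine once the set-up is in place.
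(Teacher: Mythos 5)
Your ``if'' direction is sound: the model $\mathbb{P}\bigl(\underline{\R}^{\,n_k-1}\oplus L\oplus L\bigr)$ over $\prod_{i<k}\R P^{n_i}$ with $L=\bigotimes_{i\le r}\gamma_i$ does have $w_1=w_2=0$ under the stated hypotheses, and since all defining relations of $H^{*}(B;\Z_2)$ lie in degree $\ge 3$ when every $n_i\ge 2$, checking the coefficients of the degree-$2$ monomials suffices. The first half of your ``only if'' argument is also correct: the specialization $x_j\mapsto 0$ ($j\ne i$) carries $w(B)$ to $(1+x_i)^{n_i+1+N_i}$ in $\Z_2[x_i]/(x_i^{n_i+1})$, forcing $n_i+1+N_i\equiv 0\pmod 4$, and at the top stage ($N_k=0$) this gives $n_k\equiv 3\pmod 4$. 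Your formula for the coefficient of $x_ix_j$ in $w_2(B)$, namely $c_{j,i}+\sum_{m>j}\sum_s[\ell^{(m)}_s]_{x_i}[\ell^{(m)}_s]_{x_j}$, is right as well. (The paper itself offers no proof of this proposition, so there is nothing to compare against on that side.)

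The gap is the final ``propagation'' step, which you defer as bookkeeping: it cannot be carried out, because for $k\ge 3$ the system of degree-$\le 2$ conditions you have written down admits solutions with some $n_i$ even. The slack comes precisely from the term $\sum_{m>j}\#\{s:\ell^{(m)}_s\ni x_i,x_j\}$ in the mixed condition: it can be odd even when every $c_{m,i}$ and $c_{m,j}$ with $m>j$ is even, so $c_{j,i}$ need not be even and $N_i$ can be odd. Concretely, take $B_1=\R P^{4}$, $B_2=\mathbb{P}(\underline{\R}^{5}\oplus\gamma_1)$ with fiber $\R P^{5}$, and $B_3=\mathbb{P}\bigl(\underline{\R}\oplus(\gamma_1\otimes\gamma_2)\oplus\gamma_1\oplus\gamma_2\bigr)$ with fiber $\R P^{3}$, a small cover over $\Delta^{4}\times\Delta^{5}\times\Delta^{3}$. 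Then
\begin{equation*}
w(B_3)=(1+x_1)^{5}(1+x_2)^{5}(1+x_1+x_2)(1+x_3)(1+x_1+x_2+x_3)(1+x_1+x_3)(1+x_2+x_3);
\end{equation*}
each variable occurs in an even number of the $15$ linear factors ($8$, $8$, $4$ respectively), each unordered pair of variables occurs in exactly two factors, and $\binom{8}{2}\equiv\binom{4}{2}\equiv 0\pmod 2$, so $w_1(B_3)=w_2(B_3)=0$ (the relations have degrees $5,6,4$, so no collapsing occurs in $H^{2}$). Thus $\Delta^{3}\times\Delta^{4}\times\Delta^{5}$ carries a string small cover although $n=4$ is even: the ``only if'' direction you are trying to prove is false under the paper's own criterion (string $\Leftrightarrow$ $w_1=w_2=0$ for small covers), so no reorganization of the induction will close the argument. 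The correct characterization is the solvability of your conditions (A)--(C), which for $k\ge 3$ is strictly weaker than ``all $n_i$ odd''; I would flag this to the author rather than try to repair the propagation step.
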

As shown in examples below, similar characterization does not exist for $P=I^{m} \times \prod_{i=1}^{k} \Delta^{n_{i}}$ with $m \geq 1, n_{i} \geq 2$. 

\begin{example}
The small cover $M_{\R}(I^{1} \times \Delta^{2},\Lambda)$ is string if $\Lambda$ is equivalent to 
\[
\left(
\begin{array}{c|cc|c|c}
1 & 0 & 0 & 1 & 1 \\
\hline
0 & 1 & 0 & 0 & 1 \\
0 & 0 & 1 & 0 & 1 
\end{array}
\right).
\]
\end{example}

\begin{example}
The small cover $M_{\R}(I^{1} \times \Delta^{3} \times \Delta^{4},\Lambda)$ is string if $\Lambda=[\ \mathrm{I}_{8}\ |\ \Lambda_{*}\ ]$ with 
\[
\Lambda_{*}=
\left(
\begin{array}{c|c|c}
1 & 0 & 1 \\
\hline
0 & 1 & 0 \\
0 & 1 & 1 \\
0 & 1 & 1 \\
\hline
0 & 0 & 1 \\
0 & 0 & 1 \\
0 & 0 & 1 \\
0 & 0 & 1 
\end{array}
\right).
\]
\end{example}

\section*{Acknowledgement}
\setcounter{equation}{0}

\noindent The author would like to thank professor Zhi L$\ddot{\mathrm{u}}$ for providing valuable advice.

\mbox{}\\
Qifan Shen\\
School of Mathematical Sciences \\
Fudan University \\
220 Handan Road \\
Shanghai 200433 \\
People's Republic of China \\
E-Mail: qfshen17@fudan.edu.cn \mbox{}\\


\begin{thebibliography}{00}

\bibitem{ABP67} D.W. Anderson; E.H. Brown; F.P. Peterson. \emph{The structure of the Spin cobordism ring}. Ann. of Math. (2) 86 (1967), 271-298.

\bibitem{AH70} M. Atiyah; F. Hirzebruch. \emph{Spin-manifolds and group actions}. Essays on Topology and Related Topics (Memoires dedies Georges de Rham) 18-28. Springer, New York, 1970. 

\bibitem{BB92} G. Blind; R. Blind. \emph{Triangle-free polytopes with few facets}. Arch. Math. (Basel) 58 (1992), no. 6, 599-605.

\bibitem{BP15} V.M. Buchstaber; T.E. Panov. \emph{Toric Topology}. Mathematical Surveys and Monographs, 204. American Mathematical Society, Providence, RI, 2015.

\bibitem{BPR07} V.M. Buchstaber; T.E. Panov; N. Ray. \emph{Spaces of polytopes and cobordism of quasitoric manifolds}. Mosc. Math. J. 7 (2007), no. 2, 219-242, 350.

\bibitem{Bu11} U. Bunke. \emph{String structures and trivialisations of a Pfaffian line bundle}. Comm. Math. Phys. 307 (2011), no. 3, 675-712.

\bibitem{CMO17} S. Choi; M. Masuda; S. Oum. \emph{Classification of real Bott manifolds and acyclic digraphs}. Trans. Amer. Math. Soc. 369 (2017), no. 4, 2987-3011.

\bibitem{CMS10} S. Choi; M. Masuda; D.Y. Suh. \emph{Quasitoric manifolds over a product of simplices}. Osaka. J. Math. 47 (2010), no.1, 109-129.

\bibitem{CMS11} S. Choi; M. Masuda; D.Y. Suh. \emph{Rigidity problems in toric topology: a survey}. Proc. Steklov Inst. Math. 275 (2011), no. 1, 177-190.

\bibitem{CPS10} S. Choi; T. Panov; D. Y. Suh. \emph{Toric cohomological rigidity of simple convex polytopes}. J. Lond. Math. Soc. (2) 82 (2010), no. 2, 343-360.

\bibitem{DJ91} W.M. Davis; T. Januszkiewicz. \emph{Convex polytopes, Coxeter orbifolds and torus actions}. Duke Math. J. 62 (1991), no.2, 417-451.

\bibitem{Do01} N.E. Dobrinskaya. \emph{The classification problem for quasitoric manifolds over a given polytope}. Funct. Anal. Appl. 35 (2001), no. 2, 83-89.

\bibitem{DU19} R. Dsouza; V. Uma. \emph{Results on the topology of generalized real Bott manifolds}. Osaka J. Math. 56 (2019), no. 3, 441-458.

\bibitem{Er14} N.Y. Erokhovets. \emph{Buchstaber invariant theory of simplicial complexes and convex polytopes}, Proc. Steklov Inst. Math. 286 (2014), no. 1, 128-187.

\bibitem{Gr03} B. Grunbaum. \emph{Convex polytopes}. Graduate Texts in Mathematics, 221. Springer-Verlag, New York, 2003.

\bibitem{Ha15} S. Hasui. \emph{On the classification of quasitoric manifolds over dual cyclic polytopes}, Algebr. Geom. Topol. 15 (3) 1387-1437, 2015.

\bibitem{Hi66} F. Hirzebruch. \emph{Topological methods in algebraic geometry}. Grundlehren Math. Wiss. 131, Springer, Berlin 1966.

\bibitem{Ho02} M.J. Hopkins. \emph{Algebraic topology and modular forms}. Proceedings of the International Congress of Mathematicians, Vol. I (Beijing, 2002), 291-317, Higher Ed. Press, Beijing, 2002.

\bibitem{HR95} M.A. Hovey; D.C. Ravenel. \emph{The 7-connected cobordism ring at p=3}. Trans. Amer. Math. Soc. 347 (1995), no. 9, 3473-3502.

\bibitem{Hu20} Z. Huang. \emph{The second Stiefel-Whitney class of small covers}. Chin. Ann. Math. Ser. B 41 (2020), no. 2, 163-176.

\bibitem{Jo02} M. Joswig. \emph{Projectivities in simplicial complexes and colorings of simple polytopes}. Math. Z. 240 (2002), no.2, 243-259.

\bibitem{Ju73} P.E. Jupp. \emph{Classification of certain} 6-\emph{manifolds}. Proc. Cambridge Philos. Soc. 73 (1973), 293-300.

\bibitem{Ki87} T.P. Killingback. \emph{World-sheet anomalies and loop geometry}. Nuclear Phys. B 288 (1987), no. 3-4, 578-588.

\bibitem{KM13} C. Kottke; R. Melrose. \emph{Equivalence of string and fusion loop-spin structures}.
arXiv:1309.0210 (2013).

\bibitem{LLP18} I. Y. Limonchenko; Z. L\"{u}; T. E. Panov. \emph{Calabi-Yau hypersurfaces and SU-bordism}. (Russian. Russian summary)
English version published in Proc. Steklov Inst. Math. 302 (2018), no. 1, 270-278. Tr. Mat. Inst. Steklova 302 (2018), Topologiya i Fizika, 287-295.

\bibitem{Lu17} Y. Lu. \emph{On cobordism of generalized real Bott manifolds}. arXiv:1710.00562 (2017).

\bibitem{LP16} Z. L\"{u}; T. Panov. \emph{On toric generators in the unitary and special unitary bordism rings}. Algebr. Geom. Topol. 16 (2016), no. 5, 2865-2893.

\bibitem{LY11} Z. L\"{u}; L. Yu. \emph{Topological types of 3-dimensional small covers}. Forum Math. 23 (2011), no. 2, 245-284.

\bibitem{MG95} M. Mahowald; V. Gorbounov. \emph{Some homotopy of the cobordism spectrum MO$\langle8\rangle$}. Homotopy theory and its applications (Cocoyoc, 1993), 105-119, Contemp. Math., 188, Amer. Math. Soc., Providence, RI, 1995.

\bibitem{OR70} P. Orlik; F. Raymond. \emph{Actions of the torus on 4-manifolds. I}. Trans. Amer. Math. Soc., 152 (1970), 531-559.

\bibitem{ST05} S. Stolz; P. Teichner. \emph{The spinor bundle on loop space}. MPIM preprint (2005).

\bibitem{Wa15} K. Waldorf. \emph{String geometry vs. spin geometry on loop spaces}. J. Geom. Phys. 97 (2015), 190-226.

\bibitem{Wa16} K. Waldorf. \emph{Spin structures on loop spaces that characterize string manifolds}. Algebr. Geom. Topol. 16 (2016), no. 2, 675-709.

\bibitem{Wall66} C.T.C. Wall. \emph{Classification problems in differential topology. V. On certain} 6-\emph{manifolds}. Invent. Math. 1 (1966), 355-374.

\bibitem{Wi88} E. Witten. \emph{The index of the Dirac operator in loop space}. Elliptic curves and modular forms in algebraic topology (Princeton, NJ, 1986), 161-181, Lecture Notes in Math., 1326, Springer, Berlin, 1988.

\bibitem{Zi98} G.M. Ziegler. \emph{Lectures on polytopes}. Graduate Texts in Mathematics, 152. Springer-Verlag, New York, 1995.

\end{thebibliography}
\end{document}